%% file: DRvol_main.tex
\documentclass{amsart}

\usepackage[english]{babel}
\usepackage{csquotes}
\usepackage[style=alphabetic,
            isbn=false,
            doi=false,
            url=false,
            backend=bibtex,
            maxnames=5,
            maxalphanames = 5,
            giveninits=true
]{biblatex}
\AtEveryBibitem{\clearlist{language}}
\bibliography{sources}
\usepackage{amsmath,amsfonts,amsthm,mathtools, amssymb}
\usepackage{xcolor}
\usepackage{tikz-cd}
\usepackage{hyperref}
\usepackage{aliascnt}
\usepackage{xparse}
\usepackage{tensor}
\usepackage{caption}

\usepackage{tabto}
\usepackage{imakeidx}

 \RequirePackage{filecontents}
\indexsetup{level=\subsection*,toclevel=\paragraph,headers={ \MakeUppercase{Euler sequence on strata}}{\MakeUppercase{Euler sequence on strata}}, noclearpage,firstpagestyle=headings}
\makeindex[name=graph,title=Graphs and levels,columns=1,columnseprule,options=-s Idx.ist]

\usepackage{tikz}
\usetikzlibrary{matrix,arrows,calc}
\usetikzlibrary{positioning}
\usetikzlibrary{decorations.pathreplacing,decorations.markings,decorations.pathmorphing}
\usetikzlibrary{positioning,arrows,patterns}
\usetikzlibrary{cd}
\usetikzlibrary{intersections}

\tikzset{
	every loop/.style={very thick},
	comp/.style={circle,fill,black,,inner sep=0pt,minimum size=5pt},
	order bottom left/.style={pos=.05,left,font=\tiny},
	order top left/.style={pos=.9,left,font=\tiny},
	order bottom right/.style={pos=.05,right,font=\tiny},
	order top right/.style={pos=.9,right,font=\tiny},
	order node dis/.style={text width=.75cm},
	circled number/.style={circle, draw, inner sep=0pt, minimum size=12pt},
	below left with distance/.style={below left,text height=10pt},
    below right with distance/.style={below right,text height=10pt}
	}

\makeatletter
\ifcsname phantomsection\endcsname
    \newcommand*{\@gobblenexttocentry}[9]{}
\else
    \newcommand*{\@gobblenexttocentry}[4]{}
\fi
\newcommand*{\addsubsection}{%
    \addtocontents{toc}{\protect\@gobblenexttocentry}%
    \subsection*}
\makeatother

\begin{document}

\def\subsectionautorefname{Section}
\def\subsubsectionautorefname{Section}
\def\sectionautorefname{Section}
\def\equationautorefname~#1\null{(#1)\null}

\include{macros}


\title{Completed volumes and the DR-cycle}

\begin{abstract}
  We show that the completed volumes introduced by Duriev-Goujard-Yakovlev as an approximation to compute Masur-Veech volumes via Witten-Kontsevich's combinatorial classes agrees with the top intersection of the tautological class on the double ramification cycle, computable as a coefficient of a Chiodo class.
\par
For the proof we describe the components of the double ramification cycle and their excess intersection classes to the extent seen by the top tautological intersection. This gives a recursion computing completed volumes in terms of volumes appearing in a certain set of level graphs, not only for quadratic differentials. It also completes the work of Duriev-Goujard-Yakovlev solving the technically most involved case of strata with two singularities.
  \end{abstract}

\author{Martin M\"oller}
\thanks{Research is supported  
  by the DFG-project MO 1884/3-1 and the Collaborative Research Centre
TRR 326 ``Geometry and Arithmetic of Uniformized Structures.''}
\address{
Institut f\"ur Mathematik, Goethe--Universit\"at Frankfurt,
Robert-Mayer-Str. 6--8,
60325 Frankfurt am Main, Germany
}
\email{moeller@math.uni-frankfurt.de}

\author{Miguel Prado}
\email{prado@math.uni-frankfurt.de}

      \maketitle

\tableofcontents

\input{sec_intro}


\input{sec_background}

\input{sec_support}



\input{sec_normalsheaf}

\input{sec_excess}
\input{sec_volumes}

\input{sec_examples}

\printbibliography

\end{document}

%% file: macros.tex
\newcommand{\mynewtheorem}[4]{
  \if\relax\detokenize{#3}\relax 
    \if\relax\detokenize{#4}\relax 
      \newtheorem{#1}{#2}
    \else
      \newtheorem{#1}{#2}[#4]
    \fi
  \else
    \newaliascnt{#1}{#3}
    \newtheorem{#1}[#1]{#2}
    \aliascntresetthe{#1}
  \fi
  \expandafter\def\csname #1autorefname\endcsname{#2}
}

\mynewtheorem{theorem}{Theorem}{}{section}
\mynewtheorem{lemma}{Lemma}{theorem}{}
\mynewtheorem{rem}{Remark}{lemma}{}
\mynewtheorem{prop}{Proposition}{lemma}{}
\mynewtheorem{cor}{Corollary}{lemma}{}
\mynewtheorem{definition}{Definition}{lemma}{}
\mynewtheorem{question}{Question}{lemma}{}
\mynewtheorem{assumption}{Assumption}{lemma}{}
\mynewtheorem{example}{Example}{lemma}{}


\def\defbb#1{\expandafter\def\csname b#1\endcsname{\mathbb{#1}}}
\def\defcal#1{\expandafter\def\csname c#1\endcsname{\mathcal{#1}}}
\def\deffrak#1{\expandafter\def\csname frak#1\endcsname{\mathfrak{#1}}}
\def\defop#1{\expandafter\def\csname#1\endcsname{\operatorname{#1}}}
\def\defbf#1{\expandafter\def\csname b#1\endcsname{\mathbf{#1}}}

\makeatletter
\def\defcals#1{\@defcals#1\@nil}
\def\@defcals#1{\ifx#1\@nil\else\defcal{#1}\expandafter\@defcals\fi}
\def\deffraks#1{\@deffraks#1\@nil}
\def\@deffraks#1{\ifx#1\@nil\else\deffrak{#1}\expandafter\@deffraks\fi}
\def\defbbs#1{\@defbbs#1\@nil}
\def\@defbbs#1{\ifx#1\@nil\else\defbb{#1}\expandafter\@defbbs\fi}
\def\defbfs#1{\@defbfs#1\@nil}
\def\@defbfs#1{\ifx#1\@nil\else\defbf{#1}\expandafter\@defbfs\fi}
\def\defops#1{\@defops#1,\@nil}
\def\@defops#1,#2\@nil{\if\relax#1\relax\else\defop{#1}\fi\if\relax#2\relax\else\expandafter\@defops#2\@nil\fi}
\makeatother

\defbbs{ZHQCNPALRVWG}
\defcals{ABOPQMNXYLTRAEHZKCFIL}
\deffraks{apijklmnopqueRC}
\defops{IVC, PGL,SL,mod,Spec,Re,Gal,Tr,End,GL,Hom,PSL,H,div,Aut,rk,Mod,R,T,Tr,Mat,Vol,MV,Res,vol,Z,diag,Hyp,hyp,hl,ord,Im,ev,U,dev,c,CH,fin,pr,Pic,lcm,ch,td,LG,id,Sym,Aut,Log,tw,Ch,vp,stat}
\defbfs{uvzwp} 

\def\ep{\varepsilon}
\def\ve{\varepsilon}
\def\abs#1{\lvert#1\rvert}
\def\dd{\mathrm{d}}
\def\WP{\mathrm{WP}}
\def\inj{\hookrightarrow}
\def\eq{=}

\def\i{\mathrm{i}}
\def\e{\mathrm{e}}
\def\st{\mathrm{st}}
\def\ct{\mathrm{ct}}
\def\ab{\mathrm{ab}}
\def\na{\mathrm{na}}
\def\red{\mathrm{red}}
\def\SStar{\mathrm{SStar}}
\def\SF{\mathrm{SF}}
\def\SSStar{\mathrm{SSStar}}

\def\uC{\underline{\bC}}
\def\ol{\overline}
  
\def\Vrel{\bV^{\mathrm{rel}}}
\def\Wrel{\bW^{\mathrm{rel}}}
\def\twolev{\mathrm{LG_1(B)}}

\def\be{\begin{equation}}   \def\ee{\end{equation}}     \def\bes{\begin{equation*}}    \def\ees{\end{equation*}}
\def\ba{\be\begin{aligned}} \def\ea{\end{aligned}\ee}   \def\bas{\bes\begin{aligned}}  \def\eas{\end{aligned}\ees}
\def\={\;=\;}  \def\+{\,+\,} \def\m{\,-\,}

\newcommand*{\proj}{\mathbb{P}}
\newcommand{\barmoduli}[1][g]{{\overline{\mathcal M}}_{#1}}
\newcommand{\moduli}[1][g]{{\mathcal M}_{#1}}
\newcommand{\omoduli}[1][g]{{\Omega\mathcal M}_{#1}}
\newcommand{\komoduli}[1][g]{{\Omega\mathcal M}^{\otimes k}_{#1}}
\newcommand{\modulin}[1][g,n]{{\mathcal M}_{#1}}
\newcommand{\omodulin}[1][g,n]{{\Omega\mathcal M}_{#1}}
\newcommand{\zomoduli}[1][]{{\mathcal H}_{#1}}
\newcommand{\barzomoduli}[1][]{{\overline{\mathcal H}_{#1}}}
\newcommand{\pomoduli}[1][g]{{\proj\Omega\mathcal M}_{#1}}
\newcommand{\pomodulin}[1][g,n]{{\proj\Omega\mathcal M}_{#1}}
\newcommand{\pobarmoduli}[1][g]{{\proj\Omega\overline{\mathcal M}}_{#1}}
\newcommand{\pobarmodulin}[1][g,n]{{\proj\Omega\overline{\mathcal M}}_{#1}}
\newcommand{\potmoduli}[1][g]{\proj\Omega\tilde{\mathcal{M}}_{#1}}
\newcommand{\obarmoduli}[1][g]{{\Omega\overline{\mathcal M}}_{#1}}
\newcommand{\obarmodulio}[1][g]{{\Omega\overline{\mathcal M}}_{#1}^{0}}
\newcommand{\otmoduli}[1][g]{\Omega\tilde{\mathcal{M}}_{#1}}
\newcommand{\pom}[1][g]{\proj\Omega{\mathcal M}_{#1}}
\newcommand{\pobarm}[1][g]{\proj\Omega\overline{\mathcal M}_{#1}}
\newcommand{\pobarmn}[1][g,n]{\proj\Omega\overline{\mathcal M}_{#1}}
\newcommand{\princbound}{\partial\mathcal{H}}
\newcommand{\omoduliinc}[2][g,n]{{\Omega\mathcal M}_{#1}^{{\rm inc}}(#2)}
\newcommand{\obarmoduliinc}[2][g,n]{{\Omega\overline{\mathcal M}}_{#1}^{{\rm inc}}(#2)}
\newcommand{\pobarmoduliinc}[2][g,n]{{\proj\Omega\overline{\mathcal M}}_{#1}^{{\rm inc}}(#2)}
\newcommand{\otildemoduliinc}[2][g,n]{{\Omega\widetilde{\mathcal M}}_{#1}^{{\rm inc}}(#2)}
\newcommand{\potildemoduliinc}[2][g,n]{{\proj\Omega\widetilde{\mathcal M}}_{#1}^{{\rm inc}}(#2)}
\newcommand{\omoduliincp}[2][g,\lbrace n \rbrace]{{\Omega\mathcal M}_{#1}^{{\rm inc}}(#2)}
\newcommand{\obarmoduliincp}[2][g,\lbrace n \rbrace]{{\Omega\overline{\mathcal M}}_{#1}^{{\rm inc}}(#2)}
\newcommand{\obarmodulin}[1][g,n]{{\Omega\overline{\mathcal M}}_{#1}}
\newcommand{\LTH}[1][g,n]{{K \overline{\mathcal M}}_{#1}}
\newcommand{\PLS}[1][g,n]{{\bP\Xi \mathcal M}_{#1}}

\DeclareDocumentCommand{\LMS}{ O{\mu} O{g,n} O{}}{\Xi\overline{\mathcal{M}}^{#3}_{#2}(#1)}
\DeclareDocumentCommand{\Romod}{ O{\mu} O{g,n} O{}}{\Omega\mathcal{M}^{#3}_{#2}(#1)}

\newcommand*{\Tw}[1][\Lambda]{\mathrm{Tw}_{#1}}  
\newcommand*{\sTw}[1][\Lambda]{\mathrm{Tw}_{#1}^s}  

\newcommand{\bfa}{{\bf a}}
\newcommand{\bfb}{{\bf b}}
\newcommand{\bfd}{{\bf d}}
\newcommand{\bfe}{{\bf e}}
\newcommand{\bff}{{\bf f}}
\newcommand{\bfg}{{\bf g}}
\newcommand{\bfh}{{\bf h}}
\newcommand{\bfj}{{\bf j}}
\newcommand{\bfm}{{\bf m}}
\newcommand{\bfn}{{\bf n}}
\newcommand{\bfp}{{\bf p}}
\newcommand{\bfq}{{\bf q}}
\newcommand{\bft}{{\bf t}}
\newcommand{\bfP}{{\bf P}}
\newcommand{\bfR}{{\bf R}}
\newcommand{\bfU}{{\bf U}}
\newcommand{\bfu}{{\bf u}}
\newcommand{\bfz}{{\bf z}}

\newcommand{\bfl}{{\boldsymbol{\ell}}}
\newcommand{\bfmu}{{\boldsymbol{\mu}}}
\newcommand{\bfeta}{{\boldsymbol{\eta}}}
\newcommand{\bftau}{{\boldsymbol{\tau}}}
\newcommand{\bfomega}{{\boldsymbol{\omega}}}
\newcommand{\bfsigma}{{\boldsymbol{\sigma}}}

\newcommand{\wh}{\widehat}
\newcommand{\wt}{\widetilde}

\newcommand{\ps}{\mathrm{ps}}  

\newcommand{\tdpm}[1][{\Gamma}]{\mathfrak{W}_{\operatorname{pm}}(#1)}
\newcommand{\tdps}[1][{\Gamma}]{\mathfrak{W}_{\operatorname{ps}}(#1)}

\newlength{\halfbls}\setlength{\halfbls}{.5\baselineskip}

\newcommand*{\Teichmuller}{Teich\-m\"uller\xspace}

\newcommand{\bd}[1]{{\mathbf{#1}}}
\newcommand{\cat}[1]{\bd{#1}}
\DeclareDocumentCommand{\MSmu}{ O{\mu} }{\mathcal{MS}_{#1}}
\DeclareDocumentCommand{\MS}{ O{\mu} }{\mathcal{MS}}
\DeclareDocumentCommand{\GMSmu}{ O{\mu} }{\mathcal{GMS}_{#1}}
\DeclareDocumentCommand{\GMS}{ O{\mu} }{\mathcal{GMS}}
\def\GRC{\mathrm{GRC}}
\def\DRL{\mathsf{DRL}}
\def\DR{\mathsf{DR}}
\def\logDR{\mathsf{logDR}}
\def\rel{\mathrm{rel}}
\newcommand{\trop}{\mathsf{trop}}

\newcommand{\Picabs}{\mathfrak{Pic}}
\newcommand{\Picrel}{\on{Pic}}

\newcommand{\ghost}{\overline{\mathsf{M}}}
\newcommand{\M}{\mathsf{M}}
\newcommand{\ul}[1]{{\underline{#1}}}

\newcommand{\kernel}{\operatorname{ker}}
\newcommand{\coker}{\operatorname{coker}}

%% file: sec_intro.tex
\section{Introduction}
\label{sec:intro}

The moduli space of projectivized $k$-differentials $\bP \komoduli[g,n](\mu)$ with zeros and poles of orders prescribed by $\mu = (m_1,\ldots,m_n)$ comes with a natural finite volume form. In particular for the case of Abelian differentials $k=1$ and quadratic differentials $k=2$ these Masur-Veech volumes of these moduli spaces have important implicitations for the dynamics on translation and half-translation surfaces, see \cite{ZorichFlat, Filip} for surveys and \cite{Nguyenddif} for interpretations for higher~$k$. 
\par
A challenging problem is to find formulas for the computation of these volumes, that can be efficiently evaluated and such that the large genus limits can be derived. For the case of abelian differentials, the large limit conjecture \cite{ADGZZconj} has been proven in recent years \cite{CMSZ, Agg20, SauvagetLarge}. For quadratic differentials the principal strata (all $m_i = \pm 1$) was shown in \cite{Agg21} while the general case is open.
\par
There are three strategies to compute Masur-Veech volumes. First, the initial strategy of counting square-tiled surfaces of Zorich and Eskin-Okounkov \cite{EO01} and relying on the quasimodularity of the generating function  has sucessfully been used for abelian differentials \cite{CMSZ} and for a low genus data base \cite{Goujardexplicit}, but it will play no role in the sequel of this paper. Second, one may decompose the (half-) translation surfaces into ribbon graphs with glued-in cylinders and count ribbon graphs. Key to make this strategy usefuly is to relate the count of integer metrics on ribbon graphs to intersection numbers of Witten-Kontsevich's combinatorial classes \cite{Witten2dgrav,KontAiry} on the moduli space of stable curves~$\barmoduli[g,n]$. This strategy was implemented for abelian differentials \cite{DGZZfreq, DGZZlarge} and principal strata of quadratic differentials \cite{DGZZmeander, DGZZmeanderhigher}. Third, Masur-Veech volumes can be computed as intersection numbers of tautological classes on the multi-scale compactification of the moduli spaces $\bP \komoduli[g,n](\mu)$ themselves, proven for abelian differentials \cite{CMSZ} and quadratic differentials without relative periods  and conjecturally in general \cite{CMSprincipal}. In the case without relative periods the class that computes the Masur-Veech volume is simply the top power $\xi^d$ of the tautological line bundle where $d= \dim \bP \komoduli[g,n](\mu)$.
\par
\medskip
\paragraph{\textbf{Ribbon graph completed volumes}} To adapt the ribbon graph strategy for quadratic differentials with general odd entries $m_i$, Duriev--Goujard-Yakovlev introduced in \cite{DGY} the notion of \emph{completed volumes $\ol{\vol}(\mu)$} of moduli spaces of quadratic differentials: Instead of counting metrics on Ribbon graphs, they evaluate Kontsevich's polynomials in terms of combinatorial classes, even though they overcount by contributions of degenerate ribbon graphs. Their main theorem \cite[Theorem~2.6]{DGY} is (for the case of $n \geq 4$ singularities) a recursive formula for completed volume in terms of the Masur-Veech volume and lower complexity volumes.
\par
\par
\medskip
\paragraph{\textbf{Double ramification (DR)-cycle completed volumes}}
To evaluate intersection numbers on the multi-scale compactification one may also consider using a completion. The double ramification (DR)-cycle can be considered as an alternative 'compactification' of $\bP \komoduli[g,n](\mu)$ in an appropriate blowup of $\barmoduli[g,n]$ having additional components that lie entirely over the boundary of $\barmoduli[g,n]$. Chiodo--Holmes \cite{ChiodoHolmes} recently proved a closed formula for intersection numbers of arbitrary powers~$\xi^r$ of the tautological line bundle on the DR-cycle, which was conjectured in~\cite{To2}. It is thus natural to define the \emph{DR-completed volume $\ol{\vol}^\DR(\mu)$} of any of the moduli spaces $\bP \komoduli[g,n](\mu)$ to be the appropriate intersection number, that is, $\xi^d$ in the case without relative periods, against the DR-cycle, and to aim for a recursive formula in terms of the true Masur-Veech volume and of volumes of lower complexity moduli spaces.
\par
Refering to Sections~\ref{sec:excess} and~\ref{sec:completedvolumes} for the precise volume definitions we can state our first main result simply as:
\par
\begin{theorem} \label{thm:intromain}
The DR-completed volume and the (Ribbon graph) completed volume agree, i.e.,
\bes
\ol{\vol}^\DR(\mu) \= \ol{\vol}(\mu) 
\ees
for all signatures of quadratic differentials with $m_i>-2$ odd for $i=1,\ldots,n$.
\end{theorem}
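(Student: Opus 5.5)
The plan is to show that both sides satisfy the same recursion with the same initial data. First, compute $\ol{\vol}^\DR(\mu)$ by decomposing the DR-cycle into its components, viewed in the appropriate blowup of $\barmoduli[g,n]$. These are:
\begin{itemize}
\item the main component, namely the closure of $\bP\komoduli[g,n](\mu)$, which for $k=2$ is realized via the canonical double cover;
\item boundary components, indexed by (simple star-shaped) level graphs $\Gamma$, lying entirely over $\partial\barmoduli[g,n]$.
\end{itemize}
Write $\DR = [\ol{\bP\komoduli[g,n](\mu)}] + \sum_\Gamma c_\Gamma$, where each $c_\Gamma$ is the excess intersection contribution pushed forward from the component $D_\Gamma$. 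Then
\bes
\ol{\vol}^\DR(\mu) \= \vol(\mu) \+ \sum_\Gamma \int \xi^d \cdot c_\Gamma .
\ees
By the result of \cite{CMSprincipal} (in the no-relative-period normalization used in Section~\ref{sec:excess}), the first term is the Masur--Veech volume.

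The main work is evaluating each boundary term. Use the description of the normal sheaf of $D_\Gamma$ from Section~\ref{sec:normalsheaf}. The excess class should be the top Chern class of an excess bundle, which is a sum of line bundles attached to the edges and the levels, of the form $\xi_{\text{top}} - \xi_{\text{lower}} + (\text{edge }\psi\text{-terms})$. Restrict $\xi$ to $D_\Gamma$. It becomes the tautological class of the top level, and powers of $\xi$ kill every contribution except those where the lower level carries a top-degree $\xi$-power. One must therefore show that only graphs whose lower levels are products of strata with no further degeneracy survive. Each surviving $c_\Gamma$ should then factor as
\bes
\vol(\text{top level}) \cdot \prod(\text{lower-level volumes or } \psi\text{-integrals on } \barmoduli[g_v,n_v]) \cdot \prod_e \frac{\kappa_e}{\text{mult}} .
\ees
This gives a recursion for $\ol{\vol}^\DR$ over a specified set of level graphs.

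On the ribbon graph side, \cite[Theorem~2.6]{DGY} already expresses $\ol{\vol}(\mu)$ as $\vol(\mu)$ plus contributions of degenerate ribbon graphs. Those contributions are products of lower volumes with Kontsevich $\psi$-intersection numbers, coming from the collapsed subsurfaces where the metric degenerates. The proof then matches the two recursions term by term:
\begin{itemize}
\item degenerate ribbon graph configurations correspond to the surviving level graphs $\Gamma$;
\item Witten--Kontsevich polynomial evaluations match the $\psi$-integrals on the lower-level vertices;
\item combinatorial weights (automorphisms, prong numbers, the $1/2$ factors from odd zeros) match the excess multiplicities.
\end{itemize}
Induction on $\dim$ then proves the theorem, with the base cases being the small strata where no boundary graph contributes. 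For the case $n\le 3$, which \cite{DGY} does not cover, we instead use our recursion on the DR side directly. We derive the missing ribbon-graph recursion by the same degeneration analysis, or by comparing with the Chiodo--Holmes closed formula.

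The main obstacle is the excess intersection computation. This involves identifying exactly which boundary components of the DR-cycle are seen by $\xi^d$, computing their multiplicities in the blowup, and pinning down the normal bundle contributions. Ensuring that all the constants agree with the ribbon graph normalizations, especially the two-singularity case, is the delicate part.
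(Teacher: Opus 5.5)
Your architecture (two graph-sum recursions matched term by term, with the DR side from excess intersection over components of $\GMS$ and the ribbon side from \cite[Theorem~2.6]{DGY}) is the paper's. However, the DR-side content is partly wrong, and the hardest case is missing.

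\textbf{The DR side.} The components are not indexed by simple star graphs. $\GMS$ is the union of the $\GMS(\Gamma)$ over all level graphs imposing a GRC at every level. Pairing with $\eta^d$, which restricts to $(-\zeta^\top)^d$, forces the top level to have projectivized dimension $d$ and each abelian top vertex to carry only one node. This dimension count leaves exactly the special simple star graphs and the \emph{sunflower} graphs. For $n\ge 4$ no star graph survives at all. It is the sunflowers (one non-abelian top vertex joined to genus-zero bottom vertices carrying one marking each) that match the DGY terms $\cQ(\mu-4\bfg)\times\prod\cH(2g_i^{(j)}-2)$; with star-shaped components only you would miss all of them.

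Likewise, the lower-level factors are not $\psi$-integrals. They are the top $\zeta$-powers $\vol(\mu_v)$ of genus-zero meromorphic strata, produced by $c(\cN_\Gamma)^{-1}$ with $c(\cN_\Gamma)=1+\frac{1}{\ell_\Gamma}(\zeta^\top-\zeta^\bot+\cL_\Gamma^\top)$. The multiplicity $\ell_\Gamma$ of the non-reduced component ($t_i^{\ell_i}=0$) cancels against the factor $\prod_e\kappa_e/(|\Aut(\Gamma)|\ell_\Gamma)$ relating level-wise and global $\zeta$. On the DGY side you match the explicit coefficients $C_{\bfg,\bfg_i}$ with $\vol(\mu_v)=m_i!!/(m_i-2(|\bfg_i|-1))!!$ from \cite{CGPT25}, not Kontsevich polynomials with $\psi$-integrals. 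No induction on dimension is needed, since both sides become the same sum of honest volumes.

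\textbf{The genuine gap: $n=2$.} This is the only case outside \cite{DGY}, since the $m_i$ are odd with sum $4g-4$, so $n$ is even. Neither of your two suggestions covers it.
\begin{itemize}
\item Comparing with the Chiodo--Holmes formula only evaluates the DR side. It says nothing about the ribbon-graph completed volume, which is defined by Kontsevich polynomials evaluated on walls.
\item ``The same degeneration analysis'' does not carry over. For $n=2$ new overcounting comes from static degenerating edges that are \emph{not} bridges, which cannot occur for $n\ge 4$.
\end{itemize}
What is needed are two steps:
\begin{itemize}
\item[(i)] A structure result: such static edges form a unique cycle in the static graph, whose pieces are one-faced bipartite ribbon graphs (minimal abelian strata), each containing its own $W$-pairs. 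These configurations then correspond to the special simple star graphs, whose genus-zero central vertex carries both markings.
\item[(ii)] A count of the admissible ways to reassemble the pieces. The count is governed by coloring rules that single out a maximal piece along the chosen path $\bfb'(\epsilon)\to\bfb$. Via inclusion--exclusion and a Vandermonde-type identity for double factorials, it must be shown to equal $\vol(\wt\mu)\prod_e\kappa_e/2$ with $\wt\mu=(m_1,m_2,-4g_1,\dots,-4g_h)$.
\end{itemize}
Without these two steps the theorem is not proved for $n=2$.
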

\par
In fact, the theorem is not just and equality of two numbers, but a termwise equality of two recursions that form the two constituents of the proof.
\par
The theorem now opens a two-fold way to approach the large genus asymptotic conjectures: Use either of the two tautological class expression for the completed volume to compute its large genus limit and deduce the conjecture in \cite{ADGZZconj} by inverting the recusion~\eqref{eq:DRrecintro} below. We give a comparison of the two tautological class expression at the end of the paper in Section~\ref{sec:examples}
\par
\medskip
\paragraph{\textbf{Components of the DR-cycle and the excess bundle}} The space of generalized multi-scale differentials $\GMS_\mu$ (also known as DR-locus) is the locus inside the rubber moduli space $\bP\cat{Rub}$ where the line bundle~$\cL_\mu$ defined as the $k$-th power of the canonical twisted by the marked point with weight~$-\mu$ becomes trivial. Here $\bP\cat{Rub}$ is the stack of (projectivized) rubber curves introduced by Marcus and Wise \cite{MarcusWiselog}. One of the components of~$\GMS_\mu$ is the compactification $\MS_k(\mu)$ of $\bP \komoduli[g,n](\mu)$ by multi-scale differentials \cite{LMS,To2}. The DR-cycle is the Gysin pullback of the section of the relative Picard variety over $\bP \cat{Rub}$ defined by~$\cL_\mu$ along the zero section. It is a class in $\CH^g(\bP\cat{Rub})$ supported on $\GMS_\mu$, see Section~\ref{sec:genMS} for details.
\par
The components of the DR-cycle whose image in $\barmoduli[g,n]$ is full-dimensional are called star-shaped graphs and are used in many places, e.g.\ \cite{FP18, HolmesSchmitt,BHPPS23} for a selection of papers. We complement this by a full structure result of~$\GMS_\mu$:
\begin{itemize}
\item The irreducible components $\GMS_\mu(\Gamma)$ of $\GMS_\mu$ are classified in Proposition~\ref{prop:GMSsubstack}, using level graphs~$\Gamma$ that \emph{impose a global residue condition (GRC) at every level} (Definition~\ref{def:GRCimpose})
\item The local equations cutting out $\GMS_\mu$ in $\bP\cat{Rub}$ are products of level parameters and function that specialize to GRCs on $\MS_k(\mu)$, see Proposition~\ref{prop:GMSequations} for details.
\end{itemize}
\par
Our main contribution here is the computation DR-cycle in Proposition~\ref{prop:DRwithxitop} as a sum over components of~$\GMS_\mu$ with the corresponding excess bundles to the extent seen by intersection with the top power~$\xi^d$ of the tautological bundle. This combines that:
\begin{itemize}
\item The intersection with $\xi^d$ is non-zero only for two types of level graphs~$\Gamma$, \emph{special star-shaped graphs} and \emph{sun-flower graphs}, as defined in Definition~\ref{def:SSStar} and~\ref{def:sunflower}, see Proposition~\ref{prop:xitopgraphs}.
\item The Chern polynomials of the normal bundle of $\GMS_\mu(\Gamma)$ in $\bP\cat{Rub}$ is computed in Theorem~\ref{thm:nbGMSapprox} up to terms unseen by $\xi^d$.
\end{itemize}
\par
\par
The reader interested in the full expression of the DR-cycle as a sum over components of the conormal sheaf e.g.\ for intersection products with other products powers of the taulogical bundle times tautological classes is invited to complete Proposition~\ref{prop:tangMS} with the local computation to derive the contribution supported on GRC-divisor and to add the contributions from intersections of $\GMS_\mu$-components using Proposition~\ref{prop:GMSequations}.
\par
We conclude in Theorem~\ref{thm:DRcompletedvolgraphs}: The DR-completed volume can be written as a sum
\be \label{eq:DRrecintro}
\ol{\vol}^\DR(\mu) \=  {\vol}(\mu) + \sum_{\Gamma \in \SF(\mu) \cup \SSStar(\mu)} \frac{1}{\Aut(\Gamma)k^{h_\ab}} \prod_{e \in E(\Gamma)} \kappa_e \prod_{v \in V(\Gamma)} \vol(\mu_v)
\ee
over all special simple star graphs and sunflower graphs, where $\mu_v$ is the signature of all legs adjacent to the vertex~$v$, and $h_\ab$ is the number of holomorphic abelian top components of the corresponding graph $\Gamma$. Here~$\kappa_e$ are the enhancements (also known as prongs or as twists) associated with the edges of~$\Gamma$.
\par
\medskip
\paragraph{\textbf{Ribbon graph counting and the two singularity case}} To prove
Theorem~\ref{thm:intromain} for $n \geq 4$ we identify the terms in the main Theorem~2.6 of \cite{DGY} with the recursion in~\eqref{eq:DRrecintro}. This is done in Theorem~\ref{thm:completedvolgraphs3plus}. In this case the special star shaped graphs cannot occur. Note that a 'volume' appears in~\eqref{eq:DRrecintro} for all vertices $v \in V(\Gamma)$, not just those on top level, which correspond to moduli spaces of finite volume differentials. For those on lower level, corresponding to infinite volume (meromorphic) differentials, the volume is to be interpreted as intersection number of the top power of the tautological bundle~$\xi^d$ -- and this can be computed to give the coefficients \cite{DGY}.
\par
For $n = 2$ we use the hindsight of the intended completed volume formula as graph sum to solve what appeared in \cite{DGY} the 'cumbersome' (and thus excluded) case:
\par
The overcounting of the Kontsevich polynomial occurs, if the ribbon graph admits
integer metrics degenerating to metrics assigning length zero to some edges and this edges are \emph{static}, i.e. the ribbon becomes bipartite after the edge has been removed. To each such ribbon we can associate a level graph (see Section~\ref{sec:ribbon} for the precise procedure) and the sunflower graphs are precisely the level graphs associated to ribbon graphs with a non-separating static edge.
\par
We thus need two steps: First, to analyze how the pieces of the ribbon graphs can be pieced together to give a valid configuration of static edges. This is the content of Proposition~\ref{prop:Gstat}. Second, we need to count in how many ways those pieces can be assembled to form a valid ribbon graph. It is through this count that the 'volumes' of the lower level differentials occur in Proposition~\ref{prop:GforgivenGi}. The formulas and auxiliary functions are partially remeniscent of computations of isoresidual fibers of moduli spaces of genus zero differentials \cite{CGPT25}. However, the technical task is cumbersome as the degeneration viewpoint singles out a 'maximal vertex' that governs the coloring in the bipartite complement of static edges, while the DR-volumes lead to expressions without such a distinguished vertex. Several layers of inclusion-exclusion rearrangements allow for the conversion of the two expressions.
\par
This part is concluded with Theorem~\ref{thm:completedvolgraphs2}, the recusion for completed volumes on Masur-Veech side in the case $n=2$.
\par
\par
\medskip
\paragraph{\textbf{An example:  The stratum $\mu=(5,3)$ with $k=2$}} We conclude with an example of a stratum of quadratic differentials that exhibits both types of graphs. 
\bas
D_1 & \=\left[\begin{tikzpicture}[
  baseline={([yshift=-.5ex]current bounding box.center)},
  scale=2,
  very thick,
  bend angle=30,
  comp/.style={circle,draw,thin,inner sep=2pt,font=\tiny},
  order bottom left/.style={pos=.05,left,font=\tiny},
  order top left/.style={pos=.9,left,font=\tiny},
  order bottom right/.style={pos=.05,right,font=\tiny},
  order top right/.style={pos=.9,right,font=\tiny},
  order node dis/.style={text width=.75cm},
]

\node[comp] (A) at (0,0) {3};

\node[comp,fill] (E) at (0,-0.75) {};


\node[minimum width=18pt,below left] (E-half) at (E.south west) {$5$};
\path (E) edge [shorten >=4pt] (E-half.center);

\node[minimum width=18pt,below right] (E-half) at (E.south east) {$3$};
\path (E) edge [shorten >=4pt] (E-half.center);

\draw (A) edge node[order bottom left,yshift=-1pt] {$8$} node[order top left] {$-12$} (E);
\end{tikzpicture}\right]
\quad
D_2 \=\left[\begin{tikzpicture}[
  baseline={([yshift=-.5ex]current bounding box.center)},
  scale=2,
  very thick,
  bend angle=30,
  comp/.style={circle,draw,thin,inner sep=2pt,font=\tiny},
  order bottom left/.style={pos=.05,left,font=\tiny},
  order top left/.style={pos=.9,left,font=\tiny},
  order bottom right/.style={pos=.05,right,font=\tiny},
  order top right/.style={pos=.9,right,font=\tiny},
  order node dis/.style={text width=.75cm},
]

\node[comp] (A) at (0,0) {2};
\node[comp] (B) at (0.5,0) {1};

\node[comp,fill] (E) at (0.25,-0.75) {};


\node[minimum width=18pt,below left] (E-half) at (E.south west) {$5$};
\path (E) edge [shorten >=4pt] (E-half.center);

\node[minimum width=18pt,below right] (E-half) at (E.south east) {$3$};
\path (E) edge [shorten >=4pt] (E-half.center);

\draw (A) edge node[order bottom left,yshift=-1pt] {$4$} node[order top left] {$-8$} (E);
\draw (B) edge node[order bottom right,yshift=-1pt] {$0$} node[order top right] {$-4$} (E);

\end{tikzpicture}\right]
\quad
D_3\ =\left[\begin{tikzpicture}[
  baseline={([yshift=-.5ex]current bounding box.center)},
  scale=2,
  very thick,
  bend angle=30,
  comp/.style={circle,draw,thin,inner sep=2pt,font=\tiny},
  order bottom left/.style={pos=.05,left,font=\tiny},
  order top left/.style={pos=.9,left,font=\tiny},
  order bottom right/.style={pos=.05,right,font=\tiny},
  order top right/.style={pos=.9,right,font=\tiny},
  order node dis/.style={text width=.75cm},
]

\node[comp] (A) at (0,0) {1};
\node[comp] (B) at (0.75,0) {1};
\node[comp] (C) at (1.5,0) {1};

\node[comp,fill] (E) at (0.75,-0.75) {};


\node[minimum width=18pt,below left] (E-half) at (E.south west) {$5$};
\path (E) edge [shorten >=4pt] (E-half.center);

\node[minimum width=18pt,below right] (E-half) at (E.south east) {$3$};
\path (E) edge [shorten >=4pt] (E-half.center);

\draw (A) edge node[order bottom left,yshift=-3pt] {$0$} node[order top left] {$-4$} (E);
\draw (B) edge node[order bottom right,yshift=-3pt] {$0$} node[order top right,yshift=12pt] {$-4$} (E);
\draw (C) edge node[order bottom right,yshift=-3pt] {$0$} node[order top right] {$-4$} (E);

\end{tikzpicture}\right] \\
D_4 &\=\left[\begin{tikzpicture}[
  baseline={([yshift=-.5ex]current bounding box.center)},
  scale=2,
  very thick,
  bend angle=30,
  comp/.style={circle,draw,thin,inner sep=2pt,font=\tiny},
  order bottom left/.style={pos=.05,left,font=\tiny},
  order top left/.style={pos=.9,left,font=\tiny},
  order bottom right/.style={pos=.05,right,font=\tiny},
  order top right/.style={pos=.9,right,font=\tiny},
  order node dis/.style={text width=.75cm},
]

\node[comp] (A) at (0,0) {2};
\node[comp] (B) at (0.5,0) {1};

\node[comp,fill] (E) at (0.25,-0.75) {};


\node[minimum width=18pt,above left] (A-half) at (A.north west) {$5$};
\path (A) edge [shorten >=4pt] (A-half.center);

\node[minimum width=18pt,below left] (E-half) at (E.south east) {$3$};
\path (E) edge [shorten >=4pt] (E-half.center);

\draw (A) edge node[order bottom left,yshift=-1pt] {$-1$} node[order top left] {$-3$} (E);
\draw (B) edge node[order bottom right,yshift=-1pt] {$0$} node[order top right] {$-4$} (E);

\end{tikzpicture}\right]
\quad
D_5\=\left[\begin{tikzpicture}[
  baseline={([yshift=-.5ex]current bounding box.center)},
  scale=2,
  very thick,
  bend angle=30,
  comp/.style={circle,draw,thin,inner sep=2pt,font=\tiny},
  order bottom left/.style={pos=.05,left,font=\tiny},
  order top left/.style={pos=.9,left,font=\tiny},
  order bottom right/.style={pos=.05,right,font=\tiny},
  order top right/.style={pos=.9,right,font=\tiny},
  order node dis/.style={text width=.75cm},
]

\node[comp] (A) at (0,0) {2};
\node[comp] (B) at (0.5,0) {1};

\node[comp,fill] (E) at (0.25,-0.75) {};


\node[minimum width=18pt,above left] (A-half) at (A.north west) {$3$};
\path (A) edge [shorten >=4pt] (A-half.center);

\node[minimum width=18pt,below left] (E-half) at (E.south east) {$5$};
\path (E) edge [shorten >=4pt] (E-half.center);

\draw (A) edge node[order bottom left,yshift=-1pt] {$1$} node[order top left] {$-5$} (E);
\draw (B) edge node[order bottom right,yshift=-1pt] {$0$} node[order top right] {$-4$} (E);

\end{tikzpicture}\right]
\quad
D_6=\left[\begin{tikzpicture}[
  baseline={([yshift=-.5ex]current bounding box.center)},
  scale=2,
  very thick,
  bend angle=30,
  comp/.style={circle,draw,thin,inner sep=2pt,font=\tiny},
  order bottom left/.style={pos=.05,left,font=\tiny},
  order top left/.style={pos=.9,left,font=\tiny},
  order bottom right/.style={pos=.05,right,font=\tiny},
  order top right/.style={pos=.9,right,font=\tiny},
  order node dis/.style={text width=.75cm},
]

\node[comp] (A) at (0,0) {1};
\node[comp] (B) at (0.5,0) {1};
\node[comp] (C) at (1.25,0) {1};

\node[comp,fill] (E) at (0.25,-0.75) {};
\node[comp,fill] (F) at (1,-0.75) {};


\node[minimum width=18pt,below left] (E-half) at (E.south east) {$5$};
\path (E) edge [shorten >=4pt] (E-half.center);

\node[minimum width=18pt,below left] (F-half) at (F.south west) {$3$};
\path (F) edge [shorten >=4pt] (F-half.center);

\draw (A) edge node[order bottom left,yshift=-2pt] {$1$} node[order top left] {$-5$} (E);
\draw (B) edge node[order bottom right,yshift=-2pt] {$0$} node[order top right] {$-4$} (E);
\draw (C) edge node[order bottom right,yshift=-2pt] {$0$} node[order top right] {$-4$} (F);
\draw (A) edge node[order bottom right, yshift=2pt] {$-1$} node[order top left,yshift=-1pt] {$-3$} (F);

  \end{tikzpicture}\right]
\eas
The first row shows the special star-graphs~$D_1,D_2,D_3$ while the second row shows the sunflower graphs~$D_4,D_5,D_6$. However, since the strata of type $\mu = (3,1)$ and type $(1,-1)$ in $k=2$ are empty, their volume contribution in the following table is zero. The first row in the table corresponding to the trivial graph~$\Gamma = \bullet$ is the contribution of the main component, the compactified moduli space $\ol{\cQ}(\mu)$.
\begin{center}
  \begin{tabular}{|c|c|c|c|c|}
  \hline
  \rule{0pt}{12pt} $\Gamma$ & $\frac{1}{|\Aut(\Gamma)|k^{h_\ab}}$ & $\prod_{e\in E(\Gamma)}\kappa_{e}$ & $\prod_{v\in V(\Gamma)}\text{vol}(\mu_v)$ & $\text{vol}({\Gamma})$ \\ \hline
  $\bullet$ & 1 & 1 & -35/648 & -35/648 \\ \hline
  $D_1$ & 1/2 & 10 & -305/18144 & -1525/18144\\ \hline
  $D_2$ & 1/4 & 12 & -1/160 & -3/160 \\ \hline
  $D_3$ & 1/48 & 8 & -5/288 & -5/1728  \\ \hline
  $D_4$ & 1/2 & 2 & -7/2160 & -7/2160 \\ \hline
  $D_5$ & 1/2 & 6 & 0 & 0  \\ \hline
  $D_6$ & 1/4 & 12 & 0 & 0 \\ \hline
\end{tabular}  
\end{center}
The total contribution $\vol(\Gamma)$ of each summand is the same for the component of the DR-cycle and for the Ribbon graph overcounting. Adding up we find
\bes
\ol{\vol}^\DR(5,3) \=  \frac{-73}{448}  \qquad \text{and} \quad \ol{\Vol}(5,3)
\= \frac{73}{420}\pi^{4}
\ees
in the (more standard) Masur-Veech volume normalization, see the conversion~\eqref{eq:Volvol}.
\par
\medskip
\paragraph{\textbf{Acknowledgments}} This project started at a conference in Toulouse (July~2025) for J.~Hubbard's 80th birthday, when the authors compared completed DR-cycle volume with values and formulas Elise Goujard and Ivan Yakovlev had obtained in their project \cite{DGY}. The authors thank them for sharing formulas and insights, and in particular Ivan for explaining their counting and graph coloring strategies in Section~\ref{sec:ribbon}. The authors also thank David Holmes for insights on the embedding of $\GMS$ into the space of rubber differentials. In particular, part of the deformation theory in Section~\ref{sec:defo} will also appear in a joint project of the first-named author with him and Samuel Grushevsky.

%% file: sec_background.tex
\section{Multi-scale differentials and the DR-locus} \label{sec:background}

We fix a \emph{type} $\mu = (m_1,\ldots,m_n)$ with $\sum m_i = k(2g-2)$ recording the tuples of orders of vanishing of the differential. Sometimes we switch to log convention $a_i = m_i+k$ and then use $A = (a_1,\ldots,a_n)$ to denote the \emph{log type.} Fixing the type also fixes the line bundle
\be \label{eq:defcL}
\cL \,:=\, \cL_\mu \= \omega_{X/\cM_{g,n}}^{\otimes k}\Big(-\sum_{i=1}^n m_i z_i\Big)\,.
\ee

\subsection{The space of multi-scale differentials} \label{sec:MS}

We let $\omoduli[g,n](\mu)$ be the stratum of differentials of type~$\mu$ with all of the~$n$~points labeled. We write $\bP\omoduli[g,n](\mu)$ for the projectivized stratum. More generally, for any $k \geq 0$ we let $\komoduli[g,n](\mu)$ be the stratum of $k$-differentials of the corresponding type. For $k=2$ we simply write $\cQ(\mu)$ for the stratum and $\bP\cQ(\mu)$ for its projectivization. (We drop the second index and just write $\omoduli(\mu)$ if we consider the unlabeled case.) These strata are sometimes disconnected. We will not need their component classification except for the following.
\par
If $\mu \in k\cdot \bZ_{\geq 0}^n$ we say that $\mu$ is \emph{of holomorphic abelian type}. If~$\mu$ is not of holomorphic abelian type we set 
\be
N\=2g-2 +n \qquad d = 2g-3+n  = N-1\,,
\ee
which will be the unprojectivized resp.\ projectivized \emph{dimension} of all the components of the stratum of differentials of type~$\mu$. If $\mu$ is of holomorphic abelian type, then there are \emph{abelian components} $\komoduli[g,n](\mu)_{\mathrm{ab}}$ of dimension
\be
N_\ab\=2g-1 +n  \qquad d_\ab \= 2g-2+n  \= N_\ab -1\,,
\ee
consisting of differentials which are $k$-th powers of abelian differentials. If $k>1$ there are for every $d|k$ the components consisting of primitive $k/d$-differentials. They have unprojectivized dimension $N = N_\ab-1$. Nevertheless we sometimes just write~$N$ or~$d$ to treat the dimension uniformly, if the type of the component is clear from the context.
\par
We next recall the essentials of definition of the space $\MS(\mu)$ of \emph{projectivized multi-scale differentials} as in \cite{LMS}\footnote{This space is called $\bP \Xi \omoduli[g,n](\mu)$ in loc.\ cit.\ The simpler symbol used here is used there for a relative coarse moduli space, locally the quotient by the twist group}. We start with the case $k=1$. We will use the notion of \emph{level graphs (of type~$\mu$)} with notation as in \cite{CMZEuler, To2} to describe boundary strata of spaces of differentials and subsequently also for objects in $\cat{Rub}_\cL$ over a nuclear base. In particular we denote the \emph{enhancements} (also known as \emph{prongs} or \emph{twists}) associated with the edge~$e$ by~$\kappa_e$. We index the levels by non-positive integers, starting with top level zero (hence they are $0,-1,\ldots,-L(\Gamma)$, but level passages and all parameters associated with them are indexed by positive integers. We define for each level passage\footnote{This notation differs from \cite{To2,LMS}, where the lcm's were called $a_i$, but it is consistent with e.g.\ \cite{CMZEuler}. It avoids clashes with the usual notation for the log type.}
\be
\ell_j \= \lcm(\kappa_e : \text{$e \in E(\Gamma)$ crossing the $j$-th level passage})
\quad \text{and} \quad \ell_\Gamma = \prod_{j=1}^L \ell_j\,.
\ee
A \emph{twisted differential compatible with the level graph~$\Gamma$} is a stable pointed curve $(X,\bfz)$ with level graph structure~$\Gamma$ on its dual graph together with collection of differentials $\bfomega = (\omega_{j})$ for $j=0,1,\ldots,L(\Gamma)$ with orders of zeros and poles at the marked points and prescribed by the $m_i$'s and $\kappa_e$'s at the edges, see \cite{LMS, CMZEuler}, satisfying the matching residue condition at horizontal edges of~$\Gamma$. A \emph{multi-scale differential} is a twisted differential compatible with some level graph (of type~$\mu$) that moreover satisfies  the \emph{global residue condition} 
\bes 
(\GRC_j): \qquad \sum_{e \vdash C} \Res_{q_e^-}(\omega_{-j}) = 0 \quad \text{for every component~$C$ above the $j$-th passage}
\ees
and a \emph{global prong-matching}, as defined below. We refer to loc.~cit.\ for the definition of morphisms and details for defining families of such objects, which we will not need in the sequel.
\par
\medskip
\paragraph{\textbf{Prong-matchings and framings}}
Consider a vertical edge~$e$ of~$\Gamma$. A \emph{local prong-matching~$\sigma_e$} is defined geometrically as an order-reversing bijections between the incoming horizontal prongs (aka separatrices) at the upper end~$q_+$ of the edge with respect to the differential~$\omega_+$ at the level of the upper end and the outgoing horizontal prongs at the lower end~$q_-$ of the edge with respect to the differential~$\omega_-$ at the level of the lower end. A \emph{global prong-matching} is a collection of prong-matchings for each vertical edge.
\par
Equivalently (see \cite[Section~5]{LMS}), viewing the prongs~$v_\pm$ as tangent vectors at~$q_\pm$, we can define a local prong-matching as a section
\be
\sigma_e \in \cN_e := q_+^* \omega_{X^+} \otimes q_-^* \omega_{X^-}
\quad \text{such that} \quad \sigma_e(v_+ \otimes v_-)^{\kappa_e} = 1\,,
\ee
where $X^\pm$ are the components of~$X$ at the two ends of the edge. To get rid of the (auxiliary) dependence of a horizontal direction, consider as in \cite[Section~3]{To2} the level differential at the upper and lower end as sections
$\tau_+ \in T^{-\otimes \kappa_e}_{q+}$ and $\tau_- \in T^{-\otimes \kappa_e}_{q-}$. Then
$\tau_e := \tau_+^{-1} \otimes \tau_- \in \cN_e^{\kappa_e}$. We can now state equivalently that a local prong matching is a section
\be
\sigma_e \in \cN_e \quad \text{such that} \quad \sigma_e^{\kappa_e}(\tau_e) = 1\,.
\ee
\par
A framing is an auxiliary finite datum that we add to a differential in order to separate the two ingredients that make up a prong-matching. This serves the goal of decomposing components of $\GMS(\Gamma)$ into levelwise products in Section~\ref{sec:support}.\footnote{Terminology is consistent with flat surface terminology e.g.\ in \cite{LMS, boissymero, BSW}, where the notion of \emph{framed} differential refers to a differential $(X,\omega)$ together with a choice of a (horizontal) prong (i.e.\ outgoing trajectory) at all (or a specified subset of the) zeros of~$\omega$. The same word 'framing' is used e.g.\ in literature related to stability conditions (e.g. \cite{BS15, BMQS}) for labelings of the real oriented blowup boundary of meromorphic quadratic differentials.}
\par
Consider a marked point~$z_i$ of order~$m_i$ of the abelian differential~$(X,\omega)$ and view~$\omega$ as a section of $\tau \in T_{z}^{-|m_i|-1}$. A \emph{local framing} at~$z_i$ is a section $\sigma \in  z_i^* T_X^\vee$ such that
$\sigma^{|m_i|+1}(\tau) = 1$. A \emph{global framing} is a local framing for a collection of marked points that is described by the context. For a twisted differentials this set of marked points will usually be the set of nodes at vertical edges.
\par
Suppose~$\Gamma$ is a level graph and $(X,\bfz,\bfomega)$ is a twisted differential compatible with~$\Gamma$. Then, given a global framing for each of the levels of~$\Gamma$ (at the marked points corresponding to the vertical edges) we obtain a natural global prong-matching by combining $\sigma_e = \sigma^+ \otimes \sigma_-$ the local framings at the two ends of an edge into a local prong-matching.
\par
\medskip
\paragraph{\textbf{$k$-differentials}}
We now recall properties of the compactification of the stata of $k$-differentials by multi-scale differentials for $k>1$, see \cite{CMSeclinsub} containing more details than \cite{kdiff}. Again we denote the compactification by $\MS_k(\mu) \supset \bP \komoduli[g,n](\mu)$ or just by $\MS_k$. It is the union of components of $k/d$-th powers of spaces of $d$-differentials. By induction it suffices to describe the primitive component $\MS_k^{\pr}(\mu)$, which we now present. 
\par
We let $\wh\mu$ in genus $\wh{g}$ with $\wh{n}$ marked points the type of the abelian differentials that arise as canonical $k$-fold covers of differentials of type~$\mu$. \cite[Section~7]{CMSeclinsub} defines a DM-stack $\widehat{\MS}(\wh{\mu})$ that maps to $\MS({\wh{\mu}})$ such that the image agrees with the closure of the locus of canonical coverings. These are the differentials~$\omega$ with the property that $\tau^* \omega = \eta_k \omega$ for some automorphism $\tau \in \wh{X}$ where $\eta_k$ is a primitive $k$-th root of unity.
\par
The marking of the $n$ points only partially determines the marking the~$\wh{n}$ points, allowing for a permutation group~$G$ exchanging in each fiber over the $n$ marked points cyclically the markings according to the $\bZ/k$-covering group. The quotient $\widehat{\MS}(\wh{\mu})/G$ is not yet the desired compactification of $\MS_k(\mu)$, since $\bP \komoduli[g,n](\mu)$ comes with a generic isotropy group of order~$k$ that are not present in the quotient. Adding these morphism to $\widehat{\MS}(\widehat{\mu})/G$ gives the DM-stack we want, which comes with a map
\be
s\colon \widehat{\MS}(\wh{\mu})\rightarrow \MS_k(\mu) \qquad \text{of degree
$\deg(s) = \frac{1}{k} \prod_{i=1}^n \gcd(m_i,k)$},
\ee
where the degree is the size of~$G$ over the degree of the non-representable map changing the generic isotropy group. As consequence of this construction we find:
\par
\begin{theorem} \label{thm:MSkproperties}
  The space of multi-scale $k$-differentials $\MS_k(\mu)$ is a smooth DM-stack. Its boundary $\MS_k(\mu) \setminus \bP \komoduli[g,n](\mu)$ is stratified by strata indexed by $\bZ/k$-coverings of level graphs $s_\Gamma: \wh{\Gamma} \to \Gamma$ as in \cite[Section~7.3]{CMSeclinsub}. Each such boundary stratum parametrizes abelian multi-scale differentials of type $\wh{\mu}$ compatible with $\wh{\Gamma}$ and with $\bZ/k$.
\end{theorem}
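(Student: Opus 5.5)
We would deduce Theorem~\ref{thm:MSkproperties} from the construction of $\MS_k(\mu)$ as a quotient stack of the canonical-cover space, as in \cite{CMSeclinsub}. The plan has three steps, in this order: smoothness of $\widehat{\MS}(\wh\mu)$, the stacky quotient, and the boundary stratification.

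\textbf{Step 1: smoothness of $\widehat{\MS}(\wh\mu)$.} We use that $\MS(\wh\mu)$ is a smooth DM-stack with normal crossing boundary, proven in \cite{LMS}. The locus of canonical covers is cut out near a boundary point by linear conditions on the level-wise period coordinates, namely the $\eta_k$-eigenspace conditions $\tau^*\omega=\eta_k\omega$. These conditions are compatible with the level structure, since the automorphism~$\tau$ acts on each level separately.

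The level parameters~$t_j$ of $\MS(\wh\mu)$ are not automatically $\bZ/k$-equivariant coordinates. This is why \cite[Section~7]{CMSeclinsub} builds $\widehat{\MS}(\wh\mu)$ as an intermediate stack with modified twist and prong data: the level parameters are replaced by roots adapted to the $\bZ/k$-covering~$\wh\Gamma$. In the resulting charts, $\widehat{\MS}(\wh\mu)$ is the product of a linear subspace of level-wise periods with a smooth space of level parameters. We expect this to be the main obstacle: one must check that the equivariant prong-matchings and the eigenform GRCs still produce a normal crossing chart, and not a non-normal or singular quotient. We would verify this by mimicking the plumbing construction of \cite{LMS}, done equivariantly, with the framings of Section~\ref{sec:MS} ensuring that the prong-matchings are compatible with~$\tau$.

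\textbf{Step 2: passing to $\MS_k(\mu)$.} Next we take the quotient by the finite group~$G$ relabeling the preimages of the marked points. A quotient of a smooth DM-stack by a finite group action is a smooth DM-stack. Adding the generic $\bZ/k$-isotropy is a gerbe-type modification of the stack, which is étale-locally trivial. It therefore also preserves smoothness and the DM property. The degree count stated before Theorem~\ref{thm:MSkproperties} serves as a consistency check that no further identifications occur.

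\textbf{Step 3: boundary stratification.} The boundary of $\MS(\wh\mu)$ is stratified by level graphs, and its intersection with the canonical-cover locus consists of $\bZ/k$-invariant level graphs~$\wh\Gamma$. The quotient graph $\Gamma=\wh\Gamma/(\bZ/k)$ is a level graph of type~$\mu$, and $s_\Gamma\colon\wh\Gamma\to\Gamma$ is the covering of \cite[Section~7.3]{CMSeclinsub}. The open strata of $\MS_k(\mu)$ are then the $G$-quotients of the corresponding strata upstairs. By construction these parametrize $\bZ/k$-invariant abelian multi-scale differentials of type~$\wh\mu$ compatible with~$\wh\Gamma$, which is the claimed description.
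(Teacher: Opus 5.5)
Your proposal follows the same route as the paper, which obtains the theorem directly from the construction of \cite[Section~7]{CMSeclinsub}. That construction consists of the cover space $\widehat{\MS}(\wh{\mu})$ mapping onto the closure of the canonical-cover locus in $\MS(\wh{\mu})$, its quotient by the relabeling group~$G$, and the added generic $\bZ/k$-isotropy; the paper takes smoothness of $\widehat{\MS}(\wh{\mu})$ from loc.\ cit.\ rather than re-deriving it by an equivariant plumbing argument as you suggest. The one step you should add is the paper's preliminary reduction to the primitive component $\MS_k^{\pr}(\mu)$: your argument assumes a connected canonical cover with $\tau^*\omega=\eta_k\omega$, whereas the components consisting of $k/d$-th powers of $d$-differentials have disconnected canonical $k$-covers and are handled by induction, identifying them with primitive spaces of $d$-differentials for $d<k$.
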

\par
From this theorem we need to retain that a multi-scale differentials of type $\wh{\mu}$ is the same as a twisted $k$-differential $(X,\bfz,(\omega_i)_{i=0}^L)$ of type~$\mu$ compatible with~${\Gamma}$ (together with prong-matchings) such that the lift to the canonical stable curve cover with dual graph~$\wh\Gamma$ satisfies the conditions $(\GRC_i)$ as above. See \cite{kdiff} for reformulations of the GRC in terms of the curve~$X$.

\subsection{Generalized multi-scale differentials, or the DR-locus} \label{sec:genMS}

We next aim for the definition of the space $\GMSmu$\footnote{Notation almost taken from \cite{To2}, where this symbol is used for the unprojectivized relative coarse moduli space. The space we work with is called $\bP\cat{Rub}_\mu$ in \cite{To2}, but it is hard to associated with that symbol a meaningful name. In \cite{ChiodoHolmes} this space is called the divisor relation locus $\DRL_\mu$, which we avoid to reserve DR-terminology to classes in Chow rings.}  of generalized multi-scale differentials. For this we work in the categrory $\cat{LogSch}$ of logarithmic schemes (see \cite{To2, ChiodoHolmes}) from which we freely borrow basic terminology and notation.
\par

We let $\cat{Rub}_0$ be the stack of rubber curves with outgoing slopes at the marked points all equal to zero, whose objects we denote as
$(\pi \colon \cX \to B, \, \beta\colon \cX \to \bG_{m,B}^\trop)$. We denote by
$\Picabs$ the relative Picard variety over~$\barmoduli[g,n]$. The Abel-Jacobi map associated with $O(\beta)$ and the line bundle~$\cL$ induce two sections $\sigma_0,\sigma_\cL: \cat{Rub}_0 \to \Picabs$, whose images lands in $\Picabs^0$, the subfunctor of $\Picabs$ consisting of line bundles of (total) degree~$0$. We define
\be
\cat{Rub}_\cL  = \cat{Rub}_0 \times_{\Picabs; \sigma_0, \sigma_\cL} \cat{Rub}_0
\qquad \text{and finally} \qquad \GMS_\mu \:=\ \bP(\cat{Rub}_\cL),
\ee
where the latter is the quotient stack by the natural $\bC^*$-rescaling action. A $B$-valued point in $\GMS$ is a tuple
\be \label{eq:Rubpoints}
(X/B,\beta,\cF,\varphi) \quad \text{with} \quad \pi^* \cF \cong \cL(-\beta)\,.
\ee
We summarize the main result of \cite{To2} and extend it to the case of $k$-differentials.
\par
\begin{theorem} \label{thm:MSinGMS}
For $k=1$ the space of multi-scale differentials is the 
 full subcategory of $\GMSmu$ given on objects by
\bes 
\MS(\mu)(B) \= \{(X/B,\beta,\cF,\varphi)\,:\, X'/B' \,\,\text{satisfies ($\GRC_i$) for $i=1,\ldots,L(\Gamma)$}\}
\ees
where $X'/B'$ is the pullback via any base change $B'\to B$ such that the family is nuclear.
\par
For $k>1$ the space of multi-scale differentials $\MS_k(\mu)$ is the normalization of the union of components of $\GMSmu$ that meet the locus of smooth curves. The image of $\MS_k(\mu)$ is the substack of $(X/B,\beta,\cF,\varphi)$ such that for any nuclear base changed family there exists some covering of level graphs $s_\Gamma : \wh{\Gamma} \to \Gamma$ such that the conditions $\GRC_i$ hold for the abelian differentials on the $k$-cover corresponding to~$\wh{\Gamma}$.
\end{theorem}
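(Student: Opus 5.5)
The case $k=1$ is the main theorem of \cite{To2}, which we take as given. The plan is to reduce $k>1$ to it by passing to canonical covers. The strategy has two halves. First, show that $\MS_k(\mu)$ maps to $\GMSmu$ with image contained in the union of components meeting the smooth locus. Second, show that this map is finite, birational onto that union, and has smooth source, so that it is the normalization.

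\emph{Step 1 (the map).} Start from a family of multi-scale $k$-differentials over a base $B$, with level graph $\Gamma$ and covering $\wh\Gamma\to\Gamma$. The level-wise $k$-differentials $\omega_{-j}$ together with the level parameters define a piecewise linear function $\beta$ on the tropicalization. The orders at the edges are given by the enhancements $\kappa_e$, which are integers for $k$-differentials since prongs of the canonical cover are divided by $k$. The scaled $k$-differential then provides a trivialization $\pi^*\cF\cong\cL(-\beta)$, where $\cF$ is the line bundle on $B$ built from the top-level scaling. This is the same construction as in \cite{To2}, applied to the $k$-th power of the canonical line bundle instead of $\omega$; nothing in \cite{To2} uses $k=1$ at this point. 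Compatibility with base change yields a morphism $\MS_k(\mu)\to\GMSmu$. Since $\bP\komoduli[g,n](\mu)$ is dense in $\MS_k(\mu)$, the image lies in the closure of the smooth-curve locus, i.e.\ in the union of components meeting it.

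\emph{Step 2 (image description).} For the description of the image, work on a nuclear base. Given a point of $\GMSmu$ in the closure of the smooth locus, choose a one-parameter smoothing. Over the generic point the differential is a $k$-differential, and its canonical cover extends after a finite base change. The limit of the canonical covers is an abelian multi-scale differential of type $\wh\mu$ in $\widehat{\MS}(\wh\mu)$, with some graph $\wh\Gamma$ covering $\Gamma$. By the $k=1$ case applied to $\wh\mu$, the $\GRC_i$ hold for $\wh\Gamma$. Conversely, suppose some covering $\wh\Gamma\to\Gamma$ exists for which the lifted twisted differential satisfies $\GRC_i$. Then it defines a point of $\widehat{\MS}(\wh\mu)$, and its image under $s$ from Theorem~\ref{thm:MSkproperties} lies in $\MS_k(\mu)$. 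Pushing forward to $\GMSmu$ gives the stated substack.

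\emph{Step 3 (normalization).} It remains to check three things. The map is representable and finite: properness holds because both spaces are proper over $\barmoduli[g,n]$, and quasi-finiteness holds because fibers record only the finitely many prong-matchings and choices of $\wh\Gamma$. It is an isomorphism over the dense open $\bP\komoduli[g,n](\mu)$. And $\MS_k(\mu)$ is smooth, hence normal, by Theorem~\ref{thm:MSkproperties}. A finite birational morphism from a normal stack is the normalization, which would finish the proof.

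I expect the main obstacle to be quasi-finiteness together with the image description at non-smoothable-looking points. A point of $\GMSmu$ forgets the covering $\wh\Gamma$ and the prong-matching, so several points of $\MS_k(\mu)$ may map to one point of $\GMSmu$; this is exactly why only the normalization statement is claimed. The delicate part is to show that every point in the closure of the smooth locus in $\GMSmu$ admits a lift satisfying $\GRC$ for some $\wh\Gamma$, and does not merely lie in a non-smoothable component that meets the smooth locus. My first attempt would be the valuative argument of Step~2 applied to curves through the point that are generically smooth, combined with the $k=1$ result on the cover.
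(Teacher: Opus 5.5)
\paragraph{Overall.} Your architecture is reasonable: a forward map $\MS_k(\mu)\to\GMS_\mu$, properness, then the finite, birational, normal-source criterion. Your Step~3 is actually a more explicit justification of the word ``normalization'' than the paper's sketch gives.

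\paragraph{The gap.} There is a gap, and it comes from a factual error. A point of $\GMS_\mu$ does \emph{not} forget the prong-matching. In the log-geometric description the prong-matching is recorded in the log structure of the base; it is exactly what a log splitting extracts, up to the level rotation torus. This is why, for $k=1$, $\MS(\mu)$ can be a full subcategory of $\GMS_\mu$ at all, since its objects carry global prong-matchings. So your remark contradicts the $k=1$ case you take as given.

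\paragraph{A misplaced obstacle.} The step you flag as the main obstacle is not one. $\MS_k(\mu)$ is proper, so its image is closed and contains the dense open stratum. Hence the image is the union of components meeting the smooth locus, and no valuative argument is needed.

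\paragraph{What is missing.} You are missing the inverse half of the correspondence, which is the heart of the paper's proof. On a nuclear family with minimal log structure, choose a log splitting $\wt\psi$ of the ghost sheaf and set $\omega_{-j}=\varphi^{-1}\pi^*\wt\psi\bigl(\sum_{i\le j}\ell_ip_i\bigr)$. This produces a prong-matched twisted $k$-differential, well defined up to the level rotation torus. Conversely, a rescaling ensemble $(t_j,f_e)$ defines the log structure and $\beta$. The paper thereby identifies $\GMS_\mu$ with multi-scale $k$-differentials \emph{without} GRC, and then imposes the cover-dependent $k$-GRC on both sides.

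Without this construction three things fail:
\begin{itemize}
\item[(i)] The condition in the statement is not even defined for an arbitrary object of $\GMS_\mu$. Your phrase ``the lifted twisted differential'' presupposes it.
\item[(ii)] Your converse inclusion in Step~2 only produces \emph{some} point of the image with the same underlying twisted differential, not the given object with its log structure. You must check that the prong-matching encoded in the log structure lifts to a $\bZ/k$-equivariant prong-matching on $\wh\Gamma$, and that the round trip through $\MS_k(\mu)$ returns the original object.
\item[(iii)] Your quasi-finiteness claim needs exactly this inverse: the image point must determine the twisted differential up to finitely many choices.
\end{itemize}
Once you add this construction, your Step~3 can be kept as the argument for the normalization claim.
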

\par
Since $\cat{Rub}_0$, $\cat{Rub}_\cL$ and hence $\GMS_\mu$ are stacks over $\cat{LogSch}$, it would be more precise to state that $\MS(\mu)$ is a substack of the underlying algebraic stack of $\GMS_\mu$. See \cite[Section~4]{To2} for the necessary recap on minimal log structures. In order to even make sense of applying the condition $(\GRC_i)$ to an object as in~\eqref{eq:Rubpoints} we need the construction from the following proof.
\par
\begin{proof}[Sketch of the proof of Theorem~\ref{thm:MSinGMS}] Suppose $k=1$ and suppose that $X/B$ is nuclear and with a minimal log structure. In particular the map~$\beta$ specifies a level graph~$\Gamma$. Moreover the ghost sheaf
$\ghost_{B,b} = \bN \langle p_1,\ldots, p_L \rangle \oplus \bN \langle E^h(\Gamma)\rangle $ is the free monoid with one generator $p_i$ for each level passage and one for each horizontal edge of~$\Gamma$. To build the twisted differential one chooses a log splitting, i.e., a map $\wt{\psi} : \bN \langle p_1,\ldots, p_L\rangle \to \ghost$, such that the composition with the projection $\M_B \to \ghost$ is the natural inclusion. One checks that $\wt{\psi} (\sum_{i=1}^j \ell_i p_i)$ is a section of $\cO(\beta(v_j))$ and thus can be interpreted as a section of $\cO(\beta)$ on the complement $U_i$ of the top~$j$ levels of the stable curve~$X$. With this identification the collection of elements
\be
\omega_{-j} \= \varphi^{-1} \pi^* \wt{\psi} \Bigl(\sum_{i=1}^j \ell_i p_i\Bigr)
\ee
is the desired collection of twisted differentials. A different choice of log splitting results in a rescaling of these differentials (and the prong-matchings we) by an element of the level rotation torus and thus represents the same multi-scale differential.
\par
For the converse recall from \cite{LMS, To2} more precisely that a multi-scale differential also comes with a rescaling ensenble, a collection of \emph{edge parameters~$f_e$} for each vertical edge and \emph{level parameters}~$t_j$ for $j=1,\ldots, L(\Gamma)$. The map $p_j \mapsto t_j \in \cO(B)$  together with the compatibilities among~$f_e$ and~$t_i$ can be used to define a log structure with $\ghost_{B,b}$ as above on $X/B$ and we set $\beta(v_j) = \sum_{i=1}^j \ell_i p_i$. Defining~$\varphi$ is now a matter of unwinding definitions (see \cite[Section~5.6]{To2}).
\par
The upshot so far is that for $k=1$ we obtain an isomorphism of multi-scale differentials without the GRC-conditions with $\GMS$ and imposing the GRC's on both sides yields the claim. The sketch of the maps in both directions also illustrates that the difference between $1$-differentials and $k$-differentials is nowhere essential besides the definition of the $k$-differential version of the GRC, which in turn depends on the covering $s_\Gamma : \wh{\Gamma} \to \Gamma$, not only on~$\Gamma$. After having made such a choice, the correspondence still works and yields the claim.
\end{proof}
\par

\subsection{The tautological class and its variants} \label{sec:tautclass}

For $k=1$ the projectivized multiscale compactification~$\MS(\mu)$ maps to the projectivzed Hodge bundle
\be \label{eq:maptoHodge}
\rho: \MS(\mu) \to \bP(\cH_+), \qquad \cH_+ = \pi_*\omega_{X/\barmoduli[g,n]}\bigl(- \sum_{i:m_i <0} m_i z_i\bigr) 
\ee
which is obvious from the viewpoint of \cite{LMS} using the projectivization of the top level differential~$\omega_0$. We define the class $\xi$ to be the pullback of the first Chern class $\xi = \rho^* c_1(\cO_{\bP(\cH_+)}(-1))$ to~$\MS(\mu)$.\footnote{This is consistent with the use of this symbol in \cite{CMZEuler}, hence also in the sage-package \texttt{diffstrata}. Note however that the Masur-Veech volume papers \cite{CMSZ,CMSprincipal} use $\xi = c_1(\cO(+1))$.}
\par
For $k>1$ there are two tautological bundles in the literature. Replacing in~\eqref{eq:maptoHodge} the bundle $\cH_+$ by $\cH_{k,+} = \pi_*\omega^{\otimes k} (- \sum_{i:m_i <0} m_i z_i)$ gives a map $\rho_k: \MS_k(\mu) \to \bP(\cH_{k,+})$ and we define  $\zeta = \rho_k^* c_1(\cO_{\MS_k(\mu)}(1))$. Alternatively, the covering space $\wh{\MS}(\mu)$ comes with the restriction of the tautological class~$\xi$ from the ambient multi-scale space $\MS(\wh{\mu})$. The two bundles are related by
\be \label{eq:xizetaconv}
s^*\zeta \=-k\xi \quad \text{and} \quad s_* \xi \= -\frac{\deg(s)}{k} \zeta
\ee
see \cite[Equation~(50)]{CMSeclinsub}.
\par

In \cite{To2, ChiodoHolmes} the class $\eta = c_1(\cF^\vee) \in \CH^1(\GMS)$ is defined to be the dual first Chern class of the bundle~$\cF$ in the definition~\eqref{eq:Rubpoints}. In \cite{To2} was shown to be a pullback from a 'Hodge bundle' $\eta = \cO_{\bP(\cL(D))}(1)$, but for a twist $\cL(D)$ such that $R^1\pi_* \cL(D) = 0$. This vanishing is not true for the $\cH_+$ used here, but nevertheless the following compatibility holds and gives a justification for~\eqref{eq:maptoHodge} in the language of \cite{To2}.
\par
\begin{lemma} \label{le:etaISxi}
  The restriction of~$\eta$ of~$\MS_k(\mu)$ is equal to~$-\zeta$. In particular for $k=1$ we have $\eta|_{\MS} = -\zeta = \xi$.
\end{lemma}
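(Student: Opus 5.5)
The plan is to identify the line bundle $\cF$ restricted to $\MS_k(\mu)$ with the tautological line $\cO_{\bP(\cH_{k,+})}(-1)$ pulled back via~$\rho_k$, so that $\eta = c_1(\cF^\vee) = -c_1(\cO(-1))$ becomes $-\zeta$ once the sign conventions are matched. Here $\zeta=\rho_k^*c_1(\cO(1))$, and $c_1$ of the tautological subline is $-\zeta$, so $c_1(\cF^\vee)$ should come out as $\pm\zeta$; checking which sign actually occurs is part of the bookkeeping below. I work on a nuclear chart $B$ of $\MS_k(\mu)$ with level graph~$\Gamma$ and the minimal log structure, as in the sketch of the proof of Theorem~\ref{thm:MSinGMS}. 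There $\varphi\colon\pi^*\cF\cong\cL(-\beta)$. The function $\beta$ takes the value $0$ on the top level vertices, because top level is normalized to have $\beta=0$. Hence on the open subset $U_0\subset X$ given by the top level components and the smooth fibers, $\cL(-\beta)$ agrees with $\cL$.

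The key step is to construct a nowhere vanishing map $\cF\to \pi_*\cL(-\beta) \to \cH_{k,+}$ whose image is the line spanned by the top level differential. A section $f$ of $\cF$ gives the section $\varphi(\pi^*f)$ of $\cL(-\beta)$. Restricted to top level, this is a $k$-differential with the prescribed orders at the marked points. Its only poles are at marked points with $m_i<0$ and at nodes to lower levels. Since $\beta<0$ on lower levels, the twist by $-\beta$ exactly absorbs the poles along vertical edges. Pushing forward therefore lands in $\pi_*\omega^{\otimes k}(-\sum_{m_i<0}m_iz_i)$, as for the map $\rho$ in~\eqref{eq:maptoHodge}. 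By the proof of Theorem~\ref{thm:MSinGMS}, with the trivial log splitting at level zero, the image is exactly $\omega_0$. The resulting map $\cF\to\cH_{k,+}$ is injective on fibers because $\omega_0\neq0$. It is compatible with the $\bC^*$-action, so on the projectivization it identifies $\cF$ with $\rho_k^*\cO(-1)$. Gluing is automatic, since the construction does not depend on the chart: a change of log splitting only rescales the lower levels and leaves level zero fixed.

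This gives $c_1(\cF)=c_1(\rho_k^*\cO(-1))=-\zeta$ once the directions of the identifications are tracked. The main obstacle is exactly this sign and dual bookkeeping: whether $\cF$ or $\cF^\vee$ corresponds to the tautological subline, given that $\GMS=\bP(\cat{Rub}_\cL)$ is a quotient by a $\bC^*$-action whose weight on $\cF$ has to be fixed. I would settle it by comparing the weights of the $\bC^*$-action on $\cF$ and on $\cO(-1)$ over a single fiber. The identity $\eta=-\zeta$ also agrees with \cite{To2}, where $\eta=\cO(1)$ for a twist with $R^1=0$. Following the convention of \cite{To2} through the inclusion $\cH_{k,+}\hookrightarrow\pi_*\cL(D)$ for large $D$, which is compatible with the tautological lines, gives the claim without recomputing signs. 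For $k=1$, we have $\zeta=\rho^*c_1(\cO(1))$ and $\xi=\rho^*c_1(\cO(-1))$ by definition, so $-\zeta=\xi$, and this identity is also consistent with~\eqref{eq:xizetaconv} in the case $\deg s=1$.
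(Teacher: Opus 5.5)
Your construction is the paper's: the chain $\pi^*\cF\cong\cL(-\beta)\to\cL\to\cH_{k,+}$, adjunction to a map $\cF\to\pi_*\cH_{k,+}$, and universal injectivity because a nonzero section is nonvanishing on some top level component, so that $\cF\cong\rho_k^*\cO(-1)$. (The paper also checks, via Serre duality and cohomology and base change, that $\pi_*\cH_{k,+}$ is locally free and commutes with base change; you take this for granted.) The gap is the sign. Once $\cF$ is identified, the sign is the only thing left to prove, and on this point your proposal contradicts itself. From $\cF\cong\rho_k^*\cO(-1)$ you correctly deduce $c_1(\cF)=-\zeta$, hence $\eta=c_1(\cF^\vee)=+\zeta$, not $-\zeta$. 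Your opening claim that $-c_1(\cO(-1))$ ``becomes $-\zeta$'' is an arithmetic slip: by the definition of $\zeta$, $-c_1(\rho_k^*\cO(-1))=\rho_k^*c_1(\cO(1))=\zeta$.

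Neither of your proposed repairs can flip this sign. There is no $\bC^*$-weight ambiguity to resolve. $\cF$ is pinned down by $\pi^*\cF\cong\cL(-\beta)$, i.e.\ $\cF\cong\pi_*\cL(-\beta)$. Over the open stratum this is $\pi_*\cL$, the line spanned by the $k$-differential, so it is the tautological subline and not its dual. Following \cite{To2}, where $\eta$ is the first Chern class of an $\cO(1)$, again gives $\eta=\rho_k^*c_1(\cO(1))=\zeta$.

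The paper's proof runs the same argument and ends with $\eta|_{\MS}=-\xi$. With the conventions of Section~\ref{sec:tautclass} we have $\xi=\rho^*c_1(\cO(-1))$, so $\zeta=-\xi$ for $k=1$, consistent with \eqref{eq:xizetaconv}. Hence the paper's conclusion is $+\zeta$, not the $-\zeta=\xi$ asserted in the statement. Your argument, carried out honestly, reproduces what the paper's proof actually establishes and exposes a sign inconsistency in the statement; it cannot deliver $\eta=-\zeta$. Record the sign the argument yields and flag the discrepancy, or name a different convention for $\zeta$ under which the minus sign holds. Do not defer the sign to a weight check you never carry out.
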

\par
\begin{proof} Suppose $k=1$ and let $\cE = \cO_X \bigl(- \sum_{i:m_i <0} m_i z_i\bigr)$. Note that $\pi_* \omega_{X/\barmoduli[g,n]}(\cE)$ is indeed a vector bundle. (This is implicit in~\eqref{eq:maptoHodge}) above. A quick proof: By Serre duality it is enough to show that $R^1\pi_* O(-\cE)$ is locally free and commutes with base-change. By Cohomology and Base Change, it is enough to show that $\pi_* \cO(-\cE)$ is locally free and commutes with base change. But this is obvious, since~$\cE$ is an effective linear combination of sections.)
\par
Now, as in \cite{To2} we get natural maps
\be \label{eq:cFcH}
\pi^* \cF \to \cL(-\beta) \to \cL \to \cH_+
\ee
and by adjunction a map $\wt{\rho}: \cF = \pi_* \pi^* \cF \to \pi_* \cH_+$. To get the desired map  we check that $\wt{\rho}$ is universally injective (even though now the map in~\eqref{eq:cFcH} isn't). Since $\cL(-\beta)$ is a pullback from the base, every non-zero global section is non-vanishing on some top level component, one which the map to $\cH_+$ is indeed injective. This now gives the map~\eqref{eq:maptoHodge}. Now $\eta|_{\MS} = -\xi$ follows from the standard properties of maps to projective spaces \cite[\href{https://stacks.math.columbia.edu/tag/0FCY}{Example 0FCY}]{stacks-project}.
\par
For $k>1$ the same proof works with the bundle $\cH_{k,+}$ and the map $\rho_k$.
\end{proof}
\par

\subsection{The Hodge-DR formula}



We define the \emph{log DR-cycle} as the Gysin-pullback of along the zero section $\logDR_\cL = \sigma_0^! \sigma_\cL(\bP\cat{Rub}_\cL)$. The Hodge-DR-conjecture from \cite{To2} predicts a formula for products of the tautological classes against the log DR-cycle. The conjecture was proven in \cite[Theorem~1]{ChiodoHolmes}. It states more precisely the following. Let $\epsilon:  \barmoduli^{\cL,1/r} \to \barmoduli$ be the forgetful map from the space of $r$-the roots of the line bundle~$\cL$ and let $\cL^{1/r}$ be the universal bundle on the universal family (also denoted by~$\pi$) over the domain.
\par
\begin{theorem} \label{thm:ChiodoHolmes}
  The push-forward under the map $p: \cat{Rub}_0 \to \barmoduli[g,n]$ 
  \bes  \DR_{\cL,u} := p_* (\logDR_\cL \cdot \eta^u) \= [r^{u+1} \epsilon_* c_{g+u}(-R\pi_*\cL^{1/r})]_{r=0} \qquad \in \CH^{g+u}(\barmoduli[g,n])
  \ees
  where $[\cdot]_{r=0}$ denotes taking the constant term in $r$.
\end{theorem}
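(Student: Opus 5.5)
This statement is the Hodge-DR formula of \cite{To2}, proven in \cite[Theorem~1]{ChiodoHolmes}; here I only indicate how I would reprove it. The plan is to compare two virtual classes on a common space. One is the log DR-cycle, defined via Gysin pullback along the zero section. The other is a virtual class on a moduli space of roots of~$\cL$, where Chiodo's formula computes the relevant Chern classes.

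First, I would fix a sufficiently divisible integer~$r$ and work on the space $\barmoduli^{\cL,1/r}$. On the rubber side, the slope function~$\beta$ becomes, after subdivision by~$r$, a genuine root datum. Concretely, over a suitable log modification $\wt{\cat{Rub}}$, the bundle $\cL(-\beta)$ admits an $r$-th root that is trivial on the locus where $\cL(-\beta)\cong\pi^*\cF$. This yields a comparison morphism from (an $r$-th root cover of) $\bP\cat{Rub}_\cL$ to the total space of the projectivized pushforward cone $\bP(\pi_*\cL^{1/r})$ over $\barmoduli^{\cL,1/r}$.

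Second, I would realize $\logDR_\cL$ as a localized top Chern class. The cone $C=\pi_*\cL^{1/r}$ carries a perfect obstruction theory $R\pi_*\cL^{1/r}$, and the locus of nonzero sections projectivizes to a space with tautological class~$\eta$. By the standard formula for virtual classes of cones (Kiem--Li cosection, or the projective-bundle computation $\sum_u \eta^u c_{g+u}(-R\pi_*)$), one obtains
\[ p_*(\logDR\cdot\eta^u) \= r^{u+1}\,\epsilon_* c_{g+u}(-R\pi_*\cL^{1/r}) + O(r^{\ge u+2})\text{-corrections}. \]
The factor $r^{u+1}$ arises from the degree of the root stack ($r^{2g-1}$ cancelled against $\epsilon$) and from comparing $\eta$ with its $r$-th root. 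Third, Chiodo's GRR formula shows that the right-hand side is polynomial in~$r$ for large~$r$. Taking the constant term therefore eliminates the contributions from components where roots are nontrivial.

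The main obstacle is the second step. One must show that the virtual class from $R\pi_*\cL^{1/r}$ agrees with the Gysin pullback $\sigma_0^!\sigma_\cL$ on the excess components lying over the boundary. I would first attempt this by deformation to the normal cone, using the log-smooth rubber space and the fact that, for $r$ divisible enough, $\wt{\cat{Rub}}\to\barmoduli^{\cL,1/r}$ is a birational log modification. This would identify both classes as refined intersections with the same zero section of $\Picabs^0$.
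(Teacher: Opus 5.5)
Citing Chiodo--Holmes is exactly what the paper does. It quotes the Hodge-DR formula as an imported theorem and gives no argument of its own, so your first sentence already matches the paper's treatment. The reproof you then indicate would not go through, however, and the problems are not confined to the step you flag.

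\emph{The comparison morphism cannot exist as described.} The map $\cat{Rub}_0\to\barmoduli[g,n]$ is an isomorphism over the locus of smooth curves, since $\beta$ is trivial there. By contrast, $\epsilon\colon\barmoduli[g,n]^{\cL,1/r}\to\barmoduli[g,n]$ has generic degree $r^{2g-1}$. A morphism $\wt{\cat{Rub}}\to\barmoduli[g,n]^{\cL,1/r}$ over $\barmoduli[g,n]$ would therefore be a rational choice of an $r$-th root of $\cL$ on the generic curve, and it is certainly not a birational log modification. Only the DR-locus $\bP\cat{Rub}_\cL$ maps to the root space (after extracting an $r$-th root of $\cF$), and it lands in the locus where the root has a nonzero section.

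\emph{The identity in your second step is asserted, not derived, and it is essentially the whole theorem.} This is the real gap. The projective-bundle computation only identifies $\epsilon_*c_{g+u}(-R\pi_*\cL^{1/r})$ with the pushforward of a power of the tautological class against the virtual class of the projectivized cone of sections of $\cL^{1/r}$.

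Over the boundary this cone is much larger than the DR-locus. For instance, suppose the node multiplicities of the twisted curve are the twists of a level graph. Then the coarse root has degree $0$ exactly on the components without upward edges, and negative degree on all others. A nonzero section therefore constrains only these local maxima, and leaves the other components, together with their roots, completely free. These excess components contribute with an essential dependence on $r$.

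Since $p_*(\logDR_\cL\cdot\eta^u)$ does not depend on $r$ while the Chiodo side does, you would have to prove that the discrepancy has vanishing constant term in $r$. Neither the unexplained bound $O(r^{\ge u+2})$ nor the remark that taking the constant term ``eliminates components where roots are nontrivial'' proves this.

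Already for $u=0$ and $k=0$ (the theorem of Janda--Pandharipande--Pixton--Zvonkine), the constant-term phenomenon is not obtained by an excess-intersection comparison on the root space. It comes from $\bC^*$-localization on moduli of relative stable maps to the orbifold line $\bP^1[r]$, combined with polynomiality in $r$.
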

\par
The right hand side can be explicitly evaluated, as it is equal to a coefficient of a Chiodo-class (see \cite{Chiodo} and in this form \cite[Corollary~4]{JPPZ}), to which we refer for the details and terminology)
\ba \label{eq:ChClass}
\Ch_{g,n}(\mu) &\,:=\, \epsilon_* c(-R\pi_*\cL_\mu^{1/r}) \\
&\= \sum_{\Gamma} \frac{r^{2g-1+h(\Gamma)}}{|\Aut(\Gamma)|} \sum_{w \in W(\Gamma,r)}
c_{\Gamma,*} \prod_{e \in E(\Gamma)} \cC_e \prod_{v \in V(\Gamma)} \cC_v \,\,\prod_{i=1}^n \cC_i 
  \ea
  Here the first sum is over all stable graphs~$\Gamma$, the second is over all admissible weightings mod~$r$, denoted by~$W(\Gamma,r)$ and $c_\Gamma$ is the clutching map for~$\Gamma$. The edge, vertex and leg factors are respectively given by (if $e=(h,h')$ is the decomposition into two half-edges)
\ba
\cC_e &\= \Bigl(1-\exp\Bigl(\sum_{m \geq 1} (-1)^{m} \frac{B_m(w(h)/r)}{m(m+1)}(\psi_h^m - (-\psi_h)^m \Bigr)\Bigr)/(\psi_h + \psi_{h'})\\
\cC_v &\= \exp\Bigl(\sum_{m \geq 1} (-1)^{m} \frac{B_m(k/r)}{m(m+1)} \kappa_m \Bigr)\\
\cC_i &\= \exp\Bigl(\sum_{m \geq 1} (-1)^{m-1} \frac{B_m(a_i/r)}{m(m+1)} \psi_i^m \Bigr)
\ea
as elements in $\CH^*(\barmoduli[g,n])$. Here $B_n$ denotes the $n$-the Bernoulli polynomial. 
\par
We will be most interested in the case $u=2g-3+n$, which agress with the dimension of the stratum (except for the holomorphic abelian case), and in which case
the DR-cycle is a $0$-cycle. We thus abbreviate (including the holomorphic abelian case)
\be
\DR(\mu) \= [\DR_{\cL_\mu,2g-3+n}] \qquad \in \CH^0(\barmoduli[g,n])  \,.
\ee

%% file: sec_support.tex
\section{The support of $\GMS$} \label{sec:support}

In order to decompose the support $\GMS$ we define a collection of natural substacks indexed by level graphs~$\Gamma$. For certain~$\Gamma$, those which \emph{impose a GRC at every level passage} we will show in Proposition~\ref{prop:GMSunion} that the substacks~$\GMS(\Gamma)$ define a union of irreducible components of~$\Gamma$. As technical tool for the proofs and to split $\GMS(\Gamma)$ (up to commensurability) into a product of level-wise stacks we recast the notion of \emph{framing}, common in language of differentials, in the context of~$\GMS$.
\par
\medskip
\paragraph{\textbf{Frozen level passages}} Fix a level graph~$\Gamma$. Its level passages will be the \emph{frozen level passages} of the subfunctor $\GMS(\Gamma)$ we now define.
\par
Recall that for any level graph~$\Delta$ there is the operation of \emph{undegenerating} the $i$-th level passage, contracting all the edges crossing only this level passage and moving  (and merging, if connected by an edge) all vertices on the level right above and below this passage to one level. We denote by $\delta_i(\Delta)$ the result of undegenerating the $i$-th passage, a graph with $L(\Delta)-1$ levels below zero. The operation of undegeneration at different level passages commute and we denote by $\delta_J(\Delta)$ for any subset $J \subset \{1,\ldots,L(\Gamma)\}$ the result of undegenerating all level passages in~$J$. 
\par
Suppose that $\Delta$ is a degeneration of~$\Gamma$, more precisely that $\Gamma = \delta_J(\Delta)$. We the call the level passages in~$J$ the \emph{additional level passages} and the level passages in $\{1,\ldots,L(\Delta)\} \setminus J$ the \emph{frozen level passages} of~$\Delta$ (\emph{with respect to $\Gamma$}). 
\par
\begin{definition}
The stack of \emph{$\Gamma$-frozen multi-scale differentials} is defined to be the fibered category
\be \label{eq:GMSGamma}
\GMS(\Gamma) \subset \GMS
\ee
consisting of the objects $(X/B,\beta,\cF,\varphi) \in \GMS(B)$ such that for the pullback any nuclear family $X'/B'$ obtained via any base change has a level graph~$\Delta = \Delta(B')$ which is a degeneration of~$\Gamma$ and such that $(\GRC_i)$ holds whenever~$i$ is an additional level passage of~$\Delta$ with respect to $\Gamma$ and such that the level parameters satisfy $t_i^{\ell_i} = 0$ for every frozen level passage on any pullback of~$X/B$ to a nuclear family.
\end{definition}
\par
In this language $\MS = \GMS(\bullet)$ is the subfunctor where the trivial graph is frozen. We denote by $\iota_\Gamma : \GMS(\Gamma) \to \GMS$ the natural inclusion.
\par
\begin{prop} \label{prop:GMSsubstack}
For each level graph~$\Gamma$ the category of $\Gamma$-frozen multi-scale differentials $\GMS(\Gamma)$ is a proper algebraic substack of $\GMS$. Moreover $\GMS(\Gamma)$ is a union of irreducible substacks of~$\GMS$.
\end{prop}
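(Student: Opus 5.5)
The proof will be local on $\GMS$: we work on nuclear charts and check that the conditions defining $\GMS(\Gamma)$ are closed conditions, given by explicit equations, that glue. Fix a point of $\GMS$ and a nuclear étale (or smooth) neighborhood $B$ with minimal log structure. As in the sketch of proof of Theorem~\ref{thm:MSinGMS}, $B$ then carries a level graph $\Delta=\Delta(B)$, level parameters $t_1,\ldots,t_{L(\Delta)}$ (images of the generators $p_i$ of $\ghost_{B,b}$ under a chosen log splitting), and the twisted differentials $\omega_{-j}$. The residue sums appearing in $(\GRC_i)$ are then regular functions on $B$, since residues of the $\omega_{-j}$ at the lower ends of edges are regular. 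Changing the log splitting rescales these functions by units, coming from the level rotation torus, so their vanishing loci do not depend on the choice.

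\textbf{Step 1: local equations.} If $\Delta$ is not a degeneration of $\Gamma$, the point is not in $\GMS(\Gamma)$, and the condition is open and closed on the stratification. Points whose graph degenerates to $\Gamma$ lie in the closure of the stratum of $\Gamma$. If $\Gamma=\delta_J(\Delta)$, the set of additional passages $J$ is determined, up to the finitely many ways of writing $\Gamma$ as an undegeneration of $\Delta$. On $B$ we define the closed subscheme cut out by $t_i^{\ell_i}=0$ for the frozen $i\notin J$ and by the residue functions of $(\GRC_i)$ for $i\in J$. The locus of $\GMS(\Gamma)$ over $B$ is the union of these subschemes over all such decompositions. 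We must check that this union is compatible with base change, i.e.\ that on a further pullback with a finer graph $\Delta'$ the conditions computed for $\Delta'$ agree with those pulled back from $\Delta$. This follows because $t_i$ for $\Delta$ pulls back to a product of the level parameters of $\Delta'$ over the passages merged into $i$, up to a unit. Also, the GRC for a passage of $\Delta$ splits into the GRCs of the refined passages, or is implied by them together with vanishing level parameters.

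\textbf{Step 2: substack and properness.} Once Step 1 is done, the local closed subschemes descend, so $\GMS(\Gamma)$ is a closed algebraic substack of $\GMS$. Properness is then inherited from properness of $\GMS=\bP(\cat{Rub}_\cL)$, which is proper because $\cat{Rub}_\cL$ is proper over $\barmoduli[g,n]$ relative to the $\bC^*$-action. We cite \cite{To2, ChiodoHolmes} for this.

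\textbf{Step 3: union of irreducible components.} This step is the main obstacle. Here ``irreducible substacks'' should mean that $\GMS(\Gamma)$ is the union of some of the irreducible components of $\GMS$, so we must show it is open as well as closed in the reduced structure near its generic points. The plan is to use the local equations of $\GMS$ inside $\bP\cat{Rub}$. By the construction of Theorem~\ref{thm:MSinGMS}, $\GMS$ is locally cut out by the conditions that $\cL(-\beta)$ is trivial; in terms of twisted differentials these equations are products of the form $t_i^{\ell_i}\cdot(\text{GRC}_i\text{-function})$ for each level passage. The reason is that the section $\varphi$ extends across a level passage either when the passage does not smooth ($t_i^{\ell_i}=0$) or when the residue obstruction to plumbing vanishes. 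Granting this product structure, the zero locus of the equations is, set-theoretically, the union over choices (for each passage: frozen or GRC) of the loci in Step 1. Hence each $\GMS(\Gamma)$ is a union of such pieces, and every irreducible component of $\GMS$ is contained in some $\GMS(\Gamma)$.

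To conclude that $\GMS(\Gamma)$ is a union of whole components and not merely of irreducible closed subsets, we would establish a dimension count. Using framings, $\GMS(\Gamma)$ is, up to finite commensurability, a product of level-wise strata whose GRCs are imposed at every level. Its dimension equals that of $\GMS$ at generic points, or at least no piece lies in another with a different choice of frozen or GRC passages. Where this dimension comparison fails, the right formulation is the weaker one: $\GMS(\Gamma)$ is a union of irreducible closed substacks. That weaker statement already follows from Steps 1 and 2 together with the decomposition into levelwise products.
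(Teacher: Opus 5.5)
Your Steps~1--2 reach closedness and properness by a different route from the paper. You use local equations on nuclear charts plus properness of $\GMS$ cited from \cite{To2, ChiodoHolmes}. The paper instead argues as in \cite{LMS}: in a degeneration the nodes at new level passages satisfy the GRC by Stokes' theorem, and $t_i^{\ell_i}=0$ is a closed condition. Your route is acceptable modulo one caveat, discussed at the end.

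\textbf{The gap: Step~3 does not prove the intended statement.} What the paper proves under ``union of irreducible substacks'' is that $\GMS(\Gamma)$ is \emph{locally irreducible} at every point. Its proof says explicitly that if $\Gamma$ fails to strictly impose a GRC at some passage~$j$, then $\GMS(\Gamma)$ is still locally irreducible but contained in $\GMS(\delta_j(\Gamma))$. So your primary reading, that $\GMS(\Gamma)$ is a union of irreducible components of $\GMS$, is false for general~$\Gamma$. For instance, if $\Gamma$ has a single passage at which it imposes no GRC, then near its generic points $\GMS(\Gamma)$ is the non-reduced divisor $t_1^{\ell_1}=0$ inside the smooth component $\MS$. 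The stronger statement is Proposition~\ref{prop:GMSunion}, valid only for graphs full of GRCs. A dimension count could not decide it anyway: by Lemma~\ref{le:dGammadef} the pieces have dimensions $d-L(\Gamma)+|V(\Gamma)^{\ab}_C|$, so $\GMS$ is in general not equidimensional.

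Your fallback reading is vacuous: set-theoretically, every closed substack of finite type is a finite union of irreducible closed substacks. Invoking the levelwise product decomposition does not rescue local irreducibility. The commensurability diagram of Proposition~\ref{prop:GMScomm} consists of finite maps, and a finite image of a locally irreducible space need not be locally irreducible (two lines through a point are the finite image of their disjoint union). Moreover, in the paper that proposition is proved by exactly the argument you are missing.

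\textbf{The missing idea} is a local isomorphism, near each point~$x$, between $\GMS(\Gamma)$ and a neighborhood in $\prod_i \MS(\mu^{[i]}) \times \prod_i \Spec \bC[t_i]/t_i^{\ell_i}$. Here the levelwise factors carry \emph{no} residue conditions, unlike for boundary divisors of $\MS$. For one passage it goes as follows.
\begin{enumerate}
\item Put the differentials at the nodes in the normal form $\omega_0=(u_e^{\kappa_e}+f_e^{\kappa_e}r_e)\,du_e/u_e$ and $\omega_1=-(v_e^{-\kappa_e}+r_e)\,dv_e/v_e$.
\item Since $f_e^{\kappa_e}=t_1^{\ell_1}=0$, the differential $\omega_0$ is holomorphic at the nodes. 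Replacing $u_ev_e=f_e$ by $u_ev_e=0$ and normalizing gives the levelwise differentials.
\item Conversely, run the plumbing of \cite{LMS} with $f_e=t_1^{\ell_1/\kappa_e}$ but without modification differentials. This works because the period $f_e^{\kappa_e}r_e$ of the vanishing cycle vanishes identically, so no GRC has to be imposed.
\item Essential uniqueness of the normal forms, pinned down by reference points in collars around the nodes, makes the two constructions inverse.
\item Smoothness of the $\MS(\mu^{[i]})$ and irreducibility of the fat points then give local irreducibility; iterate over the passages.
\end{enumerate}

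\textbf{The caveat for Step~1.} The same normal form, together with Stokes' theorem on each component~$C$ above the passage, gives the relation $t_1^{\ell_1}\sum_{e\vdash C}r_e=0$ on $\GMS$. This is the product structure you merely grant in Step~3, and Step~1 needs it too. At a generization where an additional passage~$i$ has been smoothed, your chart still imposes $\sum_{e\vdash C}r_e=0$, while the definition imposes no such condition there. The two agree only because of this Stokes relation, so it must be proved rather than assumed. Also, nuclear base changes never produce finer level graphs; the compatibility you need to check is with generizations inside a chart.
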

\par
\medskip
\paragraph{\textbf{The reduced substacks}} We \emph{define} $\GMS(\Gamma)^\red$ to be locally the substack of $\GMS(\Gamma)$ cut of by~$t_i = 0$ for each level~$i$ that is a frozen level passage (or equivalently: the subspace where all edges in~$\Gamma$ persist as nodes). This symbol is justified by:
\par
\begin{prop} \label{prop:GMSred}
The substack $\GMS(\Gamma)^\red$ is smooth, in particular reduced.
\end{prop}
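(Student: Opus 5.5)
We prove Proposition~\ref{prop:GMSred} by exhibiting, étale locally (or after a finite base change making the family nuclear), a product decomposition of $\GMS(\Gamma)^\red$ into level-wise moduli spaces of (generalized) multi-scale differentials, each of which is smooth. The strategy mirrors the proof that boundary strata $D_\Gamma$ of $\MS(\mu)$ are smooth in \cite{LMS}: a point of $\GMS(\Gamma)^\red$ is a twisted differential compatible with a degeneration $\Delta$ of $\Gamma$. All edges of $\Gamma$ persist as nodes, the level parameters of the frozen passages vanish identically, and the GRC holds only at the additional passages.

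\textbf{Step 1: Local description.} We work over a nuclear chart of $\GMS$ at a point with level graph $\Delta$, $\Gamma=\delta_J(\Delta)$. We use the sketch of the proof of Theorem~\ref{thm:MSinGMS}: a choice of log splitting identifies the chart with the space of twisted differentials $(\omega_{-j})$ together with a rescaling ensemble. The ensemble consists of level parameters $t_1,\ldots,t_{L(\Delta)}$ and edge parameters $f_e$ with $f_e^{\kappa_e}=t_{\mathrm{top}(e)}^{\ell}\cdots$ on the appropriate passages, and horizontal smoothing parameters. Setting $t_i=0$ for frozen $i$ kills the smoothing of every edge crossing a frozen passage, since each such $f_e$ becomes zero. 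What remains is the product of the plumbing-type coordinates for the additional passages and the horizontal edges, and the period coordinates on each level.

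\textbf{Step 2: Framings split the prong-matchings.} Here we use framings, as announced in the definition of framings in Section~\ref{sec:MS}. We pass to the finite cover of $\GMS(\Gamma)^\red$ given by choosing a global framing at both ends of every edge of $\Gamma$ crossing a frozen passage. On this cover the prong-matchings at frozen edges are determined by the framings, so they decouple. The cover is then isomorphic to a product, over the levels $j$ of $\Gamma$, of spaces $\MS^{\mathrm{fr}}(\mu_{[j]})$. Each factor is a space of framed multi-scale differentials for the level signature $\mu_{[j]}$, and it is (possibly disconnected and) meromorphic. This product is taken modulo the torus rescaling the levels of $\Gamma$ separately.

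The key point is that no GRC is imposed across a frozen passage. That is exactly what the definition of $\GMS(\Gamma)$ says: $(\GRC_i)$ is required only at additional passages. Hence the levels of $\Gamma$ genuinely decouple, and the factor for level $j$ sees, at its own internal passages, precisely the GRCs of $\MS(\mu_{[j]})$. These are the residue conditions of the level-$j$ subcurve, with the edges going down to lower $\Gamma$-levels treated as marked poles. Each factor is smooth by the smoothness of multi-scale spaces \cite{LMS}, and by Theorem~\ref{thm:MSkproperties} for $k>1$. Framed versions are finite étale over the unframed ones, so they are smooth as well. Quotienting by the free torus action and taking the finite étale quotient by the framing group preserves smoothness.

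\textbf{Main obstacle.} The hardest step is Step 1, namely justifying that the chart, after $t_i=0$ for frozen $i$, is reduced and has no extra nilpotent structure. A priori $\GMS$ is defined as a fibre product of logarithmic stacks, and the locus $t_i^{\ell_i}=0$ is non-reduced. We must check that the equations $f_e^{\kappa_e}=(\text{monomial in } t)$ for edges crossing frozen passages become $f_e^{\kappa_e}=0$, and that passing to $\GMS(\Gamma)^\red$ sets $f_e=0$ rather than leaving nilpotents. We handle this by working with the minimal log structure and the free ghost monoid $\bN\langle p_1,\ldots,p_L\rangle\oplus\bN\langle E^h\rangle$. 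The reduced locus is then the closed log stratum where the $p_i$ for frozen $i$ map to $0$, and this is smooth in the log-smooth chart. Equivalently, we verify directly that, after pulling back to the framed cover, the $f_e$ become functions of the $t_i$ and the framing, so that the zero locus is cut out by the coordinates $t_i$ themselves.
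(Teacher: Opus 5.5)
Your overall plan matches the paper's: identify $\GMS(\Gamma)^\red$ with a product of level-wise spaces $\MS(\mu^{[i]})$, locally or up to the finite framing cover, and import smoothness from \cite{LMS}. The gap is in the step you yourself single out.

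\textbf{The isomorphism is asserted, and the justification offered fails.} Step~2 claims the identification is an isomorphism of stacks, not just a bijection on points. But ``prong-matchings decouple'' and ``no GRC across frozen passages'' only explain why the level-wise data are unconstrained. Your argument for the absence of nilpotents does not work. The closed log stratum where the frozen $p_i$ map to zero is $\bP\cat{Rub}(\Gamma)$, and that is indeed smooth. However, $\GMS$ is not log smooth. Inside $\bP\cat{Rub}(\Gamma)$, the stack $\GMS(\Gamma)^\red=\GMS(\Gamma)\cap\bP\cat{Rub}(\Gamma)$ is cut out by the further condition $\pi^*\cF\cong\cL(-\beta)$, of codimension $g-|V(\Gamma)^\ab_C|$. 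The smoothness of exactly this intersection is what has to be proved. Showing that the $f_e$ vanish with the frozen $t_i$ only removes the smoothing directions of the frozen nodes. It says nothing about what the DR-condition imposes on the remaining equisingular and log-structure directions.

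\textbf{How the paper closes it.} The paper uses an explicit inverse construction. From a family in $\GMS(\Gamma)$, put the level differentials in the normal form~\eqref{eq:omegaNF} at the nodes and replace $u_ev_e=f_e$ by $u_ev_e=0$; this gives level-wise families. Conversely, glue level-wise multi-scale differentials by plumbing with $f_e=t_i^{\ell_i/\kappa_e}$. No modification differentials are needed, precisely because $f_e^{\kappa_e}=t_i^{\ell_i}=0$ kills the Stokes constraint across a frozen passage. Essential uniqueness of the normal forms, pinned by reference points in the collars, makes the two constructions mutually inverse. This yields locally $\GMS(\Gamma)\cong\prod_i\MS(\mu^{[i]})\times\prod_i\Spec\bC[t_i]/t_i^{\ell_i}$, and setting $t_i=0$ gives smoothness. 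Equivalently, as in Proposition~\ref{prop:GMSredsmooth}, one can use the dictionary of Theorem~\ref{thm:MSinGMS} together with perturbed period coordinates on all levels of~$\Gamma$. Your Step~1 essentially states this coordinate description, but you then treat it as unproven and support it with the invalid log-smoothness argument.

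\textbf{Orientation slip.} On level~$j$, the marked poles of~$\mu^{[j]}$ are the lower ends of edges going \emph{up}, with order $-\kappa_e-1$. Edges going down give zeros of order $\kappa_e-1$. This does not affect smoothness, but it changes which components are exempt from the level-wise GRC.
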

\par
Not all level graphs~$\Gamma$ can indeed serve as frozen levels, as the level passages might not be frozen, i.e.\ there exist deformations smoothing that level passage. This happens precisely, if the following conditions is met.
\par
\begin{definition} \label{def:GRCimpose}
If $k=1$ we say that $\Gamma$ \emph{weakly imposes a GRC at level~$j$} if the twisted differentials compatible with $\wh{\Gamma}$ that satisfy $(\GRC_j)$ form a subspace of positive codimension inside the space of twisted differentials. If $k>1$ we require the analogous condition on the cover: For any choice of $\bZ/k$-coverings of level graphs $s_\Gamma: \wh{\Gamma} \to \Gamma$ as in Theorem~\ref{thm:MSkproperties} the twisted differentials compatible with $\wh{\Gamma}$ that satisfy $(\GRC_j)$ form a subspace of positive codimension inside the space of twisted differentials which are $\tau$-eigendifferentials $\tau^* \omega = \eta_k \omega$.
\par
We say that~$\Gamma$ \emph{imposes a GRC at level~$j$}, if it does so weakly and if moreover there is an edge crossing precisely the $j$-th level passage, i.e, connecting level $-j$ to level $-j+1$.
\end{definition}
\par
A chain of four vertices on levels $0,-1,0,-2$ respectively with top levels holomorphic is an example of a graph that weakly imposes a GRC at all levels, but that does not (strictly) impose GRCs at all levels.
\par
In any case we call a level-graph \emph{full of GRCs} if it imposes a GRC at every level passage.
\par
\begin{prop} \label{prop:GMSunion}
The stack $\GMS$ is the scheme-theoretic union of $\GMS(\Gamma)$ with~$\Gamma$ running over the set of level graphs such that $\Gamma$ imposes a GRC at each of its level passages.
\end{prop}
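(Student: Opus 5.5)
The plan is to prove the two inclusions of Proposition~\ref{prop:GMSunion} separately, working étale locally on a nuclear chart with minimal log structure. There the level parameters $t_1,\ldots,t_L$ and the horizontal-edge parameters give coordinates as in the sketch of proof of Theorem~\ref{thm:MSinGMS}. The inclusion $\bigcup_\Gamma \GMS(\Gamma) \subset \GMS$ holds by definition, since each $\GMS(\Gamma)$ is a closed substack by Proposition~\ref{prop:GMSsubstack}. The content is therefore the reverse inclusion, on the level of ideals: the intersection of the ideals $I_\Gamma$ defining the $\GMS(\Gamma)$ in a neighborhood of a point with level graph~$\Delta$ should be zero in the local ring of $\GMS$ at that point.

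\textbf{Set-theoretic covering.} Given a point $p \in \GMS$ with level graph~$\Delta$, let $J \subseteq \{1,\ldots,L(\Delta)\}$ be the set of level passages at which the twisted differential of~$p$ satisfies $(\GRC_i)$. Set $\Gamma' = \delta_J(\Delta)$, which freezes exactly the passages where the GRC fails. A level passage where the GRC fails cannot be smoothed: the unsmoothing equations of the rubber family (the plumbing relation expressing the residue sums as multiples of $t_i^{\ell_i}$) force $t_i^{\ell_i}=0$ there. Hence $p\in \GMS(\Gamma')$.

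Next I need that $\Gamma'$ may be replaced by a graph that imposes a GRC at every passage. If a frozen passage of $\Gamma'$ only weakly imposes a GRC, or has no edge crossing exactly that passage, I argue that the generic point of any component through~$p$ lies in $\GMS(\Gamma'')$ for a coarser~$\Gamma''$. Indeed, if the GRC is not imposed, the GRC-locus is everything, so generic nearby differentials satisfy it and the passage deforms. If no edge crosses exactly passage~$j$, then every edge crossing it also crosses an adjacent passage. In that case the level parameter $t_j$ appears only through products $t_j t_{j\pm1}$, and the passage can be undegenerated by merging levels. Inducting on the number of levels shows that every irreducible component of $\GMS$ lies in some $\GMS(\Gamma)$ with $\Gamma$ full of GRCs.

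\textbf{Scheme-theoretic statement.} Set-theoretic covering is not enough; I must show that the local ideal of $\GMS$ in $\bP\cat{Rub}$ equals the intersection of the ideals of the $\GMS(\Gamma)$. The plan is to write the local equations of $\GMS$ explicitly. By the plumbing description of $\cat{Rub}_\cL$ over a nuclear chart (Theorem~\ref{thm:MSinGMS} without imposing GRCs), $\GMS$ is cut out in the smooth space of "multi-scale differentials without GRCs" by equations of the form
\[
t_j^{\ell_j}\cdot R_j \= 0, \qquad j=1,\ldots,L(\Delta),
\]
where $R_j$ restricts to the $(\GRC_j)$ residue sum, suitably combined over the components above passage~$j$. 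The ideal generated by such products of a monomial in disjoint variables and independent functions is the intersection over subsets $J$ of the ideals $(t_j^{\ell_j} : j\notin J) + (R_j : j\in J)$. This decomposition holds because the $t_j$ and the $R_j$ can be taken as part of a regular system of parameters, so the ideal is of monomial type in suitable coordinates. Each ideal in this decomposition is exactly the local ideal of $\GMS(\delta_J(\Delta))$, and those $J$ for which $\delta_J(\Delta)$ does not impose GRCs are redundant by the previous paragraph.

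\textbf{Main obstacle.} The hard step is establishing these local product equations, and in particular showing that the $R_j$ together with the $t_j$ form part of a regular sequence. This is the transversality needed for the primary decomposition to be exactly the intersection. It is delicate when several components lie above a passage, when horizontal edges are present, and for $k>1$, where the GRCs live on the canonical cover. For the cover I would first try working on $\wh\Gamma$ with $\tau$-eigendifferentials and descending.
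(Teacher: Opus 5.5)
\textbf{Verdict: genuine gap.} Your strategy is the paper's. You localize at a point with level graph $\Delta$ and use the plumbing normal form \eqref{eq:omegaNF}, in which the vanishing-cycle period at an edge is $f_e^{\kappa_e}r_e$. Stokes' theorem then gives relations of the form ``level parameter power times residue sum $=0$'', and you read off the dichotomy $(\GRC_j)$ versus frozen passage by passage. The paper proves local irreducibility of each piece by plumbing with $f_e^{\kappa_e}=0$, which identifies $\GMS(\Gamma)$ locally with $\prod_i\MS(\mu^{[i]})\times\prod_i\Spec\bC[t_i]/t_i^{\ell_i}$. You propose an explicit ideal decomposition instead, which is a legitimate way to make ``scheme-theoretic'' precise. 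However, the step you defer as the main obstacle is not merely unproven: the form in which you posit the equations is wrong, and it contradicts your own redundancy argument.

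Suppose $\GMS$ were cut out by $t_j^{\ell_j}R_j=0$ for $j=1,\dots,L(\Delta)$, with all $t_j,R_j$ part of a regular system of parameters. Then the $2^{L(\Delta)}$ loci $V\bigl((t_j)_{j\notin J},(R_j)_{j\in J}\bigr)$ are distinct irreducible sets of the same dimension, none containing another. Your decomposition would then be irredundant, and no $J$ could be discarded.
\begin{itemize}
\item For the paper's chain with vertices on levels $0,-1,0,-2$, the fully frozen locus would be a component, although that graph does not strictly impose a GRC at passage~$2$.
\item Your own previous paragraph (``$t_j$ appears only through products'') already contradicts the posited form.
\end{itemize}

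What you actually need to derive has two features.
\begin{itemize}
\item There is one relation per connected component $C$ above a passage, not one $R_j$ per passage; compare the generators $r_1t_1^{\ell_1},\dots,r_bt_1^{\ell_1}$ in Proposition~\ref{prop:GMSequations}.
\item The prefactor is $t_j^{\ell_j}$ when Stokes compares adjacent levels. When no edge crosses exactly passage~$j$, one only gets relations such as $t_{j-1}^{\ell_{j-1}}t_j^{\ell_j}\sum_{e\vdash C}r_e=0$, as in the paper.
\end{itemize}
Ideals with such mixed products have fewer minimal components. For example, $t_1^{\ell_1}\cdot(r_1,\,t_2^{\ell_2}r_2)=(t_1^{\ell_1})\cap(r_1,t_2^{\ell_2})\cap(r_1,r_2)$, which are exactly the paper's three loci $\GMS(\delta_2\Delta)$, $\GMS(\delta_1\Delta)$ and $\MS$. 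This product structure, not a separate deformation heuristic, is what absorbs the graphs that are not full of GRCs.

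Finally, the smooth ambient must be $\bP\cat{Rub}$, not ``multi-scale differentials without GRCs'': for $k=1$ that space is $\GMS$ itself, which is not smooth. In $\bP\cat{Rub}$ the residue sums are only extensions of functions from $\GMS$. The ideal there also carries extra generators $f_{b+1},\dots,f_g$ common to all components.
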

\par
We remark that the number of level passages such that $\GMS(\Gamma)$ occurs in this decomposition is not bounded as~$g$ grows. To see this, take~$\Gamma$ starting with a chain of one vertex at each level below zero, with all markings on the very bottom, and each of the vertices connected to a pair of holomorphic abelian vertices on one level higher.
\par
For the proof we use the multi-scale viewpoint on~$\GMS$. An alternative proof should be possible using the deformation theory recalled in Section~\ref{sec:defo} and the deformation obstructions imposed by the residue conditions in \cite[Lemma~4.11]{HolmesSchmitt}. 
\par
\medskip
\paragraph{\textbf{Framings}} We start with the definition of the auxiliary space~$\GMS(\Gamma)_f$ of \emph{framed generalized multi-scale differentials with frozen level graph~$\Gamma$}. It is the finite cover of $\GMS(\Gamma)$ where all the edges of~$\Gamma$ are labeled and moreover at each level we choose a global framing~$\sigma^\pm_e$ as defined in Section~\ref{sec:MS} for the set of upper and lower ends vertical edges of~$\Gamma$ with the following property: Given $(X,\beta,\varphi) \in \GMS(\Gamma)$, then $(X,\bfz,\bfomega,(\sigma^\pm_e)_{e \in E(\Gamma)})$ is a preimage in~$\GMS(\Gamma)_f$ if there exists a representative $(X,\bfz,\bfomega,(\sigma_e)_e)$ of the prong-matched twisted differential (or equivalently: there exists a log-splitting) such that for each edge~$e$ the framings of the preimage point is related to the local prong matching at~$e$ by 
\be \label{eq:framingtoPM}
\sigma_e \= \sigma_\e^+ \otimes \sigma^-_e \,.
\ee
Since there are only finitely many pairs of framings for a given~$\sigma_e$ that satisfy~\eqref{eq:framingtoPM}, we deduce that forgetful map $c_\Gamma: \GMS(\Gamma)_f \to \GMS(\Gamma)$ (obviously being proper) is a finite map.
\par
We define the substack $\GMS(\Gamma)_f^\red \subset \GMS(\Gamma)_f$ by imposing $t_i=0$ just as in the unframed situation.
\par
\medskip
\paragraph{\textbf{Level-wise splittings}} The boundary divisors of the moduli space of multi-scale differentials $\MS$ are commensurable to a product, see \cite[Section~3]{CMZEuler}. Essentially the same construction works for the components of~$\GMS$. For a given~$\Gamma$ we let $\mu^{[i]}$ be the generalized type (in the sense of \cite[Section~5]{CMZEuler} of level~$-i$, i.e.\ parameterizing possibly disconnected curves, one for each vertex at level~$i$, together with differentials with zeros and poles of orders as prescribed by the half-edges (markings and edges) adjacent to those vertices.
\par
Note the difference to the description of boundary divisors in~$\MS$: For any set~$\frakR$ of residue conditions (as in \cite[Section~3]{CMZEuler}), i.e.\ subsets of the set of poles, such that for each subset the sum of residues is required to add up to zero) we let $\MS({\mu^{[i]}})^\frakR \subset \MS({\mu^{[i]}})$ be the subspace of differentials that obeys these residue conditions. These spaces appear in the commensurability diagram for boundary divisors of~$\MS$ (\cite[Proposition~4.4]{CMZEuler}), but no superscript~$\frakR$ appears on the bottom left of~\eqref{dia:k-comens}.
\par
\begin{prop} \label{prop:GMScomm}
  Let $\Gamma$ be a level graph that imposes a GRC at every level. Then $\GMS(\Gamma)$ is commensurable to the product of its level-wise strata, i.e. there is a diagram 
\be \label{dia:k-comens}
\begin{tikzcd}[column sep=small]
\GMS(\Gamma)^\red_f \ar[r] \ar[d,
"p_{\Gamma}"'] & \GMS(\Gamma)_f  \ar[d, "c_{\Gamma}"]  \\
  \prod_{i=0}^L \MS({\mu^{[i]}}) &
\GMS(\Gamma) \\
\end{tikzcd}
\ee
where $p_\Gamma$ and $c_\Gamma$ are finite maps. 
\par
The top horizontal map is locally the quotient map by the ideal generated by the level parameters~$t_i$, which are nilpotent of order $\ell_i$ in~$\GMS(\Gamma)_f$. Moreover, 
\be \label{eq:degpdegc}
\frac{\deg(p_\Gamma)}{\deg(c_\Gamma)} \= \frac{\prod_e \kappa_e}{|\Aut(\Gamma)| \ell_\Gamma}\,.
\ee
\end{prop}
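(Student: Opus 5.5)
We follow the construction of the commensurability diagram for boundary strata of $\MS$ in \cite[Section~3 and Proposition~4.4]{CMZEuler}, adapted to the frozen setting. We first work on a nuclear chart of $\GMS(\Gamma)_f$ near a point with level graph~$\Delta$, a degeneration of~$\Gamma$. By Theorem~\ref{thm:MSinGMS} and its proof, a point is a prong-matched twisted differential with rescaling ensemble: level parameters $t_i$ for all level passages of~$\Delta$, and edge parameters $f_e$ with $f_e^{\kappa_e} = \prod t_i^{\ell_i}$ over the crossed passages. The frozen passages satisfy $t_i^{\ell_i}=0$, while the additional passages carry $(\GRC_i)$. Setting $t_i=0$ for frozen passages yields $\GMS(\Gamma)^\red_f$, which is the top horizontal map.

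To construct $p_\Gamma$, we restrict to the subfamily cut out by $t_i=0$ at every frozen passage. There, the curve retains all nodes of~$\Gamma$. Choosing a log splitting yields the collection of level differentials $\omega_{-j}$, and for each frozen level $-i$ we group the levels of $\Delta$ lying between two consecutive frozen passages. Their additional passages carry exactly the GRCs internal to that block, giving a family of multi-scale differentials of generalized type $\mu^{[i]}$. The framings $\sigma_e^\pm$ provide the framings at the edge ends of~$\Gamma$ on each level.

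The key point is that no residue condition $\frakR$ is imposed across frozen passages. Indeed, the frozen passages need not satisfy $(\GRC)$ in $\GMS(\Gamma)$, unlike for boundary divisors of $\MS$. This is why the bottom left corner is the plain product. Independence of the log splitting follows because different splittings differ by the level rotation torus. The framings absorb this action, since a rotation at a frozen level changes $\sigma_e^\pm$ compatibly with~\eqref{eq:framingtoPM}.

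Conversely, given a tuple of level-wise multi-scale differentials with framings, we reconstruct a point of $\GMS(\Gamma)^\red_f$. We glue along the edges of~$\Gamma$, define $\sigma_e = \sigma_e^+\otimes\sigma_e^-$, and set frozen $t_i=0$. This shows $p_\Gamma$ is quasi-finite and proper, hence finite. Finiteness of $c_\Gamma$ was noted above. The hypothesis that $\Gamma$ imposes a GRC at every level is used to ensure the frozen passages genuinely cannot be smoothed. This guarantees that the nilpotent thickening $t_i^{\ell_i}=0$ is the right local structure, rather than an open condition.

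The main obstacle is the degree formula~\eqref{eq:degpdegc}, which we compute by counting over a generic point. Over a generic point of $\GMS(\Gamma)$, the fiber of $c_\Gamma$ consists of the edge labelings, giving a factor $|\Aut(\Gamma)|$ after accounting for automorphisms. It also consists of the framings compatible with the prong-matching~\eqref{eq:framingtoPM}, modulo the level rotation torus. Over a generic point of the product, the fiber of $p_\Gamma$ consists of the choices of prong-matchings, of which there are $\prod_e\kappa_e$, modulo the action of the level rotation group. Each level rotation factor at passage~$i$ acts with orbits of size~$\ell_i$, giving a factor $\ell_\Gamma$. Moreover, the generic nilpotent structure $t_i^{\ell_i}$ contributes multiplicity exactly as in \cite[Proposition~4.4]{CMZEuler}. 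We expect the bookkeeping of ghost automorphisms and isotropy to be the delicate step. We would carry it out by reducing to the corresponding computation in loc.\ cit., since away from the GRC issue the local charts are identical.
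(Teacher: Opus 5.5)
Your overall strategy matches the paper's. You define $p_\Gamma$ only on the reduced locus, where all nodes of $\Gamma$ persist and no surgery at the frozen edges is needed. You read off its fibres from the framing data. You import the degree count from the existing commensurability results for boundary strata.

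\textbf{The non-reduced structure is not proved.} That $t_i^{\ell_i}=0$ holds on $\GMS(\Gamma)$ is part of its definition. That $\GMS(\Gamma)^\red_f$ is cut out by the $t_i$ is the definition of the reduced substack. The actual content is that $t_i$ is nilpotent of order exactly $\ell_i$ with no further relations. Equivalently, $\GMS(\Gamma)_f$ is locally isomorphic to $\GMS(\Gamma)^\red_f\times\prod_i\Spec\bC[t_i]/(t_i^{\ell_i})$. This is what later yields the geometric multiplicity $\ell_\Gamma$ in Proposition~\ref{prop:DRwithxitop}.

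Your sentence about the GRC hypothesis does not establish this. That hypothesis keeps $\GMS(\Gamma)$ from lying inside some $\GMS(\delta_j(\Gamma))$. It says nothing about which infinitesimal thickening of the reduced locus lies in $\GMS$. Moreover, your inverse construction only produces points with $t_i=0$.

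The missing ingredient is the paper's plumbing argument:
\begin{enumerate}
\item Near each edge, put the level differentials in the normal form $\omega_0=(u_e^{\kappa_e}+f_e^{\kappa_e}r_e)\,du_e/u_e$ and $\omega_1=-(v_e^{-\kappa_e}+r_e)\,dv_e/v_e$.
\item Glue along $u_ev_e=f_e$ with $f_e=t_i^{\ell_i/\kappa_e}$ over $\bC[t_i]/(t_i^{\ell_i})$.
\item The only obstruction is the Stokes relation $t_i^{\ell_i}\sum_{e\vdash C}r_e=0$, which holds automatically there. So no modification differentials are needed.
\item Conversely, on any such family the normal forms let you extract the level-wise differentials.
\end{enumerate}
This gives the local product structure with a thickening of exact order $\ell_i$.

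\textbf{The degree bookkeeping needs straightening out.} A point of $\GMS(\Gamma)^\red_f$ carries framings $\sigma_e^\pm$, which determine the prong-matching through $\sigma_e=\sigma_e^+\otimes\sigma_e^-$. So the fibre of $p_\Gamma$ is the set of framings at the ends of the edges of $\Gamma$, as the paper says, not the set of prong-matchings modulo level rotation. The fibre of $c_\Gamma$ consists of edge labelings together with the framings compatible with a given prong-matching class. The ratio then reduces to the count in \cite[Lemma~3.6]{CMSeclinsub}.

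There the factor $\ell_\Gamma$ comes from the level rotation action, exactly as for boundary divisors of $\MS$, which carry no thickening at all. So the nilpotent thickening must stay out of this ratio. Three facts force this:
\begin{itemize}
\item $\GMS(\Gamma)_f$ and $\GMS(\Gamma)$ carry the same thickening.
\item $p_\Gamma$ lives on the reduced space.
\item The multiplicity $\ell_\Gamma$ is accounted for separately in Proposition~\ref{prop:DRwithxitop}.
\end{itemize}
Letting it also ``contribute multiplicity'' to the degrees, as your last sentences suggest, would double count $\ell_\Gamma$.
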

\par
We conclude the description of~$\GMS(\Gamma)$ with an easy general dimension observation. A vertex $v \in V(\Gamma)$ is called \emph{holomorphic abelian} if all the half-edges adjacent to it have a label in~$\bZ_{>0}$ and divisible by~$k$. Generalizing this, we call a subset of vertices $v \subset V(\Gamma)$ which is connected through horizontal edges (only) \emph{holomorphic abelian (horizontally connected) component} if all the half-edges adjacent to besides the horizontal edges it have a label in~$\bZ_{>0}$ and divisible by~$k$. We write $V(\Gamma)^\ab$ for the set and write
\be \label{eq:defhab}
h_{\textrm{ab}} \= |V(\Gamma)^\ab|
\ee
for number of holomorphic abelian components in~$\Gamma$. They are obviously local maxima of~$\Gamma$. A component~$C$ of $\GMS(\Gamma)$ determines a subset
\be \label{eq:VabC}
V(\Gamma)^\ab_C  \, \subseteq \, V(\Gamma)^\ab
\ee
of top level vertices where the differential actually is a $k$-th power of an abelian differential.
\par
\begin{lemma} \label{le:dGammadef}
The components of $\GMS(\Gamma)$ have projectivized dimensions 
\be \label{eq:dGammadef}
d_\Gamma := \dim(\GMS(\Gamma)) \= d - L(\Gamma)  + |V(\Gamma)^\ab_C|\,.
\ee
in the range $0 \leq |V(\Gamma)^\ab_C| \leq h_{\textrm{ab}}$. 
The lower bound is always attained for some component if~$k>1$. The upper bound is attained if and only if each holomorphic abelian vertex has an abelian component.
\end{lemma}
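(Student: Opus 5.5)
The plan is to compute the dimension of a component $C$ of $\GMS(\Gamma)$ through the commensurability diagram of Proposition~\ref{prop:GMScomm}. Finite maps preserve dimension, and the passage from $\GMS(\Gamma)_f$ to $\GMS(\Gamma)_f^\red$ only kills the nilpotent level parameters. Hence $\dim C$ equals the dimension of the corresponding component of $\prod_{i=0}^{L} \MS(\mu^{[i]})$. So I will compute level by level and add up.

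\textbf{Dimension count.} Each level $\mu^{[i]}$ is a (generalized, possibly disconnected) stratum. Using the full product of projectivized strata without residue conditions, its projectivized dimension is
\[
\sum_{v\text{ at level } -i} \bigl(2g_v-2+n_v\bigr)\;-\;1\;+\;\#\{v \text{ at level } -i \text{ where $C$ is abelian}\}.
\]
Here $n_v$ counts all half-edges at~$v$, and horizontal edges are handled via the horizontally connected components as in the definition of~$h_\ab$. The term $-1$ comes from projectivizing each level once. The last term accounts for $N_\ab = N+1$ on abelian components.

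Summing over all levels and using $\sum_v(2g_v-2+n_v) = 2g-2+n$ (edges are counted twice in $n_v$ and once in the genus formula) gives $N - (L+1) + |V(\Gamma)^\ab_C|$. This equals $d - L(\Gamma) + |V(\Gamma)^\ab_C|$.

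It remains to justify two points:
\begin{itemize}
\item No GRC appears in the bottom-left of the diagram. This is exactly the content of Proposition~\ref{prop:GMScomm}, since frozen level passages impose $t_i^{\ell_i}=0$ rather than residue conditions.
\item Abelian components can only occur at holomorphic abelian vertices, and these are local maxima, hence at top of their horizontal component. This gives $V(\Gamma)^\ab_C\subseteq V(\Gamma)^\ab$ and therefore the range $0\le |V(\Gamma)^\ab_C|\le h_\ab$.
\end{itemize}

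\textbf{Attainment of the bounds.} For the lower bound with $k>1$, I will choose at every holomorphic abelian vertex the primitive $k$-differential component of its stratum. This requires that such a component is nonempty. I expect to justify this by the existence of primitive components of holomorphic $k$-strata for $k>1$ (possibly via the $\bZ/k$-cover description of Theorem~\ref{thm:MSkproperties}), combined with the fact that the product of chosen components lifts to a component of $\GMS(\Gamma)$ because $p_\Gamma$ is finite and surjective onto the product.

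For the upper bound, the maximal value $h_\ab$ is attained exactly when each holomorphic abelian vertex admits an abelian component. Such a component is nonempty if and only if the $k$-th root stratum of the vertex is nonempty. Choosing it at every such vertex gives a component with $|V(\Gamma)^\ab_C|=h_\ab$. Conversely, if some vertex has no abelian component, the count is strictly smaller.

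\textbf{Main obstacle.} The delicate step is surjectivity of $p_\Gamma$ onto every product of level-wise components. Without it, not every choice of level-wise components would be realized by a component of $\GMS(\Gamma)$. I would argue surjectivity by directly constructing the frozen family: take the twisted differential with prong-matching and level parameters $t_i$, set $t_i = 0$, and check that it lies in $\GMS(\Gamma)$. No GRC needs checking, since the levels are frozen.
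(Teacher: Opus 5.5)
Your dimension count is the argument the paper intends. The paper gives no separate proof and presents the lemma as an immediate consequence of the commensurability diagram in Proposition~\ref{prop:GMScomm}. Your summation $\sum_v(2g_v-2+n_v)-(L(\Gamma)+1)+|V(\Gamma)^\ab_C| = d-L(\Gamma)+|V(\Gamma)^\ab_C|$ is correct, provided $\Gamma$ is taken without horizontal edges: these are never frozen, and a persisting horizontal node would cut the dimension by one. Also note that $k$-th power components exist at meromorphic vertices whose orders are all divisible by~$k$, but there they have the same dimension $2g_v-2+n_v$. That, not the local-maximum property, is why only $V(\Gamma)^\ab$ enters; the inclusion $V(\Gamma)^\ab_C\subseteq V(\Gamma)^\ab$ holds by definition.

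The step that fails is the attainment of the lower bound for $k>1$. It needs a nonempty component of primitive $k$-differentials at every holomorphic abelian vertex, and such components need not exist. For $k=2$, a genus-one vertex whose half-edges all have order~$0$ carries only multiples of $(dz)^2$, since $H^0(E,K_E^{\otimes 2})=H^0(E,\cO_E)$. Likewise, the primitive stratum $\cQ(4)$ in genus two is empty by Masur--Smillie. Both vertex types occur in the paper's own special star graphs $D_2$ and $D_3$ for $\mu=(5,3)$. For $\Gamma=D_3$, every component of $\GMS(\Gamma)$ has $|V(\Gamma)^\ab_C|=h_\ab=3$, so the value~$0$ is never attained.

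Hence the existence statement you planned to cite is false. The lower-bound claim is only provable under the extra hypothesis that every holomorphic abelian vertex admits a nonempty primitive component. Under that hypothesis your construction works: choose primitive components and lift through $p_\Gamma$, whose surjectivity your frozen-family argument provides. By contrast, holomorphic abelian strata are never empty, so the upper bound $h_\ab$ is always attained and that ``if and only if'' is automatic.
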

\par
\medskip
\paragraph{\textbf{Proofs}} The proofs of the propositions will be combined as follows.
\par
\begin{proof}[Proof of Proposition~\ref{prop:GMSsubstack}, Proposition~\ref{prop:GMSred}, Proposition~\ref{prop:GMSunion}, and of Proposition~\ref{prop:GMScomm}] The properness in  Proposition~\ref{prop:GMSsubstack} is proven as in the case of $\MS = \GMS(\bullet)$ in \cite{LMS}: Under degeneration the nodes at the new level passages are forced to satisfy the GRC by Stokes' theorem. The additional condition $t_i^{\ell_i} = 0$ is a closed condition, too.
\par
The remaining statements of the first three propositions are local, to be verified at every point~$x \in \GMS$. Let~$\Gamma$ be the level graph at this point. The key idea is visible in:
\par
\medskip
\paragraph{\textbf{Case: the graph~$\Gamma$ has just one level passage}} Suppose the top level has, say,  $b$ components that impose a~$\GRC$. Take the first component~$C$ and consider the edges adjacent to this component. For each of those edges the normal form \cite[Equation~(3.11)]{To2}\footnote{A priori an element in $\GMS$ satisfies the normal formal \cite[Equation~(3.10)]{To2}. If $f_e^{\kappa_e} \neq 0$ it is shown in \cite[Theorem~4.3]{LMS} how to change coordinates to obtain the first normal form. If $f_e^{\kappa_e} =0$ the iterative Ansatz \cite[Remark~4.6]{LMS} needs to be run only for~$\ell$ steps and is thus easily shown to produce a convergent base change.}  of the node $u_e v_e = f_e$ and the differentials
\be \label{eq:omegaNF}
\omega_0 \= \bigl(u_e^{\kappa_e} + f_e^{\kappa_e} r_e\bigr)\frac{du_e}{u_e} \quad \text{and} \quad 
\omega_1 \= -\bigl(v_e^{-\kappa_e} + r_e\bigr)\frac{dv_e}{v_e}
\ee
implies that the period of the corresponding vanishing cycles equals $f_e^{\kappa_e} r_e= t_1^{\ell}r_e$ by the relation between edge parameters and level parameters. Stokes' theorem implies that the sum of those periods vanishes and we conclude that
\bes
t_1^{\ell} \cdot \sum_{e \vdash C} r_e \= 0\,.
\ees
We conclude that either~$\Gamma$ imposes a GRC, the nodes all persist, and $t_1^\ell = 0$, i.e, the defining conditions of~$\GMS(\Gamma)$ hold, or the sum of residues vanishes for each components, i.e., $(\GRC_1)$ holds. (If $b=0$, the first possibility does not occur.)

We need to show that the two components are irreducible locally near~$x$. The component where $(\GRC_1)$ holds is just~$\GMS(\bullet) = \MS$ and this follows from the smoothness of this component, the main result of~\cite{LMS}.
\par
To show the irreducibility of $\GMS(\Gamma)$ near~$x$ we will show locally an isomorphism to a neighborhood of a point in $\MS(\mu^{[0]}) \times \MS(\mu^{[1]}) \times \Spec \bC[t_1]/t_1^\ell$. Starting with an element of~$\GMS(\Gamma)$ we obtain the elements in $\MS(\mu^{[i]})$ by taking the differentials with the normal forms~\eqref{eq:omegaNF} on the nodes, replacing $u_ev_e = f_e$ by $u_e v_e=0$ everywhere. Thanks to $t_1^{\ell} = 0$ the differential~$\omega_0$ is actually holomorphic (i.e.\ without simple poles) at the nodes. Taking the normalization of the family gives the two pieces we need. The level parameter~$t_1$ is anyway part of the datum of the point in~$\GMS(\Gamma)$.
\par
Conversely, given a pair $(X_i,\omega_i)$ of multi-scale differentials in $\MS(\mu^{[0]}) \times \MS(\mu^{[1]})$, we glue the Riemann surfaces~$X_i$ at the appropriate zeros and poles of~$\omega_i$ to form a stable curve~$X'$ with dual graph~$\Gamma$. The family of degenerating curves~$X$ will be obtained from~$X$ by replacing the nodes $u_e v_e=0$ with the (infinitesimally) degenerating family $u_ev_e = f_e$ where $u_e$ and $v_e$ are coordinates that put the differentials~$\omega_0$ and~$\omega_1$ in normal form and where $f_e = t_1^{\ell/\kappa_e}$. With this form, the differential~$\omega_0$ glues to $0 = t_1^{\ell} \omega_1$ on the other side of the node.
\par
To show that the two constructions are inverses to each other, we use the essential uniqueness of the normal forms and that the constructions are local. More precisely, the ('Strebel') normal form depends on the upper (holomorphic) end on the choice of $\kappa_e$-th root of unity \cite[Theorem~4.1]{LMS}. On the lower (meromorphic) end, it depends on the choice of an auxiliary section. To pin these choices we fix once and for all in the special fiber over~$x$ (which is not altered in the construction, just as the whole locus over $t_1=0$) neighborhoods of the nodes and choose in annular boundary collars some reference points that we require to match with points in the 'plumbing fixtures' $u_ev_e = f_e$ in order kill non-uniqueness in the Strebel normal form. 
\par
To summarize: the reader will recognize that this is exactly the plumbing construction of \cite[Section~10.4]{LMS} without using any modification differentials but using $f_e^{\kappa_e}=0$ instead. This concludes the proof of irreducibility near~$x$. The smoothness in Proposition~\ref{prop:GMSred} also follows from this construction. 
\par
\medskip
\paragraph{\textbf{Case: general graph~$\Gamma$}} We start with a discussion of the  candidates for irreducible components near~$x$. For the first level passage, we conclude as above that $t_1^{\ell_1} = 0$ holds or $(\GRC_1)$ holds. Suppose $\Gamma$ has two level passages.
\par
If the second level passage imposes no GRC, we can smooth this level passage and parameterize a neighborhood by the union of $\MS$ (if $x$ is a point where $(\GRC_1)$ holds) and $\GMS(\delta_2(\Gamma))$. To show that this space is irreducible, we use the local identification with $\Spec \bC[t_1]/t_1^\ell \times \Spec \bC[t_2] \times \prod_{i=0}^2 \MS(\mu^{[i]})$. This identification is proven combining the proof in the case $L(\Gamma)=1$ above for smoothing the nodes in the first level passage with the usual parameterization of boundary neighborhoods by plumbing (without a nilpotent parameter, but with $t_2 \in \Delta$ instead) in the multi-scale space.
\par
If the second level passage strictly imposes a GRC, we compare $\omega_1$ and~$\omega_2$ as in~\eqref{eq:omegaNF} and now arrive at the conclusion that for each component~$C$ above the second level passage
\bes
t_2^{\ell_2} \cdot \sum_{e \vdash C} r_e \= 0\,
\ees
holds. Together we conclude that~$t_1^{\ell_1} = 0$ or $(\GRC_1)$ holds, and $t_2^{\ell_2} = 0$ or $(\GRC_2)$ holds. Equivalently, we see that all components that meet at~$x$ belong to one of the four spaces $\MS$, $\GMS(\delta_1(\Gamma))$, $\GMS(\delta_2(\Gamma))$, and $\GMS(\Gamma)$ (if both $(\GRC_1)$ and $(\GRC_2)$ hold at~$x$, and the obvious subsets, if not). For three of them we have already shown this in the previous case. For the last, we prove a local identification with $\Spec \bC[t_1]/t_1^{\ell_1} \times \Spec \bC[t_2]/t_2^{\ell_2} \times \prod_{i=0}^2 \MS(\mu^{[i]})$ iterating the plumbing construction above over two level passages.
\par
If the second level passage imposes a GRC, but only weakly so, the obstruction by Stokes theorem now applies only to edges comparing~$\omega_0$ and~$\omega_2$ and from which we deduce
\bes
t_1^{\ell_1} t_2^{\ell_2} \cdot \sum_{e \vdash C} r_e \= 0\,.
\ees
Together we conclude that one of 
\be \text{$(\GRC_1)$ and $(\GRC_2)$}, \qquad \text{$(\GRC_1)$ and $t_2^{\ell_2} = 0$}, \qquad
\text{or $t_1^{\ell_1} = 0$}
\ee
holds. We observe that (if both $(\GRC_1)$ and $(\GRC_2)$ hold at~$x$), all components that meet at~$x$ belong to one of the three spaces $\MS$, $\GMS(\delta_1(\Gamma))$ or $\GMS(\delta_2(\Gamma))$. Using the local irreducibility of these spaces (same argument as above again), this proves Proposition~\ref{prop:GMSunion} for this case and at this point, which claims that we do not need $\GMS(\Gamma)$ to describe~$\GMS$ as a union.
\par
We leave it to the reader to check that for $\Gamma$ with an arbitrary number of levels the same pattern continues: Strictly imposing a GRC at every level implies the $\GRC_i$ or $t_i^{\ell_i}=0$-dichotomy at every level and imposing  $t_i^{\ell_i}=0$ gives a component. In the absence of an edge joining level $-j$ with $-j+1$ one of the conditions involves a product of level parameters. This implies that corresponding~$\GMS(\Gamma)$ is still locally irreducible (Proposition~\ref{prop:GMSsubstack}) but contained in~$\GMS(\delta_j(\Gamma))$. 
\par
To prove the remaining claims of Proposition~\ref{prop:GMScomm} observe that on one the loci $t_i=0$, i.e.\ on~$\GMS(\Gamma)^\red_f$ we do not need to perform any local surgery at the edges crossing the frozen level passages. This implies that the map~$p_\Gamma$ can be defined globally. The fiber of~$p_\Gamma$ is the set of possible framings at the singularities that stem from nodes of~$\Gamma$ of the image element in the product of $\MS(\mu^{[i]})$. The degree computation follows as in \cite[Lemma~3.6]{CMSeclinsub}.
\end{proof}
\par
\medskip
\paragraph{\textbf{Chow rings and tautological bundles}} An immediate consequence of Proposition~\ref{prop:GMScomm} is:
\par
\begin{cor} The rational Chow ring of $\GMS(\Gamma)$ is a product
\be
\CH^*(\GMS(\Gamma)) \= \prod_{i=0}^L \CH^*(\MS(\mu^{[i]}))
\ee
of the level-wise Chow rings. 
\end{cor}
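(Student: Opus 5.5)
We would deduce the corollary directly from the commensurability diagram~\eqref{dia:k-comens} of Proposition~\ref{prop:GMScomm}. The guiding principle is that rational Chow groups do not see nilpotent thickenings, and that finite surjective maps between DM-stacks induce injections on rational Chow groups via pullback; composing with a pushforward and dividing by the degree gives the inverse on invariants. The statement is to be read as an identification of rational Chow rings up to these finite covers, as is customary for boundary divisors of $\MS$ in \cite{CMZEuler}.

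\emph{Step 1: remove the nilpotents.} The top horizontal map $\GMS(\Gamma)^\red_f \to \GMS(\Gamma)_f$ is locally the closed immersion cut out by the level parameters~$t_i$, which are nilpotent of order~$\ell_i$. It is therefore a homeomorphism onto the underlying reduced stack, and $\CH^*$ with $\bQ$-coefficients depends only on the reduced structure. Hence $\CH^*(\GMS(\Gamma)_f) \cong \CH^*(\GMS(\Gamma)^\red_f)$.

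\emph{Step 2: pass through the finite maps.} The map $c_\Gamma$ is finite and surjective, and generically of constant degree on each component. For rational coefficients, $c_\Gamma^*$ composed with $\frac{1}{\deg c_\Gamma}c_{\Gamma,*}$ is the identity, so $\CH^*(\GMS(\Gamma))$ is the invariant part of $\CH^*(\GMS(\Gamma)_f)$ under the deck group. This group is the finite group relabeling edges and changing framings.

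The map $p_\Gamma$ is finite. We would also check that it is étale onto its image, with fibers the possible framings compatible with fixed prong data, by the proof of Proposition~\ref{prop:GMScomm}, where $p_\Gamma$ is defined by normalizing at the edges over $t_i=0$. Moreover, $\GMS(\Gamma)^\red_f$ is smooth by Proposition~\ref{prop:GMSred}, and so is each $\MS(\mu^{[i]})$. Thus $p_\Gamma$ is a finite étale torsor for the framing group, and $\CH^*(\prod_i \MS(\mu^{[i]}))$ is the invariant part of $\CH^*(\GMS(\Gamma)^\red_f)$.

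\emph{Step 3: identify the two invariant parts.} The deck groups of $p_\Gamma$ and $c_\Gamma$ act through the same framing torus action, up to the automorphisms of~$\Gamma$. Because the framing action is by a connected group modulo a finite subgroup, it acts trivially on rational Chow groups. The automorphism part is accounted for by the factor $|\Aut(\Gamma)|$ in~\eqref{eq:degpdegc}. This yields the identification of $\CH^*(\GMS(\Gamma))$ with $\CH^*\bigl(\prod_i \MS(\mu^{[i]})\bigr)$.

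\emph{Step 4: Künneth.} We then need $\CH^*\bigl(\prod_i \MS(\mu^{[i]})\bigr)$ to be written as a product of the level-wise rings. Here we would interpret ``product'' as the exterior product appearing in \cite{CMZEuler}, meaning the tensor product of level-wise tautological rings. Alternatively, we would interpret it as the statement that all classes of interest, the $\xi$-powers and $\psi$-classes, are pulled back levelwise.

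\emph{Main obstacle.} The main obstacle is this last Künneth step, since an honest Künneth isomorphism for Chow rings of such stacks is not automatic. We would restrict the claim to the subrings generated by level-wise classes, which is all that is used in what follows.
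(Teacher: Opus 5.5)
Your route through the diagram~\eqref{dia:k-comens} is the intended one: the paper offers nothing beyond calling the corollary an immediate consequence of Proposition~\ref{prop:GMScomm}. Steps~1 and~2 are fine as far as they go. The gap is Step~3, which does not hold.

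The deck groups of $p_\Gamma$ and $c_\Gamma$ are \emph{finite}. They consist of changes of framings at the half-edges by roots of unity, and for $c_\Gamma$ also of edge relabelings by $\Aut(\Gamma)$; moreover the two groups are different. They do not come from a connected group acting on $\GMS(\Gamma)^\red_f$, since the level rotation torus is already quotiented out in $\GMS(\Gamma)$. So nothing forces them to act trivially on $\CH^*_\bQ$. The factor $|\Aut(\Gamma)|$ in~\eqref{eq:degpdegc} only normalizes push-forwards and cannot make two different invariant subspaces coincide.

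Take $\Gamma=D_3$ from the introduction. There $\Aut(\Gamma)\cong S_3$ permutes the three genus-one top vertices, so $c_\Gamma^*\CH^*(\GMS(\Gamma))$ consists of $S_3$-invariant classes. In contrast, $p_\Gamma^*\CH^*(\MS(\mu^{[0]})\times\MS(\mu^{[1]}))$ contains the injective pullback of the $\psi$-class at the edge of a single top vertex, which is not invariant. So the two subrings of $\CH^*(\GMS(\Gamma)^\red_f)$ you want to identify differ, and no ring isomorphism comes out of this argument. Your Step~4 fallback to ``subrings generated by level-wise classes'' has the same defect, because such non-symmetric level-wise classes do not descend to $\GMS(\Gamma)$.

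What the diagram does yield is commensurability, and this is what the paper actually uses: to define $\zeta^{[i]}$, in Corollary~\ref{cor:xiconversion}, and in the proof of Theorem~\ref{thm:DRcompletedvolgraphs}. Concretely:
\begin{itemize}
\item Rationally, $c_\Gamma^*$ and $p_\Gamma^*$ are injective.
\item A class $\alpha$ defined on $\GMS(\Gamma)$ itself from level-wise data satisfies $c_\Gamma^*\alpha=p_\Gamma^*(\alpha_0\boxtimes\cdots\boxtimes\alpha_L)$ on $\GMS(\Gamma)^\red_f$. An example is $\zeta^{[i]}$, built from the tautological line bundle of level~$i$ spanned by $\omega_i$.
\item Consequently $\int_{\GMS(\Gamma)^\red}\alpha=\frac{\deg p_\Gamma}{\deg c_\Gamma}\prod_i\int_{\MS(\mu^{[i]})}\alpha_i$.
\item An extra factor $\ell_\Gamma$ appears if you integrate against the fundamental class of the non-reduced $\GMS(\Gamma)$. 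Your Step~1 is right for Chow groups but forgets this multiplicity of the fundamental class.
\end{itemize}
Stated this way, Step~3 is replaced by a comparison of specific classes through the correspondence $c_{\Gamma,*}p_\Gamma^*$. The K\"unneth problem of Step~4 then disappears, since only degrees of exterior products are needed.
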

\par
With this identification we denote by $\zeta^{[i]} \in \CH^1(\GMS(\Gamma)) $ the pullback of the class $\zeta \in \CH^1(\MS(\mu^{[i]}))$ as defined in Section~\ref{sec:tautclass} to~$\GMS(\Gamma)$. We abbreviate $\zeta^\top = \zeta_\top = \zeta^{[0]}$ and $\zeta^\bot = \zeta_\bot = \zeta^{[L(\Gamma)]}$. The same as the proof of Lemma~\ref{le:etaISxi} shows:
\par
\begin{lemma} \label{le:etaISxitop}
  The restriction of~$\eta$ of~$\GMS$ is equal to~$-\zeta^\top$.
\end{lemma}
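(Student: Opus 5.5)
We follow the proof of Lemma~\ref{le:etaISxi}, now on the component $\GMS(\Gamma)$ instead of on $\MS = \GMS(\bullet)$. The aim is a map $\rho^\top \colon \GMS(\Gamma) \to \bP(\cH_{k,+}^{[0]})$, where $\cH_{k,+}^{[0]}$ is the twisted Hodge bundle of the top level stratum $\mu^{[0]}$ (pulled back along the projection to $\MS(\mu^{[0]})$). We then show that $\cF$ is identified with the pullback of $\cO(-1)$ under $\rho^\top$. Since $\zeta^{[0]}$ is by definition pulled back from $\MS(\mu^{[0]})$, where Lemma~\ref{le:etaISxi} already gives $\eta = -\zeta$, the claim then follows from the same functoriality of maps to projective bundles.

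\textbf{Construction of the map.} For an object $(X/B,\beta,\cF,\varphi)$ of $\GMS(\Gamma)$ we take, as in~\eqref{eq:cFcH}, the composite $\pi^*\cF \cong \cL(-\beta) \to \cL$. We then restrict to the top level subcurve $X^{[0]}$, i.e.\ the components where $\beta$ attains its maximum value $0$. Because $\beta$ vanishes on $X^{[0]}$, the section is not twisted down there. Hence the restriction defines a section of $\omega^{\otimes k}$ on $X^{[0]}$ with the prescribed orders at the markings, and with poles of order at most $\kappa_e + k$ at the upper ends of the vertical edges. This gives a bundle map $\cF \to \pi_*\bigl(\cL|_{X^{[0]}}(\text{poles})\bigr)$, which is the relevant twisted Hodge bundle $\cH^{[0]}_{k,+}$ of the disconnected top level stratum.

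\textbf{Universal injectivity.} We then argue that this map is universally injective, exactly as in Lemma~\ref{le:etaISxi}. A nonzero fibre section of $\cL(-\beta)$ is a pullback from the base, so it is nonvanishing on the top level components, and its image in $\cH^{[0]}_{k,+}$ is therefore nonzero. We must check that nilpotent level parameters do not spoil this. On $\GMS(\Gamma)$ the parameters satisfy $t_i^{\ell_i}=0$, and the top level differential is $\omega_0 = \varphi^{-1}\pi^*\wt\psi(0)$, involving no level parameter at all. So injectivity holds over any base, including the non-reduced thickening. This yields $\rho^\top$ with $\rho^{\top*}\cO(-1) \cong \cF$, and therefore $\eta = c_1(\cF^\vee) = \rho^{\top*}c_1(\cO(1))$.

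\textbf{Identification and main obstacle.} Finally, $\rho^\top$ factors through the top level projection in diagram~\eqref{dia:k-comens}, composed with $\rho_k$ for $\MS(\mu^{[0]})$. Here we use that $\GMS(\Gamma)^\red_f \to \GMS(\Gamma)$ induces an isomorphism on rational Chow groups, being a nilpotent thickening composed with a finite cover. Consequently $\eta|_{\GMS(\Gamma)} = -\zeta^{[0]}$ by the definition of $\zeta$ in Section~\ref{sec:tautclass}.

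The main obstacle is a sign/convention subtlety rather than geometry. One must check that the projectivization used for $\zeta$ on $\MS(\mu^{[0]})$, where top level may be disconnected and carries the residue-free conditions, agrees with the one obtained from $\cF$. One must also check that the top level factor in the commensurability diagram carries no extra twist by $\ell_\Gamma$ or by the prong data. I would verify this first on a one-level-passage graph via the normal form~\eqref{eq:omegaNF}, where $\omega_0$ is manifestly untwisted.
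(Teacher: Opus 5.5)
Your proposal is correct in substance and follows the paper, whose entire proof is the remark that the argument of Lemma~\ref{le:etaISxi} carries over: $\pi^*\cF\cong\cL(-\beta)\to\cL$ gives a universally injective map $\cF\to\pi_*\cH_{k,+}$, which is nonzero because sections are nonvanishing on top level components, so $\cF$ is the tautological line spanned by the top level differential. Two caveats: first, on the non-reduced $\GMS(\Gamma)$ the frozen nodes are in general smoothed to nilpotent order, so the top level subcurve $X^{[0]}$ is not a subfamily there (the paper maps into the Hodge bundle of the whole curve instead), though this is harmless since you only need the comparison on $\GMS(\Gamma)^\red_f$; second, with $\zeta=\rho_k^*c_1(\cO(1))$ your own formula $\eta=\rho^{\top*}c_1(\cO(1))$ gives $+\zeta^{[0]}$, so the minus sign can only be imported from Lemma~\ref{le:etaISxi} as in your first paragraph, and the paper itself states that sign inconsistently (that lemma claims $\eta|_{\MS}=\xi$, while its proof ends with $\eta|_{\MS}=-\xi$).
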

\par
We also need the tautological bundles at each of the vertices of~$\Gamma$, or more generally for any subset $S$ of the vertices at any level~$i$ of~$\Gamma$.
We define $\cO_{\mu|_S}(1)$ to be the bundle on $\GMS(\Gamma)$ whose fiber over $(X/B,\beta,\varphi,\cF)$ are the multiples of the tautological differential~$\omega_i$ restricted to the vertices in~$S$.
Then, using a bullet point to remind us that we have a bundle on the typically non-reduced $\GMS(\Gamma)$ rather than on the left column of~\eqref{dia:k-comens},  we let
\be \label{eq:cO1S}
\cO_{\mu|_S, \bullet}(1)  \= \cL(D) \quad \text{and} \quad \zeta_{S,\bullet} = c_1(\cO_{\mu|_S, \bullet}(1))
\ee
Checking on local generators (as in \cite[Proposition~4.9]{CMZEuler}) shows:
\par
\begin{cor} \label{cor:xiconversion}
In the case $S = V(\Gamma)^\top$ the set of top level vertices, the tautological classes on the various spaces agree up to a scalar, i.e.
\be
\zeta_{V(\Gamma)^\top,\bullet} \= \zeta^\top \= \frac{\prod_e \kappa_e}{|\Aut(\Gamma)| \prod_{i=1}^L \ell_i} \, \cdot \, c_{\Gamma,*} \iota_* p_\Gamma^* \zeta_{\MS^{[0]}} \quad 
\in \CH^1(\GMS(\Gamma))
\ee
\end{cor}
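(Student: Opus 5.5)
The plan is to check the identity on local generators of the three line bundles involved, following the model of \cite[Proposition~4.9]{CMZEuler}. The first equality, $\zeta_{V(\Gamma)^\top,\bullet} = \zeta^\top$, is essentially tautological. By Lemma~\ref{le:etaISxitop}, $\zeta^\top = -\eta|_{\GMS(\Gamma)}$, and $\eta$ is the dual first Chern class of $\cF$.

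For this first equality I would start from the chain of maps $\pi^*\cF \to \cL(-\beta) \to \cL$ used in the proof of Lemma~\ref{le:etaISxi}. On a nuclear chart, after choosing a log splitting as in the proof of Theorem~\ref{thm:MSinGMS}, a generator of $\cF$ maps to the top level differential $\omega_0 = \varphi^{-1}\pi^*\wt\psi(0)$. Restricted to the top level vertices, this is exactly a generator of $\cO_{\mu|_S,\bullet}(-1)$ for $S = V(\Gamma)^\top$. A different log splitting rescales all $\omega_i$ by an element of the level rotation torus, but $\omega_0$ is only rescaled by the top level factor. Since this is compatible with the rescaling of $\cF$, the two bundles are isomorphic on all of the non-reduced stack $\GMS(\Gamma)$. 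This gives $\zeta_{V(\Gamma)^\top,\bullet} = \zeta^\top$ with the sign conventions of Section~\ref{sec:tautclass}.

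The second equality compares the class on $\GMS(\Gamma)$ with the one pulled back from the top level factor $\MS(\mu^{[0]})$ through the diagram~\eqref{dia:k-comens}. I would proceed in three steps.
\begin{itemize}
\item[(i)] Over $\GMS(\Gamma)^\red_f$ the tautological bundle of top level vertices is the pullback $p_\Gamma^*\cO_{\MS^{[0]}}(-1)$. This holds because $p_\Gamma$ is defined by normalizing at the nodes and keeping $\omega_0$ unchanged.
\item[(ii)] Push forward along the nilpotent thickening $\iota$ given by the top horizontal map. Locally it is the quotient by $t_i$ with $t_i^{\ell_i}=0$, so the fundamental class of $\GMS(\Gamma)_f$ equals $\prod_i \ell_i$ times that of the reduced substack. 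Line bundle classes on the non-reduced stack are determined by their restriction to the reduced one.
\item[(iii)] Push forward along the finite map $c_\Gamma$. Comparing the multiplicity from (ii) with $\deg(p_\Gamma)/\deg(c_\Gamma)$ from~\eqref{eq:degpdegc} produces exactly the scalar $\prod_e \kappa_e / (|\Aut(\Gamma)|\ell_\Gamma)$, read as an identity of operational classes capped against the fundamental class.
\end{itemize}

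The main obstacle is step (ii)–(iii): making precise in what sense the equality holds in $\CH^1$ of a non-reduced stack, and tracking the normalization so that the nilpotent multiplicity $\ell_\Gamma$ and the degrees combine into the stated factor rather than its inverse. I would first handle this in the one-level-passage case using the explicit local model $\MS(\mu^{[0]})\times\MS(\mu^{[1]})\times\Spec\bC[t_1]/t_1^{\ell}$ from the proof of Proposition~\ref{prop:GMScomm}. The general case then follows by iterating over the level passages.
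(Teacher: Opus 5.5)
Your overall route is the paper's: its proof is only the remark that one checks local generators as in \cite[Proposition~4.9]{CMZEuler}, and your step~(i) is exactly that check. The gap is in steps~(ii)--(iii), where you try to produce the scalar by combining the length $\ell_\Gamma$ of the thickening $\GMS(\Gamma)^\red_f\to\GMS(\Gamma)_f$ with $\deg(p_\Gamma)/\deg(c_\Gamma)$.

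By \eqref{eq:degpdegc} that ratio \emph{alone} already equals $\prod_e\kappa_e/(|\Aut(\Gamma)|\ell_\Gamma)$. Its $\ell_\Gamma$ comes from the level rotation acting on prong-matchings; it appears in the same way for the reduced boundary divisors of $\MS$ in \cite{CMZEuler}, and it is not the nilpotent length. If the length enters, i.e.\ if you cap against $[\GMS(\Gamma)]=\ell_\Gamma[\GMS(\Gamma)^\red]$, you obtain $\prod_e\kappa_e/|\Aut(\Gamma)|$ instead. The paper inserts the length separately as the geometric multiplicity $\ell_\Gamma$ in \eqref{eq:excessclass}, so folding it in here would count $\ell_\Gamma$ twice in Theorem~\ref{thm:DRcompletedvolgraphs}.

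Note also that step~(i) and the projection formula give $c_{\Gamma,*}\iota_*p_\Gamma^*\zeta_{\MS^{[0]}}=\deg(c_\Gamma)\,\zeta^\top$. So the displayed scalar can only be meant as the conversion of degrees used later. If $c_\Gamma^*\alpha=p_\Gamma^*\beta$ on $\GMS(\Gamma)^\red_f$ (for instance $\alpha$ a monomial in $\zeta^\top$ and the lower level classes, by~(i)), then $\deg(c_\Gamma)\int_{\GMS(\Gamma)^\red}\alpha=\deg(p_\Gamma)\int_{\prod_i\MS(\mu^{[i]})}\beta$. Then \eqref{eq:degpdegc} turns this into the stated factor. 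From step~(ii) keep only the fact that Chow groups and first Chern classes on the thickening are those of its reduction, and drop the multiplicity.

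A smaller point concerns the first equality. As you set it up, $\cF\cong\cO_{\mu|_{V(\Gamma)^\top},\bullet}(-1)$ gives $\zeta_{V(\Gamma)^\top,\bullet}=c_1(\cF^\vee)=\eta$. Lemma~\ref{le:etaISxitop} as stated says $\eta=-\zeta^\top$, so your chain actually yields $\zeta_{V(\Gamma)^\top,\bullet}=-\zeta^\top$. The appeal to ``the sign conventions of Section~\ref{sec:tautclass}'' hides this; the clash goes back to the sign mismatch between the statement and the proof of Lemma~\ref{le:etaISxi}.

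The detour through $\eta$ is unnecessary. Step~(i) already identifies the pullback of $\cO_{\mu|_{V(\Gamma)^\top},\bullet}(1)$ to $\GMS(\Gamma)^\red_f$ with $p_\Gamma^*\cO_{\MS^{[0]}}(1)$, which is the defining property of $\zeta^\top$ under the identification of Chow rings. Pullback along $c_\Gamma$ is injective on rational Chow groups because $c_{\Gamma,*}c_\Gamma^*=\deg(c_\Gamma)$, so the first equality follows.
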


%% file: sec_normalsheaf.tex
\section{The normal sheaf to $\GMS(\Gamma)$} \label{sec:normal}

The goal of this section is to compute in Theorem~\ref{thm:nbGMSapprox} the normal sheaf to the subspaces $\GMS(\Gamma)$ to the extent necessary to evaluation in Section~\ref{sec:excess} the top intersection numbers with the $\zeta$-classes. We will recall in Section~\ref{sec:defo} some deformation theory needed for the proofs. The statements we aim for are in Section~\ref{sec:tangnormal}. The following diagram summarizes the spaces we will be working with:
\begin{center}
\begin{tikzcd}
&&& \bP \cat{Rub}(\Gamma) \ar{d} \\ 
\GMS(\Gamma)^\red  \ar[r,"\iota_\red"]  \ar[rrru, bend left, pos=0.2, "\frakj_\Gamma^{\red}"]&
\GMS(\Gamma)    \ar[r,"\iota_\Gamma"] \ar[rr, bend left,pos = 0.3, "\frakj_\Gamma"] &
\GMS  \arrow{r}{\frakj_{0}} 
\arrow{d}{\frakj_{\cL}}  & \bP \cat{Rub}  \arrow{d}{\sigma_\cL}\\
&& \bP \cat{Rub} \arrow{r}{\sigma_0} &  \Picabs
	\end{tikzcd}
\end{center}
\par
We let $\bP \cat{Rub}(\Gamma)$ be the closed substack of $\bP \cat{Rub}(\Gamma)$ where the underlying level graph is a degeneration of~$\Gamma$ and where the nodes corresponding to edges crossing a level passage of~$\Gamma$ are persistent, i.e., cut out by $t_i = 0$ for each level parameter~$t_i$. By definition the image of $\bP \cat{Rub}(\Gamma)$ in $\barmoduli[g,n]$ is contained in the boundary. We record the well-known resp. obvious facts:
\par
\begin{prop} \label{prop:GMSredsmooth}
  The stack $\bP \cat{Rub}$ is smooth and $\bP \cat{Rub}(\Gamma)$ is a smooth substack.
  \par
  The intersection of $\GMS(\Gamma)$ with $\bP \cat{Rub}(\Gamma)$ is the smooth substack  $\GMS(\Gamma)^\red$. 
\end{prop}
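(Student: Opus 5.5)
For the smoothness of $\bP\cat{Rub}$ I will rely on log smoothness. The stack $\cat{Rub}_0$ of rubber curves (Marcus--Wise) is log smooth over the point with its trivial log structure. Its underlying algebraic stack is locally toroidal: étale locally near a point with level graph~$\Delta$ it is the product of a smooth chart of $\barmoduli[g,n]$ with the toric variety of the monoid $\ghost_{B,b}$. By the minimality discussion recalled in the proof of Theorem~\ref{thm:MSinGMS}, this monoid is the free monoid $\bN\langle p_1,\ldots,p_L\rangle\oplus\bN\langle E^h(\Delta)\rangle$. A free monoid has a smooth toric variety, namely an affine space, so $\cat{Rub}_0$ is smooth. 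Then $\bP\cat{Rub}$ is smooth as well, being the quotient of a smooth stack by the free rescaling $\bC^*$-action, or equivalently the corresponding $\bG_m$-gerbe/torsor quotient.

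The same local chart handles $\bP\cat{Rub}(\Gamma)$. Near a point with level graph $\Delta$ degenerating $\Gamma$, the local coordinates are the smooth coordinates of the level strata, the level parameters $t_1,\dots,t_{L(\Delta)}$, and the horizontal smoothing parameters. The substack $\bP\cat{Rub}(\Gamma)$ is cut out by $t_i=0$ for the frozen level passages $i$ of~$\Delta$ with respect to~$\Gamma$. These are part of a regular system of coordinates, so $\bP\cat{Rub}(\Gamma)$ is locally a coordinate linear subspace and hence smooth. One point needs care: persistence of the nodes of the edges crossing a frozen passage must be equivalent to $t_i=0$, and not to a condition on some edge parameter $f_e$ with $f_e^{\kappa_e}=t_i^{\ell_i}$. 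In the minimal log structure the edge smoothing parameter is a monomial in the $t_j$ determined by $\beta$. The relation $u_ev_e=f_e$ therefore vanishes exactly when some $t_j$ crossed by $e$ vanishes. I take the reduced subscheme structure, so the locus is the union of coordinate hyperplanes indexed by the passages. Requiring this for all edges of~$\Gamma$ crossing passage $i$ forces $t_i=0$, since $\Gamma$ has an edge crossing precisely that passage by the GRC-imposing hypothesis. Where that hypothesis is absent I would use the definition literally, as cut out by $t_i=0$.

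For the second claim I compare local equations. By the proof of Propositions~\ref{prop:GMSsubstack}–\ref{prop:GMScomm}, near a point of $\GMS(\Gamma)$ the stack $\GMS(\Gamma)$ is identified with
\[
\prod_i \Spec \bC[t_i]/t_i^{\ell_i}\times(\text{smooth factor for the additional passages})\times\prod_j \MS(\mu^{[j]}),
\]
with the $t_i$ running over the frozen passages. In the same coordinates, $\bP\cat{Rub}(\Gamma)$ is $\{t_i=0\}$. The scheme-theoretic intersection is therefore given by adding the ideal $(t_i)$ to the ideal of $\GMS(\Gamma)$, which kills the nilpotent factors. What remains is exactly $\GMS(\Gamma)^\red$ as defined locally. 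This is smooth by Proposition~\ref{prop:GMSred}, being locally a product of smooth multi-scale spaces with the smooth plumbing factors.

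The main obstacle is compatibility of coordinates. The $t_i$ used in the plumbing description of $\GMS(\Gamma)$ must be the same functions, up to units, as the log coordinates on $\bP\cat{Rub}$ defining $\bP\cat{Rub}(\Gamma)$. I would argue this via the log splitting in Theorem~\ref{thm:MSinGMS}: the level parameters there are defined as the images $p_j\mapsto t_j\in\cO(B)$ of the ghost generators, which are precisely the toric coordinates of $\cat{Rub}_0$. It then suffices to check that the Strebel normal-form coordinate changes only multiply the $t_i$ by units. I expect this to follow from the uniqueness statement in \cite[Theorem~4.1]{LMS}.
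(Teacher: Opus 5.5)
Your proof is correct in substance, but it is organized differently from the paper's.

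The paper's argument is short. It cites Marcus--Wise (via \cite[Theorem~2.4]{To2}) for the smoothness of $\bP\cat{Rub}$ and regards $\bP\cat{Rub}(\Gamma)$ as evidently smooth. For the second claim it uses the identification of $\GMS$ with multi-scale differentials without GRC from \cite{To2}. It then proves smoothness of the locus where the nodes of $\Gamma$ persist directly, with perturbed period coordinates on all levels as in \cite{LMS}.

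You instead unpack the citation: log smoothness plus freeness of the minimal ghost monoid make the $t_j$ part of a regular system of parameters, so $\bP\cat{Rub}(\Gamma)$ is a coordinate subspace. You then reduce the second claim to what Section~\ref{sec:support} already provides. Once the level parameters on $\GMS(\Gamma)$ are known to be restrictions, up to units, of the log coordinates on $\bP\cat{Rub}$, the equality with $\GMS(\Gamma)^\red$ is immediate from the definitions. Smoothness is then Proposition~\ref{prop:GMSred}.

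Your route makes explicit the one compatibility the paper passes over silently. Your argument for it, via $p_j\mapsto t_j$ in the proof of Theorem~\ref{thm:MSinGMS}, is the right one. The price is that you depend on Proposition~\ref{prop:GMSred}, whereas the paper's sketch re-derives smoothness independently. Both ultimately rest on the same level-wise coordinates.

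There are two minor corrections.

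\'Etale locally, $\cat{Rub}_0$ is smooth over $\Spec\bC[\ghost_{B,b}]$. It is not a product of a chart of $\barmoduli[g,n]$ with this toric variety, because the smoothing parameters of vertical nodes in $\barmoduli[g,n]$ become monomials in the $t_j$. Your conclusion is unaffected.

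Your side argument that persistence of the nodes forces $t_i=0$ breaks down at points whose level graph $\Delta$ strictly degenerates $\Gamma$. An edge of $\Gamma$ crossing only the $i$-th passage of $\Gamma$ may, in $\Delta$, also cross additional passages. Its persistence locus is then a union of several coordinate hyperplanes. It is the requirement that the level graph be a degeneration of $\Gamma$ that singles out the hyperplane of the frozen passage. Since $\bP\cat{Rub}(\Gamma)$ is defined by $t_i=0$ anyway, you can simply drop that remark.
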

\par
\begin{proof} The first claim is essentially due to Marcus-Wise \cite{MarcusWiselog}, see \cite[Theorem~2.4]{To2} for the details to match with the definition given here.
  \par
Using the identification of $\GMS$ with multi-scale differentials (without GRC) from \cite{To2} is suffices to proof the smoothness of the subspace in that space where the level graph is a degeneration of~$\Gamma$ and the nodes of~$\Gamma$ persist. This follows as in \cite{LMS}, using perturbed period coordinates (or log period coordinates) on all levels of~$\Gamma$.
\end{proof}
\par

\subsection{Some deformation theory} \label{sec:defo}

We need some notation to describe various subspaces of the tangent space of $\bP\cat{Rub}$. The material here extends the local computations of Holmes-Schmitt \cite{HolmesSchmitt} to the log language. (Differences in the statements are due to the fact that \cite{HolmesSchmitt} work on a space between $\bP \cat{Rub}$ and $\barmoduli$, essentially the space $\bP \cat{Div}$ from \cite{MarcusWiselog}.)
\par
First, recall from Section~\ref{sec:support} the definition of the set of holomorphic subgraphs $V(\Gamma)^{\ab}$ in~$\Gamma$. These are a subset of components $\pi_0(\Gamma\setminus E^v)$ a connected component of the level graph $\Gamma \setminus E^v$ where all the vertical edges have been removed. For $\ul{v} \in \pi_0(\Gamma\setminus E^v)$ we let $X_{\ul v}$ be the corresponding connected component of the partial normalization
\begin{equation}\label{eq:widetildeX}
\nu:\ \widetilde{X}^\nu \coloneqq \bigsqcup_{\ul{v} \in \pi_0(\Gamma\setminus E^v)}  X_{\ul v}  \to X
\end{equation}
of~$X$ at all the vertical nodes, and let $\omega_{X_{\ul v}}$ be its canonical sheaf.  Then
\begin{equation*}
\omega_{X,\mathrm{hor}} \,\coloneqq \, \nu_* \bigoplus_{\ul{v} \in \pi_0(\Gamma\setminus E^v)} \omega_{X_{\ul v}} \subset \omega_X 
\end{equation*}
is the subsheaf with (order~$k$) poles along the horizontal nodes, and no poles at vertical nodes.
\par
Second, for any abelian group~$(A,\cdot)$ and any level graph~$\Gamma$ with level set $L(\Gamma)$ and edge set $E(\Gamma) = E^v(\Gamma) \sqcup E^h(\Gamma)$, split as usual into vertical and horizontal edges, there is a natural map defined on
level generators by
\begin{equation}\label{eq:rhoGamma}
\rho_\Gamma: A^{L(\Gamma)} \to A^{E^v(\Gamma)}, \qquad \lambda_0 \mapsto 1, \quad
\lambda_{-i}\mapsto \begin{cases}
\lambda_{-i}^{\kappa_e} & \text{if~$e$ crosses the $i$-th passage} \\
1 & \text{else} \\
\end{cases}
\end{equation}
where $\kappa_e$ are the enhancements of~$\Gamma$. We define
\begin{equation}H^1_L(\Gamma, A) \coloneqq \coker(\rho_\Gamma)\,.
\end{equation}
\par
From now one we fix again a level graph~$\Gamma$ specifying frozen levels, we suppose that~$\Gamma$ imposes a GRC at every level, and consider a degeneration~$\Pi$ of~$\Gamma= \delta_I(\Pi)$ where~$I$ are the additional level passages. 
\par
\begin{lemma} Suppose that $x = (X,\beta,\cF,\varphi) \in \GMS(\Gamma)$ has as level graph a degeneration~$\Pi$ of~$\Gamma = \delta_I(\Pi)$. Then the tangent space to $\bP \cat{Rub}$ splits as
\be
T_x \bP \cat{Rub} \= H^1_L(\Pi,\bC) \oplus H^1\Bigr(X, \omega_{X,\mathrm{hor}}^\vee(-\bfz)\Bigr) \oplus \bC^{L(\Delta)} \oplus \bC^{E^h(\Delta)}
\ee
Inside this space the tangent space to $\bP \cat{Rub}(\Gamma)$ is given by
\be
T_x \bP \cat{Rub}(\Gamma) \= H^1_L(\Pi,\bC) \oplus H^1\Bigr(X, \omega_{X,\mathrm{hor}}^\vee(-\bfz)\Bigr)  \oplus \bC^I\,.
\ee
\end{lemma}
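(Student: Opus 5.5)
The plan is to obtain the splitting from the log deformation theory of $\bP\cat{Rub}$, which is log smooth over the stack of log curves. Write $\Delta=\Pi$ for the level graph at~$x$; the statement uses both letters for it. I propose to compute the tangent space in three layers.

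\textbf{(a) Curve deformations.} The first layer is the deformation space of the underlying log curve $(X,\bfz)$ with its minimal log structure. Its first-order deformations are the locally trivial deformations $H^1(X, T_X^{\log}(-\bfz))$ together with the smoothing directions of the base log structure. The base directions are recorded by $\ghost_{B,b}=\bN\langle p_1,\ldots,p_L\rangle\oplus\bN\langle E^h(\Delta)\rangle$, as in the sketch of proof of Theorem~\ref{thm:MSinGMS}. They contribute the summands $\bC^{L(\Delta)}$ (level parameters $t_i$) and $\bC^{E^h(\Delta)}$ (horizontal node smoothings).

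\textbf{(b) Replacing $T_X^{\log}$ by $\omega_{X,\mathrm{hor}}^\vee$.} At a vertical node the minimal log structure ties the edge parameter $f_e$ to the level parameters. Log tangent vectors that are regular at the node but not at the branches are therefore exactly the sections of the dual of $\omega_{X,\mathrm{hor}}$ near vertical nodes. Near horizontal nodes they remain the usual locally trivial deformations, and the smoothing directions there are already counted in $\bC^{E^h(\Delta)}$. Concretely, I would use the normalization sequence for $\nu$ from~\eqref{eq:widetildeX}. This identifies locally trivial log deformations with $H^1(X,\omega_{X,\mathrm{hor}}^\vee(-\bfz))$, so these vector fields have zeros at the markings and are allowed log behaviour at the vertical node branches.

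\textbf{(c) The rubber datum $\beta$ up to rescaling.} The map $\beta\colon X\to \bG_{m}^{\trop}$ is determined by the level graph. Its infinitesimal automorphisms and deformations are governed by the torus $\bG_m^{L(\Delta)}$ acting on edges through the twists $\kappa_e$. Passing to the projectivization kills $\lambda_0$. The first-order deformations of the torsor structure are then the cokernel of the differential of $\rho_\Delta$, namely $H^1_L(\Pi,\bC)$, following the analogous computation in Holmes--Schmitt for $\bP\cat{Div}$.

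\textbf{Splitting and the subspace.} I would argue that the resulting short exact sequences split canonically after fixing a log splitting $\wt\psi$. This also explains why the splitting is only as stated, i.e.\ depends on choices up to the level rotation torus. For $T_x\bP\cat{Rub}(\Gamma)$, the frozen passages are cut out by $t_i=0$. These are exactly the coordinates in $\bC^{L(\Delta)}$ indexed by level passages of $\Gamma$, and we keep only the additional passages~$I$. The horizontal nodes of~$\Pi$ persist in $\bP\cat{Rub}(\Gamma)$, since they are edges of the degeneration. Hence the $\bC^{E^h}$ factor drops and the remaining factors are untouched. Smoothness from Proposition~\ref{prop:GMSredsmooth} makes this tangent computation the honest one.

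\textbf{Main obstacle.} I expect the hardest step to be the identification in~(c): the interaction between edge parameters $f_e$ with $f_e^{\kappa_e}$ tied to products of level parameters, and the torsor $H^1_L$, when several $\kappa_e$ share a passage. There I would first check the one-passage case with explicit coordinates $u_ev_e=f_e$, and then argue passage by passage.
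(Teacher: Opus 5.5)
Your route differs from the paper's, and the gap sits exactly where they differ. The paper writes down four explicit maps: $\iota_G$ (rescaling the log structure $D_e\mapsto\lambda_eD_e$ at the vertical nodes, modulo one $\bC^*$ for each generator $p_i$ of $\ghost_{B,b}$ acting through $\rho_\Pi$), $\iota_\Omega$ (equisingular deformations), and the coordinate-dependent opening maps $\iota_L,\iota_h$. It then \emph{separates} them. The image of $\iota_G$ is identified with the kernel of the tangent map to $\barmoduli[g,n]$, the other summands are claimed to map injectively, and a dimension count against $\dim\bP\cat{Rub}=3g-3+n$ gives spanning.

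Your proposal has no counterpart of this step: nothing in (a)--(c) shows that the layers are independent and exhaust $T_x\bP\cat{Rub}$. The sentence ``the resulting short exact sequences split canonically after fixing a log splitting $\wt\psi$'' cannot carry it. No exact sequences are actually written down, and $\wt\psi$ is a choice on the base monoid that gives no local coordinates at the nodes. The opening directions depend on such coordinates and are not canonical.

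If you adopt the separation via $\barmoduli[g,n]$, be careful with the level directions. Since $f_e=t_i^{\ell_i/\kappa_e}$ up to units, $\partial_{t_i}$ dies in $T\barmoduli[g,n]$ as soon as every edge crossing the $i$-th passage has $\kappa_e<\ell_i$. A more robust version, closer to your log-smoothness idea, does two things. First, it splits off $\bC^{L(\Pi)}\oplus\bC^{E^h(\Pi)}$ as the normal space of the smooth locus where all nodes of $\Pi$ persist, cut out by the $t_i$ and the horizontal smoothing parameters. Second, on that locus it uses the map to the boundary stratum of $\barmoduli[g,n]$, whose fibres are (up to finite groups) the tori $(\bC^*)^{E^v(\Pi)}/\rho_\Pi\bigl((\bC^*)^{L(\Pi)}\bigr)$ of log structures with the given ghost sheaf. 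Your treatment of $\bP\cat{Rub}(\Gamma)$, dropping the frozen $t_i$ and the horizontal smoothings, agrees with the paper's.

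Separately, the bookkeeping in (a)--(c) is wrong as stated.
\begin{itemize}
\item The log tangent sheaf is $T^{\log}_X(-\bfz)=\omega_X^\vee(-\bfz)$. The sequence $0\to\omega_X^\vee\to T_X\to\bigoplus_{\text{nodes}}\bC\to0$, with $T_X=\mathcal{H}om(\Omega_X,\cO_X)$, shows that $H^1(X,\omega_X^\vee(-\bfz))$ is not the space of locally trivial deformations. It is $(3g-3+n)$-dimensional and contains one log-gluing direction per node, so (a) already overcounts by $L(\Pi)+|E^h(\Pi)|$.
\item These gluing directions only make sense modulo infinitesimal automorphisms of the rubber base monoid $\ghost_{B,b}=\bN\langle p_1,\ldots,p_L\rangle\oplus\bN\langle E^h\rangle$. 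You conflate this monoid with the minimal monoid $\bN^{E(\Pi)}$ of the log curve. The $E^h$-part kills the horizontal gluings entirely, and the $p_i$-part cuts the vertical gluings $\bC^{E^v(\Pi)}$ down to $\coker(\rho_\Pi)=H^1_L(\Pi,\bC)$.
\item So $H^1_L$ comes from the log structure at the vertical nodes, not from deforming~$\beta$, which is discrete once $\Pi$ is fixed.
\item The real content of (b) is the sequence $0\to\omega_X^\vee(-\bfz)\to\omega_{X,\mathrm{hor}}^\vee(-\bfz)\to\bigoplus_{e\in E^v(\Pi)}\bC\to0$. It gives $0\to\bC^{E^v(\Pi)}\to H^1(\omega_X^\vee(-\bfz))\to H^1(\omega^\vee_{X,\mathrm{hor}}(-\bfz))\to0$, which removes exactly the vertical gluings that (c) reinstates as a quotient.
\item Finally, $\omega_{X,\mathrm{hor}}$ still has log poles at the horizontal nodes. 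Hence $h^1(\omega^\vee_{X,\mathrm{hor}}(-\bfz))=3g-3+n-|E^v(\Pi)|$, and this space still contains the horizontal gluings. Alongside $\bC^{E^h(\Pi)}$, the dimensions add up to $3g-3+n$ only if this summand is read as the equisingular deformations $H^1(X,T_X(-\bfz))$, which is how the paper's proof describes it.
\end{itemize}
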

\par
We denote by $\iota_G$, $\iota_\Omega$,  $\iota_L$  and $\iota_h$ (resp.\ $\iota_I$) the inclusion of these subspace into $T_x \bP \cat{Rub}$ (resp.\ into $T_x \bP \cat{Rub}(\Gamma)$). They correspond to changing the log structure in the graph, to equisingular deformations, and to level opening-up respectively. 
\par
\begin{proof} The deformation given by the first summand fixes the underlying stable curve~$X$ and the map~$\beta$, but changes the log structure. More precisely, recall that the log structure at a node~$q$ of~$X$ corresponding to the edge $e \in E^v(\Pi)$ needs an element $D_e \in \M_B$ with two properties. First, its reduction is the element $\delta_e \in \ghost$ and, second, image is $\cO_B$ is a smoothing parameter for the edges. With the help of these elements the log structure is then defined by 
\be
\M_{X,b}  \= \{ (u,v) \in \M_{B,b}^2 \,\,\text{such that there exists $n \in N$ with $D_e^n = u/v$} \}
\ee
For any tuple $\lambda \in (\bC^*)^{E(\Pi)}$ we define a deformation of the log structure by sending $D_e \mapsto \lambda_e D_e$. Two such deformations define isomorphic log structures if there is an automorphism of the underlying monoid that takes one into the other. Recall that $\ghost_{B,b} = \bN \langle p_1,\ldots, p_L \rangle \oplus \bN \langle E^h(\Gamma)\rangle$. The automorphism induced from $E^h(\Gamma)$ cannot be used to relate two deformations of the vertical edge parameters. For each of the~$p_i$ there is a copy of~$\bC^*$ in $\Aut(\M_{B,b})$ acting by $t\cdot(a \cdot p_i,u) \mapsto (a\cdot p_i, ut^a)$. Unwinding definitions this action gives precisely the image of $\rho_\Pi$ on the deformations of~$D_e$. Taking the tangent map to this deformation gives the injection~$\iota_G$.
(We remark that similar deformations of the horizontal edges are fully compensated by the automorphism induced from $E^h(\Gamma)$. For this reason they do not show up in $H^1_L(\Pi,\bC)$, only in the deformations of the node.)
\par
The second summand are the equisingular deformations (i.e.\ touching neither the nodes nor the log structure, but changing each piece of partial normalization~$\nu$ in~\eqref{eq:widetildeX}. Standard deformation theory shows that these are described by the inclusion~$\iota_\Omega$.
\par
Finally, the (non-canonical, depending on choices of local coordinates) inclusion $\iota_L$ and $\iota_h$ arise from opening the nodes (or lifting those for $\barmoduli)$. Obviously only the restriction to $\bC^I$ is tangent to $\bP\cat{Rub}(\Gamma)$.
\par
For the injectivity of the sum of those four maps observe that precisely the image of $\iota_G$ is the kernel of the tangent map of the forgetful map to $\barmoduli$. For the other three, the injectivity is obvious on $\barmoduli$. That the images of these maps span the tangent space follows from a dimension count.
\end{proof}
\par
Next we aim to characterize the tangent space to a component~$C$ of $\GMS(\Gamma)$ inside this space. For this purpose let
\be
b: \T_x \bP \cat{Rub} \to \T_x^\rel \Picabs \subseteq \T_x \Picabs
\ee
be the difference of the tangent maps to the sections $\sigma_0$ and~$\sigma_\cL$ at the point~$x$. Its image lies in the relative tangent space, the kernel of the tangent map $ \T \Picabs \to \T \bP \cat{Rub}$ to the forgetful map.
\par
Moreover we let $b_G \coloneqq b \circ \iota_G$ and $b_\Omega \coloneqq b \circ \iota_\Omega$ and decorate with a superscript~$\Gamma$ the restrictions to the tangent space of $\bP\cat{Rub}(\Gamma)$. To compute the image and cokernel of~$b$ we will work with the dual map and identify its kernel. A key observation is that the identification of the kernel with $\Omega^{\max }$ in the following lemma holds for every point in~$\GMS(\Gamma)$, whether the level graph is~$\Gamma$ or a degeneration of it.
\par
\begin{lemma} \label{le:bdualindentification}
 The kernel of $b_G^\vee$ and hence also the kernel of $(b_G^\Gamma)^\vee$ is equal to $H^0(X,\omega_{X,\mathrm{hor}})$, embedded into $H^0(X,\omega_X)$ via the map 
\begin{equation} \label{eq:kerbGammavee}
\bigoplus_{\ul{v} \in \pi_0(\Gamma\setminus E^v)} H^0(X_{\ul v},\omega_{X_{\ul v}}) \hookrightarrow H^0(X,\omega_X)\,,
\end{equation}
yet equivalently is the space of sections of $\omega_X$ whose residue vanishes at every vertical node.
\par
Moreover, the kernel of $b_G^\vee \oplus b_\Omega^\vee$ and hence also the kernel of $(b_G^\Gamma)^\vee \oplus (b_\Omega^\Gamma)^\vee$ is given by the image of 
\begin{equation} \label{eq:OmegaMaxincl}
\Omega_{\ab,C} \coloneqq \oplus_{\ul{v} \in V(\Gamma)_C^\ab}\, \bC \cdot \omega^{1/k}|_{X_{\ul{v}}}\hookrightarrow H^0(X,\omega_X)\,.
\end{equation}
\end{lemma}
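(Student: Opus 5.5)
The plan is to compute the differential $b$ explicitly, using the standard description of the tangent space of $\Picabs$ along a fiber. There $\T^\rel_x \Picabs = H^1(X,\cO_X)$, whose dual is $H^0(X,\omega_X)$ by Serre duality. So $b^\vee$ becomes a map $H^0(X,\omega_X)\to \T_x^\vee\bP\cat{Rub}$, and we compute its components on each summand of the splitting of the previous lemma.

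\textbf{The component $b_G^\vee$.} A deformation in $\iota_G$ rescales the elements $D_e \mapsto \lambda_e D_e$. It leaves $\cL$ unchanged, so it only varies $\cO(\beta)$. Near the node $q_e$ this changes the transition function of $\cO(\beta)$ by a factor $\lambda_e^{\kappa_e}$, up to the exponent dictated by the slope. Infinitesimally, the resulting Čech cocycle in $H^1(X,\cO_X)$ is supported on the annulus around $q_e$ and is constant there. Pairing such a cocycle with $\omega\in H^0(X,\omega_X)$ therefore gives $\kappa_e\cdot\Res_{q_e}\omega$, up to a sign and the log tangent vector. Hence
\[
b_G^\vee(\omega) = \Bigl(\sum_e \kappa_e \epsilon_e\Res_{q_e}(\omega)\Bigr)_{\text{dual coordinates of } \coker \rho_\Pi}.
\]
A functional on $\bC^{E^v}$ that vanishes on the image of $\rho_\Pi$ pairs with the generators coming from $\bC^{E^v(\Pi)}$ edge by edge. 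So the kernel consists of those $\omega$ with $\Res_{q_e}\omega=0$ for every vertical edge $e$; one has to check that no nontrivial combination is killed by the quotient, which holds because the cokernel keeps every edge coordinate. Sections of $\omega_X$ with vanishing residues at the vertical nodes are exactly sections that are holomorphic across these nodes. These descend to the partial normalization, so they form $\bigoplus_{\ul v} H^0(X_{\ul v},\omega_{X_{\ul v}})$, which gives \eqref{eq:kerbGammavee}. Since the summand $H^1_L(\Pi,\bC)$ lies in $\T_x\bP\cat{Rub}(\Gamma)$, restricting to $b_G^\Gamma$ does not change the computation.

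\textbf{The component $b_\Omega^\vee$.} An equisingular deformation is a class $\theta\in H^1(X,\omega_{X,\mathrm{hor}}^\vee(-\bfz))$. Its image under $\sigma_\cL$ is the Atiyah-type derivative of $\cL=\omega^{k}(-\sum m_iz_i)$. Its image under $\sigma_0$ is the derivative of $\cO(\beta)$; this vanishes, because $\beta$ is locally constant on the components. Now $\cL(-\beta)$ is trivialized by $\varphi$ via the level differentials $\omega_j$. So the derivative is the cup product $\theta\cup d\log$ of the trivializing section, i.e.\ $\theta\mapsto (\omega\mapsto \langle \theta, \omega\cdot \omega_j^{1/k}\text{-type quadratic term}\rangle)$. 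The clean way to see this: pairing with a section $\eta\in H^0(X_{\ul v},\omega)$ yields the functional $\theta\mapsto \int \theta\cdot \eta\cdot \frac{d\omega_{\ul v}}{k\,\omega_{\ul v}}$, i.e.\ $\eta$ paired with the logarithmic derivative of $\omega_{\ul v}$ along $\theta$. By Serre duality, since $\omega_{X,\mathrm{hor}}^{\otimes 2}(\bfz)$ is the dual of $\omega^\vee_{X,\mathrm{hor}}(-\bfz)\otimes\omega$, this functional vanishes for all $\theta$ iff the quadratic differential $\eta\cdot(\text{Lie derivative term})$ is zero. That reduces to $\eta$ being a constant multiple of $\omega_{\ul v}^{1/k}$, which in turn requires $\omega_{\ul v}^{1/k}$ to be a global section. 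This happens exactly on $V(\Gamma)^\ab_C$: in the holomorphic abelian case with a $k$-th root there, and otherwise no nonzero $\eta$ survives. This gives \eqref{eq:OmegaMaxincl}.

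\textbf{Main obstacle.} The main difficulty is this last step, namely identifying $b_\Omega$ rigorously as a first-order variation of the trivialization, and showing that the kernel is only the line spanned by $\omega^{1/k}$. I would first attempt it by comparison with Holmes--Schmitt \cite{HolmesSchmitt}, whose computation works componentwise on $\bP\cat{Div}$. The key observation is that on each $X_{\ul v}$ the map is the derivative of the map from the stratum to the Picard variety, and its cokernel dual is known to be the span of $\omega^{1/k}$ when it exists.
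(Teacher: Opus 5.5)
Your proposal follows essentially the same route as the paper, whose proof simply invokes \cite[Lemmas~4.1, 4.6 and~4.9]{HolmesSchmitt}: via Serre duality, $b_G^\vee$ becomes the residues at the vertical nodes, and $b_\Omega^\vee$ becomes the Lie-derivative map $\alpha\mapsto \omega_{\ul v}^{1/k}\,d(\alpha/\omega_{\ul v}^{1/k})$ on each $X_{\ul v}$; the restriction to $\bP\cat{Rub}(\Gamma)$ is harmless because $\iota_G$ and $\iota_\Omega$ already land in its tangent space. A few points need tidying: the edge-by-edge vanishing of residues holds because $\bC^{E^v}\to H^1_L(\Pi,\bC)$ is surjective, so its dual is injective; and $d\sigma_0$ does not vanish on equisingular directions, since moving the nodes moves $\cO(\beta)$. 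What is meaningful is only the difference $b$, i.e.\ the obstruction to deforming the trivialization of $\cL(-\beta)$ by the level differentials, and that is what your log-derivative computation actually uses, once the missing $d\alpha$ term is restored.
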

\par
In the last statement observe that for $\ul{v} \in V(\Gamma)_C^\ab$ the $k$-th root of~$\omega$ exists, even globally on the stratum. 
\par
\begin{proof} The statements about $b_G$ and $b_\Omega$ are the content of \cite[Lemma~4.1, 4.6 and~4.9]{HolmesSchmitt}. Since the restriction to the tangent space $\T_x \bP \cat{Rub}(\Gamma)$ only restricts from $L_\Delta$ to $L_I$ but not the other summands, the same holds for $b_G^\Gamma$ and $b_\Omega^\Gamma$.
\end{proof}
\par
To match with earlier notation, we remark that $h^\ab = \dim_\bC \Omega^\ab$ if in the component~$C$ all holomorphic local maxima of~$\Delta$ with labels divisible by~$k$ carry indeed $k$-th powers of abelian differentials.  
\par
\begin{cor} \label{cor:tangentspacecontained}
The tangent space $T_x\GMS$ at $x = (X,\beta,\cF,\varphi)$ belonging to the component~$C$ of~$\GMS(\Gamma)$ is contained in the kernel of the map  
\be
b_{\ab,C}: T_x \bP \cat{Rub} \to \Omega^\vee_{\ab,C}\,.
\ee
Consequently, the tangent space to $T_x\bP \cat{Rub}(\Gamma)$ is also contained in this kernel.
\end{cor}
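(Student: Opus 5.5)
The plan is to use the standard description of the tangent space of a fiber product. Since $\GMS = \bP\cat{Rub}_\cL$ is defined as the fiber product of the two sections $\sigma_0$ and $\sigma_\cL$ over $\Picabs$, a tangent vector $v \in T_x\GMS$ is a tangent vector of $\bP\cat{Rub}$ whose images under $d\sigma_0$ and $d\sigma_\cL$ agree. Equivalently, $T_x\GMS = \ker(b) \subset T_x\bP\cat{Rub}$, where $b$ is the difference map introduced above. It therefore suffices to show that $b_{\ab,C}$ factors through $b$. More precisely, we want $b_{\ab,C}$ to be, up to identification, the composite of $b$ with the projection $T^\rel_x\Picabs \to \Omega^\vee_{\ab,C}$.

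To set this up, identify $T^\rel_x\Picabs = H^1(X,\cO_X)$. By Serre duality its dual is $H^0(X,\omega_X)$, so $\Omega_{\ab,C} \hookrightarrow H^0(X,\omega_X)$ via \eqref{eq:OmegaMaxincl} dually gives a surjection $r\colon H^1(X,\cO_X) \to \Omega^\vee_{\ab,C}$. We then define $b_{\ab,C} := r \circ b$. If this agrees with the definition of $b_{\ab,C}$ intended in the paper, the first claim is immediate: any $v \in T_x\GMS$ satisfies $b(v)=0$, hence $b_{\ab,C}(v)=0$.

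The content of Lemma~\ref{le:bdualindentification} is what makes $b_{\ab,C}$ nontrivial and canonical. Dually, $b_{\ab,C}^\vee$ is the restriction of $b^\vee$ to $\Omega_{\ab,C}$. The lemma says $\Omega_{\ab,C}$ is exactly the kernel of $b_G^\vee \oplus b_\Omega^\vee$. So on $\Omega_{\ab,C}$ the map $b^\vee$ only sees the $\iota_L$ and $\iota_h$ components. In other words, $b_{\ab,C}$ vanishes on $H^1_L(\Pi,\bC)\oplus H^1(X,\omega^\vee_{X,\mathrm{hor}}(-\bfz))$ and is determined by the opening-up directions. This is only needed to make sense of the map and to check well-definedness along the component $C$. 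The point is that the subset $V(\Gamma)^\ab_C$ is locally constant on $C$, since where the $k$-th root exists is a closed and open condition on a component. Hence $\Omega_{\ab,C}$ forms a bundle along $C$, including at points whose level graph is a proper degeneration $\Pi$ of $\Gamma$. This is the point flagged before the lemma.

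For the second sentence, the statement should be read as: the image of $T_x\GMS$ in $T_x\bP\cat{Rub}(\Gamma)$, i.e.\ the tangent space of $\GMS(\Gamma)^\red = \GMS(\Gamma)\cap \bP\cat{Rub}(\Gamma)$ (Proposition~\ref{prop:GMSredsmooth}), lies in $\ker b_{\ab,C}$. This follows from the first part by restriction, using the superscript-$\Gamma$ versions $b^\Gamma_G, b^\Gamma_\Omega$ and the second half of Lemma~\ref{le:bdualindentification}.

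The main obstacle is making sure that $T_x\GMS$ really equals $\ker b$ scheme-theoretically. $\GMS$ is non-reduced along $\GMS(\Gamma)$, and we work on the underlying algebraic stack of a log fiber product, so we must check that the fiber product of the minimal log structures computes the same Zariski tangent space. I would handle this by noting that $\sigma_0$ and $\sigma_\cL$ are strict maps to $\Picabs$, which carries the pulled-back log structure. The log fiber product then coincides with the ordinary fiber product on underlying stacks, and the Zariski tangent space of a fiber product is the fiber product of tangent spaces.
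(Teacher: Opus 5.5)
Under your definition $b_{\ab,C}:=r\circ b$, the containment is a tautology, and in fact an empty one. So the proposal never reaches the content of the corollary.

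Consider a smooth point of a holomorphic stratum with $k=1$. There the summands $H^1_L$, $\bC^{L}$ and $\bC^{E^h}$ vanish, so $b=b_\Omega$. Lemma~\ref{le:bdualindentification} then gives $\ker(b^\vee)=\ker(b_\Omega^\vee)=\bC\omega=\Omega_{\ab,C}$. Hence $r\circ b=0$, and its kernel is all of $T_x\bP\cat{Rub}$. So the lemma does not make your $b_{\ab,C}$ nontrivial; it shows that it vanishes there.

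The corollary is used later as an upper bound on the tangent space dimension. Proposition~\ref{prop:GMSequations} needs $\dim T_x\GMS\leq 3g-3+n-g+|V(\Gamma)^\ab_C|$. Propositions~\ref{prop:tangMS} and~\ref{prop:tangGMSGamma} compare $\dim\ker b$ with $\dim\GMS(\Gamma)$. Being contained in the kernel of a map to the $|V(\Gamma)^\ab_C|$-dimensional space $\Omega^\vee_{\ab,C}$ can never give codimension $g-|V(\Gamma)^\ab_C|$.

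What is missing is the inclusion in the dual direction, and that inclusion is the paper's entire proof. Since $b_G=b\circ\iota_G$ and $b_\Omega=b\circ\iota_\Omega$, one has
\[
(\coker b)^\vee=\ker(b^\vee)\subseteq\ker(b_G^\vee)\cap\ker(b_\Omega^\vee)=\Omega_{\ab,C},
\]
where the last equality is Lemma~\ref{le:bdualindentification}. Dually, $\operatorname{im}(b)\supseteq\Omega_{\ab,C}^\perp$. Thus $\coker(b)$ is a quotient of $\Omega^\vee_{\ab,C}$ and $\rk(b)\geq g-|V(\Gamma)^\ab_C|$.

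Combined with $T_x\GMS\subseteq\ker b$, this gives $\dim T_x\GMS\leq\dim T_x\bP\cat{Rub}-g+|V(\Gamma)^\ab_C|$. The same argument for $b^\Gamma$, using $(b^\Gamma_G)^\vee$ and $(b^\Gamma_\Omega)^\vee$, gives the bound $d-L(\Gamma)+|V(\Gamma)^\ab_C|=d_\Gamma$ on $\bP\cat{Rub}(\Gamma)$, matching Lemma~\ref{le:dGammadef}. This is the sense in which $\Omega^\vee_{\ab,C}$ controls $b$: it bounds the cokernel of $b$. It is not a quotient through which $b$ is composed.

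Finally, the obstacle you single out is not where the work lies. Only the inclusion $T_x\GMS\subseteq\ker b$ is needed, not equality. The paper takes this inclusion directly from the definition of $\GMS$ as the locus where $\sigma_0$ and $\sigma_\cL$ agree.
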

\par
\begin{proof}
This is a consequence of the dualization computation
\begin{equation}\label{eq:cokerb}
(\coker(b))^\vee \= \ker(b^\vee) \subseteq \ker(b_\Omega^\vee) \cap \ker(b_G^\vee) \= \ker(b_\Omega^\vee \oplus b_G^\vee)\,.
\end{equation}
and Lemma~\ref{le:bdualindentification}.
\end{proof}
\par

%
%
%
%
The following propositions complete the local picture of the components of $\GMS$ inside $\bP \cat{Rub}$. They are not necessary for volume computations, but relevant for determining the full excess intersection class.
\par
\begin{prop} \label{prop:GMSGammaregemb}
For a graph~$\Gamma$ which is full of GRC's the restriction $\frakj_\Gamma$ of the closed embedding $\frakj_0: \GMS  \to \bP \cat{Rub}$ to $\GMS(\Gamma)$ is a regular embedding.
\end{prop}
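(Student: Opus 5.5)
The approach is local: regular embedding is a local property in the smooth topology, so we work at a point $x \in \GMS(\Gamma)$ whose level graph is a degeneration $\Pi$ of $\Gamma = \delta_I(\Pi)$. Near $x$ we exhibit the ideal of $\GMS(\Gamma)$ in the smooth ambient stack $\bP\cat{Rub}$ (Proposition~\ref{prop:GMSredsmooth}) as generated by a regular sequence of the expected length. By the proof of Propositions~\ref{prop:GMSsubstack}--\ref{prop:GMScomm}, near $x$ the stack $\GMS(\Gamma)$ is cut out by three sets of equations, all written in the local coordinates of $T_x\bP\cat{Rub}$ from the tangent-space splitting lemma:
\begin{itemize}
\item the equations $t_i^{\ell_i}=0$ for the frozen level passages $i$;
\item for each additional passage $i \in I$ and each component $C$ above it imposing a residue condition, the functions $\sum_{e\vdash C} r_e$ (the GRC functions, after dividing by the level parameter power);
\item the equations from the horizontal edges together with the conditions that make $\cL(-\beta)$ trivial on each level, which are the equations defining $\MS(\mu^{[i]})$ inside the corresponding levelwise rubber space.
\end{itemize}

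First we treat the levelwise part. Using the product structure of Proposition~\ref{prop:GMScomm}, the reduced locus $\GMS(\Gamma)^\red$ sits inside the smooth $\bP\cat{Rub}(\Gamma)$ as a smooth substack. A closed embedding of smooth stacks is regular, with codimension $\dim \bP\cat{Rub}(\Gamma) - d_\Gamma$; the value of $d_\Gamma$ is given by Lemma~\ref{le:dGammadef}. Choose local generators $g_1,\dots,g_c$ of this ideal in $\bP\cat{Rub}(\Gamma)$ and lift them to $\bP\cat{Rub}$. The GRC functions $\sum r_e$ need to be among these lifts; they should be independent of the remaining coordinates. I would verify this with Lemma~\ref{le:bdualindentification} and Corollary~\ref{cor:tangentspacecontained}, which identify the cokernel of $b$ with $\Omega_{\ab,C}^\vee$ and hence pin down how many independent conditions $b$ imposes.

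Second, we add the frozen-level equations. On the smooth ambient space, the $t_i$ for frozen passages are part of a coordinate system transverse to $\bP\cat{Rub}(\Gamma)$, and $\bP\cat{Rub}(\Gamma)=\{t_i=0\}$. The candidate sequence is then $(t_i^{\ell_i})_{i \text{ frozen}}, g_1,\dots,g_c$. The plumbing description shows that locally $\GMS(\Gamma)\cong \prod_i \Spec\bC[t_i]/t_i^{\ell_i}\times \GMS(\Gamma)^\red$. This description amounts to saying that the $g_j$ can be chosen independent of the frozen $t_i$, i.e.\ flat over $\bC[t_i]$. With that choice, the sequence is regular: $t_i^{\ell_i}$ are powers of independent coordinates, and modulo them the $g_j$ remain regular because the quotient is flat over the Artinian base with regular special fiber. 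Equivalently, the local ring of $\GMS(\Gamma)$ is Cohen--Macaulay of the right dimension and the number of equations matches the codimension.

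The main obstacle is justifying that the lifts $g_j$ of the levelwise equations can be chosen so that they do not interact with the frozen $t_i$. In particular, the normal forms~\eqref{eq:omegaNF} only give the local product structure after a coordinate change depending on $t_i$ modulo $t_i^{\ell_i}$. I would first try to extend the coordinate change of the plumbing construction from the thickening $\Spec\bC[t_i]/t_i^{\ell_i}$ to an honest neighborhood in $\bP\cat{Rub}$. This uses the convergent iterative normal-form Ansatz of \cite[Remark~4.6]{LMS} over the full disk $t_i\in\Delta$ with the residues $r_e$ as free parameters. Should that fail, the fallback is a dimension count: $\GMS(\Gamma)$ is Cohen--Macaulay, being locally a product of an Artinian complete intersection with a smooth space, and it is cut out by exactly codimension-many equations, so the ideal is generated by a regular sequence.
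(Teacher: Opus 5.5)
Your skeleton matches the paper's. You work in the regular local ring $R$ of $\bP\cat{Rub}$ at $x$, use smoothness of $\GMS(\Gamma)^\red$, and take as candidate generators the powers $t_i^{\ell_i}$ of the frozen level parameters together with lifts $g_j$ of equations of the reduced locus. But you never prove the one substantive point: that this sequence \emph{generates} the ideal $I$ of $\GMS(\Gamma)$.

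Regularity of the sequence is not the issue. Once $(t_i,g_j)$ is part of a regular system of parameters of $R$, the sequence $(t_i^{\ell_i},g_j)$ is automatically $R$-regular, so your flatness argument establishes something that comes for free. What is missing is the following.
\begin{itemize}
\item[(a)] The inclusion $I_0:=\langle t_i^{\ell_i},g_j\rangle\subseteq I$. This fails for arbitrary lifts from $\bP\cat{Rub}(\Gamma)$, since such lifts can be altered by multiples of the $t_i$.
\item[(b)] The reverse inclusion $I\subseteq I_0$.
\end{itemize}
The paper gets (a) from a choice you missed. The ideal of the reduced locus is $J=I+\langle t_1,\ldots,t_L\rangle$, so the generators of $J$ complementing the $t_i$ can be taken inside $I$. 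For (b) it uses that the exponents $\ell_i$ are exactly the right ones. One deforms the level-wise twisted differentials arbitrarily and glues to order $t_i^{\ell_i}$; the Stokes obstruction $t_i^{\ell_i}\sum_e r_e$ then vanishes identically modulo $t_i^{\ell_i}$. This produces a family over $R/I_0$ lying in $\GMS(\Gamma)$, i.e.\ $I\subseteq I_0$. Your discussion of the GRC functions via Lemma~\ref{le:bdualindentification} is not needed: smoothness of $\GMS(\Gamma)^\red$ inside $\bP\cat{Rub}(\Gamma)$ already supplies the $g_j$.

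Your fallback is circular. Cohen--Macaulayness of $R/I$ does not force $I$ to be generated by codimension-many elements; the cone over the twisted cubic is Cohen--Macaulay of codimension two with a three-generated ideal. Moreover, ``cut out by exactly codimension-many equations'' is precisely the statement to be proved.

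If you want to use the local product structure $\GMS(\Gamma)\cong\prod_i\Spec\bC[t_i]/t_i^{\ell_i}\times\GMS(\Gamma)^\red$, proceed as follows. Choose the $g_j$ inside $I$ as above, so $R/I_0\to R/I$ is surjective. After completion, both sides are isomorphic to $\widehat{S}[[t_1,\ldots,t_L]]/(t_1^{\ell_1},\ldots,t_L^{\ell_L})$, where $S$ is the regular local ring of $\GMS(\Gamma)^\red$. A surjective endomorphism of a Noetherian ring is injective, so $I\widehat R=I_0\widehat R$, and hence $I=I_0$. Equivalently, you can invoke that for $R$ regular, $R/I$ is a complete intersection if and only if $I$ is generated by a regular sequence.

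That would be a legitimate and shorter alternative to the paper's argument, and the analytic extension of plumbing coordinates you worried about would then be unnecessary. However, it rests entirely on the product decomposition holding as an isomorphism of local rings at every point of $\GMS(\Gamma)$. This includes points whose level graph is a proper degeneration of $\Gamma$. Establishing that is essentially the same gluing input the paper uses directly, so it must be justified rather than quoted.
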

\par
\begin{proof}
We adapt the usual strategy to show that an embedding of a smooth scheme into an ambient smooth scheme is regular, compare \cite[Lemma~10.106.4]{stacks-project}. We tacitly fix a component $C$ of $\GMS(\Gamma)$. Let~$R$ be the (regular) local ring of $\bP \cat{Rub}$ in a neighborhood of a given point~$x$ of $\GMS(\Gamma)$. We aim to show that there is a (minimal) set of generators $f_1,\ldots,f_{3g-3+n}$ of the maximal ideal~$\frakm$ of~$R$ such that the ideal~$I$ of $\GMS(\Gamma)$ in~$R$ is cut out by $f_1,\ldots,f_c$ where $c = g-|V_C^\ab| + L(\Gamma)$. We let~$J$ be the ideal defining~$\GMS(\Gamma)^\red$ in~$R$. Let $\ol{\frakm} = \frakm/J$ be the maximal ideal in~$R/J$. Since
\ba \label{eq:jfrakm}
\dim_\bC((J+\frakm)/\frakm^2) &\= \dim_\bC \frakm/\frakm^2 - \dim_\bC \ol{\frakm}/\ol{\frakm}^2 \\
&\= \dim \bP \cat{Rub} - \dim \GMS(\Gamma)^\red \= g-|V_C^\ab| + L(\Gamma) \= c\,,
\ea
we may choose $f_1,\ldots,f_c \in J$ whose images in $\frakm/\frakm^2$ are linearly independent, and more precisely we may choose $f_i = t_i$ for $i=1,\ldots,L(\Gamma)$. Since $J = \langle I, t_1,\ldots,t_L \rangle$ by definition of $\GMS(\Gamma)^\red$ we may choose $f_i \in I$ for $i=L+1,\ldots,c$. We complement these generators by $f_{c+1},\ldots,f_{3g-3+n}$ to get a minimal system of generators of~$\frakm$. As in loc.\ cit.\ it follows that $J = \langle f_1,\ldots,f_c \rangle.$. Now
\be \label{eq:genofI}
I_0 \,:= \, \langle f_1^{\ell_1},\ldots,f_L^{\ell_K},f_{L+1}, \ldots, f_c \rangle \subseteq I.
\ee
To see that they are equal we need to use that the powers~$\ell_i$ of $t_i$ are minimal to define~$\GMS$. More precisely, suppose that $y \in I$ were an element that exhibits~\eqref{eq:genofI} to be strict. Since $y \in J$ we may without loss of generality assume
$$y \= \sum_{i=1}^L \sum_{j_1=1}^{\ell_1-1} \cdots \sum_{j_L=1}^{\ell_L-1} a_{i,\bfj} \prod_i t_i^{j_i}, \qquad \bfj = (j_1,\ldots,j_L)$$
with $a_{i,\bfj} \in \langle f_{L+1}, \ldots, f_c\rangle$ and some $a_{i,\bfj} \neq 0$ for some $(i,\bfj)$. We may extend the deformations of the twisted differentials at each level (prescribed by the $a_{i,\bfj}$) in a arbitrary way to order $t_i^{\ell_i}$ and glue the resulting twisted differentials as in the proofs in Section~\ref{sec:support} to a family over $R/I_0$ (using once again that the residue constraint disappears if we glue to order at most $t_i^{\ell_i}$). This shows that such an element~$y$ does not belong to~$I$.
\end{proof}
\par
From this proposition we deduce the following statement, stating that components of~$\GMS$ intersect 'as transversely as possible'. We state the first non-trivial case. The general case (with more levels and more components, see the ``Case: general graph~$\Gamma$'' in Section~\ref{sec:support}) can be dealt with similarly.
\par
\begin{prop} \label{prop:GMSequations}
Suppose that $\mu$ is a meromorphic signature and that $\Gamma$ is a graph with one level passage. Suppose moreover at $x \in \bP\cat{Rub}$ the component $C$ of $\GMS(\Gamma)$ and $\MS$ meet, and that $b = |V(\Gamma)^\ab_C|>0$. Then there exist functions $r_1,\ldots r_b$ and $f_{b+1},\ldots,f_g$in the local ring of $\bP\cat{Rub}$ at~$x$ such that
\be
I \= \langle r_1 t_1^{\ell_1},\ldots, r_bt_1^{\ell_1}, f_{b+1},\ldots,f_g \rangle
\ee
generates the ideal of $\GMS \subset \bP\cat{Rub}$ near~$x$, while 
$r_1 ,\ldots, r_b, f_{b+1},\ldots,f_g$ is a regular system for $\MS$ and
$t_1^{\ell_1}, f_{b+1},\ldots,f_g$ is a regular system for the component~$C$ of
$\GMS(\Gamma)$.
\par
In particular $\frakj_0: \GMS \to \bP\cat{Rub}$ is a regular embedding near~$x$ if and only if~$b=1$.
\end{prop}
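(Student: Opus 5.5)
We would prove Proposition~\ref{prop:GMSequations} by combining the local Stokes-theorem computation from the proof of Proposition~\ref{prop:GMSunion} (the case of one level passage) with the construction of regular generators in the proof of Proposition~\ref{prop:GMSGammaregemb}. Work in the regular local ring $R$ of $\bP\cat{Rub}$ at~$x$. The level graph at~$x$ is $\Gamma$ or a degeneration of it. Since $\MS$ and $C$ meet at~$x$, a neighbourhood of~$x$ in~$\GMS$ is the union of~$\MS$ and~$C$. We first pin down the ``GRC functions''. Let $C_1,\dots,C_b$ be the holomorphic abelian top level components in $V(\Gamma)^\ab_C$. For each $C_j$, the normal form~\eqref{eq:omegaNF} on the edges adjacent to $C_j$ gives the relation $t_1^{\ell_1}\sum_{e\vdash C_j} r_e = 0$ in the ideal of $\GMS$. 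We therefore set $r_j := \sum_{e\vdash C_j} r_e$, extended to functions on $R$ via the plumbing coordinates. The other top level components do not impose a GRC, by the definition of~$V(\Gamma)^\ab_C$ and Lemma~\ref{le:bdualindentification}.

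Second, we produce the functions $f_{b+1},\dots,f_g$ cutting out the locus common to both components. By Lemma~\ref{le:bdualindentification} and Corollary~\ref{cor:tangentspacecontained}, the cokernel of $b$ at~$x$ has dimension $g$. Its dual is $H^0(X,\omega_X)$, which contains the $b$-dimensional subspace $\Omega_{\ab,C}$. The complementary $(g-b)$ directions are equations that hold on all of $\GMS$ near~$x$, namely the vanishing of the remaining coordinates of the section $\sigma_\cL-\sigma_0$. Following Proposition~\ref{prop:GMSGammaregemb}, we choose $f_{b+1},\dots,f_g\in I$ whose differentials are linearly independent in $\frakm/\frakm^2$ and also independent from $dt_1$ and from the $dr_j$. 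That the $dr_j$ are independent is the key transversality input. It holds because $r_j$ is the residue sum at $C_j$, and these sums are independent linear forms in perturbed period coordinates on the lower level. This is exactly the dual statement that $\Omega_{\ab,C}$ pairs nondegenerately with the residue-changing deformations.

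Third, we compare ideals. By the main result of \cite{LMS} together with Theorem~\ref{thm:MSinGMS}, $\MS$ is smooth of codimension $g$ and is cut out by the GRCs $r_1=\dots=r_b=0$ together with the $f$'s. By Proposition~\ref{prop:GMSGammaregemb} and the count $c=g-b+1$ with $L=1$, the component $C$ is cut out by $t_1^{\ell_1},f_{b+1},\dots,f_g$. The inclusion $I_0:=\langle r_jt_1^{\ell_1}, f_i\rangle\subseteq I$ follows from the Stokes relations. For the reverse inclusion, pass to $R/\langle f_i\rangle$, which is regular with coordinates including $t_1,r_1,\dots,r_b$. There the ideal of~$\GMS$ is the intersection $\langle r_1,\dots,r_b\rangle\cap\langle t_1^{\ell_1}\rangle$, since $\GMS$ is the scheme-theoretic union by Proposition~\ref{prop:GMSunion}. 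Because $t_1$ is a variable independent of the $r_j$, this intersection equals the product ideal $\langle r_jt_1^{\ell_1}\rangle$. We expect the main obstacle to be justifying the scheme-theoretic (and not just set-theoretic) union description near~$x$, i.e.\ excluding embedded nilpotent structure beyond $t_1^{\ell_1}$. For this we would reuse the gluing argument at the end of the proof of Proposition~\ref{prop:GMSGammaregemb}.

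Finally, $I$ is generated by $b+(g-b)=g$ elements, equal to the codimension, exactly when $b=1$, in which case it is a complete intersection. For $b\ge2$, the ideal $\langle r_jt_1^{\ell_1}\rangle$ needs $b$ generators but has height~$1$, so it is not a complete intersection and the embedding is not regular.
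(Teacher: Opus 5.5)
\textbf{Assessment.} Your overall route is the paper's. You take the $r_j$ to be the residue sums of $(\GRC_1)$, take the $f_i$ from the construction in Proposition~\ref{prop:GMSGammaregemb} but chosen inside the ideal of $\GMS$, complete $t_1,f_{b+1},\ldots,f_g,r_1,\ldots,r_b$ to a regular system of parameters, and identify $\MS$ and $C$ by dimension counts. Your explicit computation in $R/\langle f_i\rangle$ is correct: $\langle r_1,\ldots,r_b\rangle\cap\langle t_1^{\ell_1}\rangle = t_1^{\ell_1}\langle r_1,\ldots,r_b\rangle$, because $\langle\bar r\rangle$ is prime and $\bar t_1\notin\langle\bar r\rangle$. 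This, together with your height versus number-of-generators argument for $b\geq 2$, spells out what the paper leaves implicit once the two regular systems are in place. For $I_{\MS}=\langle r,f\rangle$, the clean justification is the paper's dimension check: both families vanish on $\MS$ and are part of a regular system of parameters, so their zero locus is integral of dimension $\dim\MS$.

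\textbf{Two steps need repair.}

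First, the cokernel of $b$ does not have dimension $g$. It is the target $\T^\rel_x\Picabs=H^1(X,\cO_X)$ that is $g$-dimensional with dual $H^0(X,\omega_X)$. By Lemma~\ref{le:bdualindentification}, $(\coker b)^\vee=\ker b^\vee\subseteq\Omega_{\ab,C}$, so $\dim\coker b\leq b$ and $\rk b\geq g-b$. It is this rank bound (equivalently $\dim\T_x\GMS\leq 3g-3+n-g+b$, as in Corollary~\ref{cor:tangentspacecontained}) that lets you pick $g-b$ of the $g$ local equations of $\{\sigma_\cL=\sigma_0\}$ with independent differentials. Taken literally, a $g$-dimensional cokernel would give you no such equations.

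Second, the independence of the $dr_j$, which you rightly call the key transversality input, is exactly where the hypothesis that $\mu$ is meromorphic enters, and your justification never uses it. Take $k=1$ and $\mu$ holomorphic. Then every top level component is subject to $(\GRC_1)$, and every vertical edge ends at one of the $C_j$. The residue theorem on the lower level (horizontal edges cancel in pairs, and there are no marked poles) gives $\sum_j r_j=0$ identically. So the residue sums are not independent linear forms in the lower-level period coordinates in general. You must invoke a marked pole to break this relation. Such a pole sits either on the lower level with a free residue, or on a non-abelian top component whose edges contribute residues not among the $r_j$.

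With these two corrections your argument is complete.
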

\par
The restrictions to~$\GMS$ of the $r_i$ are the sums of residues at the edges to the various components as in the condition~$(\GRC_1)$. It would be interesting to have a geometric interpretation on the whole (Zariski neighborhood of~$x$ in) $\bP \cat{Rub}$.
\par
\begin{proof} Let $r_1,\ldots r_b$ be the residue sums that appear in~$(\GRC_1)$. These are functions on $\GMS$ near~$x$. Since $\GMS$ is a substack, these are restrictions of functions on $\bP \cat{Rub}$ that we denote by the same letter.
Since we suppose $\mu$ is a meromorphic signature these functions are indeed independent and part of the (``perturbed period'') coordinate system both on~$\MS$ and on~$\GMS(\Gamma)$.
\par
From Proposition~\ref{prop:GMSGammaregemb} we take the functions $f_i$ such that $t_1^{\ell_1}, f_{b+1},\ldots,f_g$ is a regular sequence for~$C$ and $t_1, f_{b+1},\ldots,f_g$ is a regular sequence for~$C^\red$. More precisely, since the tangent space to~$\GMS$ has dimension at most (and a posteriori equal to) $3g-3+n-g+b$ by Corollary~\ref{cor:tangentspacecontained}, we may choose (using~\eqref{eq:jfrakm}) the~$f_i$ as elements of the ideal of~$\GMS$ at~$x$. By the independence of the $r_i$ there exist functions $h_i$ such that
\be \label{eq:RSEQ}
t_1, f_{b+1},\ldots,f_g,r_1,\ldots r_b,h_{g+1},\ldots,h_{3g-3+n}
\ee
is a regular system for the maximal ideal at~$x$.
\par
Now we use that any permutation of the elements in~\eqref{eq:RSEQ} is also a regular sequence and we start with~$r_1,\ldots r_b,f_{b+1},\ldots,f_g$. By our choice of the~$f_i$ the space $\MS$ is contained in the vanishing locus of these~$g$ elements and a dimension check completes the claim that they form a regular system for~$\MS$.
\end{proof}


%

\subsection{Tangent bundles and the normal bundle} \label{sec:tangnormal}

We start with an approximate computation of the Chern classes tangent bundle to  the multi-scale space. Note that \cite{CMZEuler} gives a complete anwer to this question. However the two resulting formulas are quite different.\footnote{E.g., the formula given there involves on the small tautological ring without the horizontal divisor, whereas the formulas that follow from Proposition~\ref{prop:tangMS} involve $\lambda$-classes that a priori do not belong to this ring.} The one here is adapted to subsequently computing normal bundles.
\par
We write $\frakj: \MS \to \bP \cat{Rub}$ for the map~$\frakj_\Gamma$ in the special case where $\Gamma = \bullet$ is the trivial graph. We define $\LG_1^{\GRC} \subset \LG_1$ to be the set of level graphs that impose a non-trivial GRC on lower level. 
\par
\begin{prop} \label{prop:tangMS}
There is an exact sequence of coherent sheaves on $\MS$:
\be \label{eq:GMSseq}
0 \to \T {\MS} \xrightarrow{d \frakj}
\frakj^* \T \bP \cat{Rub} \xrightarrow{b}   \frakj^* \T_{\rel}\Picabs
\to \cC \to 0
\ee
where the cokernel~$\cC$ is supported on the divisors $D_{\Gamma,\bullet}$ for $\Gamma$ in  $\LG_1^{\GRC}$ if $\mu$ is a meromorphic signature or $k>1$. In the case that $k=1$ and $\mu$ is a holomorphic signature, the cokernel~$\cC$ is an extension of $\cO_{\MS}(-1)$ with a sheaf supported on $D_{\Gamma,\bullet}$ for $\Gamma$ in  $\LG_1^{\GRC}$.
\end{prop}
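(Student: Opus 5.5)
The sequence~\eqref{eq:GMSseq} is the standard fibre-square sequence for $\GMS$ as a fibre product. We restrict it to the component $\MS = \GMS(\bullet)$ and then compute the cokernel of~$b$ pointwise, using the dual computation of Lemma~\ref{le:bdualindentification}.

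\emph{Step 1: left exactness.} Since $\GMS = \bP\cat{Rub}\times_{\Picabs}\bP\cat{Rub}$ via $\sigma_0,\sigma_\cL$, the tangent sheaf of the fibre product is the kernel of $b = d\sigma_0 - d\sigma_\cL$, composed to the relative tangent space. Away from the intersections with other components of $\GMS$, the inclusion $\frakj$ identifies $\T\MS$ with this kernel, because $\MS$ is smooth (Theorem~\ref{thm:MSkproperties}). To get exactness on all of $\MS$, I would argue as follows. $\T\MS$ is locally free and the kernel of $b$ restricted to $\MS$ is torsion free. Along the loci where $\MS$ meets $\GMS(\Gamma)$, Corollary~\ref{cor:tangentspacecontained} and the dimension count in Proposition~\ref{prop:GMSequations} show that $T_x\MS$ is a saturated subspace of $\ker b$ of the correct rank. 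Hence the two sheaves agree. Here $\frakj^*\T_\rel\Picabs \cong R^1\pi_*\cO_X$ has rank~$g$, and the rank count $\dim\bP\cat{Rub} - g = \dim\MS$ holds in the non-holomorphic case. In the holomorphic abelian case with $k=1$ the rank count leaves a one-dimensional cokernel generically.

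\emph{Step 2: support of the cokernel.} By duality, $\cC_x^\vee = \ker(b^\vee) \subseteq \ker(b_G^\vee\oplus b_\Omega^\vee)$, as in~\eqref{eq:cokerb}. By Lemma~\ref{le:bdualindentification}, at a point of $\MS$ with level graph $\Pi$ this kernel is spanned by the $k$-th roots of $\omega$ on holomorphic abelian horizontally connected components. Two cases are nonzero:
\begin{itemize}
\item On the open stratum with $k=1$ and $\mu$ holomorphic, the kernel is $\bC\cdot\omega$. This gives a line bundle quotient, and $\omega$ is the tautological section, so the line bundle is $\cO_{\MS}(-1)$ or its dual, depending on the sign conventions of Lemma~\ref{le:etaISxi}.
\item Otherwise the kernel is nonzero only when some level below the top carries a holomorphic abelian component. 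For $k=1$ and $\mu$ meromorphic, or for $k>1$ (where on $\MS$ the top differential is primitive, so it is not a $k$-th power), this happens only at boundary points whose graph has a holomorphic abelian vertex strictly above a lower level imposing a GRC. Undegenerating all other level passages shows that these points lie in $D_{\Gamma,\bullet}$ for some $\Gamma\in\LG_1^{\GRC}$.
\end{itemize}
One also has to check that the functional $\omega^{1/k}|_{X_{\ul v}}$ really lies in $\ker b^\vee$ and not merely in $\ker(b_G^\vee\oplus b_\Omega^\vee)$. For this I would pair it with the level-opening directions $\iota_L$: pairing gives the residue sum in $(\GRC_j)$, which vanishes on $\MS$. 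It is nonzero on the level-smoothing direction only to order $t^{\ell}$, which again restricts the support to the boundary divisor.

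\emph{Main obstacle.} The hardest step is the behaviour of the sequence along $D_{\Gamma,\bullet}$ for $\Gamma\in\LG_1^{\GRC}$. There the rank of $b$ drops, and exactness on the left needs the local equations of Proposition~\ref{prop:GMSequations}: in coordinates $r_i, t_1, f_j$, the map $b$ is given by the differentials of $r_it_1^{\ell_1}$ and $f_j$, and restricted to $\MS = \{r_i = 0, f_j=0\}$ these differentials are $t_1^{\ell_1}dr_i$ and $df_j$. This exhibits $\T\MS$ as the kernel and shows the cokernel is $\bigoplus_i \cO/t_1^{\ell_1}$, supported on the divisor. I would carry out this local computation first in the one-level case and then extend it inductively over level passages, as in the proof of Proposition~\ref{prop:GMSunion}.
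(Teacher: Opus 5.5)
Your proposal is correct and is essentially the paper's proof: $\T\MS\subseteq\ker b$, equality off the divisors $D_{\Gamma,\bullet}$ by bounding $\operatorname{coker}(b)_x$ via \eqref{eq:cokerb} and Lemma~\ref{le:bdualindentification} together with smoothness of $\MS$, and, in the holomorphic $k=1$ case, the line-bundle quotient obtained by dualizing $\bC\cdot\omega\hookrightarrow H^0(X,\omega_X)$. Your extra local computation along $D_{\Gamma,\bullet}$ via Proposition~\ref{prop:GMSequations} (cokernel $\bigoplus_i\cO/t_1^{\ell_1}$) goes beyond what the paper does here and is not needed for the support statement; also, your phrase ``some level below the top'' should be replaced by the condition in your next sentence (a holomorphic abelian vertex above a GRC-imposing passage), since at boundary points such vertices can sit on the top level.
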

\par
\begin{proof} The injection of $\T \MS$ into the kernel of $b$ follows from the definition of $\MS$ as a component of the locus where~$\sigma_\cL$ and~$\sigma_0$ agree. If~$x$ is a not boundary point where the level graph~$\Gamma$ has a GRC, then the image of $b_\ab$ in Corollary~\ref{cor:tangentspacecontained} has dimension one in the holomorphic and zero in all other cases. This shows that the dimension of $\T \MS$ and of the kernel agree. Since $\MS$ is smooth this proves the exactness at $\frakj^* \T \bP \cat{Rub}$.
\par
In the meromorphic case we simply need to show that $d\sigma_\cL$ is generically surjective outside the union of $D_{\Gamma,\bullet}$ for $\Gamma$ in  $\LG_1^{\GRC}$. In the holomorphic case we consider  outside the union of those $D_{\Gamma,\bullet}$ the surjective map $\frakj^* \T_{\rel}\Picabs \to \cO_{\MS}(-1)$ induced by the dual to the inclusion~\eqref{eq:OmegaMaxincl}. It follows from Corollary~\ref{cor:tangentspacecontained} compared with the dimension of~$\MS$ that the image of $b$ there equals the kernel of this map, proving the remaining exactness.
\end{proof}
\par
For the case of a general component $\GMS(\Gamma)$ we first apply this argument to the top and bottom levels to $\cat{Rub}(\Gamma)$ and then deal with the normal directions outside the boundary.
\par
\begin{prop} \label{prop:tangGMSGamma}
There is an exact sequence of coherent sheaves on $\GMS(\Gamma)$
\be \label{eq:GMSseqGamma}
0 \to \T {\GMS(\Gamma)^\red}  \xrightarrow{d \frakj_\Gamma}
(\frakj_\Gamma^\red)^* \T \bP\cat{Rub}(\Gamma) \xrightarrow{b}   (\frakj_\Gamma^\red)^* \T_{\rel}\Picabs \to \cC_\Gamma \to 0\,,
\ee
where the cokernel~$\cC_\Gamma$  is an extension of $\oplus_{v \in V(\Gamma)_C^\ab} \cO_{\mu,v}(-1)$ with a sheaf supported on $D_{\Gamma,\bullet}$ for $\Gamma$ in  $\LG_1^{\GRC}$. Here $V(\Gamma)_C^\ab$ is determined by the component $C$ of $\GMS(\Gamma)$ as in \eqref{eq:VabC}.
\end{prop}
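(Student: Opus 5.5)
The plan is to follow the proof of Proposition~\ref{prop:tangMS}, now working relative to the smooth ambient space $\bP\cat{Rub}(\Gamma)$ instead of $\bP\cat{Rub}$. By Proposition~\ref{prop:GMSredsmooth}, $\GMS(\Gamma)^\red = \GMS(\Gamma)\cap \bP\cat{Rub}(\Gamma)$ is smooth and $\bP\cat{Rub}(\Gamma)$ is smooth. The sequence~\eqref{eq:GMSseqGamma} therefore compares two vector bundles, and we only need to prove exactness at the first two spots and identify the cokernel.

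The map $d\frakj_\Gamma$ is injective because it is the tangent map of a closed embedding of smooth stacks. Its image lies in $\ker(b)$ because $\GMS(\Gamma)^\red$ lies in the locus where $\sigma_0$ and $\sigma_\cL$ agree.

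For equality with $\ker(b)$, compare dimensions at a general point~$x$ of $\GMS(\Gamma)^\red$, meaning one whose level graph is exactly~$\Gamma$ and which lies on no divisor $D_{\Gamma',\bullet}$ with $\Gamma'\in\LG_1^{\GRC}$ on any level. By the splitting lemma for $T_x\bP\cat{Rub}(\Gamma)$ (with $I=\emptyset$ there), the tangent space is $H^1_L(\Gamma,\bC)\oplus H^1(X,\omega_{X,\mathrm{hor}}^\vee(-\bfz))$. Lemma~\ref{le:bdualindentification} applied to $(b^\Gamma_G)^\vee\oplus(b^\Gamma_\Omega)^\vee$ shows that $\ker(b^\vee)\subseteq\Omega_{\ab,C}$. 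Conversely, every element of $\Omega_{\ab,C}$ lies in $\ker(b^\vee)$, because the $k$-th root $\omega^{1/k}|_{X_{\ul v}}$ pairs to zero with all deformations tangent to the component~$C$. This is the content of Corollary~\ref{cor:tangentspacecontained}. Hence at~$x$ we get $\dim\coker(b)=|V(\Gamma)^\ab_C|$, and $\operatorname{rk} b = g-|V(\Gamma)_C^\ab|$.

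On the other hand, Lemma~\ref{le:dGammadef} gives $\dim \GMS(\Gamma)^\red = d-L(\Gamma)+|V^\ab_C|$, while $\dim\bP\cat{Rub}(\Gamma)=\dim\bP\cat{Rub}-L(\Gamma)$. The difference is therefore $g-|V^\ab_C|$, which equals $\operatorname{rk} b$, so exactness holds in the middle at general points. Since both sides are subbundles and the kernel of a map of vector bundles is saturated, the equality of the torsion-free sheaves $\T\GMS(\Gamma)^\red$ and $\ker b$ extends everywhere. The one thing to check here is that $\T\GMS(\Gamma)^\red$ is saturated in $(\frakj_\Gamma^\red)^*\T\bP\cat{Rub}(\Gamma)$, which holds because it is the tangent bundle of a smooth closed substack.

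For the cokernel, dualize the inclusion~\eqref{eq:OmegaMaxincl} levelwise. For $v\in V(\Gamma)^\ab_C$, the line spanned by $\omega^{1/k}|_{X_{\ul v}}$ is the fiber of $\cO_{\mu,v}(1)$; here I use the bundles $\cO_{\mu|_S}(1)$ from~\eqref{eq:cO1S} with $S=\{v\}$, pulled back to the reduced locus. This gives a surjection $(\frakj^\red_\Gamma)^*\T_\rel\Picabs\to\bigoplus_{v\in V(\Gamma)^\ab_C}\cO_{\mu,v}(-1)$ that vanishes on the image of~$b$. Away from the GRC divisors, the dimension count above shows its kernel is exactly $\operatorname{im}(b)$. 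The remaining part of $\cC_\Gamma$ is the kernel of $\cC_\Gamma\to\bigoplus_v\cO_{\mu,v}(-1)$. It is supported where $\operatorname{rk} b$ drops, and by the local analysis of Section~\ref{sec:support} applied on each level of $\GMS(\Gamma)^\red\simeq\prod\MS(\mu^{[i]})$ (up to finite maps, Proposition~\ref{prop:GMScomm}), this happens exactly along further degenerations $D_{\Gamma',\bullet}$ at which a new GRC is imposed. This is the level-wise version of the argument in Proposition~\ref{prop:tangMS}.

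I expect the main obstacle to be this last step: checking that the residue functionals at vertical nodes of a further degeneration drop the rank of~$b$ only along GRC-imposing divisors, and doing so uniformly for all levels, including the non-top levels, which carry meromorphic differentials.
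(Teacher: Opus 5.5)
Your proposal follows the paper's proof. Injectivity comes from the closed embedding. Middle exactness comes from matching $\operatorname{rk} b = g-|V(\Gamma)^\ab_C|$ (Lemma~\ref{le:bdualindentification} and Corollary~\ref{cor:tangentspacecontained}) against $\dim \bP\cat{Rub}(\Gamma)-\dim\GMS(\Gamma)^\red$, which is computed from Lemma~\ref{le:dGammadef} and the codimension $L(\Gamma)$ of $\bP\cat{Rub}(\Gamma)$. The cokernel comes from the dual of~\eqref{eq:OmegaMaxincl}, and your saturation argument makes explicit the passage from general points to all of $\GMS(\Gamma)^\red$, which the paper leaves implicit.

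The step you flag as the main obstacle is not treated separately in the paper either. It runs the same dimension count at every point off the GRC divisors, relying on the remark before Lemma~\ref{le:bdualindentification} that the identification also holds at points whose level graph is a degeneration of~$\Gamma$.
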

\par
\begin{proof} The idea is the same as for Proposition~\ref{prop:tangMS}. Again, outside the intersection with GRC-divisors $D_{\Gamma,\bullet}$ the map to $\oplus_{v \in V(\Gamma)^\ab} \cO_{\mu,v}(-1)$ is induced by the dual to inclusion of differential forms in Corollary~\ref{cor:tangentspacecontained}. The image of $d\frakj_\Gamma$ is obviously contained in the kernel of~$b$. To show equality we use the smoothness of $\GMS(\Gamma)^\red$ in Proposition~\ref{prop:GMSred} and compare dimensions: Observe that $\bP\cat{Rub}(\Gamma)$ has codimension~$L(\Gamma)$ in $\bP\cat{Rub}$, locally being cut out by $t_i=0$. The dimension claim now follows from the corollary combined with Lemma~\ref{le:dGammadef}.
\end{proof}
\par
We continue to denote by $\cC_\Gamma$ the cokernel as in the previous propositions. We define
\be \label{eq:defLtop}
\cL_{\Gamma}^\top \=  \cO_{D_\Gamma} \Bigl(\sum_{\wh{\Delta} \in \LG_2
\atop \delta_2(\wh{\Delta}) = \Gamma} \ell_{\wh{\Delta},1}D_{\wh{\Delta}} \Bigr)
\ee
to be the bundle summing all the degenerations of the top level with the factor
$\ell_{\wh{\Delta},1}$, the lcm of the top level passage enhancements.\footnote{This agrees with \cite[Equation~(49)]{CMZEuler}, not to be confused with the bundle in~\eqref{eq:defcL}.}
\par
\begin{theorem} \label{thm:nbGMSapprox}
The Chern polynomial of normal bundle  $\cN_{\frakj_\Gamma \circ \iota_\red}$ of the inclusion of $\GMS(\Gamma)^\red$ in $\bP \cat{Rub}$ can be computed as
\be
c(\cN_{\frakj_\Gamma \circ \iota_\red}) \= c(\ker((\frakj_\Gamma^\red)^* \T_{\rel}\Picabs \to \cC_\Gamma)) \cdot c(\cN_\Gamma) 
\ee
where $\cN_\Gamma$ is the normal bundle of $\bP \cat{Rub}(\Gamma)$ in $\bP \cat{Rub}$ restricted to $\GMS(\Gamma)$.  
\par
If $\Gamma$ has just one level passage, then $\cN_\Gamma$ is a line bundle whose Chern polynomial is given by
\be
c(\cN_\Gamma) = 1 + \frac{1}{\ell_\Gamma}(- \zeta^\bot + \zeta^\top  + \cL_\Gamma^\top) \,.
\ee
\end{theorem}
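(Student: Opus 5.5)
The first formula will follow from the factorization of the inclusion $\frakj_\Gamma\circ\iota_\red = \frakj_\Gamma^\red$ through $\bP\cat{Rub}(\Gamma)$. Both $\GMS(\Gamma)^\red$ and $\bP\cat{Rub}(\Gamma)$ are smooth by Proposition~\ref{prop:GMSredsmooth}, and $\bP\cat{Rub}$ is smooth. Hence the composition $\GMS(\Gamma)^\red \to \bP\cat{Rub}(\Gamma) \to \bP\cat{Rub}$ consists of regular embeddings, and we get the standard short exact sequence of normal bundles
\[
0 \to \cN_{\GMS(\Gamma)^\red/\bP\cat{Rub}(\Gamma)} \to \cN_{\frakj_\Gamma\circ\iota_\red} \to \cN_\Gamma \to 0\,.
\]
Whitney's formula then reduces the first claim to identifying the first term. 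For this I use the exact sequence~\eqref{eq:GMSseqGamma} of Proposition~\ref{prop:tangGMSGamma}. The quotient $(\frakj_\Gamma^\red)^*\T\bP\cat{Rub}(\Gamma)/\T\GMS(\Gamma)^\red$ is precisely this normal bundle, and $b$ maps it isomorphically onto $\ker((\frakj_\Gamma^\red)^*\T_\rel\Picabs \to \cC_\Gamma)$. The only subtlety is that this kernel is a priori just a coherent sheaf. However, it is the image of a vector bundle under an injective map, so it is locally free, and its total Chern class is well defined.

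For the one-level case, $\bP\cat{Rub}(\Gamma)$ is locally cut out by the single equation $t_1=0$. It is therefore a Cartier divisor, and $\cN_\Gamma$ is the restriction of $\cO(\bP\cat{Rub}(\Gamma))$. To compute it, I identify the normal direction with the deformation parameter $t_1$: the normal bundle is dual to the conormal line generated by $t_1$. The relation $f_e^{\kappa_e} = t_1^{\ell_\Gamma}$ from the normal form~\eqref{eq:omegaNF} then expresses $\cN_\Gamma^{\otimes \ell_\Gamma}$ as $\cN_e^{\otimes\kappa_e}$ for any edge~$e$, where $\cN_e = q_+^*\omega_{X^+}\otimes q_-^*\omega_{X^-}$ is the node deformation line (up to the usual tangent/cotangent dualities). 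Via the sections $\tau_\pm$ defined by the level differentials, $\cN_e^{\kappa_e}$ is trivialized against $\cO_{\mu^{[0]}}(-1)^{\vee}\otimes\cO_{\mu^{[1]}}(-1)$, up to sign conventions. This accounts for the term $-\zeta^\bot+\zeta^\top$ after dividing by $\ell_\Gamma$, matching the computation of the normal bundle to boundary divisors in \cite[Section~7]{CMZEuler}.

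The correction term $\cL_\Gamma^\top$ comes from the locus where the top level further degenerates. There the top-level differential $\omega_0$ acquires a zero of order $\ell_{\wh\Delta,1}$ in the level parameter of the new upper passage. As a result, the trivializing section $\tau_+$ vanishes along $D_{\wh\Delta}$ with exactly that multiplicity. Degenerations of the bottom level do not contribute, since $\tau_-$ is normalized on the lowest level. The main obstacle is checking these vanishing orders and the signs carefully, especially in the stack/framing setting and on the non-reduced $\GMS(\Gamma)$ versus $\GMS(\Gamma)^\red$. I would follow the computation of \cite[Proposition~7.5]{CMZEuler} verbatim in local log coordinates, replacing the smoothing of $t_1$ by the frozen parameter. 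This is legitimate because on $\bP\cat{Rub}(\Gamma)$ only the first-order normal direction matters.
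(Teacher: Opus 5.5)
Your treatment of the first formula is the paper's argument, made explicit. You factor $\frakj_\Gamma\circ\iota_\red$ through the smooth $\bP\cat{Rub}(\Gamma)$, take the normal bundle sequence, and identify $\cN_{\GMS(\Gamma)^\red/\bP\cat{Rub}(\Gamma)}$ with the image of $b$ via Proposition~\ref{prop:tangGMSGamma}. For the one-level case you also follow the paper, which defers to \cite{CMZEuler}: $\cN_\Gamma$ is generated by the level parameter, its $\ell_\Gamma$-th power is a ratio of top and bottom tautological sections, and top-level degenerations correct this ratio.

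What is not right as written is the bookkeeping through a single edge $e$ along further degenerations.

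\emph{Top degenerations.} Near $D_{\wh\Delta}$ for a top-level degeneration, let $s$ be the parameter of the new passage and $t$ that of the frozen one.
If $e$ starts on the new top sublevel, it crosses both passages, so $f_e^{\kappa_e}=s^{\ell_{\wh\Delta,1}}t^{\ell_\Gamma}$. Then your identification of $\cN_\Gamma^{\otimes\ell_\Gamma}$ with the $\kappa_e$-th power of the node line fails by a twist by $\ell_{\wh\Delta,1}D_{\wh\Delta}$, and $\tau_+$ does \emph{not} vanish there.
Only if $e$ starts on the lower sublevel is $f_e^{\kappa_e}=t^{\ell_\Gamma}$, with $\tau_+$ vanishing to order $\ell_{\wh\Delta,1}$.

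\emph{Bottom degenerations.} Dually, $\zeta^\bot$ is normalized by the \emph{top} sublevel of the bottom stratum. So along a bottom degeneration, $\tau_-$ does vanish, to the order $\ell'$ of the new passage, for edges whose lower end drops to the lowest sublevel. This is cancelled exactly by the extra factor $s^{\ell'}$ in $f_e^{\kappa_e}$. Your stated reason, ``$\tau_-$ is normalized on the lowest level'', is therefore wrong, even though the conclusion (no bottom correction) is right.

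In every case the two effects combine to the edge-independent correction $\sum_{\wh\Delta}\ell_{\wh\Delta,1}D_{\wh\Delta}$ on top and to nothing on the bottom, so the formula survives. You should either track both effects edge by edge, or argue as the paper does without passing through nodes. There, $t^{\ell_\Gamma}$ is the ratio of the level differentials immediately above and below the frozen passage. This differs from the ratio of the tautological generators only when the top generator sits on a higher sublevel, which produces exactly the $\cL_\Gamma^\top$ term.
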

\par
\begin{proof}
The first statement follows from the description of $\T {\GMS(\Gamma)^\red}$ in the previous Propositions~\ref{prop:tangMS} and~\ref{prop:tangGMSGamma} and the definition of the normal bundles.
\par
The computation of the normal bundle $\cN_\Gamma$ is essentially the same as the computation of the normal bundle of a boundary divisor~$D_\Gamma$ inside $\MS$ in \cite[Theorem~7.1]{CMZEuler}: The normal bundle is locally generated by the level parameter~$t_i$. The $\ell$-th power of~$t_i$ is given (away from degenerations) as the ratio of the tautological section on top an bottom level, thus the term $- \zeta^\bot + \zeta^\top$. If the top level degenerates, this ratio statement is no longer true, requiring the correction by  $\cL_\Gamma^\top$.
See also the comments in \cite[Section~4.3]{CMSeclinsub} for the adaptation to $k$-differentials, using the conventions in~\eqref{eq:xizetaconv}. 
\end{proof}

%% file: sec_excess.tex
\section{The excess intersection classes that contribute to the volume}
\label{sec:excess}

We compute the contributions to $\logDR_\cL = \sigma_0^! \sigma_\cL(\bP\cat{Rub}_\cL)$ that pair non-trivially with $\eta^d$. The first step is to single out the relevant type of graphs. 
\par
\begin{definition}
A level graph~$\Gamma$ is called a \emph{($k$-)simple star graph}, if
\begin{enumerate}
\item it has one vertex on bottom level (the \emph{central vertex}) and
\item for each vertex on top level (the \emph{outlying vertices}) all the adjacent legs (the~$m_i$ at marked points and the enhancement~$\kappa_e$ at the edges) have orders divisible by~$k$.
\end{enumerate}
\end{definition}
Note that a simple star graph might be not of compact type, and there is no restriction to the genus and the markings at the central vertex.
\par
\begin{definition} \label{def:SSStar}
A simple star graph is called \emph{special} if
\begin{enumerate}
\item it is compact type,
\item the genus of the central vertex is zero,
\item the vertices on top level carry no markings,
\item and the central vertex carries precisely two markings (or just one marking, if $k=1$ and $\mu=(2g-2)$ is a single zero).
\end{enumerate}
\end{definition}
We denote these sets of graphs by $\SStar(\mu)$ and $\SSStar(\mu)$ respectively.
\par
\begin{definition} \label{def:sunflower}
A level graph~$\Gamma$ is called a \emph{sunflower graph}, if
\begin{enumerate}
\item it is compact type,
\item it has one vertex (the \emph{sun}) on top level, which is connected to every vertex on bottom level,
\item each bottom level vertex has genus zero and precisely one marking, and
\item for every vertex on top level besides the sun all the adjacent legs have orders ($m_i$ and $\kappa_e$) divisible by~$k$.
\end{enumerate}
\end{definition}
\par
We denote this set of graphs by $\SF(\mu)$. The reason for this definition is: 
\par
\begin{prop} \label{prop:xitopgraphs}
Suppose that $\alpha_\Gamma = \iota_{\Gamma,*} [\alpha] \in \CH^*(\GMS)$ is a pushforward from $\GMS(\Gamma)$ with $\Gamma \neq \bullet$ and that $\alpha \cdot \eta^d \neq 0$. Then $\Gamma$ is obtained from a special ($k$)-simple star graph or from a sunflower graph by possibly degenerating the lower level.
\par
If $\mu$ is of holomorphic abelian type, then more precisely such a graph~$\Gamma$ is a special simple star graph with a unique marking, i.e., necessariliy $\mu = (2g-2)$.
\end{prop}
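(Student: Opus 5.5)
By Lemma~\ref{le:etaISxitop}, $\eta$ restricts to $-\zeta^\top$ on $\GMS(\Gamma)$. So the plan is to show that on every component~$C$ of $\GMS(\Gamma)$ with $\Gamma\neq\bullet$, the power $(\zeta^\top)^d$ is either zero for dimension reasons or forces the combinatorial shape in the statement.

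\textbf{Step 1: reduce to top level.} Using the commensurability diagram~\eqref{dia:k-comens} and Corollary~\ref{cor:xiconversion}, $(\zeta^\top)^d$ pulls back, up to a nonzero scalar, to $p_\Gamma^*\zeta_{[0]}^d$, a class coming from the top level factor $\MS(\mu^{[0]})$ alone. Any class $\alpha$ on $\GMS(\Gamma)$ decomposes via the Künneth-type Corollary into level-wise pieces. Hence $\alpha\cdot\eta^d\neq 0$ requires $d\le \dim \MS(\mu^{[0]})$, with the complementary degree of $\alpha$ spread over lower levels.

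\textbf{Step 2: dimension count.} Let $g_0,n_0$ be the total genus and number of half-edges at top level, and $h_0$ the number of top level components on which $C$ carries an abelian root. Then $\dim\MS(\mu^{[0]})$ is at most $2g_0-2+n_0-1+h_0$, minus the number of top components, which in the disconnected generalized type each contribute projectively. I would compare this with $d=2g-3+n$ via $g=g_0+g_{\mathrm{low}}+h^1(\Gamma)$, and count $n$ as the markings split between levels plus the edges.

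The inequality $\dim \MS(\mu^{[0]})\ge d$ should force the following equalities, each costing at least one dimension otherwise:
\begin{itemize}
\item $h^1(\Gamma)=0$ (compact type);
\item all lower level genus is zero;
\item every top vertex except possibly one is holomorphic abelian, gaining $+1$ through $V^\ab_C$;
\item the lower level has few markings, each lower vertex needing at least one marking by stability.
\end{itemize}
Concretely, each top component costs $-1$ (projectivization) unless it is abelian ($+1$ compensates). Each lower vertex of genus $0$ with $j$ markings and edges is rigid of dimension $j-3$, which must be absorbed. The two solutions I expect are the following. Either all top vertices are abelian and there is one genus-$0$ central bottom vertex with two markings (special star); or exactly one non-abelian top vertex (the sun) is adjacent to every bottom vertex, and each bottom vertex has one marking and genus $0$ (sunflower). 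Extra lower levels do not affect top-level dimension, so lower levels may degenerate further, which gives the ``possibly degenerating the lower level'' clause.

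\textbf{Step 3: holomorphic abelian case.} If $\mu\in k\bZ_{\ge0}^n$, then $d=d_\ab$ is one larger. A sunflower's sun is then itself holomorphic abelian, and the count is off by one unless there is a single marking. Redoing the count with $d_\ab$ should leave only special star graphs whose central vertex carries one marking, forcing $\mu=(2g-2)$ with $k=1$.

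\textbf{Main obstacle.} The main obstacle is Step~2: keeping careful bookkeeping of the disconnected generalized types, of the GRC-imposing condition (which rules out top vertices with no edge to the lower level), and of the contribution $|V(\Gamma)^\ab_C|$ from Lemma~\ref{le:dGammadef}. I would first write the inequality as a sum of nonnegative local defects per vertex and per edge, and then read off the equality cases.
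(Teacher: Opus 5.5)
Your Step~1 is the paper's starting point, but Step~2 has a genuine gap: the inequality $\dim \mathcal{MS}(\mu^{[0]}) \geq d$ does \emph{not} force the star or sunflower shape. Carry out the count you sketch, with one level passage, no horizontal edges, and $B$ bottom vertices carrying $r_n$ markings in total. You get
\[
\dim \mathcal{MS}(\mu^{[0]}) - d \;=\; 1 - h_{\mathrm{na}} - h^1(\Gamma) - 2\sum_{v\ \mathrm{bottom}} g_v - (r_n - B).
\]
This is completely insensitive to how many nodes and markings the holomorphic abelian top vertices carry: each extra node on an abelian vertex gains one dimension, and that exactly pays for the extra edge.

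Concretely, for $k=1$ take the following graph. A sun $S$ with a marked pole is joined to a bottom vertex $B_1$. A holomorphic abelian vertex $A$ is joined to both $B_1$ and $B_2$. An abelian leaf $A'$ is joined to $B_2$. Each $B_j$ has genus zero and one marking. This graph imposes a GRC: the condition attached to $A'$ asks the three-pointed genus-zero differential on $B_2$ to have zero residue at a pole, which is impossible for a nonzero differential. Its top level has projectivized dimension exactly $d$. Yet it is neither a sunflower nor a lower-level degeneration of one. Likewise the all-abelian chain $A_1 - B_1 - A_2 - B_2 - A_3$ with a pole at $B_1$ passes the count but is not a special star. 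So your list of ``forced equalities'' (in particular the shape conclusions and marking-free top vertices) does not follow from dimension counting, and Step~3 inherits the same problem.

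The missing ingredient is a vanishing statement, not a dimension statement; the paper uses it at the start of its count. On a holomorphic abelian stratum of genus $g_v$ with $s\geq 2$ special points, the top power of $\zeta$ is zero. The reason is that the class is pulled back from the projectivized Hodge bundle, where $\xi^{2g_v}=0$ by Mumford's relation, while the stratum has projectivized dimension $2g_v-2+s$. Moreover, $(\zeta^\top)^d$ on the disconnected top level factors as a product over its components. Hence every holomorphic abelian top vertex must be a leaf with a single node and no markings; this is the paper's $m_i=1$.

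Only then does the count close. Take $r_m$ over the non-abelian top vertices only. Combine the tree bound $B \geq 1 + r_m - r_g$, the bound $B \le r_n$, and the dimension inequality. This gives $h_{\mathrm{na}} + r_g \le 1$. Its three cases yield, respectively, the contradiction $B = r_m = 0$, the sunflowers, and the special stars.

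A minor point: every bottom vertex needs a marking because of the degree count (all other lower half-edges are poles), not merely because of stability.
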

\par
\begin{proof} Let $h_\na$ be the number of top level vertices of~$\Gamma$ that are not holomorphic abelian. We label the top level vertices, starting with the $h_\ab$ abelian ones. Each vertex carries a triple of information $g_i,n_i,m_i$ encodding its genus, the number of marked points and the number of nodes attached to it. We may merge the lower levels as this does not change neither hypothesis nor conclusion and suppose that~$\Gamma$ has just one level passage.
\par
We start with the case that $\mu$ is not a holomorphic abelian signature. By Lemma~\ref{le:etaISxitop} and the hypothesis $\alpha \cdot \eta^d \neq 0$ the projectivized top level dimension has to be at least $2g-3+n$, which implies
\begin{equation*}
    2g-3+n \,\leq \, \sum_{i=1}^{h_\ab}(2g_i-1+n_i+m_i) + \sum_{i=h_\ab+1}^{h_\ab+h_\na}(2g_i-2+n_i+m_i)-1
\end{equation*}
or equivalently that 
\begin{equation*}
2\left(g-\sum_{i=1}^{h_\ab+h_\na}g_i\right) +
\left(n-\sum_{i=1}^{h_\ab+h_\na}n_i\right)
\,\leq\,  2-h_\na +\sum_{i=1}^{h_\ab+h_\na}(m_i-1)\, .
\end{equation*}
\par
Note that the top $\zeta$-power of a holomorphic abelian top level vertex will be zero if it has more than one node. Therefore $m_i=1$ for $i=1,\ldots,h_\ab$, and we abbreviate by
\begin{eqnarray*}
r_{g} \ = g-\sum_{i=1}^{h_\ab+h_\na}g_i, \qquad 
r_{n} \=  n-\sum_{i=1}^{h_\ab+h_\na}n_i, \qquad
r_{m} \=  \sum_{i=h_\ab +1}^{h_\ab+h_\na}(m_i-1)
\end{eqnarray*}
the remaining genus, marking and nodes (for bottom level) respectively.
\par
We claim that $1 + r_m - r_g$ is a lower bound for the number of vertices on bottom level. This bound holds if~$\Gamma$ is a tree and continues to hold for every insertion of any further edges, thus justifying the claim.
\par
Moreover,  there should be at least one marking for every bottom vertex. Taken together
\begin{equation*}
    1+r_m-r_g \,\leq\, \#\{\text{bottom level vertices}\}\,\leq \,r_n \,\leq\, 2-h_\na + r_m -2r_g\, .
\end{equation*}
This leads to the inequality
\begin{equation*}
    0\,\leq\, 1-h_\na -r_g
\end{equation*}
which implies one of the following cases:
\begin{itemize}
    \item $r_g=1$ and $h_\na =0$: then the number of bottom level vertices is~$r_m=0$, which is impossible.
    \item $r_g=0$ and $h_\na=1$: then the number of bottom level vertices is $r_n=1+r_m$, so each contains exactly one marking and is connected to the unique non-holomorphic abelian top vertex, i.e., we have a sunflower graph.
    \item $r_g=0$ and $h_\na=0$: then there are $1+r_m$ bottom level vertices one with two markings and the rest with one marking each. However, as $r_m=0$, there is a unique bottom component containing two markings. This case only happens when $|\mu|=2$, i.e., we have a special simple star graph.
\end{itemize}
\par
If $k=1$ and $\mu$ is holomorphic, then the projectivized top level dimension has to be at least $2g-2+n$, which leads to the inequality
\begin{equation*}
    2g-2+n \,\leq \, \sum_{i=1}^{h_\ab}(2g_i-1+n_i+m_i) + \sum_{i=h_\ab+1}^{h_\ab+h_\na}(2g_i-2+n_i+m_i)-1\, .
\end{equation*}
The inequality on the number of bottom level vertices now becomes
\begin{equation*}
    1+r_m-r_g \,\leq\, \#\{\text{bottom level vertices}\}\,\leq \,r_n \,\leq\, 1-h_\na + r_m -2r_g\, .
\end{equation*}
Then $r_g=0$, $h_\na=0$, $r_m=0$, and $r_n=1$ which implies that there is a unique bottom component with genus zero and a single marking. This can only happen when $|\mu|=1$ and $h_\ab\geq 2$ due to stability on the bottom vertex, again a special simple star graph.
\end{proof}
\par
Inspection of the cases also shows in the situation of the preceding proposition:
\par
\begin{cor} \label{cor:topleveldim}
If $\alpha \cdot \eta^d \neq 0$ on $\GMS(\Gamma)$, then the dimension of the projectivized top level statum of~$\Gamma$ is equal to $d$ (both in the meromorphic case and in the holomorphic case, where we abuse $d$ instead of writing $d_\ab$.)
\end{cor}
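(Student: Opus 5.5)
The plan is to revisit the inequalities in the proof of Proposition~\ref{prop:xitopgraphs} and check that, in every case that survives, they are forced to be equalities. As in that proof, we first merge all lower levels, which changes neither the top level nor the hypothesis, and so we may assume $\Gamma$ has one level passage. By Lemma~\ref{le:etaISxitop}, $\eta$ restricted to $\GMS(\Gamma)$ is $-\zeta^\top$. Hence $\alpha\cdot\eta^d\neq0$ forces $(\zeta^\top)^d\neq 0$ on the top level factor. Under the product decomposition of the Chow ring (the Corollary after Proposition~\ref{prop:GMScomm}), this class is pulled back from the projectivized top level stratum $\MS(\mu^{[0]})$. So the projectivized top level dimension $d_\top$ satisfies $d_\top\ge d$, which is exactly the starting inequality of that proof.

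For the reverse inequality $d_\top\le d$, we use the case analysis of that proof.

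In the sunflower case ($r_g=0$, $h_\na=1$, $m_i=1$ for the abelian vertices), the chain
\[
1+r_m-r_g \le \#\{\text{bottom vertices}\}\le r_n\le 2-h_\na+r_m-2r_g
\]
has its two ends equal to $1+r_m$. Hence every inequality in the chain is an equality, in particular the last one. But the last one is a rewriting of $d\le d_\top$, so $d_\top=d$.

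In the special star case ($r_g=0$, $h_\na=0$, $r_m=0$), the chain reads $1\le \#\le r_n\le 2$ with $r_n=|\mu|=2$. So the last inequality is again an equality, and $d_\top=d$.

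In the holomorphic abelian case, the chain reads $1+r_m-r_g\le\#\le r_n\le 1-h_\na+r_m-2r_g$, and we found $r_g=h_\na=r_m=0$ and $r_n=1$. Both ends equal $1$, so the inequality derived from $d_\ab\le d_\top$ is an equality, and $d_\top=d_\ab$.

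The only step needing care is that the top-level dimension bound used in the proof was written with $|V(\Gamma)^\ab_C|=h_\ab$, i.e.\ counting every holomorphic abelian vertex with its abelian dimension $2g_i-1+n_i+m_i$. For a component $C$ where some abelian vertices carry non-abelian $k$-differentials, the true top dimension is smaller. Replacing $h_\ab$ by $|V(\Gamma)^\ab_C|$ only strengthens the upper bound on $d_\top$. The same chain then forces equality with this corrected count, and shows that the surviving components are exactly those with $V(\Gamma)^\ab_C=V(\Gamma)^\ab$; this matches Lemma~\ref{le:dGammadef}. We also need that the top $\zeta$-power of an abelian vertex with more than one node vanishes; the proof already uses this to set $m_i=1$. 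Since these points are bookkeeping, the corollary follows directly by inspection of the equality cases.
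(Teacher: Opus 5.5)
Your proposal is correct and follows the paper's own argument: the corollary is read off from the case analysis in the proof of Proposition~\ref{prop:xitopgraphs}, since in each surviving case (sunflower, special star with $r_n=2$, holomorphic star) the last inequality of the chain, which is just the rewritten bound $d\le d_\top$, is forced to be an equality. Your side remark on components overstates one point: with the reduced count, the chain alone still permits a single vertex of $V(\Gamma)^\ab\setminus V(\Gamma)^\ab_C$ when $h_\na=0$ and $r_g=0$ (excluding it needs the extra observation that the bottom marking would then be a positive multiple of $k$, making $\mu$ of holomorphic abelian type), but the chain collapses in that case too, so the corollary is unaffected.
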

\par
Next we observe that this graphs indeed are the only ones that pair non-trivially with the~$\eta^d$. Recall from~\eqref{eq:dGammadef} that $d_\Gamma = \dim(\GMS(\Gamma))$.
\par
\begin{prop} \label{prop:DRwithxitop}
  The intersection of $\logDR$ with the top power of $\eta$ is a sum
\be
[\logDR_\cL \cdot \eta^d] \=
\sum_{\Gamma \in \SF(\mu) \cup \SSStar(\mu) \atop \text{or $\Gamma = \bullet$}}
\iota_{\Gamma,*} [\DR_{d,\Gamma}]
\ee
over contributions from the trivial graph, sunflower graphs and special simple star graphs. Here
\be \label{eq:excessclass}
[\DR_{d,\Gamma}] \= \ell_\Gamma \cdot c_{d_\Gamma -d}(\frakj_\cL^* \cN_{\sigma_0} / \cN_{j_0|\GMS(\Gamma)^\red}) \cdot \eta^d\,.
\ee
\end{prop}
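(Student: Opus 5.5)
The plan is to compute $\logDR_\cL=\sigma_0^!\,[\bP\cat{Rub}]$ with Fulton's excess intersection formula, component by component. By Proposition~\ref{prop:GMSunion}, the support $\GMS$ of the refined Gysin pullback is the union of the $\GMS(\Gamma)$ with $\Gamma$ full of GRCs. The refined class is the degree-$g$ part of the Segre-class expression $\{c(\frakj_\cL^*\cN_{\sigma_0})\cap s(\GMS,\bP\cat{Rub})\}_{g}$.

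\textbf{Step 1: localise the class.} Decompose the Segre class $s(\GMS,\bP\cat{Rub})$ into classes supported on the various $\GMS(\Gamma)$, plus classes supported on their pairwise intersections.

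\textbf{Step 2: discard non-contributing pieces.} Each piece is a pushforward $\iota_{\Gamma,*}[\alpha]$ from some $\GMS(\Gamma)$ or from a deeper stratum. Pieces living on intersections of components are pushed forward from $\GMS(\Pi)$ for a degeneration $\Pi$ that adds a level passage at top level. By Corollary~\ref{cor:topleveldim} and Lemma~\ref{le:etaISxitop}, $\eta^d$ only sees the top level, and the top level must have dimension exactly $d$. Any extra top-level degeneration or GRC-divisor therefore lowers the top-level dimension, and $\eta^d$ kills these pieces. Proposition~\ref{prop:xitopgraphs} leaves only $\Gamma=\bullet$ and the graphs obtained from $\SF(\mu)\cup\SSStar(\mu)$ by degenerating lower level. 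Degenerations of the lower level give lower-dimensional substrata of $\GMS(\Gamma)$ for $\Gamma$ sunflower or special star, so they are absorbed into $\iota_{\Gamma,*}$ of those graphs.

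\textbf{Step 3: compute the excess class on a surviving component.} For a surviving $\Gamma$, use Proposition~\ref{prop:GMSGammaregemb}: $\frakj_\Gamma$ is a regular embedding, and by its proof the ideal is generated by $t_i^{\ell_i}$ together with smooth equations. Its fundamental class is therefore $\ell_\Gamma[\GMS(\Gamma)^\red]$. Here $\ell_\Gamma$ is the multiplicity of the nonreduced structure $\bC[t_i]/t_i^{\ell_i}$ given by Proposition~\ref{prop:GMScomm}. The Chern classes of the normal cone agree, up to this multiplicity, with those of the normal bundle of the smooth $\GMS(\Gamma)^\red$ in $\bP\cat{Rub}$. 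That normal bundle is described by Theorem~\ref{thm:nbGMSapprox}, which uses the exact sequences of Propositions~\ref{prop:tangMS} and~\ref{prop:tangGMSGamma}. Away from the GRC-divisors this makes $\GMS(\Gamma)^\red$ a component of the expected type, so the excess formula gives the top Chern class of the excess bundle $\frakj_\cL^*\cN_{\sigma_0}/\cN_{\frakj_0|\GMS(\Gamma)^\red}$. Its rank is $g-\operatorname{codim}=d_\Gamma-d$, since the class must have dimension $d$ to pair with $\eta^d$. Multiplying by $\ell_\Gamma$ and capping with $\eta^d$ yields~\eqref{eq:excessclass}. For $\Gamma=\bullet$ the same argument applies with $\ell_\bullet=1$.

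\textbf{Main obstacle.} The hard step is justifying the discard in Step 2 for the error terms of the Segre class supported on intersections of components. There $\frakj_0$ fails to be regular (Proposition~\ref{prop:GMSequations} with $b>1$), and on GRC-divisors the cokernels $\cC_\Gamma$ have torsion. I would first handle this via the local equations of Proposition~\ref{prop:GMSequations}, blowing up or using residual intersection, to show that every correction term is pushed forward from a boundary stratum $D_{\Pi,\bullet}$ with a degenerated top level. Each such term then vanishes against $\eta^d$ by the dimension count of Proposition~\ref{prop:xitopgraphs} and Corollary~\ref{cor:topleveldim}.
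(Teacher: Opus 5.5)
Your architecture matches the paper's. The product is supported on $\GMS$, whose pieces come from Proposition~\ref{prop:GMSunion}. Proposition~\ref{prop:xitopgraphs} reduces to $\bullet$ and $\SF(\mu)\cup\SSStar(\mu)$, and each surviving smooth $\GMS(\Gamma)^\red$ is handled by the excess formula with multiplicity $\ell_\Gamma$.

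\textbf{Main gap.} The gap is the step you flag yourself, and the reason you give for it does not work. You assert that everything supported on an intersection of components lies over a degeneration adding a level at the \emph{top} of $\Gamma$, because ``GRC-divisors lower the top-level dimension''. But a GRC is a residue condition on the \emph{lower}-level differentials; for boundary strata of $\MS$ the residue conditions $\frakR$ sit on the lower-level factors. So a GRC locus inside $\GMS(\Gamma)^\red$ lying over a bottom-level degeneration $\Pi$ of $\Gamma$ keeps the full $d$-dimensional top level, and $\eta^d=\pm(\zeta^\top)^d$ has no dimensional reason to vanish on it. A local blow-up or residual-intersection analysis via Proposition~\ref{prop:GMSequations} only localizes the correction terms to the intersection locus. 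It cannot decide over which level graphs that locus sits; that needs a global, combinatorial input.

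The paper supplies it as follows. Suppose a piece in $\GMS(\Gamma_1)^\red\cap\GMS(\Gamma_2)^\red$ pairs non-trivially with $\eta^d$. By Proposition~\ref{prop:xitopgraphs} its level graph is a bottom-level degeneration of a sunflower or special star graph, say $\Gamma_1$. Then $\Gamma_2$ would have to arise by undegenerating the top passage, and such a graph does not impose a GRC at every level, so it indexes no component --- a contradiction. For $\Gamma_2=\bullet$ a dimension count also works: a boundary stratum of $\MS$ with level graph $\Pi$ has dimension $d-L(\Pi)<d$, whereas its unconstrained top level alone would already be $d$-dimensional.

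\textbf{Two smaller points.} First, calling the bottom-level degenerations $\Pi$ of $\Gamma\in\SF(\mu)\cup\SSStar(\mu)$ ``absorbed'' into $\GMS(\Gamma)$ needs an argument. If such a $\Pi$ were full of GRCs, $\GMS(\Pi)$ would be a separate component, not contained in $\GMS(\Gamma)$. It would have the same $d$-dimensional top level and its own excess term, which is missing from the sum. The paper excludes this by observing that these $\Pi$ do not impose a GRC at every level.

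Second, in Step~3 the reduced normal cone of the non-reduced $\GMS(\Gamma)$ is the normal bundle of $\GMS(\Gamma)$ in $\bP\cat{Rub}$ restricted to $\GMS(\Gamma)^\red$. There the level-parameter direction is spanned by $t_i^{\ell_i}$, so it is the $\ell_i$-th tensor power of the corresponding line in the normal bundle of $\GMS(\Gamma)^\red$. Only the fundamental class picks up the factor $\ell_\Gamma$; the Chern classes do not agree ``up to multiplicity''.

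With these points in place your Segre-class bookkeeping does go through. It also makes explicit something the paper passes over quickly: near other components, the cone over $\GMS(\Gamma)^\red$ need not be a vector bundle. All correction terms then lie over top-level degenerations of $\Gamma$, where $\eta^d$ restricts to zero by the product structure of Proposition~\ref{prop:GMScomm}.
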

\par
\begin{proof} By definition of intersection products (e.g.\ \cite[Section~6.1]{Fulton}) $\logDR_\cL = \sum_{i} \ell_i \alpha_i$ where the sum is over the distinguished subvarieties of the intersection product, i.e. over the images of irreducible components~$C_i$ of the normal cone of $\GMS$ in $\bP \cat{Rub}$. Here $\ell_i$ is the geometric multiplicity of~$C_i$ in $C = C_{\cat{Rub}} \GMS$. By Proposition~\ref{prop:GMSunion} the distinguished subvarieties are the $\GMS(\Gamma)^\red$ for $\Gamma$ imposing a GRC at each of its levels, and intersections of such.
\par
Now we apply Proposition~\ref{prop:xitopgraphs}. We deduce that among the substacks $\GMS(\Gamma)^\red$ only the graphs~$\Gamma$ listed in the proposition give components of the normal cone where the intersection with $\eta^d$ may have a non-zero contribution irrespectively of the Segre class of the cone. (Observe that degenerating the lower level of $\Gamma \in \SF(\mu) \cup \SSStar(\mu)$ will not give a graph that imposes a GRC at all levels, so these degenerations do not show up in the list.)
\par
We next argue, that no distinguished subvariety contained in an intersection of two such $\GMS(\Gamma_i)^\red$ for $i=1,2$ pairs non-trivially with $\eta^d$. The level graph at such an intersection has to be a bottom level degeneration of (say~$\Gamma_1$) a sunflower or a special simple star graph by Proposition~\ref{prop:xitopgraphs}. But the graph~$\Gamma_2$ that arises from top level undegeneration the does not impose a GRC at all level, and thus does not determine a component of~$\GMS$, contradiction.
\par
Since the $\GMS(\Gamma)^\red$ are smooth hence regularly embedded, their contribution can be computed using the excess intersection formula as given. The final claim implicit in the notation is that the geometric multiplicity equals~$\ell_i$, which can be read off from the local equations for $\GMS(\Gamma)$ in Proposition~\ref{prop:GMSequations}.
\end{proof}
\par
\medskip
For any signature~$\mu$ (without imposing restrictions neither on the $m_i$ nor on~$k$) we define \emph{intersection-theoretic volume} of a stratum to be
\be
\vol(\mu) \,:=\, \vol(\komoduli[g,n](\mu)) \= \int_{\bP\komoduli[g,n](\mu)} \zeta^d
\ee
the top intersection of the dual tautological class~$\zeta$. We use $\vol(\mu)$ in small letters for the intersection-theory side and compare in Section~\ref{sec:completedvolumes} with Mazur-Veech volumes~$\Vol(\mu)$. Our goal here is to compute the \emph{DR-completed volume}
\be \label{eq:volDR}
\ol{\vol}(\mu)^\DR \= \int_{\barmoduli[g,n]} \DR(\mu) \zeta^d
\ee
in terms of intersection-theoretic volumes of strata.
\par
\begin{theorem} \label{thm:DRcompletedvolgraphs}
The DR-completed volume can be written as a sum
\be
\ol{\vol}^\DR(\mu) \=  {\vol}(\mu) + \sum_{\Gamma \in \SF(\mu) \cup \SSStar(\mu)} \frac{1}{\Aut(\Gamma)k^{h_\ab}} \prod_{e \in E(\Gamma)} \kappa_e \prod_{v \in V(\Gamma)} \vol(\mu_v)
\ee
over all special simple star graphs and sunflower graphs, where $\mu_v$ is the signature of all legs adjacent to the vertex~$v$, and $h_\ab$ is the number of holomorphic abelian top components of the corresponding graph $\Gamma$. 
\end{theorem}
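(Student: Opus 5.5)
We start from Proposition~\ref{prop:DRwithxitop}, which writes $[\logDR_\cL\cdot\eta^d]$ as the sum of $\iota_{\Gamma,*}[\DR_{d,\Gamma}]$ over $\Gamma=\bullet$, $\Gamma\in\SF(\mu)$ and $\Gamma\in\SSStar(\mu)$. We then push forward along $p$ and integrate. The plan is to evaluate each summand separately and show that it produces the corresponding term of the stated formula.

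For $\Gamma=\bullet$ the component is $\MS$, of dimension $d$ in the meromorphic case, so the excess class in~\eqref{eq:excessclass} has degree zero. We then use Lemma~\ref{le:etaISxi} to replace $\eta^d$ by $(-\zeta)^d$. This gives $\vol(\mu)$, up to the sign convention that $\ol{\vol}^\DR$ absorbs through~\eqref{eq:volDR}. In the holomorphic abelian case, Proposition~\ref{prop:tangMS} shows that the cokernel contains an extra $\cO(-1)$. The excess class is then $c_1$ of this line bundle, which is $\pm\zeta$, and this accounts for the shift from $d$ to $d_\ab$. We will check that the factor $k^{-h_\ab}$ is consistent with~\eqref{eq:xizetaconv}.

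For $\Gamma\neq\bullet$, Corollary~\ref{cor:topleveldim} says that $\eta^d=(-\zeta^\top)^d$ (Lemma~\ref{le:etaISxitop}) already fills the top level. So only the part of the excess class $c_{d_\Gamma-d}$ that is pulled back from the lower level, or from the abelian top vertices, can pair nontrivially. By Lemma~\ref{le:dGammadef} and $L(\Gamma)=1$, the excess rank is $d_\Gamma-d=h_\ab-1$, provided the component has all abelian vertices abelian; components with fewer abelian vertices give lower-dimensional top levels and vanish by Corollary~\ref{cor:topleveldim}. The key step is to read off the excess bundle from Theorem~\ref{thm:nbGMSapprox}. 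Modulo classes killed by $(\zeta^\top)^d$, the quotient $\frakj_\cL^*\cN_{\sigma_0}/\cN$ should have Chern polynomial equal to
\[
\prod_{v\in V(\Gamma)^\ab}(1+\zeta_v)\big/\bigl(1+\tfrac{1}{\ell_\Gamma}(\zeta^\top-\zeta^\bot)\bigr)
\]
times relative-Picard $\lambda$-type factors. We expect these factors to restrict trivially on the relevant dimension-$0$ part, because the central vertex (resp.\ the bottom vertices) has genus zero. The $\cL_\Gamma^\top$ correction dies against $(\zeta^\top)^d$ for the same reason. Expanding the degree-$(h_\ab-1)$ part and pairing with $(\zeta^\top)^d$ should leave $\frac{1}{\ell_\Gamma}\zeta^\bot$ to the power needed on the bottom level, together with the top power of $\zeta_v$ on each abelian vertex. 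Here we must track that the abelian top vertices are projectivized separately, so the top-level top power distributes as a product of the $\vol(\mu_v)$.

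Finally we convert to levelwise integrals using the commensurability diagram~\eqref{dia:k-comens}, Corollary~\ref{cor:xiconversion} and the degree ratio~\eqref{eq:degpdegc}. The multiplicity $\ell_\Gamma$ in~\eqref{eq:excessclass} cancels the $\ell_\Gamma$ in the denominator of~\eqref{eq:degpdegc}, which leaves $\prod_e\kappa_e/|\Aut(\Gamma)|$. For $k>1$, each abelian vertex contributes a factor $1/k$ when its $\zeta$ is converted via~\eqref{eq:xizetaconv} to the $k$-th-power component, and this gives $k^{-h_\ab}$. We expect the main obstacle to be the middle step: justifying that every Chern class of the excess bundle other than the $\zeta^\bot$ and $\zeta_v$ terms pairs to zero, and getting the normalization and signs exactly right. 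We would first test the bookkeeping on a two-vertex special star graph such as $D_1$ in the $(5,3)$ example.
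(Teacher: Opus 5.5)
Your outline follows the paper's route: Proposition~\ref{prop:DRwithxitop}, then the excess class via Theorem~\ref{thm:nbGMSapprox}, then saturation of the top level by $\eta^d$ from Corollary~\ref{cor:topleveldim}, then Corollary~\ref{cor:xiconversion} and~\eqref{eq:xizetaconv}. The first problem is that the normalization does not close as you set it up. You expand $c(\cN_\Gamma)^{-1}$ with $c_1(\cN_\Gamma)=\frac{1}{\ell_\Gamma}(\zeta^\top-\zeta^\bot+\cL_\Gamma^\top)$ and keep $(\zeta^\bot/\ell_\Gamma)^{h_\ab-1}$. The single factor $\ell_\Gamma$ in~\eqref{eq:excessclass} only cancels the $\ell_\Gamma$ of~\eqref{eq:degpdegc}. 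You therefore end up with $\ell_\Gamma^{1-h_\ab}\prod_e\kappa_e/|\Aut(\Gamma)|$, which is not the claimed coefficient once $h_\ab\geq 2$ and $\ell_\Gamma>1$. For example, $D_5$ and $D_7$ for $\mu=(4,2,-2)$ have $\ell_\Gamma=3$ and $h_\ab=2$, and the \texttt{admcycles}-confirmed total $1/960$ requires their contributions without a factor $1/3$.

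The missing point is that $\ell_\Gamma$ is the multiplicity of the thickening $\GMS(\Gamma)$. By Proposition~\ref{prop:GMSGammaregemb} this substack is regularly embedded with equation $t^{\ell_\Gamma}$ rather than $t$. The excess bundle must be formed with the normal bundle of this non-reduced substack. Its level direction is $\cN_\Gamma^{\otimes\ell_\Gamma}$, with first Chern class $\zeta^\top-\zeta^\bot+\cL_\Gamma^\top$ and no $1/\ell_\Gamma$. Only then is the surviving term exactly $(\zeta^\bot)^{h_\ab-1}$. This is what the paper uses when it says the only contributing part of the excess class is a pure $\zeta$-power on the bottom level.

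Second, the step you call the main obstacle is left open, and it is exactly where the paper has to argue. The cokernel $\cC_\Gamma$ of Proposition~\ref{prop:tangGMSGamma} (and $\cC$ of Proposition~\ref{prop:tangMS}) agrees with $\bigoplus_{v}\cO_{\mu,v}(-1)$ only up to a sheaf supported on GRC divisors. Nothing in your sketch shows that these terms pair to zero with $\eta^d$. The paper handles them as follows:
\begin{itemize}
\item For $\Gamma\neq\bullet$: the bottom level of a sunflower or special simple star graph has no GRC-imposing degenerations. Every top-level degeneration leaves $\SF(\mu)\cup\SSStar(\mu)$, so $\zeta^d$ vanishes there.
\item For $\Gamma=\bullet$ with $\mu$ holomorphic: your claim that the excess class is just $c_1(\cO(-1))$ also needs that the only candidate from Proposition~\ref{prop:xitopgraphs}, a special star with one marking, does not occur in the boundary of $\MS$. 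Its genus-zero bottom vertex would have to carry a differential with a single zero and at least two poles of vanishing residue, which does not exist.
\end{itemize}

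Two smaller corrections. There are no $\lambda$-type factors to dispose of, so no genus-zero argument is needed: $\T_{\rel}\Picabs=\cN_{\sigma_0}$ enters both the numerator and, through $\ker(\T_{\rel}\Picabs\to\cC_\Gamma)$, the normal bundle, and the two cancel by the Whitney formula. Also, the classes $\zeta_v$ coming from $\cC_\Gamma$ never contribute. The whole top level, abelian vertices included, is one jointly projectivized space of dimension $d$, already saturated by $(\zeta^\top)^d$. The product of top-level volumes comes from a Segre class computation on that space, not from separate projectivizations of the abelian vertices.
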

\par
Here the volume of abelian top components has to be taken with respect to its tautological class $\zeta = -\xi$ as space of abelian differentials. If considered as a space of $k$-powers of abelian differentials, this introduces a factor~$k$ for each such vertex (compare with~\ref{eq:xizetaconv}), cancelling the $k^{h_\ab}$-denominator.
\par
\begin{proof} Evaluating the DR-volume on the left hand side we apply 
  Proposition~\ref{prop:xitopgraphs} to first reduce to the summands on the right hand side. For each of those summand we use Theorem~\ref{thm:nbGMSapprox} to evaluate that the Chern polynomial of the excess bundle is given by
  \be
c(\frakj_\cL^* \cN_{\sigma_0} / \cN_{j_0|\GMS(\Gamma)^\red}) \=  c(C_\Gamma) \cdot c(\cN_\Gamma)^{-1}
  \ee
since $\T_{\rel}\Picabs = \cN_{\sigma_0}$ with~$\cC_\Gamma$ as in Proposition~\ref{prop:tangGMSGamma}.
\par
We next argue that we can replace $c(C_\Gamma)$ by $\oplus_{v \in V(\Gamma)^\ab} \cO_{\mu,v}(-1)$ when taking the preceding equation $\cdot \eta^d$. In fact, there are two cases. First, consider $\Gamma = \bullet$, i.e., the component~$\MS$. In the meromorphic (or $k>1$ case) the excess class is a zeroth Chern class. In the holomorphic case the only graph that appears in Proposition~\ref{prop:xitopgraphs} has an inconvenient (in the sense of \cite{MUWrealize}) bottom vertex, i.e., does not appear in the boundary of $\MS$. Second, if $\Gamma$ is a special simple star or a sunflower. Then the botton level has no degenerations that impose GRC's as neither the level-moving in the sunflower case does, nor such exist for the star graphs where there is at most one zero and the rest are (residue-unconstrained) poles. Any top level degeneration of such a~$\Gamma$ is not a special simple star nor a sunflower, so top-$\zeta^d$ evaluates to zero there whatever the excess class may contribute.
\par
From Corollary~\ref{cor:topleveldim} we deduce that the full $d_\Gamma-d$-th Chern class taken from the excess class must be supported on lower level. Since the remaining contributions from~$\cC$ are to the upper level, this term must come from $c(N_\Gamma)^{-1}$, and in fact the only part of the excess class that contributes equals $\zeta^{d-d_\Gamma}$. This evaluates to the product of the volumes on the lower level vertices. A standard Segre class computation evaluates the term~$\zeta^d$ as the product of the volumes of the top level vertices.
\par
In the preceding conversions to volumes we have been using the level-wise~$\zeta^\top$. The conversion to the (global) $\zeta$ appearing in~\eqref{eq:volDR} is responsible for the factor  $\prod \kappa_e/|\Aut(\Gamma)|\ell_\Gamma$ by Corollary~\ref{cor:xiconversion}. The $k$-power arise from the conversion of~$\xi$ to~$\zeta$.
\end{proof}

%% file: sec_volumes.tex
\section{Volumes of strata and DR-volumes}  \label{sec:completedvolumes}

For any stratum in $k=1$ or $k=2$ and of finite volume (i.e., $m_i >-k$ or equivalently $a_i >0$ for all~$i$) we denote by $\Vol(\mu) = \Vol(\komoduli[g,n](\mu))$ the \emph{Masur-Veech volume} in the normalization as explained in \cite{CMSprincipal}, taking the hypersurface volume on differentials with unitary area.
If the stratum is \emph{\textrm{REL} zero}, i.e., if $k=1$ and $n=1$ or $k=2$ and all $m_i$ are odd, then \cite{SauvagetMinimal} and \cite{CMSprincipal} relate the Masur-Veech volume and the intersection theoretic volume by
\begin{eqnarray*}
 \Vol(\mu) & \= & \frac{2(2\pi i)^{2g}}{(2g-1)!} \vol(\mu) \ \  \text{for} \ \  k=1, \  n=1\,,\\
 \Vol(\mu) & \= & \frac{2^{-n+3} (2\pi i)^{2g-2+n}}{(2g-3+n)!} \vol(\mu) \ \text{for} \  k=2, \ m_i's \ \text{odd}\,.
\end{eqnarray*}
\par
In the case $k=2$ and all entries $m_i>-2$ odd the \emph{completed volume} by $\ol{\Vol}(\cQ(\mu))$ of strata was defined in \cite{DGY} as an approximation to the Masur-Veech volume using counts of integer metrics on Ribbon graphs via Kontsevich polynomials. This completed volume is expressed as the volume of the strata plus the contribution of adjacent strata coming from the counting of integer metrics on degenerate ribbon graphs. The goal of this section is to show that this completed volume can also be interpreted as a sum over contributions from level graphs. In order to state the formula in parallel to the DR-case we define the (Ribbon graph) completed volume in the intersection theory convention by
\be \label{eq:Volvol}
\ol{\Vol}(\mu) \= \frac{2^{2g+1}(-1)^{g-1+\frac{n}{2}} \pi^{2g-2+n}}{(2g-3+n)!} \ol{\vol}(\mu)\,
\ee
where $n$ is even as all the entries $m_i$ are odd and their sum is $2g-2$.
\par
We will prove two theorems. The first theorem for  $|\mu|\geq 3$ requires simply a matching of terms and of volume conventions. The second theorem for $|\mu|=2$ requires a complete discussion of ribbon graphs and their completed volumes, since this case was omitted in \cite{DGY}. Observe that in the case $|\mu| \geq 3$ only sunflowers appear in Proposition~\ref{prop:xitopgraphs}. This fact is matched by the following formula.
\par
\begin{theorem} \label{thm:completedvolgraphs3plus}
For $k=2$ and $|\mu|\geq 3$ the Ribbon-graph completed volume can be written in intersection theory convention as a sum
\be \label{eq:Qvol3}
\ol{\vol}(\mu) \=  {\vol}(\mu) + \sum_{\Gamma \in   \SF(\mu)} \frac{1}{\Aut(\Gamma)2^{h_\ab}} \prod_{e \in E(\Gamma)} \kappa_e \prod_{v \in V(\Gamma)} \vol(\mu_v)
\ee
over all sunflower graphs, where $\mu_v$ is the signature of all legs adjacent to the vertex~$v$. 
\end{theorem}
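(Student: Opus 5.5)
The plan is to start from the main recursion of \cite[Theorem~2.6]{DGY} for $n \geq 4$ singularities. The case $|\mu| = 3$ is vacuous, since $n$ is even for $k=2$ with all $m_i$ odd. That recursion expresses $\ol{\Vol}(\cQ(\mu))$ as the Masur-Veech volume $\Vol(\cQ(\mu))$ plus a sum over degenerate ribbon graph configurations. Each configuration is indexed by a set of static edges of length zero whose removal makes the ribbon graph bipartite. I will translate both sides of that formula into the intersection-theory normalization via \eqref{eq:Volvol} and the Sauvaget/Chen--M\"oller--Sauvaget conversion $\Vol(\mu) = \frac{2^{-n+3}(2\pi i)^{2g-2+n}}{(2g-3+n)!}\vol(\mu)$. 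The leading terms then match: the constants $2^{2g+1}(-1)^{g-1+n/2}\pi^{2g-2+n}$ and $2^{-n+3}(2\pi i)^{2g-2+n}$ differ exactly by the sign and power of $2$ absorbed in the definition of $\ol{\vol}$. I will check this as a first sanity step.

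Second, I will set up a bijection between the correction terms of \cite[Theorem~2.6]{DGY} and sunflower graphs $\Gamma \in \SF(\mu)$. Following the procedure announced in the introduction (and detailed in Section~\ref{sec:ribbon}), to a degenerate ribbon graph I associate a level graph as follows. The contracted static edges produce genus zero bottom vertices, each carrying exactly one marking, since a zero of odd order arises from collapsing a star of edges around a single singularity. The remaining ribbon graph produces a top level. For $n \geq 4$ no special simple star graph can occur, because those require exactly two markings on the central vertex and hence $n=2$. This matches Proposition~\ref{prop:xitopgraphs}. I then need to verify that the top level consists of one non-abelian vertex (the sun), together with holomorphic abelian vertices whose adjacent labels are all even. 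The abelian vertices correspond to components of the bipartite complement that carry squares of abelian differentials. The compact type condition follows because static edges are separating in the relevant configurations.

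Third, I will match the weights term by term. On the DGY side, a term is a product of Kontsevich-polynomial contributions for each piece, times combinatorial multiplicities counting gluings. On our side, the weight is $\frac{1}{|\Aut(\Gamma)|2^{h_\ab}}\prod_e\kappa_e\prod_v\vol(\mu_v)$. The top vertex contributions should be identified with $\vol(\mu_v)$ via the same REL-zero conversion. The abelian vertices use the abelian conversion with $k=1$, $n=1$, and the factor $2$ from $s^*\zeta=-k\xi$ explains the $2^{h_\ab}$. The bottom vertices are genus zero meromorphic quadratic differentials with one zero and poles. For these, \cite{DGY} computes explicit coefficients, and I will identify them with $\vol(\mu_v)=\int\zeta^{d_v}$, evaluated as an explicit genus zero intersection number (a $\psi$-class integral on $\barmoduli[0,m]$). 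The number of ways to glue the pieces along the edges should produce the product of enhancements $\prod_e\kappa_e$, counting prong-matchings or rotations at each edge, divided by automorphisms.

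The main obstacle I expect is this last bookkeeping step. The powers of $2$, $\pi$ and the factorials $(2g_v-3+n_v)!$ coming from the normalizations of the individual vertices must recombine into the global normalization \eqref{eq:Volvol}. This needs the dimension identity $d=\sum_v d_v+$(number of levels) from Corollary~\ref{cor:topleveldim}. The binomial-type coefficients from distributing Kontsevich's variables between pieces must also cancel the factorials. I would first test the dictionary on the smallest case with a nontrivial sunflower, for example $\mu=(1,1,1,-1)$ in genus $1$, against the DGY tables, and then do the general exponent count.
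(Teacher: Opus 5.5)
Your plan is essentially the paper's proof: start from \cite[Theorem~2.6]{DGY}, read each product stratum $\cQ(\mu-4\bfg)\times\prod_{i,j}\cH(2g_i^{(j)}-2)$ as the top level of a sunflower graph (no special star graphs occur since $n\geq 4$), rewrite the coefficient $C_{\bfg,\bfg_i}$ as $\frac{1}{2^{2h_\ab}|\Aut(\Gamma)|}\prod_e\kappa_e\prod_{v\ \text{bottom}}\vol(\mu_v)$ via the automorphism and prong bookkeeping, and then convert normalizations (the two leading constants you compare are in fact identical). The inputs you leave vague are imported in the paper rather than computed. The bottom-vertex volumes $\vol(\mu_{v_i})=m_i!!/(m_i-2(|\bfg_i|-1))!!$ come from \cite[Proposition~5.1]{CGPT25}; they are not a bare $\psi$-integral on $\barmoduli[0,m]$, because the boundary corrections of the multi-scale compactification matter. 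The recombination of factorials is the product-stratum volume formula \cite[Equation~(32)]{DGY}, converted from the area-$1/2$ to the unit-area normalization.
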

\par
Observe that the last product is over all vertices of~$\Gamma$, including those on bottom level, which have higher order poles and where the volume has known no Masur-Veech-type interpretation.
\par
\begin{proof}
 The formula presented in  of \cite[Theorem~2.6]{DGY} decomposes the completed volume for a stratum $\mu=(m_1,\ldots,m_n)$ with odd entrees with $n\geq 3$ and $m_i\geq -1$, is
\bas\
&\ol{\Vol}(\cQ(\mu)) -  {\Vol}(\cQ(\mu)) \\
\=   & \sum_{\substack{\bfg=(g_1,\ldots,g_n) \\ 0 \leq g_i \leq \frac{m_i+1}{4}}} \,\,\sum_{\substack{\text{$i$ such that} \\ g_i>0}} \,\,\sum_{\substack{\bfg_i=(g_i^{(1)},g_i^{(2)},\ldots)\\ |\bfg_i|=g_i, \ g_i^{(j)}>0}} C_{\bfg,\bfg_i}\Vol\Bigl(\cQ(\mu-4\bfg)\times \prod_{i,j}\cH(2g_i^{(j)}-2)\Bigr) \, .
    \eas
where the coefficient~$C_{\bfg,\bfg_i}$ is defined in~\eqref{eq:defCgg} below and where $\mu-4\bfg$ denotes vector subtraction.
\par
First, claim that the decomposition in this sum is in bijection with a sum over sunflower graphs. In fact, the disconnected stratum $\cQ(\mu-4\bfg)\times \prod_{i,j}\cH(2g_i^{(j)}-2)$ describes the top components of two level graph $\Gamma$, where $\cQ(\mu-4\bfg)$ corresponds to the non-abelian component (the \textit{sun}), and each $\cH(2g_i^{(j)}-2)$ is an abelian holomorphic principal stratum of genus $g_i^{(j)}$. We get a bottom component of genus zero for every $i$ such that $g_i>0$, where $g_i$ equals the sum of the genera of abelian vertices attached to that bottom compoenent. This bottom component has a unique marking $m_i$ and one nodes connecting it to the sun and one node connecting to each abelian component $\cH(2g_i^{(j)}-2)$ for $j=1,\ldots,|\bfg_i|$. Moreover, the intersection theoretic volume of this bottom vertex $v_i$ is
\be
 \vol(\mu_{v_i}) \= \frac{m_i!!}{(m_i-2(|\bfg_i|-1))!!}
\ee
as computed in Proposition~5.1 of \cite{CGPT25}.
\par
Second, we need to justify that the coefficient given in \cite{DGY} as
\be \label{eq:defCgg}
C_{\bfg,\bfg_i} \= \prod_{\substack{\text{$i$ such that}\\
g_i>0}}\frac{m_i!!}{(m_i-2(|\bfg_i|-1))!!}\frac{(m_i-4g_i+2)}{|\bfg_i|!2^{|\bfg_i|}}\frac{|\bfg_i|!}{|\Aut(\bfg_i)|}\prod_{j=1}^{|\bfg_i|}(2g_i^{(j)}-1)
\ee
in fact equals
\be
C_{\bfg,\bfg_i} \= \frac{1}{2^{2h_\ab}|\Aut(\Gamma)|} \prod_{e \in E(\Gamma)} \kappa_e \prod_{v \in V(\Gamma^{\perp})}\vol(\mu_v)\,,
\ee
where $\Aut(\bfg_i)$ is the subgroup of the symmetric group that permutes only places with the same $g_i$. In fact, the automorphisms of a sunflower graph have the property that
\bes
   \frac{1}{2^{h_\ab}|\Aut(\Gamma)|} \=  \prod_{\substack{\text{$i$ such that} \\
    g_i>0}} \frac{1}{2^{|\bfg_i|}|\Aut(\bfg_i)|} \, . 
\ees
Grouping the prongs as those attached to the sun and those attached to the remaining vertices we observe
\bes
    \frac{\prod_{e \in E(\Gamma)} \kappa_e}{2^{h_\ab}} \=  \prod_{\substack{i \  s.t. \\
    g_i>0}}(m_i-4g_i+2)\prod_{j=1}^{|\bfg_i|}(2g_i^{(j)}-1)\,.
\ees
\par
Third, the volume of the top level product stratum is given in \cite[Equation~(32)]{DGY}. Note however that they they normalize the Masur-Veech volume on strata of quadratic differentials with (the cone under) area~$1/2$. Converting all these volumes into our (unit volume) convention we obtain that
\be
\Vol\Bigl(\cQ(\mu')\times \prod_{i=1}^r\cH(2g_i-2)\Bigr) \= \frac{(d'-1)!\Vol(\cQ(\mu'))\prod_{i=1}^r(2g_i-1)!\Vol\cH(2g_i-2)}{(d-1)!}
\ee
where $d'=\dim\cQ(\mu')$, $2g_i=\dim\cH(2g_i-2)$, $d=\dim\left(\cQ(\mu')\times \prod_{i=1}^r\cH(2g_i-2)\right)$. Let $g'=\sum_{g_i\in \underline{g}} g_i$. Using the volume conversions
\begin{eqnarray*}
    \Vol(\cQ(\mu-4\bfg)) & = & \frac{2^{2(g-g')+1}(-1)^{(g-g')-1+\frac{n}{2}} \pi^{2(g-g')-2+n}}{(2(g-g')-3+n)!} \vol(\cQ(\mu-4\bfg))\,\\
    \Vol(\cH(2g_i^{(j)}-2)) & = & \frac{2(2\pi)^{2g_i^{(j)}}(-1)^{g_i^{(j)}}}{(2g_i^{(j)}-1)!}\vol(\cH(2g_i^{(j)}-2))\, .
\end{eqnarray*}
and combining the formula, we conclude the formula~\eqref{eq:Qvol3} in the theorem.
\end{proof}
\par
The goal of the rest of the section is:
\begin{theorem} \label{thm:completedvolgraphs2}
For $k=2$ and $|\mu|\geq 2$ the statement of Theorem~\ref{thm:completedvolgraphs3plus} also holds with the sum on the right hand side replaced by $\Gamma \in \SF(\mu) \cup \SSStar(\mu)$, now also including special simple star graphs.
\end{theorem}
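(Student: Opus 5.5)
Since the case $|\mu|\geq 3$ is Theorem~\ref{thm:completedvolgraphs3plus}, and there $\SSStar(\mu)$ is empty (its central vertex must carry exactly two markings), the content of the statement is the case $|\mu|=2$, $\mu=(m_1,m_2)$ with both $m_i$ odd and $m_1+m_2=4g-4$. This case was excluded in \cite{DGY}. The plan is to redo their ribbon graph computation from scratch and organize the overcount as a sum over level graphs.

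\textbf{Setting up the overcount.} Following \cite{DGY}, $\ol{\Vol}(\mu)$ is obtained by evaluating Kontsevich polynomials on all ribbon graphs of the stratum with integer metrics on edges and cylinders. This overcounts by metrics on degenerate ribbon graphs in which some edges have length zero. The overcounted edges are exactly the \emph{static} ones, meaning that removing them leaves a bipartite ribbon graph. I will give a procedure (Section~\ref{sec:ribbon}) attaching a level graph to each such configuration. The bipartite complement with its coloring gives top level components and bottom level genus-zero vertices carrying the markings. Non-separating static edges lead to sunflower graphs, and configurations in which both singularities collapse onto a single genus-zero piece lead to the special simple star graphs $D_1,D_2,D_3$-type. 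The first step, Proposition~\ref{prop:Gstat}, classifies which static-edge configurations are valid and shows that the associated level graph lies in $\SF(\mu)\cup\SSStar(\mu)$.

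\textbf{Counting.} The second step, Proposition~\ref{prop:GforgivenGi}, counts for a fixed level graph $\Gamma$ the ways the top level ribbon pieces can be glued to bottom level pieces to form a valid ribbon graph. The top pieces contribute the top-level volumes through the same product/area conversion used in the proof of Theorem~\ref{thm:completedvolgraphs3plus}, now including the abelian principal factors. The gluing count should produce the enhancement product $\prod_e\kappa_e$, the automorphism factor $1/(|\Aut(\Gamma)|2^{h_\ab})$, and a combinatorial number attached to each bottom vertex. I would then identify that number with $\vol(\mu_v)$ of the genus-zero bottom stratum. For sunflower bottoms this is the double-factorial formula of \cite[Proposition~5.1]{CGPT25}. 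For the central vertex of a special star, with two zeros and many poles of order divisible by $2$, I would compare with the isoresidual-type counts of \cite{CGPT25} or compute $\zeta^{d}$ on that genus-zero stratum directly.

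\textbf{Main obstacle.} I expect the main difficulty in this last identification. The ribbon graph coloring singles out a maximal vertex that governs the bipartition, so the raw count is a sum over the choice of this vertex and over colorings. The intersection-theoretic side has no distinguished vertex. I would first write the raw count as a generating function over the choice of maximal vertex, then apply inclusion–exclusion over which static edges are forced, to remove the dependence on that choice. The resulting symmetric expression can then be matched termwise with $\prod_e\kappa_e\prod_v\vol(\mu_v)$. As a check I would use the $(5,3)$ example table. Finally, converting all volumes with~\eqref{eq:Volvol} and the conversions of Section~\ref{sec:completedvolumes} gives the formula with the sum over $\SF(\mu)\cup\SSStar(\mu)$.
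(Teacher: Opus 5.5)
Your outline is the same as the paper's: classify the static-edge configurations as in Proposition~\ref{prop:Gstat}, count gluings as in Proposition~\ref{prop:GforgivenGi}, and remove the dependence on the maximal vertex. However, the first step is set up incorrectly.

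An edge $e$ of $G^*$ is static if \emph{some connected component} of $G^*-e$ is bipartite, not if $G^*-e$ as a whole is bipartite. With the correct definition, the dichotomy is the reverse of what you wrote:
\begin{itemize}
\item If all static edges are bridges, exactly one piece $G_0^*$ of $G^*$ minus the static edges is non-bipartite, and it becomes the sun. The faces of $G_0^*$ that carry trees of bipartite pieces give the bottom vertices, and those pieces give the abelian top vertices. These are the sunflower configurations, handled exactly as for $|\mu|\geq 3$.
\item If some static edge $e$ is \emph{not} a bridge, then $G^*-e$ is connected, hence bipartite. So every piece is bipartite and there is no sun at all. These are the configurations that give special star graphs.
\end{itemize}
So ``non-separating static edges lead to sunflower graphs'' would fail as soon as you try to prove it. The introduction phrases this the other way round; the construction in Section~\ref{sec:ribbon} is the consistent one.

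Beyond this, these two steps carry all the content, and the proposal supplies neither.

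In the non-bridge case you need three facts:
\begin{itemize}
\item $G^*_{\stat}$ is a single cycle with dangling trees;
\item each piece is bipartite with \emph{exactly one face}, by a face count using that $G^*$ has only two faces;
\item wall-paired vertices $i,W(i)$ lie in the same piece.
\end{itemize}
The one-face property is what makes each top vertex an unmarked vertex in $\cH(2g_i-2)$ with a single edge and $\kappa_i=4g_i-2$. That is what places the graph in $\SSStar(\mu)$.

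For the count you need an explicit target, namely $\vol(\wt\mu)=\sum_I c_{1,I}f_2(m_1,|I|+1)f_2(m_2,|I^c|+1)$ from \cite{CGPT25} (Proposition~\ref{prop:vol2zero}). You then need a proof that the number of gluings equals $\vol(\wt\mu)\prod_e\kappa_e/2$. The paper does this as follows:
\begin{itemize}
\item It uses the coloring rules of Lemma~\ref{le:coloring} to split off the rotation factor $R=\prod_i e_i$.
\item It sorts the pieces into those inside the $z_1$-face, inside the $z_2$-face, and on the cycle ($I\sqcup J\sqcup K$).
\item It counts attachments along the cycle as compositions with bounded parts $1\le l_i\le e_i$.
\item It splits into cases by whether the maximal piece lies in $I$, $J$ or $K$.
\item It shows the case-dependent total equals the symmetric expression $\mathcal F_h$, by regrouping error terms over $L'=J'\sqcup L$ and applying a double-factorial Vandermonde identity (Lemma~\ref{eq:alphaop}).
\end{itemize}
The alternating sums there come from the upper bounds $l_i\le e_i$, not from an inclusion--exclusion over ``which static edges are forced''. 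As stated, your last step names no identity that could be verified, and the $(5,3)$ table is only a consistency check, not a proof.
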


\subsection{Ribbon graph counting}  \label{sec:ribbon}

We summarize the counting procedure of \cite{DGY}, define the various types of graphs appearing in their strategy, and show how they relate to the level graphs that appeared in the previous sections. 
\par
Associated with any square-tiled (abelian, or here rather) quadratic differential~$(X,q)$ there is
\begin{itemize}
\item the \emph{(cylinder) stable graph $\Delta$},\footnote{these are called~$\Gamma$ in \cite{DGY}, we try to reserve that letter for stable graphs that appear as level graphs},  and 
\item a ribbon graph~$G$, whose vertices are the singularities of~$q$, and whose boundary components in bijection with the cylinders, i.e.\ with the edges of~$\Delta$, and
\item an integer metric on (the edges of)~$G$, and heights of the cylinders.
\end{itemize}
\par
Here the graph~$\Delta$ is obtained by taking the limit point in the Deligne-Mumford compactification of the geodesic flow applied to $(X,q)$, equivalently the stable curve obtained by shrinking each horizontal cylinder of~$(X,q)$ to a point. The ribbon graph~$G$ is a neighborhood of the complement of the horizontal cylinders. 
\par
As in \cite{DGY} we often pass to the \emph{dual ribbon graph} of~$G$, denoted by~$G^*$. Its vertices now correspond to cylinders (and thus inherit the auxiliary labeling) and the faces correspond to singularities of~$q$.
\par
To count square-tiled surfaces with~$N$ squares (say) the main challenge is to count the number of integer metrics that can be assigned to the ribbon graph~$G$. For this purpose the boundary components of~$G$ are given an auxiliary labeling, and the goal is to count the number $F_G(\bfb)$ of integer metrics with boundary lengths~$\bfb = (b_1,\ldots,b_{2c}) \in \bZ_{>0}^{2c}$, where $c$ is the number of cylinders. Equivalently, we can count for each dual ribbon graph~$G^*$ the number $F_{G^*}(\bfb)$ of integer metrics with vertex parameters~$\bfb$. \cite[Section~3.3]{DGY} recalls properties of the function~$\vp$ that assigns the vertex paramters~$\bfb$ to a given tuples of edge lengths $w(e)_{e \in E(G^*)}$ of a (dual) Ribbon graph~$G^*$.
\par
\begin{lemma}
A dual ribbon graph~$G^*$ together with a pairing of vertices, i.e.\ a fixed-point free involution~$W$ on $\{1,\ldots,2c\}$ determines a cylinder stable graph~$\Delta$ with $c=E(\Delta)$ edges.
\end{lemma}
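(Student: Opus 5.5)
We build the cylinder stable graph~$\Delta$ directly from the pair $(G^*,W)$. The vertices of~$\Delta$ are the connected components of the surface obtained from the ribbon graph~$G$ by cutting along its boundary curves. Equivalently, they are the connected components of~$G^*$ regarded as a (possibly disconnected) ribbon graph. The edges of~$\Delta$ are the orbits of~$W$, so there are exactly~$c$ of them. The two half-edges of the edge $\{j,W(j)\}$ are attached to the components of~$G^*$ that contain the vertices~$j$ and~$W(j)$. Geometrically, $W$ records which two boundary components of~$G$ bound the same horizontal cylinder. Gluing a cylinder between these two boundaries and shrinking it to a point is exactly the operation of contracting a cylinder to a node described above.

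Next we attach the remaining data to~$\Delta$. Each component $G^*_v$ is a ribbon graph, so it has a well-defined topological type. Its faces correspond to the singularities of~$q$, which become the markings of~$v$. Its vertices correspond to the boundary curves, which become the half-edges of~$\Delta$ at~$v$. The genus~$g_v$ is read off from Euler's formula, $2-2g_v = |V(G^*_v)| - |E(G^*_v)| + |F(G^*_v)|$. The leg assignment is then recorded from the face labels.

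It remains to check that~$\Delta$ is a stable graph of the correct type $(g,n)$. There are two checks.
\begin{itemize}
\item Since each $G^*_v$ is a genuine ribbon graph with at least one edge, and since the vertex-valence conditions for quadratic differentials exclude valence one and two except at poles, we get $2g_v - 2 + n_v > 0$. Here $n_v$ counts markings plus half-edges.
\item Summing the Euler characteristics and adding $h^1(\Delta)$ gives the total genus~$g$. This is the usual formula for a surface glued from its pieces along $c$ cylinders: $g = \sum_v g_v + c - |V(\Delta)| + 1$.
\end{itemize}
Connectedness of~$\Delta$ is part of the hypothesis that $(G^*,W)$ comes from a connected surface. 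Alternatively, we restrict to those pairings~$W$ for which the result is connected, as \cite{DGY} implicitly does.

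The main subtlety is the stability check. It needs care if some component of~$G^*$ is a genus-zero ribbon graph with only two faces or two vertices. In that case I would fall back on the constraint from \cite[Section~3.3]{DGY} that all face degrees are $m_i+2$ with $m_i \geq -1$ odd, which rules out such degenerate components. Everything else is bookkeeping of the construction.
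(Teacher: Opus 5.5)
Your construction is the dictionary the paper relies on; the paper states this lemma without proof. Components of $G^*$ give the vertices of $\Delta$, the $W$-orbits of the $2c$ labelled vertices give the $c$ edges, faces give the legs, and Euler's formula gives the vertex genera, so your proposal is correct.

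Your stability discussion can be simplified. Euler's formula gives directly $2g_v-2+n_v=|E(G^*_v)|$, so stability just means every component has an edge. That holds because each face has degree $m_i+2\geq 1$. The genus-zero components with two faces or two vertices that you single out as subtle already have an edge, hence are stable and need not be ruled out, and the remark on vertex valences is not needed.
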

\par
The notation for the involution suggests that we are interested in the count of~$F_{G^*}(\bfb)$ for $\bfb$ lying on a \emph{wall}, defined as
\be \label{eq:wall}
W \= \{b_i - b_{W(i)} = 0\,\, \text{for all $i=1,\ldots,2c$}\} \subseteq \bZ_{>0}^{2c}\,.
\ee
intentionally abusing the letter~$W$.
\par
The strategy can now be summarized as follows:
\par
\begin{itemize}
\item[i)] The volume of the stratum~$Q(\mu)$ is the sum over all graphs~$\Delta$ -- more precisely over all dual ribbon graphs~$G^*$ with vertex pairing~$W$ that induce~$\Delta$ -- of the large~$N$-asymptotics of $F_{G^*}(\bfb)$ times height and width parameters that give in total given~$N$ squares (\cite[Proposition~4.1]{DGY}).
\item[ii)] The analagous statements holds for abelian strata with a single zero, and products of those strata (\cite[Section~4.3]{DGY}), which appear in the decomposition below. Both i) and ii) still hold for height and width parameters in a coset of finite index (\cite[Lemma~7.1]{DGY}).
\item[iii)] The sum over all~$G^*$ of $F_{G^*}(\bfb)$ is a quasi-polynomial, in fact a polynomial on every coset of $(2\bZ)^{2c} \subset \bZ^{2c}$, whose top term agrees with the Kontsevich polynomials $N_{g,2c}^\mu(\bfb)/2$ (\cite[Proposition~3.9]{DGY}).
\item[iv)] The Kontsevich polynomials $N_{g,2c}^\mu(\bfb)$ can be computed as intersection number of Witten-Kontsevich's combinatorial classes (\cite[Section~2 and Appendix~B]{DGY} and Section~\ref{sec:examples}).
\end{itemize}
However, since we need to evaluate the function $F_{G^*}(\bfb)$ on walls, the agreement with the Kontsevich polynomial in iii) no longer holds. This is the context of \cite[Section~5]{DGY}).
\par
We thus \emph{fix the cylinder stable graph~$\Delta$} with a \emph{fixed auxiliary labeling} of the non-leg half-edges $\{1,\ldots,2c\}$ with the involution~$W$ that pairs them. Our goal is an analog of \cite[Theorem~5.1]{DGY}, which makes explicit the overcounting of the Kontsevich polynomial compared to the metrices of all (dual) Ribbon graphs that give~$\Delta$ when paired with~$W$.\footnote{We aim for this overcounting theorem only in the special case of the walls we need. In loc.\ cit. more general wall structures are allowed, with the aim of e.g.\ proving (e.g. ``string'') equations for Witten-Kontsevich's combinatorial classes.}
\par
As explained in \cite[Proof of Theorem~5.1]{DGY} the overcounting stems from integral metrics on all dual ribbon graphs~$G^*$ that are compatible with $(\Delta,W)$ and where the edge length~$w_e = 0$ for a subset of $e \in E(G^*)$ that are called~\emph{degenerating edges}.
\par
Recall that the (global) Ribbon graphs~$G^*$ are non-bipartite, since we consider quadratic differentials that are not squares of an abelian differential. Call $e \in E(G^*)$ \emph{static}, if a connected component of~$G^*-e$ is bipartite.
The weight~$w(e)$ of an edge is uniquely determined by the vertex parameters~$\bfb$ if and only if~$e$ is static, combine \cite[Lemma~3.3 and Lemma~A.4]{DGY}. As a consequence:
\par
\begin{lemma}
The overcounting of a degenerate (dual) Ribbon graph contributes to the top order term of $F_{G^*}(\bfb)$ if and only if all the degenerating edges are static.
\end{lemma}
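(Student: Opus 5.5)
The plan is a dimension count. The top order term of $F_{G^*}(\bfb)$ on the wall $W$ is governed by the dimension of the fibers of the vertex parameter map $\vp$. A degenerate configuration contributes to this term exactly when the integer metrics with the degenerating edges set to length zero still form fibers of the generic dimension. So I would compare fiber dimensions of $\vp$ for $G^*$ and for $G^* - S$, where $S$ is the set of degenerating edges. Here I read a weight $w_e = 0$ as deleting~$e$ from the dual ribbon graph, keeping all vertices, so that the vertex parameters are still indexed by $\{1,\ldots,2c\}$.

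\textbf{Rank of $\vp$.} The first step is to record the rank of the linear map $\vp\colon \bZ^{E(H)} \to \bZ^{V(H)}$ for an arbitrary graph $H$; this is what \cite[Lemma~3.3 and Lemma~A.4]{DGY} provide. On each connected component the map is injective on vertex space exactly when the component is non-bipartite. A bipartite component loses one dimension, namely the alternating sum of the vertex parameters. Hence
\bes
\dim \ker \vp_H \= |E(H)| - |V(H)| + \beta(H),
\ees
where $\beta(H)$ is the number of bipartite connected components of~$H$. The number of lattice points of the fiber over $\bfb$ then grows like $|\bfb|^{\dim\ker\vp_H}$, at least for $\bfb$ in the relative interior of the image cone intersected with the wall. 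I would check this using the positivity constraints $w_e > 0$ on the remaining edges, and the fact that on $W$ the equations $b_i = b_{W(i)}$ do not cut the fiber further.

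\textbf{Comparing $G^*$ and $G^* - S$.} Since $G^*$ is connected and non-bipartite, $\beta(G^*) = 0$ and the generic degree is $|E(G^*)| - |V(G^*)|$. For the degenerate graph the fiber dimension is
\bes
|E(G^*)| - |S| - |V(G^*)| + \beta(G^* - S).
\ees
The contribution is of top order if and only if $\beta(G^*-S) = |S|$.

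\textbf{Deleting edges one at a time.} Remove the edges of $S$ one by one. Deleting a single edge increases $\beta$ by at most one. It increases $\beta$ by exactly one precisely when, in the current graph, some component containing an endpoint of the deleted edge becomes bipartite, i.e.\ when the edge is static there. Without this, $\beta$ either stays the same, or, for a bridge between two components one of which was already bipartite, the count can be checked to not increase by two in a way that beats $|S|$. Hence $\beta(G^*-S) \leq |S|$, with equality if and only if every step is a static deletion. This recovers the dichotomy in the lemma: if all degenerating edges are static, their weights are linear functions of $\bfb$ by the cited lemmas, setting them to zero is a condition on $\bfb$ only, and the fiber keeps the generic dimension. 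If some degenerating edge is not static, its weight is a free fiber coordinate and setting it to zero drops the degree by one.

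\textbf{Main obstacle.} I expect the hardest step to be making ``static'' independent of the deletion order. It has to be shown that the edges of $S$ are each static in $G^*$ if and only if some, equivalently every, ordering of the deletions consists of static deletions. It also has to be confirmed that the top coefficient of the lattice count is genuinely nonzero on the wall in the static case, so that ``contributes'' is an equivalence and not merely an upper bound. I would attack the first point by translating staticity into a parity condition on cycles through~$e$: an edge is static iff it lies on every odd cycle of its component. I would attack the second by exhibiting an interior point of the degenerate fiber polytope over a generic $\bfb \in W$.
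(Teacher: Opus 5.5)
Your proposal is correct and is essentially the paper's argument. The paper derives the lemma directly from the fact that $w(e)$ is determined by $\bfb$ if and only if $e$ is static, which is what your rank formula $\dim\ker\vp_H=|E(H)|-|V(H)|+\beta(H)$ encodes. That is exactly the reasoning in the last two sentences of your ``Deleting edges one at a time'' paragraph.

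The ``main obstacle'' you flag comes only from deleting edges one at a time, and linear algebra removes it. We have $\ker\vp_{G^*-S}=\ker\vp_{G^*}\cap\bigcap_{e\in S}\{w_e=0\}$. Applying your formula to $G^*-e$ shows that $e$ is static precisely when $w_e$ vanishes on $\ker\vp_{G^*}$. So if all edges of $S$ are static, $\ker\vp_{G^*-S}=\ker\vp_{G^*}$, and hence $\beta(G^*-S)=|S|$. A single non-static $e\in S$ already lowers the dimension by one. Staticity in intermediate graphs never enters.

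Do not pursue the route you sketch for this step, because both of its ingredients are wrong as stated. First, ``static iff it lies on every odd cycle'' is false for static bridges: a bridge with a bipartite side lies on no cycle at all. Second, with the paper's definition, a non-bridge edge of an already bipartite component of an intermediate graph counts as ``static'' but does not raise $\beta$.

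Finally, the compatibility of the extra conditions on $\bfb$ with the wall is not part of this lemma in the paper. It is handled afterwards in Proposition~\ref{prop:Gstat}.
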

\par
The more precise goal is thus to describe all possible configurations of static degenerating edges on dual Ribbon graphs compatible with $(\Delta,W)$ and, for given splitting pieces, count the number of ways to reassamble them to a graph~$G^*$.
\par
Let $G^*_1,\ldots,G_h^*$ be the connected components of~$G^*$ minus the static edges $E_{\stat}(G^*)$. Then the \emph{static graph $G_{\stat}^*$} of~$G^*$, the graph with vertices $G_i^*$ and edges $E_{\stat}(G^*)$.
\par
\medskip
\paragraph{\textbf{Ribbon graphs to sunflower graphs}}  If all the static edges of~$G^*$ are bridge edges (i.e.\ they disconnect~$G^*$), then we may work with the consequences of \cite[Lemma~5.4]{DGY} as in the case of~$|\mu| \geq 3$, where this the only possibility that occurs.
\par
For the convenience of the reader, we explain how associate with~$G^*$ a sunflower graph~$\Gamma$, implicitly hidden in the proof of Theorem~\ref{thm:completedvolgraphs3plus}: the unique non-bipartite component~$G_0^*$ of~$G^*$ will be the sun of~$\Gamma$. Each face of $G_0^*$ without adjacent trees gives a marking (a leg) at the sun. All the other faces~$F$ of $G_0^*$ give a bottom level (genus zero) vertex of~$\Gamma$ with a unique marking of order corresponding to the number of edges adjacent to that face. For each component~$G_i^*$ that belongs to a subtree growing from a face~$F$ of~$G_0^*$ as add a top level vertex to~$\Gamma$ of the corresponding genus and adjacent precisely to the bottom level vertex corresponding to~$F$. Also in the case $|\mu| = 2$, with this assignment the ribbon graphs all whose static edges are bridges gives precisely the contribution of sunflower graphs in Theorem~\ref{thm:completedvolgraphs2}.
\par
In the remainder of the section we consider the complementary case and in particular \emph{restrict to the case $n=|\mu| = 2$.} The following replaces \cite[Lemma~5.4, Lemma~5.5 and~5.6]{DGY}.
\par
\begin{prop} \label{prop:Gstat}
Suppose that $G^*$ has a static non-bridge edge~$e$. Then the static graph~$G_{\stat}^*$ contains a unique ('main') cycle (consisting of static non-bridge edges).
\par
Each graph~$G^*_i$ is bipartite and has exactly one face. In particular $G_{\stat}^*$ naturally acquires the structure of a dual of a (planar) Ribbon graph~$G_{\stat}$.
\par
Moreover, the vertices of each index pair $(i,W(i))$ belong to the same component $G^*_{j(i)} = G^*_{j(W(i))}$ of the static graph.
\end{prop}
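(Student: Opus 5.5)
The plan is to exploit two constraints: $n=|\mu|=2$, so $G^*$ has exactly two faces, and $G^*$ is non-bipartite. First I would establish the basic dichotomy for static edges. If $e$ is static, then some component of $G^*-e$ is bipartite. For a bridge $e$ this component is one of the two sides. For a non-bridge $e$ the graph $G^*-e$ is connected and bipartite, and since $G^*$ is not bipartite, adding $e$ must create an odd cycle. So the endpoints of a static non-bridge edge have the same colour in the (unique) 2-colouring of $G^*-e$.

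Next I would show that the static non-bridge edges form a single cycle in $G_{\stat}^*$. Suppose $e,e'$ are static non-bridge edges. Removing both leaves a bipartite graph, because it is a subgraph of the bipartite $G^*-e$. I would then argue via the cycle space modulo~$2$ using the parity homomorphism $H_1(G^*;\bZ/2)\to\bZ/2$ that sends a cycle to its length. An edge is a static non-bridge edge exactly when every odd cycle passes through it, i.e.\ when it lies in the intersection of all odd cycles. In $G_{\stat}^*$ the image of such edges must therefore lie on every odd cycle. I would use this to conclude that the static non-bridge edges lie on one common cycle of $G_{\stat}^*$ and that $G_{\stat}^*$ has first Betti number exactly one, with the remaining static edges being bridges, i.e.\ trees hanging off the main cycle.

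For the face statement, I would count faces using Euler characteristic and the ribbon structure. Cutting $G^*$ along static edges splits faces. Each $G_i^*$ is a subgraph of a bipartite component of $G^*-e$ (for a non-bridge $e$ on the cycle), so it is bipartite. A bipartite ribbon subgraph inherits a face structure in which the faces of $G^*$ (only two of them) are distributed. Here I would exploit that $b_i=b_{W(i)}$ on the wall together with the vertex-parameter map $\vp$: the static edge weights are determined by alternating sums of the $b_j$ over the colour classes (Lemma~A.4 of \cite{DGY}). Forcing all these weights to be compatible with degenerations means each alternating sum around a piece must vanish on the wall. The vanishing is only possible if every pair $(i,W(i))$ lies in the same piece and occurs with opposite colours, which gives the last claim. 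For the one-face claim, I would argue that if some $G_i^*$ had two or more faces, then one face of $G^*$ would be entirely contained in $G_i^*$. Its boundary would then avoid the static edges, contradicting that the cycle in $G_{\stat}^*$ separates the two faces of $G^*$ (the two faces being the inside and outside of the main cycle, which is where planarity of $G_{\stat}$ enters). Given one face per piece, contracting each $G_i^*$ to a vertex yields a ribbon graph with two faces and one cycle. This is a planar (genus zero) graph whose dual is $G_{\stat}$.

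The main obstacle is the second step, uniqueness of the cycle and ruling out extra independent cycles formed by static edges, together with linking the wall condition to the pairing of $i$ and $W(i)$. I would first try the odd-cycle-intersection characterization combined with a direct colour-parity computation of $\vp$ on each piece.
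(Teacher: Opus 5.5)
Your cycle-space route to the unique cycle is sound, and your plan for the pairing claim follows the paper. The genuine gap is in the one-face step, with smaller imprecisions in the other two.

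\textbf{The gap.} You assert that if $G_i^*$ had two or more faces, some face of $G^*$ would lie entirely in $G_i^*$. Nothing you have established forces this. The static half-edges at a piece on the main cycle may sit in corners of \emph{different} faces of $G_i^*$, and then no face of $G^*$ is contained in $G_i^*$. Euler characteristic alone only gives $\sum_j f_j = h + 2\bigl(g(G^*)-\sum_j g(G_j^*)\bigr)$. Here $f_j$ is the number of faces of the $j$-th of the $h$ pieces, and $h$ is also the number of static edges, since $G^*_{\stat}$ has first Betti number one. So you need a reason why re-inserting the static edges never raises the genus, and you give none. The contradiction you then draw (the main cycle separating the two faces as inside and outside) is justified by planarity of $G_{\stat}$. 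But you only deduce planarity afterwards from the one-face property, so the argument is circular.

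\textbf{The missing idea} is parity of face degrees, which your proposal never uses. The two faces of $G^*$ have $m_1+2$ and $m_2+2$ sides, both odd. Every face of a bipartite ribbon graph has an even number of sides, since its boundary is a closed walk. Deleting an edge merges the faces on its two sides if they differ, and otherwise splits that face into two. If a static non-bridge edge $e$ had the same face on both sides, $G^*-e$ would be a connected bipartite ribbon graph with three faces, one of them the untouched odd face of $G^*$. Hence $e$ separates the two faces, and $G^*-e$ has a single face. All other static edges are bridges of $G^*-e$. Deleting a bridge from a one-faced component leaves exactly one face on each side, so every $G_i^*$ has one face. This is the face count behind the paper's argument, and planarity of $G_{\stat}$ then follows as you say.

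\textbf{The other steps.} Your cycle-space argument is a valid substitute for the paper's observation that two static non-bridge edges form a $2$-edge cut. Phrase it in $G^*$, not via odd cycles of $G^*_{\stat}$, since parity is not preserved when the pieces are contracted. From the colouring of $G^*-e$, a cycle of $G^*$ contains $e$ if and only if it is odd. Static bridges lie on no cycle. The map $H_1(G^*;\bZ/2)\to H_1(G^*_{\stat};\bZ/2)$ is surjective, so its target is spanned by the set of all static non-bridge edges.

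For the pairing claim you still have to extract the vanishing of each per-piece alternating sum from the vanishing edge weights. Along the dangling trees this is a leaf induction. On the cycle it comes from combining the weights of two adjacent cycle edges, whose colourings differ exactly on the piece between them.
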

\par
\par
\begin{proof} If $e_1$ and $e_2$ are two static bridge edges, then $G^* - e_1 -e_2$ is disconnected (if not, any bicoloring of it leads to a contradiction when inserting the edges). The same argument shows that if there are three static bridge edges, then one of the paths joining the edges has to pass through~$e_2$. Inductively this shows that all static bridge edges lie on a cycle. The remaining (non-bridge) static edges must then grow from this cycles, by the very definition of 'non-bridge', giving $G_{\stat}^*$ the structure of one cycle with dangling trees.
\par
To show that each $G_i^*$ has exactly one face we argue inductively as in \cite[Lemma~5.5]{DGY} with the leaves of the dangling trees. It thus suffices to prove this property in case $G_{\stat}^*$, when the argument is similar: Consider a connected component~$G_i^*$. The two faces adjacent to a static edge~$e$ that enters the same cannot be equal, since otherwise~$e$ would be non-bridge. Joining the two half-edges that enter~$e$ we see that~$G^*$ has at least one more face than~$G_i^*$. Since~$G^*$ has just two of them, the claim follows.
\par
For the last claim and the components of the dangling tree the argument is as in \cite[Lemma~5.6]{DGY}: Consider a $G_i^*$ which is a leaf. Since the adjacent edge is degenerating and since $G_i^*$ is bicolored in black~$B$ and white $W$ vertices, we must have a relation $\sum_{i \in B} b_i = \sum_{i \in W} b_i$ by \cite[Lemma~3.3]{DGY}. Such a relating can follow from the defining relations~\eqref{eq:wall} only if the claimed condition holds for $G_i^*$. Now induct on the dimension to the leaf.
\par
We thus may assume that $G_{\stat}^*$ consists of the cycle only. The claim is trivial if this graph has only one edge, so assume it has two, denoted by~$e_1$ and $e_2$. Color $G^* - e_1$ so that this edge is adjacent to two black~$B$ vertices. The set of black and white vertices splits into two subsets for each of the two components of $G^*-e_1-e_2$, call them $B^\top, B^\bot, W^\top, W^\bot$ respectively, which are the two components $G_1^*$ and $G_2^*$ in this case. Now \cite[Lemma~3.4]{DGY} implies
\bas
0  &\=  w(e_1) \= \sum_{i \in B^\top} b_i + \sum_{i \in B^\top} b_i  - \sum_{i \in W^\top} b_i - \sum_{i \in W^\top} b_i  \\
0  &\=  \pm w(e_2) \= \sum_{i \in B^\top} b_i - \sum_{i \in B^\top} b_i  - \sum_{i \in W^\top} b_i + \sum_{i \in W^\top} b_i  \\
\eas
since coloring with respect to~$e_2$ flips the color on one of the two components. Adding and subtracting these equations gives equations that can only be deduced from~\eqref{eq:wall} if the claimed conditions hold. The general case follows by applying this argument to two adjacent edges of~$G_{\stat}^*$.
\end{proof}
\par
\medskip
\paragraph{\textbf{Ribbon graphs to special star-shaped graphs}} We associate to $G^*$ with a static non-bridge the special star-shaped graph~$\Gamma$ as follows. By definition there is precisely one bottom level vertex with the two markings and for each component $G_i^*$ (say with $e_i$ edges) of $G_{\stat}^*$ we add a top level holomorphic vertex without markings of genus~$g_i = (e_i+1)/2$ to~$\Gamma$ (and enhancement $\kappa_i = 2e_i = 4g_i-2$).
\par
\medskip
Next we analyze the converse step, namely how many ways there are to glue the given splitting pieces as in Proposition~\ref{prop:Gstat} to a given ribbon graph.
Suppose we are given
\begin{itemize}
\item[a)] bipartite dual ribbon graphs $G^*_1,\ldots,G_h^*$, each with one face, corresponding the minimal abelian stratum in genus~$g_i$, i.e.\ with $e_i := 2g_i-1$ edges and thus a face adjacent to $\kappa_i = 2e_i$ edges.
\item[b)] a vertex labeling and a fixed-point free involution, such that $(i,W(i))$ belong to the same component $G_j^*$ of the dual ribbon graph.
\end{itemize}
\par
We aim for Proposition~\ref{prop:GforgivenGi}, which is the analog of \cite[Proposition~5.7]{DGY}. We first compute the volume of the bottom level strata, to make the goal of the computation apparent. For $a \geq 2$ set $f_2(a,1) = 1/(a+2)$ and for $n \geq 2$ we let
\be
f_2(a,n) \= \frac{a!!}{(a-2(n-2))!!}
\ee
\par
This function appears in volume computations in the following form:
\par
\begin{prop} \label{prop:vol2zero}
The intersection-theoretic volume of the quadratic stratum in for the genus zero signature $\wt\mu = (m_1,m_2,-4g_1,\ldots -4g_{h^\ab})$ is
\be \label{eq:volmutilde}
\vol(\wt\mu) \= \sum_{I \subset \{1,\ldots, h^\ab\} \atop \text{such that
${c_{1,I}> 0}$}}
c_{1,I} f_2(m_1,|I|+1)f_2(m_2,|I^c|+1)
\ee
where $c_{1,I} = m_1 + 2 - \sum_{i \in I} 4g_i$.
\end{prop}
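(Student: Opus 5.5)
We propose to compute $\vol(\wt\mu)=\int_{\MS_2(\wt\mu)}\zeta^{d}$, $d=h^\ab-1$, by the standard divisor relation that expresses $\zeta$ through a $\psi$-class and boundary divisors, and then to induct on $h^\ab$. Write $h=h^\ab$. The genus-zero signature $\wt\mu$ has $n=h+2$ points, is not of holomorphic abelian type, and its only odd entries are the two zeros $m_1,m_2$. The building blocks are the genus-zero strata with a single zero and only even poles. For these we take from \cite[Proposition~5.1]{CGPT25} that the stratum with zero $a$ and $n$ poles (of orders $-4g_i$, plus the pole forced by the degree condition) has volume $f_2(a,n)$. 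We also check by hand that the zero-dimensional case $(a,-a-4)$ gives $1/(a+2)$, which is our convention $f_2(a,1)$.

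\textbf{Step 1 (one divisor relation).} We use the $k$-differential version (via the canonical cover and~\eqref{eq:xizetaconv}) of the relation from \cite{CMZEuler, Sauvaget}. At the marked point $z_2$ it reads
\bes
\zeta \= \tfrac{m_2+2}{2}\,\psi_2 \,-\, \sum_{\Gamma:\ z_2 \text{ on bottom}} \ell_\Gamma [D_\Gamma],
\ees
up to the normalisation constants to be fixed. We write $\zeta^d=\zeta^{d-1}\cdot\zeta$ and substitute. The $\psi_2$-term is handled the same way: in genus zero, $\psi_2$ is itself a sum of boundary divisors separating $z_2$ from two auxiliary points (we take them to be $z_1$ and a pole). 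So $\zeta^d$ becomes a weighted sum of $\zeta^{d-1}$ restricted to two-level boundary divisors $D_\Gamma$.

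\textbf{Step 2 (selecting the graphs that survive).} On $D_\Gamma$ the class $\zeta$ restricts to $\zeta^\top$ by Corollary~\ref{cor:xiconversion}. Therefore $\zeta^{d-1}\cdot[D_\Gamma]\neq 0$ forces the top level to have projectivised dimension $d-1$, and then the bottom level contributes its own top-$\zeta$ power through the splitting of Proposition~\ref{prop:GMScomm}. A dimension count, analogous to the proof of Proposition~\ref{prop:xitopgraphs}, should show that only the following graphs contribute.
\begin{itemize}
\item Each graph $\Gamma_I$ has exactly one edge, with $z_1$ on bottom and $z_2$ on top.
\item The poles are split as $I$ at the bottom and $I^c$ at the top.
\item The edge has bottom pole order $-(c_{1,I}+2)$ and enhancement $\kappa=c_{1,I}$. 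This requires $c_{1,I}>0$, which is exactly the summation condition.
\end{itemize}
All other divisors either have excess top dimension or have a bottom level containing two zeros of insufficient dimension. The bottom vertex has signature $(m_1,(-4g_i)_{i\in I},-c_{1,I}-2)$, a single-zero stratum, so its volume is $f_2(m_1,|I|+1)$.

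\textbf{Step 3 (induction).} The top vertex has signature $(m_2,c_{1,I}-2,(-4g_i)_{i\in I^c})$ with two zeros and fewer poles. We evaluate it inductively, or directly when its second zero has order $c_{1,I}-2$ as in the single-zero formula. We expect the induction to telescope so that the top contributes exactly $f_2(m_2,|I^c|+1)$. The factor $\kappa_e/\ell_\Gamma$ coming from Corollary~\ref{cor:xiconversion} then combines with $\ell_\Gamma$ in the divisor relation to produce the single prefactor $c_{1,I}$.

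\textbf{Main obstacle.} We expect the hardest part to be Step 3 together with the constants in Step 1. The difficulty is showing that the $\psi_2$-term and the inductive contributions of the top level recombine into the symmetric product $f_2(m_1,\cdot)f_2(m_2,\cdot)$, rather than a sum asymmetric in $z_1$ and $z_2$. We would first test the formula for $h=1,2$ against Proposition~5.1 of \cite{CGPT25}. If the telescoping fails, the fallback is to compute $\vol(\wt\mu)$ directly with the isoresidual-fibre counting of \cite{CGPT25}: a generic residue fibre of the genus-zero stratum decomposes according to which poles $I$ are "captured" by $z_1$, which yields the same $I$-sum.
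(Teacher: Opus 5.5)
There is a genuine gap. It sits in Steps~2 and~3, which is exactly where the closed formula would have to come from. First, your list of graphs is not what the divisor relation produces. Since $\zeta|_{D_\Gamma}=\zeta^\top$, and the projectivized dimensions of the top and bottom level of a two-level divisor add up to $\dim D_\Gamma=d-1$, the product $\zeta^{d-1}\cdot[D_\Gamma]$ only sees divisors whose bottom level is projectively zero-dimensional. The bottom then contributes only its degree, not ``its own top-$\zeta$ power''. Your bottom stratum $(m_1,(-4g_i)_{i\in I},-c_{1,I}-2)$ has projectivized dimension $|I|-1$, so among the $\Gamma_I$ only $|I|=1$ survives, with $f_2(m_1,2)=1$. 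A factor $f_2(m_1,|I|+1)$ with $|I|\ge 2$ cannot arise this way. Moreover, $I=\emptyset$ is not a stable graph at all. The genus-zero signature $(a,-a-4)$ has only two points, so it is not a zero-dimensional stratum, and $f_2(a,1)=1/(a+2)$ is merely a convention making the $I=\emptyset$ summand equal to $f_2(m_2,h+1)$. Conversely, you discard divisors that do contribute. One example is the collision of $z_1$ with $z_2$: bottom $(m_1,m_2,-m_1-m_2-4)$, top the one-zero stratum $(m_1+m_2,(-4g_i)_i)$. For $h\ge 3$ there are also divisors with $z_1$ alone on a trivalent bottom vertex joined to two top components. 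Together with the $\psi$-term, whose pullback you did not analyse, these must produce the $I=\emptyset$ summand. Also note that the relation you wrote at $z_2$ sums over graphs with $z_2$ on the bottom, whereas your $\Gamma_I$ have $z_1$ there.

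Second, in Step~3 the top level of $\Gamma_{\{i\}}$ is the two-zero stratum $(m_2,\,m_1-4g_i,\,(-4g_j)_{j\ne i})$. By the very statement you are proving, its volume is a sum over subsets involving $f_2(m_1-4g_i,\cdot)$; it is not $f_2(m_2,h)$. So the divisor relation yields a recursion in $h$ with shifted zero orders. The entire content of the proposition is then the double-factorial identity showing that $\sum_I c_{1,I}f_2(m_1,|I|+1)f_2(m_2,|I^c|+1)$ solves this recursion, and that is precisely what you only ``expect to telescope''. The paper attempts no such derivation: its proof simply quotes the formula from the end of the proof of \cite[Theorem~1.2]{CGPT25}, which is your fallback. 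To turn your main plan into a proof you would have to do four things. List all projectively zero-dimensional-bottom divisors containing the chosen zero on the bottom. Include the pullback of the $\psi$-class. Fix the $k=2$ normalisations. Finally, prove the resulting combinatorial identity.
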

\par
\begin{proof}
The formula is the last (very long) displayed formula in the proof of \cite[Theorem~1.2]{CGPT25}.
\end{proof}
\par
To see which static graphs can appear with the given data, we fix a specific degeneration $\bfb'(\epsilon) \to \bfb$ for $\epsilon \in [0,1]$ starting at a point in the complement of all hyperplanes in the wall~$W$. That is we relabel the vertices such that $W(i) = i + c$, such that the labels $\leq c$ correspond to black vertices and the labels $\geq c+1$ are white vertices. Moreover we suppose that
\be \label{eq:biprime}
b_{i+c}' - b_i' = 2^i \epsilon\,,
\ee
i.e., the last tuple has the largest deviation from zero. While a priori the count might depend on the chosen order of equations, it turns out that it actually does not.
\par
\begin{prop} \label{prop:GforgivenGi}
The number~$\mathcal{G}_h = R(G_1^*,\ldots,G_h^*,\bfb')$ of ribbon graphs~$G$ whose degenerating edges as $\bfb'(\epsilon) \to \bfb$ form a static graph with pieces as in a), b) and which has precisely one cycle is equal to
\be 
\mathcal{G}_h \= \vol(\wt{\mu}) \cdot \prod_{e \in E(\Gamma)} \frac{\kappa_e}{2}
\ee
with $\vol(\wt{\mu})$ as in~\eqref{eq:volmutilde}
\end{prop}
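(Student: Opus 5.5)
Proposition~\ref{prop:Gstat} reduces the count to a combinatorial problem. With the pieces $G_1^*,\ldots,G_h^*$ fixed, a ribbon graph $G^*$ of the required kind is obtained by placing the $G_i^*$ in a cyclic order along the main cycle of $G^*_{\stat}$ and joining consecutive pieces by one static edge. Each piece has a single face of $\kappa_i = 2e_i$ corners, and each static edge is attached at a corner of the face of each piece it meets. When the static edges are added, the single face of each $G_i^*$ gets cut. The result must have exactly two faces, of sizes $m_1+2$ and $m_2+2$, which correspond to the two markings. So the plan is to parametrize the gluings by these attachment data and then impose two requirements: the face sizes must come out right, and the degeneration $\bfb'(\epsilon)\to\bfb$ from \eqref{eq:biprime} must send exactly the static edges, and no others, to weight zero.

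\textbf{Step 1: the face constraint.} Going once around the main cycle, each piece $G_i^*$ is entered and left through two corners of its face. These two corners cut the boundary walk of the face into two arcs. One arc contributes to the face labelled $m_1$ and the other to the face labelled $m_2$. Let $I$ be the set of pieces whose longer arc (in a sense made precise below) goes to the $m_1$-face. The length of the face containing $m_1$ is then $\sum_{i\in I}$ (arc length) plus a contribution from the static edges. The difference $c_{1,I}=m_1+2-\sum_{i\in I}4g_i$ should be the number of free choices for where to cut the pieces in $I^c$ relative to those in $I$. I expect this to reproduce the linear factor $c_{1,I}$ in \eqref{eq:volmutilde}, and I expect the condition $c_{1,I}>0$ to be exactly the requirement that both faces are nonempty.

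\textbf{Step 2: the sign condition.} On each piece the static edge must go from a black corner to a white corner, compatibly with the bicoloring. By \cite[Lemma~3.3, 3.4]{DGY}, the sign of $w(e)$ along the path $\bfb'(\epsilon)$ must be positive, or else the degeneration is not realized. With the ordering \eqref{eq:biprime}, the piece carrying the largest label plays the role of the maximal vertex. The sign condition then fixes a cyclic orientation and a relative order of the pieces within $I$ and within $I^c$.

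\textbf{Step 3: the remaining factors.} After Steps 1 and 2 the remaining freedom should be the following. On each piece, choose which of its $\kappa_i/2 = e_i$ corners of the right colour is the entry corner; this gives the factor $\prod_e \kappa_e/2$. Then distribute the $|I|$ pieces, respectively the $|I^c|$ pieces, among positions on the $m_1$-face, respectively the $m_2$-face, respecting the imposed order. I expect the second count to be a product of the form $m(m-2)\cdots$, that is $f_2(m_1,|I|+1)\, f_2(m_2,|I^c|+1)$, with the convention $f_2(a,1)=1/(a+2)$ for an empty side. Summing over $I$ should then reproduce \eqref{eq:volmutilde} multiplied by $\prod_e \kappa_e/2$.

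\textbf{Main obstacle.} The hard part is matching the maximal-vertex bookkeeping of Step 2 with the symmetric formula of Proposition~\ref{prop:vol2zero}. The degeneration singles out one piece, whereas the volume formula sums over all $I$ symmetrically. I would first verify the cases $h=1$ and $h=2$ by hand. In general I would expect an inclusion--exclusion over which side the maximal piece lies on. It would telescope via $f_2(a,n+1)=(a-2n+4)^{-1}$-type recursions, namely $f_2(a,n)\cdot(a-2(n-1))=f_2(a,n+1)$, into the sum over $I$. If this fails, I would instead prove that the count is independent of the chosen ordering in \eqref{eq:biprime}, average over all orderings, and then compare termwise.
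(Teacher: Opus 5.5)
Your rotation factor $\prod_e\kappa_e/2$ is right, and so is your diagnosis that the maximal piece breaks the symmetry. But your initial parametrization is wrong, and Steps~1 and~3 rest on it. Proposition~\ref{prop:Gstat} does not put every piece on the main cycle: $G^*_{\stat}$ is one cycle with trees of pieces dangling from it. So the pieces split as $\{1,\ldots,h\}=I\sqcup J\sqcup K$: the pieces in $K\neq\emptyset$ sit on the cycle, and those in $I$, resp.\ $J$, lie in trees entirely inside the face of $z_1$, resp.\ $z_2$. Your cycle-only picture is correct only for $h=1$.

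This decomposition is what gives \eqref{eq:volmutilde} its meaning.
\begin{itemize}
\item A hanging piece contributes its whole perimeter plus both sides of its attaching edge, $\kappa_i+2=4g_i$, to one face.
\item Hence $c_{1,I}$ is the part of the $z_1$-perimeter coming from the cycle, i.e.\ the number of places where the $I$-trees can be attached.
\item $f_2(m_1,|I|+1)$ counts the trees built from the $I$-pieces (the factor from \cite[Equation~(38)]{DGY}).
\end{itemize}
Neither quantity concerns pieces whose face is split between the two faces, so your reading of $I$ (``longer arc goes to the $m_1$-face'') cannot reproduce them.

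Already for $h=2$ the cycle-only joinings are a minority. Take $\mu=(5,3)$ with pieces of genus $1$ and $2$ (the graph $D_2$), so that $\vol(\wt\mu)=3+3=6$. After dividing by $R$, the joinings with both pieces on the cycle contribute only $2$. The remaining $3+1$ come from $K=\{2\}$: a static self-loop on the genus-$2$ piece, with the genus-$1$ piece hanging into the $z_1$-face or the $z_2$-face respectively.

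Even with the correct decomposition, the count is not the product you expect in Step~3. Contrary to your Step~2, the coloring rules of Lemma~\ref{le:coloring} do not fix any ordering: all $(|K|-1)!$ cyclic orders occur, and so do $2^{|K|}$ (or $2^{|K|-1}$) colorings. What the rules impose are parities. The special piece has an even number $2l_t$ of its edges facing one face, and every other cycle piece an odd number $2l_i-1$. The cycle count is therefore a count of compositions with parts $l_i\le e_i$ and prescribed sum, done by inclusion--exclusion over $L\subseteq K$. It also depends on whether the maximal piece $h$ lies in $I$, $J$ or $K$.

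The resulting $\mathcal{N}(I,J,K)$ does not match any symmetric expression termwise. In the example above it gives $2,3,1$ on the three partitions, whereas the paper's symmetric comparison sum $\mathcal{F}(I,J,K)$ gives $3,3,0$; only the totals agree. Bridging this mismatch is the real content of the proof. The paper forms the error terms $\mathcal{F}-\mathcal{N}$, regroups them over $L'=J'\sqcup L$, evaluates them with the two-step Vandermonde identity of Lemma~\ref{eq:alphaop}, and shows that the sums $S_{L'}$ vanish. Neither a telescoping along a one-step $f_2$-recursion nor averaging over orderings does this.

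Note also that your recursion is off. The correct one is $f_2(a,n+1)=f_2(a,n)\,(a-2n+4)$; for example $f_2(a,2)=1=(a+2)f_2(a,1)$, whereas your version gives $a/(a+2)$.
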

\par
As part of the proof we describe the possibilities to make out of the vertices $G_i^*$ a graph $G_{\stat}^*$ with~$m$ edges, hence exactly one loop, and two faces, and such that moreover those faces are adjacent to $m_1+2$ and $m_2+2$ many edges respectively. We call this procedure a \emph{joining}, and subsequently count those joinings. Such a joining is called \emph{admissible} if there is a degeneration $\bfb'(\epsilon) \to \bfb$ such that the weight~$w(e)$ of every static edge is positive for $\epsilon > 0$.
\par
The ordering of the vertices such that~\eqref{eq:biprime} holds induces a natural ordering of the $G_i^*$, namely by the maximum vertex label~$s$ contained in $G_i^*$. For convenience we assume in the sequel that this order is increasing, i.e.\ that the $G_i^*$ are labeled such that
\be
\max\{ s: b_s \in G_i^*\} < \max\{ s: b_s \in G_j^*\}\quad \text{if and only if $i<j$.} 
\ee
We root the subtrees in $G_{\stat}^*$ at the point where they join the unique cycle and call \emph{descendents} of $G_i^*$ the vertices not on the path to the root.
\par
\begin{lemma}\label{le:coloring}
A joining is weakly admissible if and only if the following four conditions hold:
\begin{itemize}
\item Let $G^*_{\max}$ be the vertex of main cycle of $G_{\stat}^*$ that contains the largest wall equation inside or whose branch contains this wall equation. Then the two non-bridge degenerating edges adjacent to~$G^*_{\max}$ end at vertices of the same color. Define this color to be black.
\item For all other pieces of the cycle the two adjacent non-bridge degenerating edges end at vertices of different color.
\item If all descendants of~$G_i^*$ have labels smaller than~$i$, then the path from $G_i^*$ to its root emanates at a black corner of $G_i^*$.
\item Otherwise the path from $G_i^*$ to its root and the path to the subtree containing the descendent of maximal label are at vertices of different colors of~$G_i^*$.
\end{itemize}
A joining is admissible if the two faces of the resulting graph~$G^*$ are adjacent to $m_1+2$ and $m_2+2$ edges.
\end{lemma}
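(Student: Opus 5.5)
We read off admissibility from the sign conditions on the weights of the static (degenerating) edges along the specific degeneration $\bfb'(\epsilon)\to\bfb$ of~\eqref{eq:biprime}. By \cite[Lemma~3.3 and Lemma~3.4]{DGY}, once $G^*$ minus the static edges is bicolored, the weight of a static edge $e$ is a signed sum $\sum_{i\in B}b'_i-\sum_{i\in W}b'_i$ over the vertices of the component(s) cut off by~$e$. The signs are fixed by the colors of the endpoints of~$e$. Each index pair $(i,W(i))$ lies in a single piece $G^*_j$ by Proposition~\ref{prop:Gstat}, so on the wall these sums vanish and $w(e)=0$. Along the perturbation, $w(e)$ becomes a signed sum $\pm\sum 2^s\epsilon$ over the pairs $s$ contained in the cut-off region. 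Its sign is governed by the single largest index~$s$, since $2^s>\sum_{t<s}2^t$. The whole proof is therefore bookkeeping: we determine, for each static edge, which region it cuts off, what the maximal label there is, and on which color that pair sits relative to the endpoint of~$e$.

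We first treat the dangling trees. Take a tree edge $e$ joining $G_i^*$ to its parent, on the path to the root. The region cut off is $G_i^*$ together with its descendants, and its maximal label is that of $G_i^*$ itself or of some descendant. By the ordering convention, in the first case (all descendants have smaller labels) the dominant pair lies in $G_i^*$. With $b'_{s+c}-b'_s=2^s\epsilon$ and labels $>c$ white, positivity of $w(e)$ forces $e$ to emanate from a black corner. This is the third condition. In the second case the dominant contribution comes through the subtree containing the maximal descendant. Its sign is inherited from the color at which that subtree's edge attaches, because colors flip across a static edge in the bicoloring of Lemma~3.4. Positivity then requires the two attachment points to have different colors, which is the fourth condition. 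We proceed by induction from the leaves inward, exactly as in the proof of Proposition~\ref{prop:Gstat}.

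Next comes the main cycle, which we expect to be the main obstacle. Here we adapt the two-edge computation in the proof of Proposition~\ref{prop:Gstat}. Removing two cycle edges $e_1,e_2$ splits the graph into two arcs, and $w(e_1)$ and $w(e_2)$ are the sum and the difference of the signed arc-sums, with the bicoloring flipped on one arc. For a general cycle we fix a consistent bicoloring of $G^*$ minus one cycle edge. Going around the cycle, the color parity changes at each piece according to whether its two cycle edges meet the same or different colors. Non-bipartiteness of $G^*$ requires an odd total number of "same color" pieces. Positivity of all cycle weights, dominated by the branch containing the overall largest label, forces the "same color" flip to occur exactly at $G^*_{\max}$ and nowhere else. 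The delicate point is to check that a second "same color" piece always produces a cycle edge whose dominant term has the wrong sign. We would try to show this by comparing the two arcs separated at that piece and $G^*_{\max}$, and using that $G^*_{\max}$ carries $2^{s_{\max}}$.

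Finally, weak admissibility only asks for positivity of the weights. Full admissibility additionally requires the resulting $G^*$ to realize the signature. The static graph has two faces by Proposition~\ref{prop:Gstat}, so this amounts to the face-degree condition $m_1+2$, $m_2+2$ on the two faces, which is a purely combinatorial check added by definition.
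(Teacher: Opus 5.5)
\textbf{Verdict.} The overall route is the paper's, but the main-cycle step is left unproved. For the dangling trees you follow the paper: the signed sum of \cite[Lemma~3.3]{DGY}, dominance of the largest pair under~\eqref{eq:biprime}, and induction from the leaves as in \cite[Lemma~6.2]{DGY}.

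\textbf{The gap.} The main cycle, i.e.\ the first two rules, is exactly the part not covered by \cite{DGY}, and you do not prove it. You leave the decisive step as something you ``would try''. Your formulation, that a \emph{second} same-color piece gives a wrong sign, already presupposes that $G^*_{\max}$ is the same-color piece. So it neither excludes the configuration where $G^*_{\max}$ is of different type and exactly one other piece is same-colored, nor gives the converse.

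\textbf{The fix: compare adjacent cycle edges, not arcs.} Write the cycle as $P_1,\ldots,P_m$. Let $e_j$ join $v_j\in P_j$ to $u_{j+1}\in P_{j+1}$. Let $C_j$ be the bicoloring of $G^*-e_j$ with both endpoints of $e_j$ black, so that $2w(e_j)=\sum_B b'-\sum_W b'$. Since $G^*-e_j$ contains every pair, all cycle weights are dominated by the same pair $s_{\max}$, which lies in the block (piece plus branch) of $G^*_{\max}$. So all cycle weights are positive iff all $C_j$ restrict to one and the same coloring of this block, in which the larger vertex of $s_{\max}$ is black. Now take $P_j\neq G^*_{\max}$. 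The complement of the block of $P_j$ is connected in both $G^*-e_{j-1}$ and $G^*-e_j$. On it, $C_{j-1}$ and $C_j$ agree iff $u_j,v_j$ have different colors. Indeed, $C_j(u_{j+1})$ is black, while $C_{j-1}(u_{j+1})$ is opposite to $C_{j-1}(v_j)$ and $C_{j-1}(u_j)$ is black. Hence every piece other than $G^*_{\max}$ is of different type. Your (correct) parity count, an odd number of same-type pieces, then forces $G^*_{\max}$ to be of same type. Conversely, these conditions make all $C_j$ agree on the block of $G^*_{\max}$, so all cycle weights share one sign.

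\textbf{The sign condition at $G^*_{\max}$.} Whether that common sign is positive is a genuine extra condition, which your proposal never addresses. It says that the two cycle attachments at $G^*_{\max}$, which are black in every $C_j$, carry the color of the larger vertex of $s_{\max}$. If $s_{\max}$ lies in a branch, it says instead that this branch leaves $G^*_{\max}$ at the opposite color. This is what ``Define this color to be black'' really encodes, not a free naming. The proof of Proposition~\ref{prop:GforgivenGi} relies on it: only $2^{|K|-1}$ colorings in Case~2, and attachment at a white corner of $G_t^*$ in Case~1.

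\textbf{A sign slip in the third rule.} Take $b_v=\sum_{e\ni v}w(e)$, $b'_{s+c}-b'_s=2^s\epsilon$, and labels $>c$ white, as you state. For a leaf edge attached at a vertex of the color of $s$, the dominant term of $w(e)$ is $b'_s-b'_{s+c}=-2^s\epsilon<0$. So positivity forces attachment at the color of $s+c$, which is white under that labeling, not black. ``Black'' in the third rule must therefore mean the color of the larger vertex of the dominant pair. Fix this convention explicitly rather than deriving the opposite color from the labeling.
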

\par
\begin{proof}
  Consider how the edge lengths are computed from the vertex parameters \cite[Lemma~3.3]{DGY}, given that these are uniquely determined for static edges by \cite[Lemma~3.2]{DGY}. The last two statements can be derived exactly as in \cite[Lemma~6.2]{DGY}.
\end{proof}
\par
\begin{proof}[Proof of Proposition~\ref{prop:GforgivenGi}] Given an admissible joining with the induced coloring for each vertex $G_i^*$, whose face has $\kappa_i=4g_i-2$ edges, the coloring rules of Lemma~\ref{le:coloring} only allow to rotate all the attached graphs in steps of two, preserving the coloring. This gives $e_i=\kappa_i/2$ choices and we isolate this rotation factor $R=\prod_{i=1}^h e_i$.
  \par
  The graph $G_{\stat}^*$ has two faces corresponding to the zeroes $z_1,z_2$. The vertices $G_i^*$'s are divided into three disjoint sets: those fully adjacent to the face $z_1$ that we will index by $I$, those fully adjacent to $z_2$ that we index by $J$, and those adjacent to both faces forming the disconecting cycle that we will index by $K$. The perimeter of the disconnecting cycle adjacent to the face $z_1$ has length $c_{1,I}= m_1+2-\sum_{i\in I}4g_i$, and that adjacent to the face $z_2$ has length $c_{2,J}= m_2+2-\sum_{j\in J}4g_j$. Then, for such a $G_{\stat}^*$ to exist, it must be satisfied that $c_{1,I}, \ c_{2,J}>0$ and $K\not= \emptyset$. We denote by $\mathcal{N}(I,J,K)$ the number of such admisible joinings divided by the rotation factor~$R$. Then the total number of admisible joinings is the sum
\be\nonumber
    \mathcal{G}_h \= R \cdot \sum_{\substack{I\sqcup J \sqcup K\=\{1,.\ldots,h\} \\ c_{1,I}, \ c_{2,J}>0 \\ K\not= \emptyset}} \mathcal{N}(I,J,K)\, .
\ee
We proceed to compute the number $\mathcal{N}(I,J,K)$. Assume $G^*_{h}=G^*_{\max}$. The vertices indexed by $I$ form a (possibly disconnected) tree. The number of these trees is given as a factor of equation~(38) of \cite[Theorem~5.1]{DGY}, which we denote by the function $f_2(m_1,|I|+1)$. Analogously, the number of trees formed by the vertices indexed by $J$ is given by $f_2(m_2,|J|+1)$. The ways to attach these pair of trees to the main disconnecting cycle depends on the position of $G_h^*$. If $h\not\in I$, there are $c_{1,I}$ ways to attach the tree to main cycle. Analogously, if $h\not\in J$, there are $c_{2,J}$ ways to attach the corresponding tree. Otherwise, the ways to attach these trees, and the ways to attach the vertices indexed by $K$ in the main cycle, is given by the following two cases:
\par
\medskip
\paragraph{\textbf{Case 1: Suppose $h\in I$ or $h \in J$}} The coloring rules give a special role to the vertex to whom the branch containing $G_h^*$ is attached. We call it the \textit{special vertex} $G_t^*$ and reserve the symbol $t\in K$ for it. Fix an order of the vertices in the cycle. By the first coloring rule on Lemma~\ref{le:coloring}, the number of edges of $G_t^*$ facing the $z_1$-face is even, say $2l_t$. By the third and fourth coloring rules, the branch containing $G_h^*$ must be attached to one of the $l_t$ white corners of $G_t^*$.  For the rest of the vertices $G_i^*$ with $i\in K/\{t\}$, the second coloring rule says that the number of edges is odd, say $2l_i-1$. Then fix a tuple $\{l_i\}_{i\in K}$ with $1\leq l_i\leq e_i$ and $\sum_{i\in K}l_i=(c_{1,I}-1)/2$. By the coloring rule the number of ways to join a given tree formed out of the vertices of~$I$ is equal to~$l_i$, if $i=t$ is the special vertex. Then the total number of ways (over all $t\in K$) to join the labeled vertices in~$K$ up to rotation with a given $I$-vertex tree becomes
    \bas
    \mathcal{K}_{cycle} \= & \left(\sum_{i\in K}l_i\right) \#\left\{\{l_i\}_{i\in K} \ | \ 1\leq l_i\leq e_i, \ \text{and} \ \sum_{i\in K}l_i=(c_{1,I}-1)/2\right\}\\
    \= & \frac{c_{1,I}-1}{2}\left[t^{\frac{c_{1,I}-1}{2}}\right]\prod_{i\in K}(t+\ldots+t^{e_i})\\
    \= & \frac{c_{1,I}-1}{2}\sideset{}{^*}\sum_{L\subseteq K} \binom{(c_{1,I}-3-D_L)/2}{|K|-1}
    \eas
where $D_L=\sum_{i \in L}\kappa_i$ and where the star restricts the sum to terms where the numerator of the binomial coefficient is not smaller than the denominator. To rewrite the expression using $c_{2,J}$ in the interior sum we use the variables $r_i=e_i-l_i+1$ so that 
    \bas
    \#&\{\{l_i\}_{i\in K} \ | \ 1\leq l_i\leq e_i, \ \text{and} \ \sum_{i\in K}l_i=(c_{1,I}-1)/2\}\\
    & \= \#\{\{r_i\}_{i\in K} \ | \ 1\leq r_i\leq e_i, \ \text{and} \ \sum_{i\in K}r_i=(c_{2,J}+1)/2\}
    \eas
    and the expression gets rewritten  as
    \be\nonumber
       \mathcal{K}_{cycle} \= \frac{c_{1,I}-1}{2}\sideset{}{^*}\sum_{L\subseteq K} (-1)^{|L|}\binom{(c_{2,J}-1-D_L)/2}{|K|-1}\,,
\ee
where the star expresses again the condition $(c_{2,J}-1-D_L)/2\geq |K|-1$ or equivalently that $c_{2,J}-1-D_L\geq 2(|K|-1)$. We multiply this count by the $(|K|-1)!$ ways to permute the vertices in the cycle, and by the $2^{|K|}$ ways of coloring them, as we removed only the factor~$R$ but according to the coloring rules, vertices in the circle can be joined at vertices black-to-white or vice versa the say clockwise order. We obtain that
    \bas
        \frac{\mathcal{N}(I,J,K)_{h\in I}}{c_{2,J}f_2(m_1,|I|+1)f_2(m_2,|J|+1)} &\=  2^{|K|}(|K|-1)!\mathcal{K}_{cycle}\\
        & \= (c_{1,I}-1)\sum_{L\subseteq K}(-1)^{|L|}[c_{2,J}-1-D_L]_{|K|}\, .
    \eas
where we use the bracket notation for
\be \label{eq:bracket}
[a]_r:=\textbf{1}_{a\geq2(r-1)}f_2(a,r+1).
\ee
Symmetrically, if $h\in J$, the count in terms of $c_{2,J}$ becomes
    \be\nonumber
        \frac{\mathcal{N}(I,J,K)_{h\in J}}{c_{1,I}f_2(m_1,|I|+1)f_2(m_2,|J|+1)}\=(c_{2,J}-1)\sum_{L\subseteq K}(-1)^{|L|}[c_{2,J}-3-D_L]_{|K|}\, .
    \ee
\par
\medskip
\paragraph{\textbf{Case 2: Suppose $h\in K$}} The special vertex in the cycle is now  $G_h^*$, where $2l_h$ is the number of edges facing the $z_2$-face. Let $2l_i-1$ be the number of edges facing the $z_2$-face for every $i\in K/\{h\}$. Then, fixing an order of the vertices of the cycle, the number of attachments is
    \bas
    \mathcal{K}_{cycle} \= & \#\{\{l_i\}_{i\in K} \ | \ 0\leq l_h\leq e_h, \ 1\leq l_i\leq e_i \ \text{for} \ i\in K/\{h\}, \ \sum_{i\in K}l_i=(c_{2,J}-1)/2\}\\
    \= & \#\bigl\{\{l_i\}_{i\in K} \ | \ 1\leq l_i\leq e_i, \ \sum_{i\in K}l_i=(c_{2,J}-1)/2\bigr\}\\
    & + \#\bigl\{\{l_i\}_{i\in K/\{h\}} \ |  \ 1\leq l_i\leq e_i, \text{for} \ i\in K/\{h\}, \ \sum_{i\in K/\{h\}}l_i=(c_{2,J}-1)/2\bigr\}\\
    \= & \sum_{L\subseteq K}(-1)^{|L|}\binom{(c_{2,J}-3-D_L)/2}{|K|-1} + \sum_{L\subseteq K/\{h\}}(-1)^{|L|}\binom{(c_{2,J}-3-D_L)/2}{|K|-2}\\
    \= & \sum_{\substack{L\subseteq K \\ h\in L}}(-1)^{|L|}\binom{(c_{2,J}-3-D_L)/2}{|K|-1} + \sum_{L\subseteq K/\{h\}}(-1)^{|L|}\binom{(c_{2,J}-1-D_L)/2}{|K|-1} 
    \eas
    where the last sum is given by the binomial theorem. Multiplying by the $(|K|-1)!$ ways to permute the vertices, and the $2^{|K|-1}$ ways of coloring the vertices (as the coloring of $G_h^*$ is fixed) the total count of joinings up to rotation is given by 
    \bas
&\phantom{\=} &
\frac{\mathcal{N}(I,J,K)_{h\in K}}{c_{1,I}c_{2,J}f_2(m_1,|I|+1)f_2(m_2,|J|+1)} \=  2^{|K|-1}(|K|-1)!\mathcal{K}_{cycle}\\
&\= & \sum_{\substack{L\subseteq K \\ h\in L}}(-1)^{|L|}[c_{1,I}-3-D_L]_{|K|}  + \sum_{L\subseteq K/\{ h\}}(-1)^{|L|}[c_{1,I}-1-D_L]_{|K|}.
\eas
again with the bracket notation from~\eqref{eq:bracket}.
\par
\medskip
\paragraph{\textbf{Conversion of expressions}} Using the partition notation our goal is to show that $\mathcal{G}_h=\mathcal{F}_h$ where 
\be\nonumber
\mathcal{F}_h \= R \cdot \sum_{\substack{I\sqcup J\sqcup K=\{1,\ldots,h\} \\  c_{1,I},c_{2,J}>0 \\ K'\not= \emptyset}} \mathcal{F}(I,J,K)
\ee
is defined via
\be\nonumber 
\mathcal{F}(I,J,K) \=  c_{1,I}f_2(m_1,|I|+1)c_{2,J}f_2(m_2,|J|+1) \sum_{L\subseteq K}(-1)^{|L|}[c_{2,J}-2-D_L]_{|K|}\, .
\ee
Note that the components of the expression for $\mathcal{G}_h$ depend on the location of the maximal element~$h$, but the expression for $\mathcal{F}_h$ don't. 
We write  $I'\sqcup J' \sqcup K'$ for the partition of $\{1,\ldots,h-1\}$ as above after removing the maximal element~$h$. The computation above gave us to the following expressions:
\bas
\mathcal{N}(I'\cup\{h\},J',K') & \=  (c_{1,I'\cup\{h\}}-1)f_2(m_1,|I'|+2)c_{2,J'}f_2(m_2,|J'|+1)\\
& \quad \cdot\sum_{L\subsetneq K'}(-1)^{|L|}[c_{2,J'}-1-D_{L}]_{|K'|} \\
\mathcal{N}(I',J'\cup\{h\},K') & \=  c_{1,I'}f_2(m_1,|I'|+1)(c_{2,J'\cup\{h\}}-1)f_2(m_2,|J'|+2)\\
& \quad \cdot\sum_{L\subsetneq K'}(-1)^{|L|}[c_{2,J'\cup\{h\}}-3-D_{L}]_{|K'|} \\
\mathcal{N}(I',J',K'\cup\{h\}) & \=  c_{1,I'}f_2(m_1,|I'|+1)c_{2,J'}f_2(m_2,|J'|+1)\\
& \quad \cdot\Bigl(\sum_{L\subseteq K'}(-1)^{|L|}[c_{2,J'}-1-D_L]_{|K'|+1}  \\
& \quad \hspace{0.5cm}  -\sum_{L\subseteq K'}(-1)^{|L|}[c_{2,J'}-3-D_{L\cup\{h\}}]_{|K'|+1}\Bigr) 
\eas
All these functions, as well as $\mathcal{F}(I,J,K)$, involve an auxiliary summation over $L \subset K'$, and thus so does the error function
\be\nonumber
\mathcal{E}(I,J,K) \= \mathcal{F}(I,J,K)-\mathcal{N}(I,J,K)\, 
\ee
where our goal is to show the vanishing of 
\be\nonumber
\frac{\mathcal{F}_h-\mathcal{G}_h}{R} \= \sum_{\substack{I\sqcup J\sqcup K=\{1,\ldots,h\} \\  c_{1,I},c_{2,J}>0 \\ K\not= \emptyset}} \mathcal{E}(I,J,K)\, .
\ee
We will use the shorthand notation
\bas
A \= & c_{2,J'}-2, \qquad \delta^+_r(A) \= & [A]_r-[A+1]_r, \qquad 
\delta^-_r(A) \= & [A]_r-[A-1]_r\,.
\eas
For a fixed partition $I'\sqcup J' \sqcup K'=\{1,\ldots,h-1\}$, there are three types of error functions depending on the location of $h$:
\bas
\mathcal{E}(I'\sqcup \{h\},J',K') \= & c_{2,J'}\,f_2(m_1,|I'|+2)\, f_2(m_2,|J'|+1)\\
&\cdot  \sum_{L\subseteq K}(-1)^{|L|}\left(c_{1,I'\cup\{h\}}\delta^+_{|K'|}(A-D_L) + [A-D_L+1]_{|K'|}\right)\\
\mathcal{E}(I',J'\sqcup \{h\},K') \= & c_{1,I'}f_2(m_1,|I'|+1)f_2(m_2,|J'|+2)\\
&\cdot  \sum_{L\subseteq K}(-1)^{|L|} \left(c_{2,J'\cup\{h\}}\delta^-_{|K'|}(A-D_L-4g_h) + [A-D_L-4g_h-1]_{|K'|}\right)\\
\mathcal{E}(I',J',K'\sqcup \{h\}) \= & c_{1,I'}c_{2,J'}f_2(m_1,|I'|+1)f_2(m_2,|J'|+1)\\
&\cdot  \sum_{L\subseteq K}(-1)^{|L|} \left(\delta^+_{|K'|+1}(A-D_L) - \delta^-_{|K'|+1}(A-D_L - d_h)\right)\, .
\eas
We denote these three functions as $\cE_*(I',J',K')$ for $* = I,J,K$ respectively and denote by $\cE_*(I',J',K')_L$ the summand for fixed~$L$ in each of them. We regroup these sums by defining
\bes
L'=J'\sqcup L, \qquad  H= K' \setminus L \= (I'\sqcup L')^c,
\ees
and hence $K'=H\sqcup L$. Observe that if we fix $L'$, the argument $A-D_{L'\setminus J'}=c_{2,L'}-2+2|L'\setminus J'|$ only depends on the cardinality $|J'|=\colon j$. If we let
\bes
R_*(I',L',H) = \sum_{J \subseteq L'} \cE_*(I',J',K')_{L'}
\ees
the sums become
\bas
\mathcal{R}_I(I',L',H) \= & \sum_{\substack{J'\subseteq L'\\ K'\neq\emptyset}}
(-1)^{|L'\setminus J'|}\,
c_{2,J'}\, f_2(m_1,|I'|+2)\, f_2(m_2,|J'|+1) \cdot
\\
& 
\cdot
\Bigl(c_{1,I'\cup\{h\}}\, \delta^+_{|K'|}
\big(A-D_{L'\setminus J'}\big)
+ \big[A-D_{L'\setminus J'}+1\big]_{|K'|}
\Bigr)\\
\mathcal{R}_J(I',L',H) \= & 
\sum_{\substack{J'\subseteq L'\\ K'\neq\emptyset}}
(-1)^{|L'\setminus J'|}\,
c_{1,I'}\, f_2(m_1,|I'|+1)\, f_2(m_2,|J'|+2) \cdot
\\
&
\cdot \Bigl(
c_{2,J'\cup\{h\}}\, \delta^-_{|K'|} \big(A-D_{L'\setminus J'}-4g_h\big)
+
\big[A-D_{L'\setminus J'}-4g_h-1\big]_{|K'|}
\Bigr)\\
\mathcal{R}_K(I',L',H) \= & \sum_{J'\subseteq L'}
(-1)^{|L'\setminus J'|}\,c_{1,I'}\,c_{2,J'}\,f_2(m_1,|I'|+1)\,f_2(m_2,|J'|+1)\\
&
\cdot\Bigl(\delta^+_{|K'|+1}(A-D_{L'\setminus J'})
- \delta^-_{|K'|+1}(A-D_{L'\setminus J'}-(4g_h-2))\Bigr)\, .
\eas
Using Lemma~\ref{eq:alphaop} these functions reduce to
\bas
\mathcal{R}_I(I',L',H) \= & f_2(m_1,|I'|+2)\Bigl(c_{1,I'\cup\{h\}}\alpha^0_{L',H}(0)-(c_{1,I'\cup\{h\}}-1)\alpha^0_{L',H}(1)\Big)\\ \\
\mathcal{R}_J(I',L',H) \= & c_{1,I'}f_2(m_1,|I'|+1)\Bigl(\alpha^1_{L',H}(2-4g_h)-\alpha^1_{L',H}(1-4g_h)\Bigr)\\ \\
\mathcal{R}_K(I',L',H) \= & c_{1,I'}f_2(m_1,|I'|+1)\Bigl(\alpha^1_{L',H}(0)-\alpha^1_{L',H}(1)\\
& \hspace{3cm} -\alpha^1_{L',H}(2-4g_h)+\alpha^1_{L',H}(1-4g_h) \Bigr)\, .
\eas
As the $\alpha$-operators have the special values
\bas
\alpha^0_{L',H}(0) & \= 0, \quad & \alpha^0_{L',H}(1) & \= -[c_{2,L'}-1]_{|H|}f_2(m_2-c_{2,L'}-1,|L'|+1)\\
\alpha^1_{L',H}(0) & \= 0, \quad & \alpha^1_{L',H}(1) & \= -[c_{2,L'}-1]_{|H|+1}f_2(m_2-c_{2,L'}-1,|L'|+1)
\eas
the sum of the three functions become 
\bas
\mathcal{R}(I',L',H) &\,\coloneq \,  \mathcal{R}_I(I',L',H) + \mathcal{R}_J(I',L',H) + \mathcal{R}_K(I',L',H)\\ 
&\=  f_2(m_1,|I'|+1)[c_{2,L'}-1]_{|H|}f_2(m_2-c_{2,L'}-1,|L'|+1)\\
& \quad \cdot [(m_1-2|I'|+2)(c_{1,I'\cup\{h\}}-1)+c_{1,I'}(c_{2,L'}-2|H|+1)]\, .
\eas
The last step is now to prove the vanishing of
\be\nonumber
\frac{\mathcal{F}_h-\mathcal{G}_h}{R} \= \sum_{I'\sqcup L'\sqcup H=\{1,\ldots,h-1\}} \mathcal{R}(I',L',H)\, .
\ee
For this, we fix $L'$ and let $U=\{1,\ldots,h-1\}\setminus L'$. It is sufficient to show that
\be
\sum_{I'\sqcup H=U} \mathcal{R}(I',L',H) \= 0\, .
\ee
As the common factor $f_2(m_2-c_{2,L'}-1,|L'|+1)$ is independent on $I'$ and $H$, this is equivalent to show the vanishing of the sum 
\bas
S_{L'} &\,\coloneq \,  \sum_{I'\sqcup H=U}  f_2(m_1,|I'|+1)[c_{2,L'}-1]_{|H|}\\
&\hspace{1.5cm} \cdot[(m_1-2|I'|+2)(c_{1,I'\cup\{h\}}-1)+c_{1,I'}(c_{2,L'}-2|H|+1)]\, .
\eas
Here we use the same trick of summing over all $I'\subseteq U$ by first fixing the cardinality of $|I'|=i$. Then the above sum becomes
\bas
S_{L'} \= &\sum_{i=0}^{|U|}\sum_{\substack{I'\subseteq U \\ |I'|=i}}f_2(m_1,i+2)[c_{2,L'}-1]_{|U|-i}(c_{1,I'\cup\{h\}}-1)\\
&\hspace{1.5cm} + f_2(m_1,i+1)[c_{2,L'}-1]_{|U|-i+1}c_{1,I'}\, .
\eas
By using the identities 
\bas
\sum_{\substack{I'\subseteq U \\ |I'|=i}}c_{1,I'} &\=  \binom{|U|-1}{i}(m_1-2i+2)
 + \binom{|U|-1}{i-1}(c_{1,U}+2|U|-2i) \\ 
\sum_{\substack{I'\subseteq U \\ |I'|=i}}(c_{1,I'\cup\{h\}}-1) &\= \sum_{\substack{I'\subseteq U \\ |I'|=i}}c_{1,I'}  -(4g_h+1)\binom{|U|}{i}
\eas
we obtain
\bas
S_{L'} &\= \sum_{i=0}^{|U|}\,\, f_2(m_1,i+2)[c_{2,L'}-1]_{|U|-i} \cdot\\
& \cdot \Bigg( \binom{|U|-1}{i}(m_1-2i+2) + \binom{|U|-1}{i-1}(c_{1,U}+2|U|-2i) -(4g_h+1)\binom{|U|}{i}\Bigg) \\
&\hspace{0.7cm} + f_2(m_1,i+1)[c_{2,L'}-1]_{|U|-i+1} \, \cdot \\
& \cdot \Bigg( \binom{|U|-1}{i}(m_1-2i+2) + \binom{|U|-1}{i-1}(c_{1,U}+2|U|-2i)\Bigg)\, .
\eas
We substitude $c_{1,U}\= 4g_h-c_{2,L'}$ and $\binom{|U|}{i}\= \binom{|U|-1}{i} + \binom{|U|-1}{i-1}$ and
merge the terms with the common binomial coefficient and conclude that $S_{L'}=0$.
\end{proof}
\par
\begin{lemma} \label{eq:alphaop}
The operator 
\bas
\alpha_{L',H}^{\epsilon}(u) \, :=\, \sum_{J'\subseteq L'} (-1)^{|L'\setminus J'|} c_{2,J'}f_2(m_2,|J'|+1)[A-D_{L'\setminus J'}+u]_{|K'|+ \epsilon}\, .
\eas
has the simpler expression
\be\nonumber
    \alpha_{L',H}^{\epsilon}(u) \= -u\,[c_{2,L'}+u-2]_{|H|+\epsilon}\,f_2(m_2-c_{2,L'}-u,|L'|+1)\, .
\ee
\end{lemma}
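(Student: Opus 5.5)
The plan is to prove the identity for $\alpha^{\epsilon}_{L',H}(u)$ by collapsing the sum over $J'\subseteq L'$ to a sum over $j=|J'|$, and then recognizing a finite-difference/Vandermonde identity for the falling double-factorial products $f_2$.

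\emph{First step: the bracket depends only on $j$.} Since $A=c_{2,J'}-2$ and $\kappa_i=4g_i-2$, we get
\[
A-D_{L'\setminus J'} \= c_{2,L'}-2+2|L'\setminus J'| .
\]
This is the observation already made before the definition of the $\mathcal{R}_*$. Also $|K'|=|H|+|L'\setminus J'|$. So with $l=|L'|$ and $j=|J'|$, the bracket factor is $[c_{2,L'}-2+2(l-j)+u]_{|H|+l-j+\epsilon}$ and $f_2(m_2,j+1)$ depends only on $j$. The only set-dependent factor left is $c_{2,J'}$, and I will average it over subsets of fixed size:
\[
\sum_{|J'|=j} c_{2,J'} \= \binom{l}{j}(m_2+2)-\binom{l-1}{j-1}\,4g_{L'} .
\]
Here $4g_{L'}=m_2+2-c_{2,L'}$. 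This turns $\alpha^\epsilon_{L',H}(u)$ into a single alternating sum over $j$ whose only parameters are $m_2$, $c_{2,L'}$, $l$, $|H|$, $\epsilon$ and $u$.

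\emph{Second step: the one-variable identity.} Recall $[x]_r=f_2(x,r+1)=x(x-2)\cdots(x-2r+4)$, which is $2^{r-1}$ times a falling factorial in $x/2$; likewise $f_2(m_2,j+1)$ is a falling factorial in $m_2/2$ of length $j-1$. The alternating sum $\sum_j(-1)^{l-j}\binom{l}{j}(\cdots)$ is an $l$-th finite difference with step $2$. Combining it with the Chu--Vandermonde identity for falling factorials, $(x+y)^{\underline n}=\sum_k\binom nk x^{\underline k}y^{\underline{n-k}}$, should give a closed form in which the two falling products merge into $[c_{2,L'}+u-2]_{|H|+\epsilon}\,f_2(m_2-c_{2,L'}-u,l+1)$. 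The prefactor $-u$ I would pin down by first checking that the whole sum vanishes at $u=0$; this is a telescoping of the two binomial terms coming from $c_{2,J'}$. I would then compare the coefficient linear in $u$. Degree counting in $u$ shows the quotient by $u$ is exactly the claimed product.

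\emph{Alternative route: induction on $|L'|$.} Pick $i\in L'$ and split the sum according to whether $i\in J'$. The term with $i\notin J'$ is $-\alpha$ for $L'\setminus\{i\}$ with $|H|$ raised by one and the argument shifted by $2$. The term with $i\in J'$ is an $\alpha$-type sum in which $m_2$ is lowered by $\kappa_i+2$, since $c_{2,J'}$ loses $4g_i$. The claimed right-hand side should satisfy the same two-term recursion, by the Pascal rule $f_2(a,n+1)=a\,f_2(a-2,n)$. The base case $L'=\emptyset$ reads $c_{2,\emptyset}f_2(m_2,1)[m_2+u]_{|H|+\epsilon}$ against the formula, and it is a direct check using the convention $f_2(a,1)=1/(a+2)$.

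\emph{Main obstacle.} The main obstacle is the boundary behaviour. The bracket $[a]_r$ carries the indicator $\mathbf 1_{a\ge 2(r-1)}$, and $f_2(\cdot,1)$ has the exceptional value $1/(a+2)$. So the polynomial identities above hold only where all indicators are $1$. I would first prove the identity as a polynomial identity in the generic range. Then I would check that whenever an indicator on the left switches off, the corresponding falling product on the right vanishes as well, because it contains the factor $0$ at the threshold $a=2(r-1)-2$ by parity. Alternatively, one restricts to the arguments that actually occur with $c_{2,L'}>0$ in the proof of Proposition~\ref{prop:GforgivenGi}.
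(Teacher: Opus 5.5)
Your main line is the paper's proof. You reduce to $j=|J'|$ using $A-D_{L'\setminus J'}=c_{2,L'}-2+2r$ with $r=|L'|-j$, average $c_{2,J'}$ over subsets of size $j$, split off $[c_{2,L'}+u-2]_{|H|+\epsilon}$, and finish with the two-step Vandermonde identity. Your average $\binom{l}{j}(m_2+2)-\binom{l-1}{j-1}4g_{L'}$ equals the paper's $\binom{l-1}{j}(m_2+2-2j)+\binom{l-1}{j-1}(c_{2,L'}+2r)$, because $j\binom{l-1}{j}=(l-j)\binom{l-1}{j-1}$. However, the three things you add on top of this would not work as written.

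\textbf{Indicators.} These are not an obstacle, and your proposed fix is false. For every $J'$ the indicator in $[c_{2,L'}-2+2r+u]_{|H|+r+\epsilon}$ is $\mathbf{1}_{c_{2,L'}+u\geq 2(|H|+\epsilon)}$. This is independent of $r$ and coincides with the indicator of $[c_{2,L'}+u-2]_{|H|+\epsilon}$ on the right. Hence the paper's factorization $[c_{2,L'}-2+2r+u]_{|H|+r+\epsilon}=[c_{2,L'}+u-2]_{|H|+\epsilon}\,f_2(c_{2,L'}+2(r-1)+u,r+2)$ holds exactly. The lemma is therefore a polynomial identity multiplied by one common indicator. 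Your parity argument, by contrast, breaks down. Here $m_2$ is odd and all $\kappa_i$ are even, so for even $u$ (for instance $u=0$ or $u=2-4g_h$, both used afterwards) every argument is odd, and no $f_2$-product contains a zero factor.

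\textbf{The factor $-u$.} Vanishing at $u=0$, the linear coefficient and degree counting are not enough. After removing the common bracket, the sum is a polynomial of degree at most $l$ in $u$, and two coefficients do not determine it. That would only work if you already knew the answer is an affine function of $u$ times $f_2(m_2-c_{2,L'}-u,l+1)$. That is the actual content, and your Vandermonde step asserts it (``should give'') without deriving it: the weights $c_{2,J'}$ prevent a single direct application of Vandermonde.

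The telescoping you mention is the right mechanism, and it holds identically in $u$:
\begin{itemize}
\item absorb $m_2+2-2j$ into $f_2(m_2,j+2)$;
\item write $c_{2,L'}+2r=(c_{2,L'}+u+2r)-u$ and absorb the first summand into $f_2(c_{2,L'}+u+2r,r+3)$;
\item after the shift $r\mapsto r-1$ the two absorbed sums cancel.
\end{itemize}
What remains is
\[
-u\sum_{r=0}^{l-1}(-1)^r\binom{l-1}{r}f_2(m_2,l+1-r)\,f_2(c_{2,L'}+u+2r-2,r+2) \;=\; -u\,f_2(m_2-c_{2,L'}-u,l+1).
\]

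\textbf{The case $L'=\emptyset$.} The argument above needs $l\geq 1$. For $L'=\emptyset$ the sum equals $[m_2+u]_{|H|+\epsilon}$, which does not vanish at $u=0$, so your claimed vanishing fails there. The right-hand side only makes sense there as the removable value $-u\,f_2(-2-u,1)=1$, so this case is a separate direct check, as in your base case.

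\textbf{The induction route.} As stated, it does not close. In the branch $i\in J'$ the factor is $f_2(m_2,|J''|+2)=m_2\,f_2(m_2-2,|J''|+1)$, not $f_2(m_2-4g_i,|J''|+1)$. So that branch is not an $\alpha$ with lowered $m_2$. You would need a family in which the parameter inside $c_{2,\cdot}$ and the one inside $f_2$ vary independently.
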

\par
\begin{proof} By fixing $|J'|=j$, this operator becomes
\bas
\alpha_{L',H}^{\epsilon}(u) \= \sum_{j=0}^{|L'|}\alpha_{L',H}^{\epsilon}(u;j).
\eas
with
\bas
\alpha_{L',H}^{\epsilon}(u;j) \,\coloneq\, (-1)^{|L'|-j}f_2(m_2,j+1)[A-D_{L'\setminus J'}+u]_{|K'|+ \epsilon}\sum_{\substack{J'\subseteq L' \\ |J'|=j}}c_{2,J'}\, .
\eas
Let $r=|L'|-j$. Using the identities
\bas
\sum_{\substack{J'\subseteq L' \\ |J'|=j}}c_{2,J'} &\=  \binom{|L'|-1}{j}(m_2+2-2j) + \binom{|L'|-1}{j-1}(c_{2,L'}+2r)\\
[A-D_{L'\setminus J'}+u]_{|K'|+\epsilon} &\=  [c_{2,L'}+u-2]_{|H|+\epsilon}f_2(c_{2,L'}+2(r-1)+u,r+2)
\eas
and the Vandermonde convolution identity (with $2$-step factorials instead of the usual factorials)
\bas
\sum_{r=0}^{|L'|-1}&(-1)^r\binom{|L'|-1}{r}f_2(m_2,|L'|+1-r)f_2(c_{2,L'}+u+2r-2,r+2) \\
& \= f_2(m_2-c_{2,L'}-u,|L'|+1)
\eas
the claim follows.
\end{proof}
%
%

%% file: sec_examples.tex
\section{Examples and outlook} \label{sec:examples}

In the introduction we presented an example illustrating our main theorem that the DR-completed volume agrees with the ribbon graph completed volume for quadratic differentials, i.e.\ for $k=2$. In this section we present another example for abelian differentials illustrating Theorem~\ref{thm:DRcompletedvolgraphs} that gives the DR-completed volume as a graph sum of top tautological intersection numbers.
\par
Moreover we give an outlook comparing the two possible ways to compute completed volumes by evaluations in the tautological ring.
\par
\medskip
\paragraph{\textbf{The stratum $\mu=(4,2,-2)$ with $k=1$}} Since this stratum has three zeros there are no special star graphs and the possible list of sunflower graphs for this stratum is as follows:
\bas
D_1 &\=\left[\begin{tikzpicture}[
  baseline={([yshift=-.5ex]current bounding box.center)},
  scale=1.5,
  very thick,
  bend angle=30,
  comp/.style={circle,draw,thin,inner sep=2pt,font=\tiny},
  order bottom left/.style={pos=.05,left,font=\tiny},
  order top left/.style={pos=.9,left,font=\tiny},
  order bottom right/.style={pos=.05,right,font=\tiny},
  order top right/.style={pos=.9,right,font=\tiny},
  order node dis/.style={text width=.75cm},
]

\node[comp] (A) at (0,0) {1};
\node[comp] (B) at (0.5,0) {2};

\node[comp,fill] (E) at (0.25,-0.75) {};

\node[order node dis,above left] (A-top) at (A.north east) {$-2$};
\path (A) edge [shorten >=4pt] (A-top.center);


\node [minimum width=18pt,above right] (A-top) at (A.north east) {$2$};
\path (A) edge [shorten >=5pt] (A-top.center);

\node[minimum width=18pt,below left] (E-half) at (E.south east) {$4$};
\path (E) edge [shorten >=4pt] (E-half.center);

\draw (A) edge node[order bottom left,yshift=-1pt] {$0$} node[order top left] {$-2$} (E);
\draw (B) edge node[order bottom left] {$2$} node[order top right] {$-4$} (E);

\end{tikzpicture}\right]
\qquad
D_2\=\left[\begin{tikzpicture}[
  baseline={([yshift=-.5ex]current bounding box.center)},
  scale=1.5,
  very thick,
  bend angle=30,
  comp/.style={circle,draw,thin,inner sep=2pt,font=\tiny},
  order bottom left/.style={pos=.05,left,font=\tiny},
  order top left/.style={pos=.9,left,font=\tiny},
  order bottom right/.style={pos=.05,right,font=\tiny},
  order top right/.style={pos=.9,right,font=\tiny},
  order node dis/.style={text width=.75cm},
]

\node[comp] (A) at (0,0) {1};
\node[comp] (B) at (0.75,0) {1};
\node[comp] (C) at (1.25,0) {1};

\node[comp,fill] (E) at (0.5,-0.75) {};

\node[order node dis,above left] (A-top) at (A.north east) {$-2$};
\path (A) edge [shorten >=4pt] (A-top.center);


\node [minimum width=18pt,above right] (A-top) at (A.north east) {$2$};
\path (A) edge [shorten >=5pt] (A-top.center);

\node[minimum width=18pt,below left] (E-half) at (E.south east) {$4$};
\path (E) edge [shorten >=4pt] (E-half.center);

\draw (A) edge node[order bottom left,yshift=-2pt] {$0$} node[order top left,yshift=-2pt] {$-2$} (E);
\draw (B) edge node[order bottom left] {$0$} node[order top right,yshift=-2pt] {$-2$} (E);
\draw (C) edge node[order bottom left] {$0$} node[order top left,yshift=8pt] {$-2$} (E);

\end{tikzpicture}\right] 
D_3 \=\left[\begin{tikzpicture}[
  baseline={([yshift=-.5ex]current bounding box.center)},
  scale=1.5,
  very thick,
  bend angle=30,
  comp/.style={circle,draw,thin,inner sep=2pt,font=\tiny},
  order bottom left/.style={pos=.05,left,font=\tiny},
  order top left/.style={pos=.9,left,font=\tiny},
  order bottom right/.style={pos=.05,right,font=\tiny},
  order top right/.style={pos=.9,right,font=\tiny},
  order node dis/.style={text width=.75cm},
]

\node[comp] (A) at (0,0) {2};
\node[comp] (B) at (0.5,0) {1};

\node[comp,fill] (E) at (0.25,-0.75) {};

\node[order node dis,above left] (A-top) at (A.north east) {$-2$};
\path (A) edge [shorten >=4pt] (A-top.center);


\node [minimum width=18pt,above right] (A-top) at (A.north east) {$2$};
\path (A) edge [shorten >=5pt] (A-top.center);

\node[minimum width=18pt,below left] (E-half) at (E.south east) {$4$};
\path (E) edge [shorten >=4pt] (E-half.center);

\draw (A) edge node[order bottom left,yshift=-1pt] {$2$} node[order top left] {$-4$} (E);
\draw (B) edge node[order bottom left] {$0$} node[order top right] {$-2$} (E);

\end{tikzpicture}\right] \\
D_4 &\=\left[\begin{tikzpicture}[
  baseline={([yshift=-.5ex]current bounding box.center)},
  scale=1.5,
  very thick,
  bend angle=30,
  comp/.style={circle,draw,thin,inner sep=2pt,font=\tiny},
  order bottom left/.style={pos=.05,left,font=\tiny},
  order top left/.style={pos=.9,left,font=\tiny},
  order bottom right/.style={pos=.05,right,font=\tiny},
  order top right/.style={pos=.9,right,font=\tiny},
  order node dis/.style={text width=.75cm},
]

\node[comp] (A) at (0,0) {2};
\node[comp] (B) at (0.5,0) {1};

\node[comp,fill] (E) at (0.25,-0.75) {};

\node[order node dis,above left] (A-top) at (A.north east) {$-2$};
\path (A) edge [shorten >=4pt] (A-top.center);


\node [minimum width=18pt,above right] (A-top) at (A.north east) {$4$};
\path (A) edge [shorten >=5pt] (A-top.center);

\node[minimum width=18pt,below left] (E-half) at (E.south east) {$2$};
\path (E) edge [shorten >=4pt] (E-half.center);

\draw (A) edge node[order bottom left,yshift=-1pt] {$0$} node[order top left] {$-2$} (E);
\draw (B) edge node[order bottom left] {$0$} node[order top right] {$-2$} (E);

\end{tikzpicture}\right] \qquad \qquad 
D_5 \= \left[\begin{tikzpicture}[
  baseline={([yshift=-.5ex]current bounding box.center)},
  scale=1.5,
  very thick,
  bend angle=30,
  comp/.style={circle,draw,thin,inner sep=2pt,font=\tiny},
  order bottom left/.style={pos=.05,left,font=\tiny},
  order top left/.style={pos=.9,left,font=\tiny},
  order bottom right/.style={pos=.05,right,font=\tiny},
  order top right/.style={pos=.9,right,font=\tiny},
  order node dis/.style={text width=.75cm},
]

\node[comp,fill] (A) at (0,0) {};
\node[comp] (B) at (0.5,0) {1};
\node[comp] (C) at (1.25,0) {2};

\node[comp,fill] (E) at (0.25,-0.75) {};
\node[comp,fill] (F) at (1,-0.75) {};

\node[order node dis,above left] (A-top) at (A.north east) {$-2$};
\path (A) edge [shorten >=4pt] (A-top.center);

\node[minimum width=18pt,below left] (E-half) at (E.south east) {$2$};
\path (E) edge [shorten >=4pt] (E-half.center);

\node[minimum width=18pt,below left] (F-half) at (F.south west) {$4$};
\path (F) edge [shorten >=4pt] (F-half.center);

\draw (A) edge node[order bottom left] {$0$} node[order top left] {$-2$} (E);
\draw (B) edge node[order bottom left] {$0$} node[order top right] {$-2$} (E);
\draw (C) edge node[order bottom right,yshift=-2pt] {$2$} node[order top right] {$-4$} (F);
\draw (A) edge node[order bottom right, yshift=2pt] {$0$} node[order top left,yshift=-1pt] {$-2$} (F);

\end{tikzpicture}\right] \\
D_6 &\=\left[\begin{tikzpicture}[
  baseline={([yshift=-.5ex]current bounding box.center)},
  scale=1.5,
  very thick,
  bend angle=30,
  comp/.style={circle,draw,thin,inner sep=2pt,font=\tiny},
  order bottom left/.style={pos=.05,left,font=\tiny},
  order top left/.style={pos=.9,left,font=\tiny},
  order bottom right/.style={pos=.05,right,font=\tiny},
  order top right/.style={pos=.9,right,font=\tiny},
  order node dis/.style={text width=.75cm},
]

\node[comp,fill] (A) at (0,0) {};
\node[comp] (B) at (0.5,0) {1};
\node[comp] (C) at (1.25,0) {1};
\node[comp] (D) at (1.75,0) {1};

\node[comp,fill] (E) at (0.25,-0.75) {};
\node[comp,fill] (F) at (1,-0.75) {};

\node[order node dis,above left] (A-top) at (A.north east) {$-2$};
\path (A) edge [shorten >=4pt] (A-top.center);

\node[minimum width=18pt,below left] (E-half) at (E.south east) {$2$};
\path (E) edge [shorten >=4pt] (E-half.center);

\node[minimum width=18pt,below left] (F-half) at (F.south west) {$4$};
\path (F) edge [shorten >=4pt] (F-half.center);

\draw (A) edge node[order bottom left] {$0$} node[order top left] {$-2$} (E);
\draw (A) edge node[order bottom right, yshift=2pt] {$0$} node[order top left,yshift=-1pt] {$-2$} (F);
\draw (B) edge node[order bottom left] {$0$} node[order top right] {$-2$} (E);
\draw (C) edge node[order bottom right,yshift=-2pt] {$0$} node[order top left, yshift=5pt] {$-2$} (F);
\draw (D) edge node[order bottom right,yshift=-2pt] {$0$} node[order top right] {$-2$} (F);

\end{tikzpicture}\right] \qquad 
D_7 \=\left[\begin{tikzpicture}[
  baseline={([yshift=-.5ex]current bounding box.center)},
  scale=1.5,
  very thick,
  bend angle=30,
  comp/.style={circle,draw,thin,inner sep=2pt,font=\tiny},
  order bottom left/.style={pos=.05,left,font=\tiny},
  order top left/.style={pos=.9,left,font=\tiny},
  order bottom right/.style={pos=.05,right,font=\tiny},
  order top right/.style={pos=.9,right,font=\tiny},
  order node dis/.style={text width=.75cm},
]

\node[comp] (A) at (0,0) {1};
\node[comp] (B) at (0.5,0) {1};
\node[comp] (C) at (1.25,0) {1};

\node[comp,fill] (E) at (0.25,-0.75) {};
\node[comp,fill] (F) at (1,-0.75) {};

\node[order node dis,above left] (A-top) at (A.north east) {$-2$};
\path (A) edge [shorten >=4pt] (A-top.center);

\node[minimum width=18pt,below left] (E-half) at (E.south east) {$2$};
\path (E) edge [shorten >=4pt] (E-half.center);

\node[minimum width=18pt,below left] (F-half) at (F.south west) {$4$};
\path (F) edge [shorten >=4pt] (F-half.center);

\draw (A) edge node[order bottom left] {$0$} node[order top left] {$-2$} (E);
\draw (B) edge node[order bottom left] {$0$} node[order top right] {$-2$} (E);
\draw (C) edge node[order bottom right,yshift=-2pt] {$0$} node[order top right] {$-2$} (F);
\draw (A) edge node[order bottom right, yshift=2pt] {$2$} node[order top left,yshift=-1pt] {$-4$} (F);

  \end{tikzpicture}\right]
\eas
Their volume contributions, as computed using the sage-package \texttt{diffstrata} are shown in the following table: 
\begin{center}
  \begin{tabular}{|c|c|c|c|c|}
  \hline
  \rule{0pt}{14pt} $\Gamma$ & $\frac{1}{|Aut(\Gamma)|k^{h_{\ab}}}$ & $\prod_{e\in E(\Gamma)}\kappa_{e}$ & $\prod_{v\in V(\Gamma)}\text{vol}(\mu_v)$ & $\text{vol}({\Gamma})$ \\ \hline
  $\bullet$ & 1 & 1 & 23/9216 & 23/9216 \\ \hline
  $D_1$ & 1 & 3 & 1/5120 & 3/5120 \\ \hline
  $D_2$ & 1/2 & 1 & 1/1152 & 1/2304 \\ \hline
  $D_3$ & 1 & 3 & -1/1536 & -1/512  \\ \hline
  $D_4$ & 1 & 1 & -23/27648 & -23/27648 \\ \hline
  $D_5$ & 1 & 3 & -1/15360 & -1/5120  \\ \hline
  $D_6$ & 1/2 & 1 & -1/3456 & -1/6912 \\ \hline
  $D_7$ & 1 & 3 & 1/4608 & 1/1536 \\ \hline
\end{tabular}  
\end{center}
These contributions add up to
\be
\ol{\vol}^\DR(\mu)\= 1/960 
\ee
which is confirmed by the \texttt{DR\_cycle}-method (with \texttt{chiodo\_coeff=True}) of the sage-package \texttt{admcycles}.  
\par
\par
\medskip
\paragraph{\textbf{Two expresssions in the tautological ring}} Our main Theorem~\ref{thm:intromain} shows that the completed volume can be computed by two quite different expressions in the tautological ring that we briefly compare.
\par
First, the Ribbon graph completed volume for $k=2$ and~$\mu$ with only odd entries is computed as follows as a sum over all stable graphs~$\Delta$ as in Section~\ref{sec:ribbon} with edge factors and vertex factors as follows. Let $\bfb = (b_e)_{e \in E(\Delta)}$ be a multi-index and $\bfd_v$ be a multi-index 
\be
\ol{\Vol}(\mu) \= \sum_\Delta \mathcal{Z}\Bigl(\prod_{e \in E(\Delta)} b_e \cdot \prod_{v \in V(\Delta)} \sum_{|\bfd_v| = \dim(\mu_v)} 2^{g-\dim(\mu_v)} \langle \tau_\bfd \rangle_{\mu_v}\frac{\bfb^{\bfd_v}}{\bfd !} \Bigr)
\ee
using the linear operator $\mathcal{Z}: \bfb^\bfd \mapsto \prod d_i! \zeta(d_i+1)/(\sum (d_i+1))!$ and where
\be
\langle \tau_\bfd \rangle_{\bfm} \= \int_{W_{\bfm,n}} \prod_{i \geq 0} \psi_i^{d_i}, \qquad
\bfd = (d_1,\ldots,d_n)
\ee
is the integral over Witten-Kontsevich's combinatorial classes $W_{\bfm,n} \subset \moduli[g,n]$, whose dimension we denoted by $\dim(\bfm)$,  see \cite[Section~2.1]{DGY}, \cite{Witten2dgrav,KontAiry}.
\par
Second, the DR-completed volume can be evaluated using the Chiodo-class expression in Theorem~\ref{thm:ChiodoHolmes} and~\eqref{eq:ChClass}. It is also a sum over stable graphs, but in its present form it is not very efficient to evaluate. One reason for this is the dependence on the auxiliary weighting parameter~$r$. We conjecture that the Chiodo-coefficient needed here, of top codimension is actually a monomial in~$r$, which already gives significant computational simplification. Moreover, the DR-completed volume is defined for any~$k$ and any~$\mu$ and exhibits interesting piecewise polynomial behaviour that further improves evaluation. We plan to address this in a sequel to the paper and hope that at least one of the two expressions is suitable to approach the large genus limit conjecture \cite{ADGZZconj}.